\theoremstyle{definition}
\newtheorem{definition}{Definition}[section]
\newtheorem{example}[definition]{Example}
\newtheorem{remark}[definition]{Remark}
\theoremstyle{plain}
\newtheorem{theorem}[definition]{Theorem}
\newtheorem{lemma}[definition]{Lemma}
\newtheorem{proposition}[definition]{Proposition}
\newtheorem{corollary}[definition]{Corollary}
\DeclareMathOperator{\spin}{spin}
\DeclareMathOperator{\Tr}{Tr}
\DeclareMathOperator{\Cl}{C}
\DeclareMathOperator{\cl}{cl}
\DeclareMathOperator{\rk}{rk}
\DeclareMathOperator{\id}{id}
\DeclareMathOperator{\Hom}{Hom}
\DeclareMathOperator{\NS}{NS}
\DeclareMathOperator{\Aut}{Aut}
\DeclareMathOperator{\scale}{\mathfrak{s}}
\DeclareMathOperator{\norm}{\mathfrak{n}}
\DeclareMathOperator{\aut}{Aut}
\DeclareMathOperator{\Mo}{Mon}
\DeclareMathOperator{\ns}{NS}
\DeclareMathOperator{\im}{im}
\renewcommand{\AA}{\mathbb{A}}
\newcommand{\QQ}{\mathbb{Q}}
\newcommand{\FF}{\mathbb{F}}
\newcommand{\RR}{\mathbb{R}}
\newcommand{\CC}{\mathbb{C}}
\newcommand{\ZZ}{\mathbb{Z}}
\newcommand{\PP}{\mathbb{P}}
\newcommand{\UU}{\mathbb{U}}
\newcommand{\EE}{\mathbb{E}}
\newcommand{\IZ}{\mathbb{Z}}
\newcommand{\NN}{\mathbb{N}}
\renewcommand{\O}{\mathcal{O}}
\newcommand{\X}{\mathcal{X}}
\newcommand{\T}{\mathcal{T}}
\newcommand{\K}{\mathcal{K}}
\newcommand{\M}{\mathcal{M}}
\newcommand{\F}{\mathcal{F}}
\newcommand{\E}{\mathcal{E}}
\newcommand{\C}{\mathcal{C}}
\newcommand{\B}{\mathcal{B}}
\renewcommand{\L}{\mathcal{L}}
\renewcommand{\P}{\mathcal{P}}
\newcommand{\disc}[1]{#1^{\vee} \! / #1 }
\newcommand{\RN}[1]{\textup{\uppercase\expandafter{\romannumeral#1}}}
\newcommand{\even}{\RN{2}}
\newcommand{\odd}{\RN{1}}
\newcommand{\leg}[2]{\left( \frac{#1}{#2}\right)}
\newcommand{\kumn}{\mathrm{Kum}_n}
\newcommand{\hskn}{K3^{\left[n\right]}}
\newcommand*{\defeq}{\mathrel{\rlap{%
                     \raisebox{0.3ex}{$\m@th\cdot$}}%
                     \raisebox{-0.3ex}{$\m@th\cdot$}}%
                     =}
\renewcommand{\div}{\mathop{div}}
\newcommand{\condAst}{\hyperlink{ast}{(\ast)}}
\def\blfootnote{\xdef\@thefnmark{}\@footnotetext}
\title[]{Prime order isometries of unimodular lattices and automorphisms of ihs manifolds}
\author{Simon Brandhorst, Alberto Cattaneo}
\address{Simon Brandhorst,
Fakult\"at f\"ur Mathematik und Informatik, Universit\"at des Saarlandes, Campus E2.4, 66123 Saarbr\"ucken, Germany}
\email{brandhorst@math.uni-sb.de}
\address{Alberto Cattaneo, 
Mathematisches Institut and Hausdorff Center for Mathematics, Universit\"at Bonn, Endenicher Allee 60, 53115 Bonn, Germany; \newline Max Planck Institute for Mathematics, Vivatsgasse 7, 53111 Bonn, Germany.}
\email{cattaneo@math.uni-bonn.de}
\begin{document}
\begin{abstract}
We characterize conjugacy classes of isometries of odd prime order in unimodular $\ZZ$-lattices.
This is applied to give a complete classification of odd prime order non-symplectic automorphisms of irreducible
holomorphic symplectic manifolds up to deformation and birational conjugacy.
\end{abstract}

\maketitle

\blfootnote {{\it 2010 Mathematics Subject Classification}: 11E39, 14J28, 14J50.} \blfootnote {{\it Key words and phrases:} Hermitian lattice, unimodular lattice, quadratic form, lattice isometry, K3 surface, irreducible holomorphic symplectic manifold, automorphism.}

\tableofcontents

\addtocontents{toc}{\protect\setcounter{tocdepth}{1}}
\section{Introduction}
\subsection*{Isometries of unimodular lattices}
A \emph{lattice} consists of a finitely generated free $\ZZ$-module $L$ equipped with a nondegenerate symmetric bilinear form $b\colon L \times L \rightarrow \QQ$.
It is called \emph{integral} if $b$ is integer valued. An integral lattice is \emph{odd} if there is $x\in L$ with $b(x,x)=1$, otherwise it is called \emph{even}. An integral $\ZZ$-lattice is called \emph{unimodular} if it is of determinant $\pm 1$.
An even unimodular lattice of signature $(l_+, l_-)$ exists if and only if $l_+ \equiv l_- \pmod{8}$, while in the odd case there is no restriction on the signature. If $l_+$ and $l_-$ are both nonzero, then an odd (even) lattice of this signature is unique up to isometry.

We treat the following problem:
does $L$ admit an isometry $f \in O(L)$ of odd prime order $p$? If yes, classify their conjugacy classes.

In order to do so, we define the invariant and coinvariant lattices of $f$ as
\[L^f=\{x \in L \mid f(x)=x\} \quad \mbox{ and }\quad L_f=\left(L^f\right)^\perp.\]
If $L$ is unimodular, these lattices are in fact $p$-elementary. In particular, $|\det L_f|$ is a power of $p$.
Our first main result is the following theorem.
\begin{theorem} \label{thm: classific unimod}
Let $p$ be an odd prime number. There exists a unimodular lattice
$L$ of signature $(l_+,l_-)$ admitting an isometry $f \in O(L)$
of order $p$ with coinvariant lattice $L_f$ of signature $(s_+,s_-)$
and determinant $\det L_f = (-1)^{s_-}p^n$
if and only if there exists
$m\in \ZZ_{\geq 0}$ such that
\begin{enumerate}
\setlength{\itemsep}{5pt}
\item[(I)] $l_+ + l_- - s_+ - s_- > 0$ and $L$ is odd or
\item[(II)] $l_+ \equiv l_- \pmod{8}$ and is $L$ even;
\item $s_+ + s_- = (n+2m)(p-1)>0$;
\item $s_+, s_- \in 2\ZZ$;
\item $s_+ \leq l_+$, $ \quad s_- \leq l_-$;
\item $s_+ + s_- + n \leq l_+ + l_-$;\item if $n = 0$ or $n=l_+ +l_- - s_+ - s_-$,
then $s_+ \equiv s_- \pmod{8}$.
\end{enumerate}
\end{theorem}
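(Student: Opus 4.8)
The plan is to translate the existence of $(L,f)$ into the existence of a compatible pair of lattices -- the invariant lattice $L^f$ and the coinvariant lattice $L_f$ -- glued to a unimodular overlattice, and then to recognise $L_f$ as the trace form of a Hermitian lattice over $\O_K$, where $K=\QQ(\zeta_p)$. First I would set up the equivariant gluing dictionary in the style of Nikulin: an isometry $f\in O(L)$ of order $p$ is the datum of a primitive sublattice $L^f$, of its orthogonal complement $L_f$ together with the fixed-point-free $\O_K$-action induced by $f$, and of the overlattice $L\supseteq L^f\oplus L_f$, which (since $L$ is unimodular) is the full graph of an anti-isometry $\gamma\colon\disc{L^f}\xrightarrow{\ \sim\ }\disc{L_f}$; moreover, because $f$ acts trivially on $L^f$ and the glue must be $f$-stable, $f$ acts trivially on $\disc{L_f}$ as well. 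This forces $L^f$ and $L_f$ to be $p$-elementary with $\disc{L^f}\cong\disc{L_f}\cong(\ZZ/p)^n$, and bookkeeping of signatures gives $\signt(L^f)=(l_+-s_+,\,l_--s_-)$, whence condition~(4), together with $\rk L^f\ge\ell(\disc{L^f})=n$, whence condition~(5). Conversely, from any such data the glued lattice is unimodular; it can be taken odd exactly when $L^f$ or $L_f$ is odd, and since the coinvariant lattice of an isometry of odd prime order is always even (its values lie in $2\ZZ$), this reduces to $L^f$ being odd, which is possible precisely when $\rk L^f>0$ -- condition~(I); if instead $L$ is to be even, then $l_+\equiv l_-\pmod 8$ as for every even unimodular lattice -- condition~(II). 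So the theorem is reduced to the joint existence of the Hermitian lattice $L_f$ with the required invariants and of a $p$-elementary lattice $L^f$ realising the anti-isometric discriminant form with the prescribed parity.

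On the Hermitian side, set $K_0=\QQ(\zeta_p+\zeta_p^{-1})$. The lattice $L_f$ is the trace form (suitably twisted by the different of $K/\QQ$) of a Hermitian $\O_K$-lattice of rank $r$; at each of the $(p-1)/2$ real places of $K_0$ the Hermitian form has a signature $(a_v,b_v)$ with $a_v+b_v=r$, contributing $(2a_v,2b_v)$ to $\signt(L_f)$, so $s_+=2\sum a_v$ and $s_-=2\sum b_v$ are even -- condition~(3) -- and $s_++s_-=(p-1)r$, accounting for the factor $p-1$ in condition~(2). The requirement that $f$ act trivially on $\disc{L_f}$ (together with the integrality of $L_f$) translates, at the prime $\mathfrak q$ of $\O_K$ over $p$, into the localisation of $L_f$ at $\mathfrak q$ being the orthogonal sum of a unimodular Hermitian lattice of rank $n$ and a $\mathfrak q^{-1}$-modular one, the latter necessarily of even rank $2m$ since Hermitian lattices of odd scale have even rank; hence $r=n+2m$ with $m\ge 0$, which is condition~(2). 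Existence of such an $L_f$ -- with the prescribed rank, archimedean signatures, behaviour at $\mathfrak q$, and parity -- I would then deduce from the local--global theory of Hermitian lattices over $\O_K$ (classification by rank, determinant class in $K_0^\times/N_{K/K_0}(K^\times)$, and archimedean signatures, subject only to reciprocity), the parameter $m$ providing the freedom to adjoin a $\mathfrak q^{-1}$-modular part of the right rank; the sign $(-1)^{s_-}$ of $\det L_f$ is automatic for a lattice of signature $(s_+,s_-)$. This simultaneously determines the discriminant form $q_{L_f}$.

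On the integral side I would invoke Nikulin's existence theorem for $p$-elementary lattices to produce $L^f$ of signature $(l_+-s_+,\,l_--s_-)$, of the prescribed parity, with discriminant form anti-isometric to $q_{L_f}$; admissibility of the signature uses~(4) and the inequality between the rank and the length of the discriminant group uses~(5). Gluing $L^f$ to $L_f$ along $\gamma$ then produces the desired unimodular $L$, of the correct parity by the first step.

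The hard part will be condition~(6), i.e.\ the two boundary cases. If $n=0$ then $L_f$ is even unimodular, so Milgram's formula gives $\signt(L_f)=s_+-s_-\equiv 0\pmod 8$, that is $s_+\equiv s_-\pmod 8$; if $n=\rk L^f$ then $L^f$ is $p$-elementary of rank $n$ with discriminant group of order $p^n$, which forces $L^f\cong W(p)$ for a unimodular lattice $W$ of signature $(l_+-s_+,\,l_--s_-)$, so $L^f$ and its discriminant form are rigid, and matching the latter against $q_{L_f}$ through Milgram again forces $s_+\equiv s_-\pmod 8$. Outside these two extremes $L^f$ carries an orthogonal summand invisible to the discriminant form, which leaves enough room to adjust parity and to realise any admissible discriminant form, so no further congruence is needed. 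The genuinely technical points I expect to have to work through are the exact form of the trace-form determinant in terms of the Hermitian one -- in particular controlling the Steinitz class when the class number of $K$ exceeds $1$, so that $\det L_f$ is precisely $\pm p^n$ -- and the verification that the boundary analysis is sharp, i.e.\ that these are the only cases in which a mod-$8$ restriction on $(s_+,s_-)$ appears.
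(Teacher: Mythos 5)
Your proposal follows essentially the same route as the paper: reduce to a $p$-elementary coinvariant lattice carrying a fixed-point-free isometry of order $p$ acting trivially on its discriminant group, analyse it locally as a hermitian $\ZZ[\zeta_p]$-lattice (whence the evenness of $s_\pm$, the decomposition $\rk_{\ZZ[\zeta_p]} L_f = n+2m$, and the $n=0$ boundary case), and then use Nikulin's embedding/gluing machinery to produce the unimodular overlattice, with the second boundary case coming from the rigidity of a $p$-elementary complement whose rank equals the length of its discriminant group. The only divergence is presentational: you build $L^f$ from its discriminant form and glue by hand, while the paper invokes Nikulin's primitive embedding theorems (1.12.2 and 1.16.5) applied to $L_f$, which package exactly the same determinant-character computation that you sketch via $L^f\cong W(p)$ and Milgram's formula.
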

The invariants of the theorem determine the genus of the invariant and coinvariant lattice of $f$. But in order to distinguish between conjugacy classes, we need additional invariants.

Let $\zeta_p$ be a primitive $p$-th root of unity.
The coinvariant lattice $L_f$ has the structure of a
$\ZZ[\zeta_p]$-module via $\zeta_p \cdot x = f(x)$.
Moreover, it carries a nondegenerate $\QQ[\zeta_p]$-valued hermitian form $h$ defined by
\[h(x,y) = \sum_{i=0}^{p-1} b(x,f^i(y))\zeta_p^i, \qquad x,y \in L_f.\]

Let $k = \rk_{\ZZ[\zeta_p]} L_f=n+2m$. Its \emph{determinant lattice} $\det(L_f,h_f)$ consists of the top exterior power $\det L_f = \bigwedge^k L_f$ equipped with the hermitian form $\det h_f=\wedge^k h_f$ defined by
$h_f(x,x)=\det h_f(x_i,x_j)_{i,j}$ for $x=x_1 \wedge \ldots \wedge x_k \in \det L_f$.
Let $E=\QQ[\zeta_p]$ and $K= \QQ[\zeta_p + \zeta_p^{-1}]$. Since $\det L_f$ is a module of rank one, it is isomorphic to a fractional ideal $I_f$ of $E$. Seen as an element of the class group $\Cl(E)$ it is independent of the isomorphism and called the \emph{Steinitz class} of $L_f$. It measures the deviation of $L_f$ from being a free $\ZZ[\zeta_p]$-module. The Steinitz class $[I_f]$ is contained in the \emph{relative class group} $\Cl(E/K)$, which is defined as the kernel of the norm map $\Cl(E) \rightarrow \Cl(K)$. The order of $\Cl(E/K)$ is called the \emph{relative class number}.

Finally, the \emph{signatures} of $(L,f)$ are given by the signatures $(k_i^+,k_i^-)\in (2\NN)^2$ of the real quadratic spaces
\begin{equation}\label{eqn:sign}
K_i=\ker(f_\RR+f_\RR^{-1}- \zeta_p^i - \zeta_p^{-i}), \quad i \in \{1,\dots, (p-1)/2\}.\end{equation}
They satisfy $(s_+,s_-)=\sum_{i=1}^{(p-1)/2}(k_i^+,k_i^-)$ and $k_i^+ + k_i^-$ is independent of $i$. In fact any collection of even nonnegative $k_i^\pm$ satisfying these conditions occurs.
\begin{theorem}\label{thm:unimodular_conjugacy}
Let $L$ be a unimodular lattice, $f,g \in O(L)$ be isometries of odd prime order $p$,
$E=\QQ[\zeta_p]$ and $K= \QQ[\zeta_p + \zeta_p^{-1}]$.
Suppose that $L_f$ is indefinite or of rank $p-1$.
Then $f$ is conjugate to $g$ if and only if
\begin{enumerate}
 \item the invariant lattices $L^f$ and $L^g$ are isometric,
 \item $f$ and $g$ have the same signatures,
 \item the determinant lattices $\det(L_f,h_f)$ and $\det(L_g,h_g)$ are isometric.
\end{enumerate}
The number of conjugacy classes with the same signature and invariant lattice as $f$ is given by the relative class number $\# \Cl(E/K)$.

If moreover the relative class number is odd, then the determinant lattices are isometric if and only if $L_f$ and $L_g$ have the same Steinitz invariant in $\Cl(E/K)$.
\end{theorem}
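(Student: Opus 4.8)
The plan is to reduce the conjugacy problem to the arithmetic of hermitian $\ZZ[\zeta_p]$-lattices via equivariant gluing, and then to resolve the resulting classification using class field theory for the unitary group of the CM extension $E/K$. Since $L$ is unimodular, it is the gluing of $L^f\oplus L_f$ along an anti-isometry $\gamma\colon q_{L_f}\xrightarrow{\sim}q_{L^f}$ of discriminant forms (these are anti-isometric because $L^f$ and $L_f$ are $p$-elementary and glue to a unimodular lattice); as $f$ fixes $L^f$ pointwise it acts trivially on $q_{L^f}$, hence, $\gamma$ being $f$-equivariant, also on $q_{L_f}$, so $(L,f)$ is recovered from $L^f$, the hermitian lattice $(L_f,h_f)$ (whose $\ZZ[\zeta_p]$-action records $f|_{L_f}$), and $\gamma$ alone. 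By the usual gluing formalism, conjugacy classes of $f$ with prescribed isometry classes of $L^f$ and of $(L_f,h_f)$ then biject with the double-coset set $\im(O(L^f)\to O(q_{L^f}))\backslash O(q_{L^f})/\im(U(L_f,h_f)\to O(q_{L_f}))$, where $O(q_{L_f})$ is identified with $O(q_{L^f})$ through $\gamma$. The implication $f\sim g\Rightarrow(1)\wedge(2)\wedge(3)$ is immediate, a conjugating isometry transporting all the listed data; so only the converse and the count remain.

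For these, fix the isometry class of $L^f$ and the signatures. The hypothesis that $L_f$ is indefinite or of $\ZZ$-rank $p-1$ is used precisely here: when $L_f$ is indefinite, the hermitian lattice $(L_f,h_f)$ has hermitian rank $\geq 2$ and is indefinite, so strong approximation for its special unitary group over $K$ applies, controlling both the double cosets above and the classification of hermitian lattices in a fixed genus; when $\rk_{\ZZ[\zeta_p]}L_f=1$ one argues directly with fractional ideals of $E$. The resulting class-field-theoretic analysis shows that the conjugacy classes of $f$ with the chosen $L^f$ and signatures are exactly $\#\Cl(E/K)$ in number and are separated by the determinant lattice $\det(L_f,h_f)$; the Steinitz class $[I_f]\in\Cl(E/K)$ is a coarser invariant, in that two such conjugacy classes may have equal Steinitz class, the discrepancy being controlled by an elementary abelian $2$-group arising from the norm obstruction of $E/K$. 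Since, granting (1) and (2), $f\sim g$ holds iff $f$ and $g$ define the same element of this finite set and since the determinant lattice distinguishes its members, this proves (3) together with the counting statement.

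Finally, by local class field theory $K_v^\times/\mathrm N_{E_w/K_v}(E_w^\times)$ has order $\leq 2$ at every place, so the residual ambiguity group above has exponent $2$; when $\#\Cl(E/K)$ is odd it must be trivial, so the Steinitz class alone already separates the conjugacy classes — equivalently, the determinant lattices — which is the last assertion. The step I expect to be the genuine obstacle is the one invoked in the middle paragraph: establishing the strong approximation / Hasse-principle input for the unitary group of $(L_f,h_f)$ over $K$, and carrying out the bookkeeping of units, of the relative class group $\Cl(E/K)$, and of the ramification of $E/K$ above $p$ that identifies the obstruction with $\Cl(E/K)$ and matches it to the Steinitz class; the rank-one case must be treated separately and reconciled with the general argument.
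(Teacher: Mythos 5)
Your proposal follows essentially the same route as the paper: decompose $L$ as an equivariant gluing of $L^f$ with the hermitian lattice $(L_f,h_f)$, use strong approximation for the unitary group both to classify the genus of $(L_f,h_f)$ by its determinant lattice and to kill the gluing ambiguity (equivalently, to show $O(L_f,h_f)\to O(q_{L_f})$ is surjective, which makes your double coset a single point), and count conjugacy classes by $\#\Cl(E/K)$. One small correction: indefiniteness of the trace form of $L_f$ does \emph{not} force $\rk_{\ZZ[\zeta_p]}L_f\geq 2$ (a rank-one hermitian lattice can have mixed signs at the real places of $K$, e.g.\ trace signature $(2,2)$ for $p=5$), so the correct dichotomy is ``hermitian rank one'' versus ``hermitian rank $\geq 2$ and indefinite''; since you treat the rank-one case separately by ideal theory, this slip does not affect the argument. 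Your route to the final assertion also differs slightly from the paper's, which invokes Shimura's result that an odd relative class number forces every totally positive unit of $\ZZ_K$ to be a norm, whereas you observe that the relevant ambiguity (the kernel of $J/J_0\to\Cl(E/K)$) is a $2$-group sitting inside a group of order $\#\Cl(E/K)$, hence trivial when that number is odd --- both work.
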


\begin{remark}
For $L_f$ or $L^f$ definite a classification of conjugacy classes includes an enumeration of isometry classes in the respective genera. This is typically of algorithmic nature and has been carried out for instance in \cite{nebe:automorphisms-unimodular,nebe-kirschmer:binary-hermitian} where Kirschmer and Nebe classify extremal unimodular lattices of rank $48$ admitting certain prime order isometries.

The analogous results for $p=2$ are easily derived from \cite[Thms.\ 3.6.2, 3.6.3, \S 16]{nikulin} due to Nikulin. They boil down to a classification of primitive $2$-elementary sublattices of unimodular lattices up to the action of the orthogonal group.

In \cite{quebbemann} Quebbemann classifies odd prime order automorphisms of unimodular lattices taking $(L_f,f)$ and $L^f$ as given. The addition of a classification for hermitian and $p$-elementary lattices allows us to reach the effective results given above.

In a recent breakthrough, Bayer-Fluckiger \cite{bayer-fluckiger:unimodular-charpoly} classifies characteristic polynomials of isometries of unimodular lattices, under the condition that the polynomial has no linear factor, which prevents us from applying her results to our study.
\end{remark}

\subsection*{Automorphisms of ihs manifolds}
Unimodular lattices arise in geometry as the intersection form of closed, oriented $4k$-folds. In the case of simply connected fourfolds, by a celebrated result of Freedman \cite[Thm.\ 1.5]{freedman} the isometry class of the intersection form determines the topology of the manifold (together with a $\ZZ/2\ZZ$ obstruction in the case of odd intersection form). The automorphisms of these manifolds act as isometries on the intersection lattice. \Cref{thm: classific unimod} thus yields restrictions for this action.

In the case of a complex K3 surface $S$, the global Torelli theorem explains how to recover automorphisms of $S$ from isometries of the intersection lattice. We consider automorphisms of odd prime order which are non-symplectic (i.e.\ whose action on $H^{2,0}(S)$ is not trivial), since their coinvariant lattice inside $H^2(S,\IZ)$ is indefinite. \Cref{thm: classific unimod,thm:unimodular_conjugacy} can be combined to give the following classification.

\begin{theorem}\label{thm:K3-classification}
Let $p$ be an odd prime number and $r,a$ nonnegative integers.
There exists a K3 surface $S$ and a non-symplectic automorphism $\sigma \in \aut(S)$ of odd prime order $p$
with invariant lattice $H^2(S,\ZZ)^\sigma$ $p$-elementary hyperbolic of rank $r \geq 1$ and discriminant $p^a$
if and only if
\begin{enumerate}
\setlength{\itemsep}{5pt}
\item  $p \leq 19$ and $22-r \equiv 0 \pmod{p-1}$;
\item  $0 \leq a \leq \min\{r, \frac{22-r}{p-1}\}$ and $a \equiv \frac{22-r}{p-1} \pmod{2}$;
\item if $a=0$ or $a=r$, then $r \equiv 2 \pmod{8}$.
\end{enumerate}
Moreover, the triple $(p,r,a)$ determines the pair $(S, \langle\sigma\rangle)$ up to deformation.
\end{theorem}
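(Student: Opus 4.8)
The plan is to deduce \Cref{thm:K3-classification} from the lattice-theoretic results \Cref{thm: classific unimod} and \Cref{thm:unimodular_conjugacy} combined with the global Torelli theorem and surjectivity of the period map for K3 surfaces. First I would set up the dictionary: for a K3 surface $S$ the lattice $H^2(S,\ZZ)$ is the unique even unimodular lattice of signature $(3,19)$, and a non-symplectic automorphism $\sigma$ of order $p$ acts on it with $\zeta_p$ acting nontrivially on the line $H^{2,0}(S)$; hence exactly one of the eigenvalue-summand spaces $K_i$ from \eqref{eqn:sign} contains the real and imaginary parts of a holomorphic $2$-form, forcing the corresponding signature contribution to include $(2,0)$, while the remaining $K_j$ are negative definite. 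This means the coinvariant lattice $H^2(S,\ZZ)_\sigma$ is hyperbolic of signature $(2,20-r)$ where $r=\rk H^2(S,\ZZ)^\sigma$, and $H^2(S,\ZZ)^\sigma$ is $p$-elementary hyperbolic of rank $r$ and discriminant $p^a$ with $s_+=2$, $s_-=20-r$.

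Next I would translate the numerical conditions. Plugging $(l_+,l_-)=(3,19)$, $(s_+,s_-)=(2,20-r)$, $n=a$ into \Cref{thm: classific unimod}(II): condition (1) gives $s_++s_-=22-r=(a+2m)(p-1)>0$, so $(p-1)\mid 22-r$ and $a\equiv \frac{22-r}{p-1}\pmod 2$, which also forces $p\le 23$; a short separate argument (the case $p=23$ would need $r=0$ contradicting $r\ge 1$, or one checks it is excluded because then the invariant lattice cannot be hyperbolic of positive rank) rules out $p=23$ and yields $p\le 19$, giving condition (1) of the theorem. Condition (3) $s_+\le l_+$ is automatic ($2\le 3$); condition (4) $s_++s_-+n\le l_++l_-$ reads $22-r+a\le 22$, i.e.\ $a\le r$; combined with $a\le \frac{22-r}{p-1}$ (from $n\le n+2m$) this gives the bound in condition (2); and condition (5) of \Cref{thm: classific unimod} becomes: if $a=0$ or $a=22-r-(22-r)=r$ then $s_+\equiv s_-\pmod 8$, i.e.\ $2\equiv 20-r\pmod 8$, i.e.\ $r\equiv 2\pmod 8$, which is exactly condition (3). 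Condition (2) of the lattice theorem, $s_\pm\in 2\ZZ$, holds since $s_+=2$ and $s_-=20-r$ is forced even by $22-r\equiv 0\pmod{p-1}$ with $p-1$ even. One must also check that the genus symbol produced by \Cref{thm: classific unimod} is precisely ``$p$-elementary of rank $r$, discriminant $p^a$, hyperbolic'' — this is where I would invoke the standard fact (Rudakov–Shafarevich / Nikulin) that a hyperbolic $p$-elementary lattice is determined by $(r,a)$ together with the parity, and that here the lattice is even (being embedded as a primitive sublattice of an even unimodular lattice with even orthogonal complement, or because $22-r$ and the relevant congruences force it).

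For the existence half, given $(p,r,a)$ satisfying (1)--(3), \Cref{thm: classific unimod} produces a unimodular (necessarily even, of signature $(3,19)$, hence $\cong H^2(S,\ZZ)$) lattice $L$ with an order-$p$ isometry $f$ having the prescribed invariant and coinvariant lattices and signatures $(k_i^\pm)$; I choose the signatures so that one $K_i$ contributes $(2,0)$. Then I pick a point in the corresponding eigenspace to serve as the period, check the period lies in the period domain and avoids the hyperplanes orthogonal to $(-2)$-classes in a way compatible with $f$ (a genericity/Baire-category argument inside the $f$-fixed part of the period domain, using that the coinvariant lattice is hyperbolic so the positive cone has the right signature), and invoke surjectivity of the period map and the strong Torelli theorem to realize $(L,f)$ as $(H^2(S,\ZZ),\sigma^*)$ for an actual automorphism $\sigma$; positivity/ampleness of the relevant cone shows $\sigma$ is a genuine automorphism and not merely birational, since K3 surfaces have no difference between these. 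For the deformation statement I would use that deformation classes of pairs $(S,\langle\sigma\rangle)$ are governed by the isometry class of the pair $(H^2(S,\ZZ),\langle\sigma^*\rangle)$ up to the monodromy action, and then apply \Cref{thm:unimodular_conjugacy}: the three invariants there are the isometry class of $L^f$, the signatures, and the determinant lattice $\det(L_f,h_f)$. The first is fixed once $(r,a)$ and the hyperbolic type are fixed (uniqueness of $p$-elementary hyperbolic lattices again); the signatures are fixed as explained; the only potential ambiguity is the determinant lattice / Steinitz class, so the crux is to show it is automatically determined here — which I expect to be the main obstacle. I would resolve it by noting that for $p\le 19$ the relative class number $\#\Cl(\QQ[\zeta_p]/\QQ[\zeta_p+\zeta_p^{-1}])$ equals $1$ (the relative class numbers $h_p^-$ are $1$ for $p\le 19$), so by the last sentence of \Cref{thm:unimodular_conjugacy} the determinant lattice is unique and all such pairs are conjugate, hence all the corresponding K3 pairs lie in a single deformation class. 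This pins down $(p,r,a)\mapsto (S,\langle\sigma\rangle)$ up to deformation and completes the proof.
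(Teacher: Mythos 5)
Your proposal is correct and follows essentially the same route as the paper: conditions (1)--(3) are exactly the specialization of \Cref{thm: classific unimod} to $(l_+,l_-)=(3,19)$, $(s_+,s_-)=(2,20-r)$, $n=a$, and the deformation statement comes from \Cref{thm:unimodular_conjugacy} together with uniqueness of the hyperbolic $p$-elementary invariant lattice in its genus (the paper's \Cref{lem:uniqueness_L^G}) and the triviality of the relative class number for $p\leq 19$; the only structural difference is that you redo the Torelli/period-map realization by hand where the paper delegates it to \Cref{thm:non-symplectic-classification}. The one step you omit, which is the first line of the paper's proof, is the passage from $O(\Lambda)$-conjugacy (which is all that \Cref{thm:unimodular_conjugacy} delivers) to conjugacy in the monodromy group $\Mo^2(\Lambda)=O^+(\Lambda)$, as required for deformation equivalence: this is settled by noting that $-\id_\Lambda$ is central with real spinor norm $(-1)^{l_+}=-1$, so any conjugating isometry of negative spinor norm may be replaced by its negative, and conjugacy classes in $O(\Lambda)$ and $O^+(\Lambda)$ coincide. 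A cosmetic slip: the coinvariant lattice has signature $(2,20-r)$ and is not hyperbolic; it is the invariant lattice, of signature $(1,r-1)$, that is hyperbolic.
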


\begin{remark}
A study of non-symplectic automorphisms of odd prime order on K3 surfaces was already conducted by Artebani, Sarti and Taki in \cite{autom_k3_ord3} and \cite{autom_k3}, where all possible isometry classes of the invariant and coinvariant lattices inside the second cohomology lattice are listed and related to the topology of the fixed locus of the automorphism.
In a second step the authors prove that a K3 surface $S$ with a non-symplectic prime order automorphism $\sigma$ of given invariant lattice belongs to an explicit irreducible family. Since the action on the cohomology lattice does not vary within the family, this shows that the action is unique up to conjugacy. We can follow the converse approach: since the fixed locus is a deformation invariant of the pair $(S,\sigma)$, \Cref{thm:K3-classification} gives another explanation why the fixed locus is determined by the invariant lattice.
\end{remark}

In higher dimension, K3 surfaces generalize to irreducible holomorphic symplectic (ihs) manifolds. The cohomology group $H^2(X,\IZ)$ of an ihs manifold $X$ admits an integral $\ZZ$-lattice structure (by use of the Beauville--Bogomolov--Fujiki quadratic form).
The known deformation types of ihs manifolds are: $\hskn$ (i.e.\ deformation equivalent to Hilbert schemes of $n$ points on a K3 surface \cite[\S 6]{beauville}), $\kumn$ (i.e.\ deformation equivalent to the $2n$-dimensional generalized Kummer variety of an abelian surface \cite[\S 7]{beauville}), OG$_6$ and OG$_{10}$ (i.e.\ deformation equivalent, respectively, to the six-dimensional and ten-dimensional ihs manifolds first constructed by O'Grady \cite{ogrady6, ogrady10}). Whether or not there are more deformation types remains a wide open question.

Beyond the case of K3 surfaces the classification of non-symplectic automorphisms is not as immediate. The reason is twofold. Firstly, the second cohomology lattice is no longer unimodular. Secondly, instead of the orthogonal group of this lattice one has to consider a subgroup, the monodromy group, for geometric purposes.
Thirdly, birational geometry enters the picture.
We also remark that the natural map $\rho_X \colon \aut(X) \rightarrow O(H^2(X,\IZ))$ is injective for manifolds of type $\hskn$ and OG$_{10}$, but not in general. We will be interested in studying automorphisms $f \in \aut(X)$ whose action on cohomology $f^* \in O(H^2(X,\IZ))$ is an isometry of odd prime order.

We obtain the following generalization and unification of the classification results in small dimension of \cite{CC}, \cite{mongardi_tari_wandel:kummer} and \cite{grossi:nonsymplectic}. The case of OG$_10$ is treated for the first time.
\begin{theorem} \label{thm: classification k3n}
Let $X'$ be an ihs manifold of type $\hskn$, $\kumn$, OG$_6$ or OG$_{10}$.
Let $K$ be a primitive sublattice of $\Lambda\cong H^2(X',\ZZ)$. Then there exist an ihs manifold $X$ of the same deformation type of $X'$, a marking $\eta: H^2(X,\IZ) \rightarrow \Lambda$ and a non-symplectic automorphism $\sigma \in \aut(X)$ with $\sigma^* \in O(H^2(X,\IZ))$ of odd prime order $p$ such that $\eta(H^2(X,\IZ)_{\sigma^*}) = K$ and $\eta(H^2(X,\IZ)^{\sigma^*}) = K^\perp$ if and only if $K$ is $p$-elementary of discriminant $p^a$ and signature $(2, (a+2m)(p-1)-2)$ for some nonnegative integers $a, m$.
\end{theorem}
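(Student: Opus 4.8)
The plan is to prove the two implications separately, the ``only if'' direction being essentially lattice-theoretic and the ``if'' direction combining a lattice construction with the surjectivity of the period map and the Torelli theorem.

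For necessity, suppose $X,\eta,\sigma$ exist as in the statement and put $K=\eta(H^2(X,\ZZ)_{\sigma^*})$, $N=\eta(H^2(X,\ZZ)^{\sigma^*})$, so $N=K^\perp$. Since $\sigma^*$ has prime order $p$ one has $p\Lambda\subseteq K\oplus N$, so $K$ is $p$-elementary; combining this with the explicit discriminant forms of the four deformation types and with the fact that $\sigma^*$, lying in $\Mo^2$, acts on the discriminant group of $\Lambda$ by $\pm\id$ --- hence trivially, as $p$ is odd --- gives $\det K=\pm p^a$. Non-symplecticity means $\sigma^*$ acts on $H^{2,0}(X)=\CC\omega_X$ by a primitive $p$-th root of unity, so $\omega_X\in K\otimes\CC$, and averaging a K\"ahler class over $\langle\sigma\rangle$ produces a $\sigma$-invariant positive class in $N$; hence $K$ has signature $(2,s_-)$ and $N$ signature $(1,\ast)$. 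Finally, viewing $K$ as a $\ZZ[\zeta_p]$-module through $\sigma^*$ shows that it has rank $a+2m$ over $\ZZ[\zeta_p]$ for some $m\ge 0$ (the $a$ being read off from $\det K$), whence $s_-=(a+2m)(p-1)-2$.

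For sufficiency, let $K\subseteq\Lambda$ be primitive, $p$-elementary, of discriminant $p^a$ and signature $(2,(a+2m)(p-1)-2)$. I would first build an isometry $\phi\in O(\Lambda)$ of order $p$ with $\Lambda_\phi=K$, $\Lambda^\phi=K^\perp$ and signatures $k_1^+=2$, $k_i^+=0$ for $i>1$, so that the $\zeta_p$-eigenspace of $\phi$ on $K\otimes\CC$ carries a positive-definite real $2$-plane. For this I realize $\Lambda$ as the orthogonal complement of a primitive vector, or of a small primitive sublattice, of a unimodular lattice $\widetilde\Lambda$ --- for instance a vector of square $2(n-1)$ in $U^4\oplus E_8(-1)^2$ for type $\hskn$, with analogous choices in the other three cases --- arranged so that $K$ stays primitive in $\widetilde\Lambda$. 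The stated hypotheses on $K$, together with the hypothesis that $K$ embeds primitively into $\Lambda$ (which, through Nikulin's embedding criteria, supplies exactly the auxiliary numerical and mod-$8$ conditions), then let me apply \Cref{thm: classific unimod} and \Cref{thm:unimodular_conjugacy} to obtain an order-$p$ isometry $\widetilde\phi$ of $\widetilde\Lambda$ with coinvariant lattice exactly the given $K$, invariant lattice $K^{\perp_{\widetilde\Lambda}}$, and the prescribed signatures --- \Cref{thm:unimodular_conjugacy} being what pins down the correct primitive embedding. One checks that $\widetilde\phi$ fixes the auxiliary vector, so preserves $\Lambda$, and sets $\phi=\widetilde\phi|_\Lambda$. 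Since $\phi$ is the identity on $K^\perp$ it acts trivially on the discriminant group of $\Lambda$, and it preserves the orientation of a positive-definite $3$-space (identity on a positive class of $K^\perp$, a rotation on the positive $2$-plane), so $\phi\in O^+(\Lambda)$ and hence lies in the monodromy group; for types $\kumn$ and OG$_6$, where $\Mo^2$ is a proper subgroup of $O^+$, one checks in addition that $\phi$ lies in the kernel of the relevant character, which holds because $\det\phi=1$ and $\phi$ fixes a large sublattice. I expect this lattice step to be the main obstacle: transporting the unimodular classification to the non-unimodular $\Lambda$, verifying that ``primitively embeds in $\Lambda$'' accounts precisely for the auxiliary hypotheses of \Cref{thm: classific unimod}, normalizing the isometry so that its coinvariant lattice is the given $K$, and controlling the smaller monodromy groups for $\kumn$ and OG$_6$.

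The geometric step is then standard. I pick a period $[\omega]$ in the $\zeta_p$-eigenspace of $\phi$ general enough that $\langle\omega,\overline\omega\rangle>0$ and $\omega^\perp\cap\Lambda=K^\perp$; both are possible because that real $2$-plane is positive and, by a $\ZZ[\zeta_p]$-module argument, no nonzero vector of $K^\vee$ lies in the sum of the eigenspaces for the $\zeta_p^i$ with $i\not\equiv\pm 1\pmod p$. By surjectivity of the period map there is a marked ihs manifold $(X,\eta)$ of the prescribed deformation type with this period, and $\phi$ becomes a Hodge isometry of $H^2(X,\ZZ)$, a monodromy operator by the previous step. Since $K^\perp$ has signature $(1,\ast)$, the manifold $X$ is projective, $\ns(X)=\eta^{-1}(K^\perp)$, and $\phi$ fixes an ample class; by the Torelli theorem for ihs manifolds $\phi=\eta\circ\sigma^*\circ\eta^{-1}$ for a unique $\sigma\in\aut(X)$, which is non-symplectic since $\sigma^*\omega=\zeta_p\omega$. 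A short argument with $\ker\rho_X$ lets us take $\sigma$ of order exactly $p$, and by construction $\eta(H^2(X,\ZZ)_{\sigma^*})=K$ and $\eta(H^2(X,\ZZ)^{\sigma^*})=K^\perp$.
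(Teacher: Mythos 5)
Your overall strategy is recognizably close to the paper's, and the necessity direction is essentially right, though slightly imprecise: the inclusion $p\Lambda\subseteq K\oplus N$ only shows that the glue group is $p$-elementary, not $A_K$ itself; the clean argument (and the one the paper's framework supplies) is to extend $\sigma^*$ by the identity on $V=\Lambda^\perp$ to the unimodular overlattice $M$ — possible because $\sigma^*$ has odd order and acts by $\pm\id$ on $A_\Lambda$ — and then quote the fact that coinvariant lattices of prime order isometries of unimodular lattices are $p$-elementary. Your geometric step (generic period in the $\zeta_p$-eigenspace, surjectivity of the period map, Torelli) is fine and is exactly what \Cref{thm:non-symplectic-classification} packages.

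The genuine gap is in the sufficiency, at the step you yourself flag as the main obstacle: forcing the coinvariant lattice of the constructed isometry to be the \emph{given} sublattice $K$. \Cref{thm: classific unimod} only controls the signature and determinant, hence the genus, of the coinvariant lattice it produces; to identify that abstract lattice with $K$ you need that $K$ is unique in its genus (true here, since $K$ is indefinite or isometric to $A_2(-1)$, but you never invoke this). More seriously, even granting that, conjugating the isometry inside $O(M)$ so that its coinvariant sublattice becomes the given embedded $K$ requires the primitive embedding $K\hookrightarrow M$ to be unique up to $O(M)$, and this fails: for $M\in\even_{(4,20)}$, $p=23$ and $\rk K=22$ the complement $K^{\perp_M}$ can be either $F_{23a}$ or $F_{23b}$ (\Cref{lem:uniqueness_L^G}), so there are two $O(M)$-orbits of such embeddings, and neither \Cref{thm: classific unimod} nor \Cref{thm:unimodular_conjugacy} tells you that the orbit containing your $K$ is the one realized. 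The paper avoids the issue entirely: by \Cref{prop:clas-fixed-pt-free} together with uniqueness of $K$ in its genus, $K$ itself carries a fixed-point-free isometry $f$ of order $p$ with $\bar f=\id_{A_K}$ and signatures $(2,\ast)$ in the first eigenspace pair; since $\bar f$ is trivial, $\id_{K^\perp}\oplus f$ preserves every glue map and extends to $\Phi\in O(\Lambda)$ with $\Lambda_\Phi=K$ by \cite[Cor.\ 1.5.2]{nikulin}, so no choice of primitive embedding ever enters. Your route is repaired by replacing the detour through \Cref{thm: classific unimod} with this direct glue-with-the-identity construction.
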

\begin{remark}
 The introduction of hermitian lattices to the study of automorphisms
 allows for an approach which is at large independent of the deformation type of the manifolds.
 Nevertheless we have to restrict to the known types in the hypothesis since we need an explicit description of the group of monodromy operators.
 Once the monodromy group is available the methods can be used for ihs manifolds of arbitrary deformation type.
\end{remark}

While reasonably short, the theorem leaves much to be desired. First of all it does not tell us for which values of $a$, $m$ (and $n$) such a primitive sublattice $K$ actually exists. Secondly, what does it mean geometrically for two automorphisms to have isomorphic (co)invariant lattices?

This is addressed in \Cref{sec: autosihs} where we study deformations and conjugations by birational morphisms of pairs $(X, G)$, with $X$ an ihs manifold and $G$ a group of automorphisms. We do so by extending the results of \cite{joumaa:order2} and \cite{bcs:ball} on moduli spaces of $(\rho, T)$-polarized ihs manifolds of type $\hskn$ to arbitrary ihs manifolds.

In \Cref{thm:non-symplectic-classification} we prove that, for non-symplectic automorphisms with an action on cohomology of odd prime order, the equivalence classes of pairs $(X, G)$ up to deformation and birational conjugation correspond to the conjugacy classes of the corresponding monodromy operators.

Then, in \Cref{thm:fiber} we show that the conjugacy class of such a monodromy is determined by the conjugacy class of an isometry $f$ (of the same order) of a \emph{unimodular} lattice $M \supset H^2(X,\ZZ)$ together with the orbit of a primitive embedding of a suitable lattice of rank one or two inside the invariant lattice $M^f$.

These primitive embeddings are studied and classified in  \Cref{sec: primitive vectors,sec: OG10}, where we also provide necessary and sufficient conditions for the existence of a primitive vector of given square and divisibility in a $p$-elementary lattice $M^f$ (\Cref{thm: existence for p-elem}).
This results in an explicit classification of the invariant and coinvariant lattices in $H^2(X,\ZZ)$, i.e. it answers the first question.

Computing the number of orbits of primitive vectors vectors up to $O(M^f)$ and $SO(M^f)$ (\Cref{thm:orbits}) gives the following theorem.
\begin{theorem}
Let $X$ be an ihs manifold and $G \leq \Aut(X)$ a group of non-symplectic automorphisms with $\ker \rho_X \leq G$ and $\rho_X(G)$ of odd prime order $p$. If $X$ is of type OG$_{10}$, then $(X,G)$ is determined up to deformation and birational conjugation by the isomorphism classes of the lattices $H^2(X,\ZZ)^G$, $H^2(X,\ZZ)_G$ if and only if $p\neq 23$. For manifolds of type $\hskn$ and $\kumn$, the lattices determine $(X,G)$ up to deformation and birational conjugation except for $p=23$ and the cases in \Cref{tbl:ambiguous-k3n,tbl:ambiguous-kumn}.
\end{theorem}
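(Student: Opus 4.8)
The proof assembles the structural results of \Cref{sec: autosihs} with the lattice classifications proved earlier.

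\emph{Reduction to lattice data.} By \Cref{thm:non-symplectic-classification}, two pairs $(X,G)$ as in the statement are equivalent up to deformation and birational conjugation if and only if the associated monodromy operators are conjugate, and by \Cref{thm:fiber} the conjugacy class of such a monodromy operator corresponds to the datum of a conjugacy class of an isometry $f$ of order $p$ of a unimodular lattice $M\supseteq H^2(X,\ZZ)$ (depending only on the deformation type) together with the orbit of a primitive embedding $N\hookrightarrow M^f$, where $N$ is a fixed lattice of rank one or two (depending on the deformation type, and on $n$ for $\hskn$ and $\kumn$). Under this correspondence $H^2(X,\ZZ)_G\cong M_f$ and $H^2(X,\ZZ)^G\cong N^\perp$, the orthogonal complement of $N$ in $M^f$. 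So the claim is equivalent to: the isometry classes of $M_f$ and of $N^\perp$ determine the pair consisting of the conjugacy class of $f$ and the orbit of $N\hookrightarrow M^f$, with the stated exceptions.

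\emph{Recovering $f$.} As $M$ is unimodular and $M_f=(M^f)^\perp$, the lattice $M^f$ is $p$-elementary with discriminant form $-q_{M_f}$ and with signature $\signt(M)-\signt(M_f)$. The numerical constraints of \Cref{thm: classification k3n} show that $M^f$ and $N^\perp$ are indefinite of rank at least three in every OG$_{10}$ case, and in the $\hskn$ and $\kumn$ cases outside an explicit finite list of small rank (and occasionally definite) lattices; in that range Nikulin's uniqueness theorem for $p$-elementary lattices recovers $M^f$ from $M_f$, while the positive index of $M^f$ being a fixed small number pins down the signature invariants $(k_i^\pm)$ of $f$. \Cref{thm:unimodular_conjugacy} then says that the conjugacy class of $f$ is one of exactly $\#\Cl(E/K)$ possibilities, $E=\QQ(\zeta_p)$. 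Since $p-1\mid\rk M_f\le \rk M-2$, the only primes occurring are $p\le 23$, with $p=23$ impossible for $\kumn$; and as the relative class number of $\QQ(\zeta_p)$ is $1$ for every odd prime $p\le 19$ and equals $3$ for $p=23$, the conjugacy class of $f$ is uniquely determined by $M_f$ exactly when $p\ne 23$.

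\emph{Recovering the embedding, and conclusion.} Given $M^f$ and the isometry class of $N^\perp$, \Cref{thm: existence for p-elem} guarantees a primitive embedding $N\hookrightarrow M^f$ with complement isometric to $N^\perp$, and \Cref{thm:orbits} computes the number of $O(M^f)$- and of $SO(M^f)$-orbits of primitive vectors of the relevant square and divisibility with given orthogonal complement; which of these counts is geometrically relevant is determined, through \Cref{thm:fiber}, by the monodromy group of the deformation type. In the indefinite, sufficiently large rank range above both counts equal one, so this orbit is determined by $N^\perp$; for the finitely many remaining lattices \Cref{thm:orbits} — supplemented by a direct enumeration of the relevant genera when $M^f$ or $N^\perp$ is definite — records exactly when the count exceeds one, and collecting these cases yields \Cref{tbl:ambiguous-k3n} and \Cref{tbl:ambiguous-kumn}, whereas for OG$_{10}$ the list is empty. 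Combining the two steps, $(H^2(X,\ZZ)^G,H^2(X,\ZZ)_G)$ determines $(X,G)$ up to deformation and birational conjugation outside $\{p=23\}$ and the tabulated cases (the condition $p=23$ being vacuous for $\kumn$). Conversely, for $p=23$ the three Steinitz classes give three non-conjugate isometries $f$ sharing one invariant lattice $M^f$, hence the same admissible $N^\perp$; by \Cref{thm: classification k3n} and \Cref{thm: existence for p-elem} they are all realized geometrically, producing three pairwise non-equivalent $(X,G)$ with identical lattices; and for each table entry the two embedding orbits are not fused by the centralizer of $f$ in the monodromy group, again giving non-equivalent pairs with equal lattices.

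\emph{Main obstacle.} The conceptual outline above is short; the work lies in the finite casework — proving that outside a short explicit list $M^f$ and $N^\perp$ fall in the Nikulin/single-orbit range, and, for the remaining small rank (often definite) lattices, carrying out the honest enumeration of genera and the $O(M^f)$-versus-$SO(M^f)$ orbit computation of \Cref{thm:orbits}, and then certifying that the potentially coincident pairs are genuinely distinct, i.e.\ not identified by any isometry of $M$ that centralizes $f$ and lies in the monodromy group. This last verification reduces to understanding the centralizers of $f$ in the monodromy groups of the three deformation types and is the most delicate ingredient.
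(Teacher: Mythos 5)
Your proposal is correct and follows essentially the same route the paper takes: reduce to conjugacy of monodromies via \Cref{thm:non-symplectic-classification}, split the conjugacy class into the class of the extended isometry $f$ of the unimodular lattice $M$ plus the orbit of the primitive embedding via \Cref{thm:fiber}, recover the class of $f$ from $M_f$ using \Cref{thm:unimodular_conjugacy} and the relative class numbers (whence the $p=23$ exception), and count embedding orbits via \Cref{thm:orbits} in the indefinite case and the theta-series computation for the four definite rank-two invariant lattices, which produces exactly \Cref{tbl:ambiguous-k3n,tbl:ambiguous-kumn} (and nothing for OG$_{10}$ by the $A_2(-1)$-embedding proposition). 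The only remark worth making is that the ``delicate'' fusion question you defer at the end --- whether two embedding orbits are identified by an isometry of $M$ centralizing $f$ inside the monodromy group --- is already fully packaged in the injectivity part of \Cref{thm:fiber}, which shows the relevant group is precisely $S(M^f)$, so no further centralizer analysis is needed.
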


By combining all these results we explicitly classify non-symplectic automorphisms of the known ihs manifolds with an action on cohomology of odd prime order up to deformation and birational conjugation. (See \cite{grossi:nonsymplectic} for OG$_6$.)

\subsection*{Outlook}
It should be possible to extend the results to automorphisms of order $m>2$ of ihs manifolds with trivial action on the Picard group. These correspond to isometries with minimal polynomial $\Phi_1(x)\Phi_m(x)$. In another direction we expect that our methods generalize to isometries of $p$-elementary lattices and thus apply to automorphisms of supersingular K3 surfaces in positive characteristic. Is it possible to go beyond the $p$-elementary case?
What about isometries of prime order in lattices over number fields in the spirit of \cite{kirschmer:unimodular}?
Can we follow a similar strategy for automorphisms of singular symplectic varieties?

\subsection*{Acknowledgements} The authors thank Samuel Boissi\`ere, Chiara Camere, Alessandra Sarti and Davide Veniani for sharing their insights, Markus Kirschmer for explaining his work and how to prove \Cref{prop:genus2}, Rainer Schulze-Pillot for pointing out the work of Quebbemann, suggesting the use of M\"obius inversion and the theta series of $F_{23b}$.
Part of this work arose at the 2019 Japanese-European Symposium on Symplectic Varieties and Moduli Spaces at ETH Z\"urich.
\blfootnote{
A.~C. is grateful to Max Planck Institute for Mathematics in Bonn for its hospitality and financial support. A.~C. is supported by the Deutsche Forschungsgemeinschaft (DFG, German Research Foundation) under Germany's Excellence Strategy -- GZ 2047/1, Projekt-ID 390685813.}
\blfootnote{S.~B. is supported by  SFB-TRR 195 ``Symbolic Tools in Mathematics and their Application'' of the German Research Foundation(DFG).}

\addtocontents{toc}{\protect\setcounter{tocdepth}{2}}

\section{Prime order isometries of unimodular lattices}
Our path towards a classification of prime order isometries starts from the invariant and coinvariant lattices. We already remarked that the coinvariant lattice $L_f$ of an isometry $f \in O(L)$ of odd prime order $p$ can be seen as a hermitian lattice over $\ZZ[\zeta_p]$, whose classification is well understood. Assuming that $L$ is unimodular gives us sufficient information to characterize $L^f$ and $L_f$. In a final step we check when and how $L^f$ and $L_f$ glue to obtain a unimodular lattice. 

\subsection{\texorpdfstring{$\ZZ$}{Z}-lattices}
We assume that the reader is familiar with the basic theory of $\ZZ$-lattices and refer to \cite{nikulin,conway_sloane,kneser} for further details and
definitions.
Let $(L,b)$ and $(L',b')$ be integral lattices. A homomorphism
$f \colon L \rightarrow L'$ of $\ZZ$-modules is a \emph{homomorphism of
lattices} if it satisfies $b(x,y)=b'(f(x),f(y))$ for all
$x,y \in L$. Automorphisms of lattices are called \emph{isometries} and monomorphisms \emph{embeddings}. An embedding is \emph{primitive} if its cokernel is torsion free.
The orthogonal group $O(L,b)$ consists of the
automorphisms of $(L,b)$.
If $b$ is understood, we may abbreviate $(L,b)$ to $L$ and $b(x,x)$ to $x^2$.
In this case we write $L(-1)$ for $(L,-b)$. The genus of a lattice $L$ is denote by $\mathfrak{g}(L)$. We use Conway-Sloane's notation for genera of lattices (see \cite[Ch.\ 15]{conway_sloane}).

For a prime number $p$, a lattice is said to be $p$-elementary if its discriminant group is a sum of copies of $\ZZ/p\ZZ$. The following result gives a classification of $p$-elementary lattices, which we will need later on.
\begin{theorem}\cite[Ch.\ 15, Thm.\ 13]{conway_sloane}\label{thm:p-elementary}
Let $l_{\pm},n \in \NN$, $\epsilon \in \{\pm 1\}$.
Then the genus $\even_{(l_+,l_-)}p^{\epsilon n}$ is nonempty if and only if
$\epsilon$ is given for $\rk L \neq n$ by
\begin{equation}\label{eqn:p-sign}
 l_+ - l_- \equiv 2 \epsilon -2 -(p-1)n \pmod{8}
\end{equation}
 while if $\rk L =n$, then $\epsilon = \leg{-1}{p}^{l_-}$.
 
 The genus $\odd_{(l_+,l_-)}p^{\epsilon n}$ is nonempty if and only if
   $\epsilon=\leg{-1}{p}^{l_-}$ when $\rk L =n$.
\end{theorem}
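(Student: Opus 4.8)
I would prove this by the local-to-global theory of quadratic forms; it is the existence half of the genus classification, specialised to $p$-elementary lattices. Recall that a genus of integral $\ZZ$-lattices is determined by its signature $(l_+,l_-)$ together with the $q$-adic genus symbols over all primes $q$, and that a prescribed family of local data is realised by an integral lattice if and only if the signatures agree over $\RR$ and the global compatibility conditions hold: the product formula for Hasse--Witt invariants, or equivalently, for even lattices, Milgram's congruence $\signt(L)\equiv\mathrm{sign}(q_L)\pmod 8$ between the signature and the Gauss sum of the discriminant form (for odd lattices the $2$-adic part of the data is laxer). So I would fix $(l_+,l_-)$ and the combinatorial shape ``$p$-elementary of scale-$p$ rank $n$ with determinant sign $\epsilon$'' and read off which constraints the compatibility conditions force.

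The local analysis away from $p$ is immediate. For a prime $q\nmid 2p$ the determinant $\pm p^n$ is a $q$-adic unit, so $L\otimes\ZZ_q$ is unimodular; over $\ZZ_q$ with $q$ odd such a form is classified by rank and determinant square class and always exists, so no constraint arises. At $p$: being $p$-elementary, $L\otimes\ZZ_p$ has Jordan decomposition $U_0\perp pU_1$ with $U_0,U_1$ unimodular over $\ZZ_p$ of ranks $l_++l_--n$ and $n$. As $p$ is odd each $U_i$ is determined by its rank and the square class of $\det U_i$ in $\ZZ_p^\times/(\ZZ_p^\times)^2$, and $\det L=(-1)^{l_-}p^n$ forces $\det U_0\cdot\det U_1\equiv(-1)^{l_-}$ modulo squares in $\ZZ_p^\times$. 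By definition $\epsilon$ is the square class of $\det U_1$: if $l_++l_->n$ then $U_0$ is present and $\det U_0$ can be chosen so that either value of $\epsilon$ occurs $p$-adically, while if $l_++l_-=n$ then $U_0$ is absent, $\det U_1\equiv(-1)^{l_-}$, and this is a square modulo $p$ exactly when $\leg{-1}{p}^{l_-}=1$. This gives the ``$\rk L=n$'' clauses; in the even case one may equivalently note that then $L=pL'$ for a unimodular $\ZZ$-lattice $L'$ of signature $(l_+,l_-)$, whose existence (and evenness, under the relevant hypotheses) is classical.

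It remains to handle the prime $2$ and globalise. Again $\det L$ is a $2$-adic unit, so $L\otimes\ZZ_2$ is unimodular. If $L$ is required odd, the $2$-adic symbol may be taken of odd type with its oddity adjusted, within the constraints of the $2$-adic classification, to meet the global oddity relation for any target signature; hence for $\rk L\ne n$ the genus is unconditionally nonempty, which is the odd assertion. If $L$ is required even, then $L\otimes\ZZ_2$ must be even unimodular --- forcing $l_++l_-$ even --- and, for $\rk L>n$, the discriminant form $q_L$ is the $p$-elementary finite quadratic form on $(\ZZ/p)^n$ determined by $(n,\epsilon)$. Computing its Gauss sum, which factors into $n$ one-variable Gauss sums and whose value involves $\leg{-1}{p}$ and the residue of $p$ modulo $4$, gives $\mathrm{sign}(q_L)\equiv 2\epsilon-2-(p-1)n\pmod 8$, so Milgram's congruence becomes exactly \eqref{eqn:p-sign}. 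Conversely, for $\rk L>n$ there is ``room'' in the sense of Nikulin's existence theorem, and this congruence (together with $\det L=(-1)^{l_-}p^n$, already arranged at the other primes) suffices for the even genus to be nonempty.

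The conceptual content is light; the work is all in the local bookkeeping. I expect the main obstacle to be the Gauss-sum computation --- equivalently the $p$-excess/oddity accounting --- keeping the sign conventions for $\det L$, the $\ZZ_p$- and $\ZZ_2$-determinant square classes and the signature invariant of $q_L$ mutually consistent so that the final congruence lands on precisely \eqref{eqn:p-sign} and not one of its $\pm 4$ variants. One must also invoke the precise realisation theorem of Conway--Sloane, Ch.\ 15 (or Nikulin), especially at the boundary case $\rk L=n$, rather than re-deriving it; this is why the result is quoted rather than reproved here.
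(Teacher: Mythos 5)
The paper does not prove this statement: it is quoted verbatim from Conway--Sloane \cite[Ch.\ 15, Thm.\ 13]{conway_sloane} and used as a black box. Your sketch correctly reconstructs the standard local--global existence argument behind the cited result --- unimodularity away from $p$ and $2$, the two-block Jordan splitting $U_0\perp pU_1$ at $p$ with the determinant constraint forcing $\epsilon=\leg{-1}{p}^{l_-}$ exactly when $U_0$ is absent, and the oddity/$p$-excess formula (equivalently Milgram's congruence) producing \eqref{eqn:p-sign} in the even case --- which is precisely the route Conway--Sloane take, so there is nothing to correct and no divergence from the source to report.
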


\subsection{Hermitian lattices}
In this section we introduce hermitian lattices -- our key tool to study the coinvariant lattice.
Since this is the first systematic application of hermititan lattices in the context of automorphisms of ihs manifolds, we provide plenty of details.

\begin{definition}
Let $K$ be a field of characteristic zero and let $a \in K$.
Set $E = K[x]/(x^2-a)$ and denote by $\sigma\colon E \rightarrow E, [x] \mapsto [-x]$ the canonical involution.
A \emph{hermitian space} $(V,h)$ over $(E,\sigma)$ is a
finitely generated free $E$-module $V$ equipped with a nondegenerate $K$-bilinear form
\[h\colon V \times V \rightarrow E\]
which is $E$-linear in the first argument and satisfies
$h(x,y) = h(y,x)^\sigma$ for all $x,y \in V$.
Let $\ZZ_E$ be the maximal order of $E$.
A \emph{hermitian $\ZZ_E$-lattice} $(L,h)$ consists of a finitely generated $\ZZ_E$-module $L\subseteq V$ of full rank, equipped with the hermitian form $h$.
\end{definition}
If $E$ and $h$ are understood, we drop them from notation and simply speak of a hermitian lattice $L$.
For $a_1, \dots, a_n \in K$ we denote by $\langle a_1, \cdots, a_n \rangle$ a free $\ZZ_E$-module spanned by $e_1, \dots, e_n$
equipped with the hermitian form defined by $h(e_i,e_j)=\delta_{ij}a_i$.
\begin{remark}
 More generally one can define hermitian lattices over orders in algebras with involution.
 For instance for a finite group $G$ we can take $\ZZ[G]$ and the involution as inversion.
 If we assume the algebra to be \'etale, then the trace  construction (see below) works as well. But in full generality not much is known.
\end{remark}
\noindent\textbf{Notation for cyclotomic fields.}
Let $p$ be an odd prime and $\zeta$ a fixed primitive $p$-th root of unity. See \cite[Ch.\ 2]{washington:cyclotomic_fields} for details.
\begin{itemize}
 \item $\Phi_p(x)$ is the $p$-th cyclotomic polynomial.
 \item $E = \QQ[\zeta]$ and $K = \QQ[\zeta + \zeta^{-1}]$.
 \item $\Cl(E)$, $\Cl(K)$ are the respective class groups.
 \item $\Cl(E/K)=\Cl(E)/\Cl(K)$ is the relative class group.
 \item $\#\Cl(E/K)=\#\Cl(E) /\#\Cl(K)$ is the relative class number.
 \item $\zeta^{\sigma}=\zeta^{-1}$, i.e.\ the involution $\sigma$ is complex conjugation.
 \item $\ZZ_E = \ZZ[\zeta]$ and $\ZZ_K = \ZZ[\zeta + \zeta^{-1}]$.
 \item $\pi = (1-\zeta)$.
 \item $\mathfrak{p}=\pi \pi^\sigma \ZZ_K$.
 \item $\mathfrak{P}=\pi \ZZ_E$.
 \item $\mathfrak{A}= \mathfrak{D}^E_\QQ = \pi ^ {p-2} \ZZ_E$ is the absolute different.
 \item $\mathfrak{R}= \mathfrak{D}^E_K= \pi \ZZ_E$ is the relative different.
 \item $N = N^E_K$ and $T = \Tr ^E_K$ are the relative norm and trace.
 \item $\Omega(K)$ is the set of places of $K$.
 \item $\mathbb{P}(K)$ is the set of prime ideals of $K$.
\end{itemize}

\noindent\textbf{Notation concerning hermitian lattices.}
\begin{itemize}
 \item $(L,h)$ is a hermitian lattice.
 \item $(L,b=\Tr^E_\QQ \circ h)$ is its trace lattice.
 \item $\mathfrak{s}(L)=h(L,L)$ is the scale of $(L,h)$.
 \item $\mathfrak{n}(L)=\sum_{x \in L} h(x,x)\ZZ_K$ is the norm of $(L,h)$.
\end{itemize}

\subsection{The trace lattice}
If $(L,h)$ is a hermitian $\ZZ[\zeta]$-lattice define $b = \Tr^E_\QQ \circ\; h$. Then $(L,b)$ is a $\ZZ$-lattice, called the \emph{trace lattice}. Multiplication by $\zeta$ induces an isometry $f$ of $(L,b)$ with minimal polynomial $\Phi_p(x)$.
Conversely, if $(L,b)$ is a lattice and $f$ an isometry with minimal polynomial $\Phi_p(x)$, then $\zeta\cdot x = f(x)$ defines a $\ZZ_E$-module structure on $L$ and
\begin{equation}\label{eqn:hermitian_form}
h(x,y) = \sum_{i=0}^{p-1} b(x,f^i(y)) \zeta^i \in E
\end{equation}
\noindent defines a hermitian form.
We sum up this result in the following proposition. Note that it involves the choice of a fixed root of unity $\zeta$.
\begin{proposition}
The trace construction sets up an equivalence between the category of hermitian $\ZZ_E$-lattices $(L,h)$ and
the category consisting of pairs $((L,b),f)$, where $f \in O(L,b)$ is an isometry of minimal polynomial $\Phi_p(x)$ and morphisms are $f$-equivariant isometries.
\end{proposition}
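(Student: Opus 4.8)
The plan is to make the equivalence explicit by writing down both functors and checking that they are mutually quasi-inverse, so that the whole proof reduces to a handful of routine verifications governed by one Fourier-inversion identity. In one direction, the trace functor $F$ sends a hermitian $\ZZ_E$-lattice $(L,h)$ to the pair $\big((L,\Tr^E_\QQ\circ h),\,\mu_\zeta\big)$, where $\mu_\zeta$ is multiplication by $\zeta$, and sends a morphism to the same underlying $\ZZ$-linear map. In the other direction, $G$ sends a pair $((L,b),f)$ to $L$ endowed with the $\ZZ_E$-module structure $\zeta\cdot x=f(x)$ and the hermitian form $h$ attached to \eqref{eqn:hermitian_form}, normalized so that $\Tr^E_\QQ\circ h=b$; again a morphism goes to the same underlying map. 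The natural isomorphisms $GF\cong\id$ and $FG\cong\id$ will be given by the identity on the underlying modules.

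First I would check that $F$ is well defined on objects. The form $b=\Tr^E_\QQ\circ h$ is $\QQ$-valued and $\ZZ$-bilinear; it is symmetric because $\Tr^E_\QQ(\alpha^\sigma)=\Tr^E_\QQ(\alpha)$ and $h(y,x)=h(x,y)^\sigma$; and it is nondegenerate because $h$ and the trace pairing $E\times E\to\QQ$ are, using that $\ZZ_E L$ has full rank. That $\mu_\zeta\in O(L,b)$ is the identity $\zeta\zeta^\sigma=1$ applied inside $h$, and $\mu_\zeta$ has minimal polynomial $\Phi_p(x)$ because, after tensoring with $\QQ$, the space $V=L\otimes_\ZZ\QQ$ is a nonzero free module over the field $E\cong\QQ[x]/(\Phi_p(x))$, on which $\mu_\zeta$ acts with minimal polynomial exactly $\Phi_p$. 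Dually, for $G$ on objects: the relation $\Phi_p(f)=0$ turns $L$ into a $\ZZ[x]/(\Phi_p(x))=\ZZ_E$-module; since $\Phi_p$ is irreducible, $V$ becomes an $E$-vector space and $L\subseteq V$ a full-rank $\ZZ_E$-lattice. The form \eqref{eqn:hermitian_form} is $E$-linear in the first variable because $f$ is an isometry — one slides $f$ off the first slot and reindexes, using $f^p=\id$, which follows from $(x-1)\Phi_p(x)=x^p-1$ — and it is hermitian by the same reindexing together with the symmetry of $b$; nondegeneracy of $h$ follows from that of $b$.

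The core computation is that the two round trips are the identity. Everything is controlled by $\Tr^E_\QQ(\zeta^j)=p-1$ for $p\mid j$ and $\Tr^E_\QQ(\zeta^j)=-1$ otherwise, equivalently by the identity
\[
\sum_{i=0}^{p-1}\Tr^E_\QQ(\zeta^{-i}\alpha)\,\zeta^{i}=p\,\alpha \qquad (\alpha\in E),
\]
together with $\sum_{i=0}^{p-1}f^i=\Phi_p(f)=0$. Feeding these into $GF$ and $FG$ recovers the original hermitian form, respectively the original trace form (the normalizing constant in the definition of $G$ is chosen precisely so that no scalar is left over), and hence the identity maps furnish the desired natural isomorphisms. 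It remains to see that the functors behave correctly on morphisms: a $\ZZ$-linear $\phi$ intertwines $f$ and $f'$ exactly when it is $\ZZ_E$-linear, and for a $\ZZ_E$-linear $\phi$ the conditions ``$\phi$ preserves $\Tr^E_\QQ\circ h$'' and ``$\phi$ preserves $h$'' coincide, since $\Tr^E_\QQ\!\big(\lambda\,(h'(\phi x,\phi y)-h(x,y))\big)=0$ for all $\lambda\in\ZZ_E$ forces $h'(\phi x,\phi y)=h(x,y)$ by nondegeneracy of the trace pairing. Thus $F$ and $G$ are well defined on morphisms, are fully faithful, and are mutually quasi-inverse, which proves the equivalence.

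I do not expect a serious obstacle here — the proposition is a setup statement. The only points that need genuine care are (i) fixing the normalizing constant (the factor $p$ in the displayed identity) so that $F$ and $G$ are honest quasi-inverses and not merely inverse up to a scalar, and (ii) confirming that $\zeta\cdot x=f(x)$ produces a hermitian $\ZZ_E$-lattice in the precise sense of the definition, i.e.\ a full-rank $\ZZ_E$-submodule of a hermitian $E$-space — which is exactly where the irreducibility of $\Phi_p$, and hence the fact that $E$ is a field rather than merely an \'etale algebra, is used.
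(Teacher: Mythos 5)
Your proof is correct and follows the same route the paper takes implicitly: the paper simply records the two constructions (the trace form in one direction, the form of \eqref{eqn:hermitian_form} in the other) and states the proposition without further argument, so your write-up just supplies the routine verifications. Your observation about the normalizing constant is well taken: with the formula exactly as displayed in \eqref{eqn:hermitian_form} one gets $\Tr^E_\QQ\circ h = p\,b$ rather than $b$, so the inverse functor must divide by $p$ (equivalently, use $\tfrac{1}{p}\sum_{i=0}^{p-1} b(x,f^i(y))\zeta^i$) for the two functors to be honestly quasi-inverse rather than inverse up to the scalar $p$, which would not even yield a natural isomorphism since $(L,h)$ and $(L,ph)$ are in general not isometric.
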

We denote by 
\[L^\vee=(L,b)^\vee = \{ x \in L \otimes \QQ \mid b(x,L) \subseteq \ZZ  \}   \]
and
\[L^\#=(L,h)^\#= \{ x \in L \otimes \QQ \mid h(x,L) \subseteq \ZZ_E  \} \]
the respective \emph{dual lattices}.
They satisfy the following relation
\begin{equation}\label{eqn:discr}
(L,b)^\vee = \mathfrak{A}^{-1} (L,h)^\#.
\end{equation}

If $\mathfrak{B}L^\#=L$ for some fractional ideal $\mathfrak{B}$ of $E$, we call $(L,h)$ $\mathcal{B}$-modular. A $\ZZ_E$-modular hermitian lattice is called unimodular.

\begin{lemma}\label{lem:even}
 If $(L,h)$ is a hermitian lattice over a prime cyclotomic field such that its trace lattice $(L,b)$ is integral, then $(L,b)$ is even.
\end{lemma}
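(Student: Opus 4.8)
The plan is to reduce evenness to a single vector and then observe that the value $b(x,x)$ of the trace form is, up to a factor of $2$, a trace taken from the real subfield $K$ rather than from $E$.

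First I would fix $x \in L$ and write, with $f$ denoting multiplication by $\zeta$, $h(x,x) = \sum_{i=0}^{p-1} b\big(x, f^i(x)\big)\zeta^i$. Since $f \in O(L,b)$ and $(L,b)$ is integral by hypothesis, each coefficient $b(x,f^i(x))$ lies in $\ZZ$, so $h(x,x) \in \ZZ[\zeta] = \ZZ_E$. Because $h$ is hermitian we also have $h(x,x) = h(x,x)^\sigma$, so $h(x,x)$ is fixed by $\sigma$ and hence lies in $K$. Therefore $h(x,x) \in \ZZ_E \cap K = \ZZ_K$.

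Next I would use that $b = \Tr^E_\QQ \circ\, h$ together with $[E:K]=2$: since $\sigma$ fixes $K$ and $h(x,x) \in K$, the relative trace $\Tr^E_K$ sends $h(x,x)$ to $h(x,x) + h(x,x)^\sigma = 2\,h(x,x)$, so
\[ b(x,x) = \Tr^E_\QQ\big(h(x,x)\big) = \Tr^K_\QQ\big(\Tr^E_K(h(x,x))\big) = 2\,\Tr^K_\QQ\big(h(x,x)\big). \]
As $h(x,x) \in \ZZ_K$, its trace $\Tr^K_\QQ(h(x,x))$ is a rational integer, whence $b(x,x) \in 2\ZZ$. Since $x$ was arbitrary, $(L,b)$ is even.

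I do not expect a real obstacle here; the only point is to notice that routing the trace through $K$ produces the factor of $2$, so the substantive step is the containment $h(x,x) \in \ZZ_K$, which merely combines integrality of $(L,b)$ with the $\sigma$-invariance of diagonal hermitian values. (An alternative, $K$-free route: set $c_i = b(x,f^i(x)) \in \ZZ$; symmetry of $b$ and $f$ being an isometry give $c_i = c_{p-i}$, so $\sum_{i=1}^{p-1}c_i$ is even, while applying $\Tr^E_\QQ$ to $h(x,x) = \sum_i c_i\zeta^i$, using $\Tr^E_\QQ(1) = p-1$ and $\Tr^E_\QQ(\zeta^i) = -1$ for $1 \le i \le p-1$ together with $c_0 = b(x,x)$, gives $\sum_{i=1}^{p-1}c_i = (p-2)\,b(x,x)$; evenness of the left-hand side and oddness of $p-2$ then force $b(x,x) \in 2\ZZ$.)
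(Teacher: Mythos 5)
Your main argument is correct and follows essentially the same route as the paper's proof: the entire point is that $h(x,x)$ is $\sigma$-invariant, hence lies in $K$, so that factoring $\Tr^E_\QQ$ through $\Tr^E_K$ produces the factor $2$ via $\Tr^E_K|_K=2$. Where you differ is only in how the denominators of $h(x,x)$ are controlled: the paper argues via $h(x,x)\in\norm(L)\subseteq K\cap\scale(L)\subseteq \mathfrak{D}^K_\QQ$ and the integrality of traces on (co)differents, whereas you read off $h(x,x)\in\ZZ[\zeta]=\ZZ_E$ directly from \cref{eqn:hermitian_form}; both work, yours being more concrete. One caveat: the two normalizations in the paper are not literally compatible --- with $h$ as in \cref{eqn:hermitian_form} one computes $\Tr^E_\QQ(h(x,y))=p\,b(x,y)$ rather than $b(x,y)$, because $\Tr^E_\QQ(\zeta^i)=-1$ for $i\neq 0$ and $\sum_{i=0}^{p-1}f^i=0$. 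This is harmless for your conclusion (the relevant coefficient of $b(x,x)$ is odd in either convention), but it does affect your parenthetical identity: computing $\sum_{i=0}^{p-1}c_i=b\bigl(x,\sum_{i}f^i x\bigr)=0$ directly gives $\sum_{i=1}^{p-1}c_i=-b(x,x)$, not $(p-2)\,b(x,x)$. With that correction, your alternative route is actually the cleanest and most self-contained proof: the symmetry $c_i=c_{p-i}$ pairs the indices $1,\dots,p-1$ into $(p-1)/2$ two-element orbits (there is no fixed point since $p$ is odd), so $\sum_{i=1}^{p-1}c_i$ is even and equals $-b(x,x)$. This version is genuinely different from the paper's, as it avoids hermitian forms and differents entirely and uses only that $f$ is an isometry with $\Phi_p(f)=0$.
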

\begin{proof}
 That $(L,b)$ is integral is equivalent to $\scale(L) \subseteq \mathfrak{A}$. Let $x\in L$. Then $h(x,x) \in \norm(L) \subseteq K \cap \scale(L) \subseteq K \cap \mathfrak{A} = \mathfrak{D}^K_\QQ$. Thus $\Tr^E_\QQ(h(x,x))=2\Tr^K_\QQ(h(x,x)) \in 2\ZZ$.
\end{proof}

\begin{example}
 The root lattice $A_{p-1}$ admits a fixed point free isometry of order $p$. To see this, consider the extended Dynkin diagram $\widetilde{A_{p-1}}$. It is a regular polygon with $p$ vertices, with a rotational symmetry $f$ of order $p$ which fixes the sum of the $p$ vertices. This sum is precisely the kernel of $\widetilde{A_{p-1}}$. Thus $f$ descends to a fixed point free isometry of the quotient. But the quotient is isomorphic to $A_{p-1}$, therefore it can be seen as a hermitan $\ZZ[\zeta]$ lattice of rank one.
\end{example}

\subsection{Genera of hermitian lattices}
The classification of hermitian lattices largely parallels that of $\ZZ$-lattices. In this section we recall the parts of the classification needed for our purposes.
\begin{definition}
Two hermitian lattices $L$ and $L'$ are said to be in the same \emph{genus} if the completions
$L_\nu$ and $L'_\nu$ are isomorphic for all $\nu \in \Omega(K)$.
\end{definition}
Let $\nu \in \Omega(K)$ be a place of $K$.
The place $\nu$ is called \emph{good} if $E_\nu$ is either isomorphic to $K_\nu \times K_\nu$ or $E_\nu/K_\nu$ is an unramified field extension of degree $2$. Otherwise we call $\nu$ \emph{bad}, and then $E_\nu/K_\nu$ is a ramified extension of degree $2$.
A hermitian lattice over $E_\nu$ can be decomposed as an orthogonal direct sum of modular hermitian lattices. In the sequel we recall the classification of modular hermitian lattices \cite{jacobowitz}.
A unimodular hermitian lattice over $E_\nu$ with $\nu$ a good prime is unique up to isomorphism (cf.\ \cite[Prop.\ 3.3.5]{kirschmer:hermitian}).
In our case the only bad prime is $\pi$.
\begin{proposition}\label{prop:modular}\cite[Prop.\ 3.3.5]{kirschmer:hermitian}
Let $\pi$ be a bad prime which is coprime to $2$. Let $L$ be a
$\pi^i$-modular lattice of rank $r$ over $E_\pi$.
If $i$ is even, then \[L \cong \langle (\pi \pi^\sigma)^i,\dots, (\pi \pi^\sigma)^i, \det L (\pi \pi^\sigma)^{i(1-r)/2}\rangle.\]
If $i$ is odd, then
\[L \cong H_{i}^{\oplus r/2} = \left(
\begin{matrix}
0 & \pi^i \\
(\pi^\sigma)^i & 0\\
\end{matrix}
\right)^{\oplus r/2}.\]
\end{proposition}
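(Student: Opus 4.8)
The plan is to reduce to the two cases $i=0$ and $i=1$ and, in each, to produce an orthogonal decomposition of $L$ into pieces of rank one (when $i$ is even) or rank two (when $i$ is odd). Since $\pi\pi^\sigma=N^E_K(\pi)$ is a uniformizer of $K_\pi$ and $(\pi\pi^\sigma)\ZZ_{E_\pi}=\mathfrak{P}^2$, multiplying $h$ by $(\pi\pi^\sigma)^{\pm1}$ gives an equivalence between $\mathfrak{P}^j$-modular and $\mathfrak{P}^{j\pm2}$-modular lattices of the same rank that multiplies the determinant by $(\pi\pi^\sigma)^{\pm r}$. Writing $i=2j+\varepsilon$ with $\varepsilon\in\{0,1\}$ and replacing $L$ by $(\pi\pi^\sigma)^{-j}L$, we are reduced to a unimodular ($\varepsilon=0$) or a $\mathfrak{P}$-modular ($\varepsilon=1$) lattice; rescaling back produces the stated normal forms, the determinant bookkeeping being consistent because $\pi\pi^\sigma$ is a norm from $E_\pi$.

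\textbf{The even case.} Here $\mathfrak{s}(L)=\ZZ_{E_\pi}$, and the point is that $L$ is diagonalizable. Because the residue characteristic is odd, $2\in\ZZ_{K_\pi}^\times$, whence $\Tr^E_K(\ZZ_{E_\pi})=\ZZ_{K_\pi}$; combined with the polarization identity $\Tr^E_K(\mathfrak{s}(L))\subseteq\mathfrak{n}(L)$ this forces $\mathfrak{n}(L)=\ZZ_{K_\pi}$, so $L$ contains a vector $e$ with $h(e,e)\in\ZZ_{K_\pi}^\times$. Then $\ZZ_{E_\pi}e$ splits off orthogonally and an induction on the rank gives $L\cong\langle u_1,\dots,u_r\rangle$ with all $u_i\in\ZZ_{K_\pi}^\times$. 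To normalize, one shows that for $r\geq2$ the binary lattice $\langle u_1,u_2\rangle$ represents $1$: the congruence $\bar u_1\bar\lambda^2+\bar u_2\bar\mu^2=1$ is solvable in the (finite, odd-order) residue field, and a solution lifts by Hensel's lemma, the surjectivity of the relevant differential again using $\Tr^E_K(\ZZ_{E_\pi})=\ZZ_{K_\pi}$. Hence $\langle u_1,u_2\rangle\cong\langle1,u_1u_2\rangle$, and iterating collapses $L$ to $\langle1,\dots,1,\det L\rangle$, with $\det L\in K_\pi^\times/N^E_K(E_\pi^\times)$ the only surviving invariant.

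\textbf{The odd case.} There is a parity obstruction to rank-one summands: a rank-one $\mathfrak{P}$-modular lattice has the form $\langle a\rangle$ with $a=h(e,e)$ fixed by $\sigma$, hence $a\in K_\pi$ and $v_E(a)=2v_K(a)$ even, which cannot equal $1$. Equivalently $\mathfrak{n}(L)\subseteq\mathfrak{P}\cap K_\pi=\mathfrak{p}$ is strictly smaller than $\mathfrak{s}(L)=\mathfrak{P}$. Pick a primitive $x\in L$; by $\mathfrak{P}$-modularity there is $y\in L$ with $h(x,y)$ a uniformizer of $E_\pi$, and after scaling $y$ by a unit we may assume $h(x,y)=\pi$. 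The Gram matrix of $(x,y)$ is then $\left(\begin{smallmatrix}h(x,x)&\pi\\\pi^\sigma&h(y,y)\end{smallmatrix}\right)$ with diagonal in $\mathfrak{p}$, so its determinant has $v_E=2$ and $\langle x,y\rangle$ is a rank-two $\mathfrak{P}$-modular orthogonal direct summand of $L$. Untwisting by $\pi^{-1}$ and reducing modulo $\mathfrak{P}$ exhibits a nondegenerate alternating form on $\langle x,y\rangle/\mathfrak{P}\langle x,y\rangle$, necessarily hyperbolic; lifting a hyperbolic basis and then killing the residual diagonal entries by successive approximation (valid since $\ZZ_{E_\pi}$ is complete and the residue characteristic is odd) yields $\langle x,y\rangle\cong H_i$. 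Inducting on the orthogonal complement shows $r$ is even and $L\cong H_i^{\oplus r/2}$; in particular the binary $\mathfrak{P}$-modular lattice is unique, so no determinant invariant appears.

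\textbf{Main obstacle.} The content sits in the structural splittings — diagonalizability in the even case, the splitting-off of $H_i$ in the odd case — and in the two normalization lemmas ($\langle u_1,u_2\rangle\cong\langle1,u_1u_2\rangle$, and the reduction of a binary $\mathfrak{P}$-modular Gram matrix to $H_i$). Each of these reduces to solving a hermitian or norm equation over $\ZZ_{E_\pi}$; since $E_\pi/K_\pi$ is \emph{ramified} the norm form is mildly degenerate and one generally cannot lift in one Hensel step, so one argues modulo increasing powers of $\mathfrak{P}$, exploiting that $2$ is a unit (completing the square, $\Tr^E_K(\ZZ_{E_\pi})=\ZZ_{K_\pi}$) and that every element of a finite field of odd order is a sum of two squares. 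With these lemmas in hand the remainder is formal manipulation of ideals and determinants.
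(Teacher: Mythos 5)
Your argument is correct, but note that the paper does not prove this proposition at all: it is imported verbatim from Kirschmer's work (ultimately Jacobowitz's classification of hermitian forms over local fields), so there is no in-paper proof to compare against. What you have written is a faithful reconstruction of that classical argument, and all the load-bearing steps are sound: the reduction to $i\in\{0,1\}$ by rescaling with the $K_\pi$-uniformizer $\pi\pi^\sigma$; the identity $\Tr^E_K(\ZZ_{E_\pi})=\ZZ_{K_\pi}$ (valid because $2$ is a unit, even though the relative different is $\mathfrak{P}$), which forces $\norm(L)=\scale(L)\cap K_\pi$ in the even case and hence diagonalizability; the valuation-parity obstruction $\norm(L)\subseteq\mathfrak{p}\subsetneq\mathfrak{P}=\scale(L)$ in the odd case, which rules out rank-one summands and forces the splitting of binary $\mathfrak{P}$-modular planes; and the successive-approximation normalizations (representing $1$ by $\langle u_1,u_2\rangle$, killing the diagonal of $H_i$), where your caution about the degeneracy of the ramified norm form is appropriate but in fact a single Hensel step on the underlying quaternary $K_\pi$-quadratic form already suffices. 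Two small remarks. First, you silently use that a sublattice whose scale equals that of the ambient modular lattice splits off orthogonally; this is standard (project $L$ onto $M\otimes E$ and observe the image lands in $\mathfrak{P}M^{\#}=M$) but deserves a sentence. Second, your determinant bookkeeping actually produces diagonal entries $(\pi\pi^\sigma)^{i/2}$ in the even case, which is the internally consistent normalization (it is the only one compatible with the last entry $\det L\,(\pi\pi^\sigma)^{i(1-r)/2}$ and with $\mathfrak{P}^i$-modularity); the exponent $i$ on the first $r-1$ entries in the displayed statement is a transcription slip, and your proof quietly corrects it.
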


Let $v_1, \dots , v_s$ be the real places of $K$.
Denote by $n_i$ the number of negative entries in a diagonal Gram matrix of $(L_{v_i},h_{v_i})$. They are called
the signatures of $(L,h)$.
Let $(s_+,s_-)$ be the signature of the trace lattice $(L,b)$ and $k^\pm_i$ the \emph{signatures} of $(L,b,f)$ as defined in the introduction.
For a place $q$ of $\QQ$, we obtain the orthogonal splitting
\[(L,h) \otimes \ZZ_q \cong \bigoplus_{\nu \mid q} (L,h)_\nu.\]
For $q=-1$ the infinite place, we obtain the orthogonal splitting of \cref{eqn:sign}.
By carrying out the trace construction for a hermitian lattice over $\CC/\RR$ of rank one we get that $k_i^-=2n_i$ and thus
\begin{equation*}
s_- = 2 \sum_{i=1}^s n_i.
\end{equation*}

The genus of a hermitian lattice $(L,h)$ is uniquely determined by its signatures $n_i$ and its modular decompositions at all primes dividing its determinant.
\begin{proposition}\cite[3.4.2 (3) and 3.5.6]{kirschmer:hermitian}\label{prop:existence-herm}
Given hermitian lattices $(L_\nu,h_\nu)$ at each place $\nu \in \Omega(K)$, all but finitely many of which are unimodular, there is a global hermitian lattice $(L,h)$ with
$(L,h)_\nu \cong (L_\nu,h_\nu)$ at all places if and only if
the set $S=\{ \nu \in \Omega(K) | \det (L_\nu,h_\nu) \not \in N(E^\times) \}$
is of even cardinality.
\end{proposition}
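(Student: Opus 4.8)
The plan is to split the statement into a question about hermitian \emph{spaces} and a gluing problem for \emph{lattices} inside a fixed ambient space, and to dispatch these in turn. For necessity, if a global $(L,h)$ with the prescribed localizations exists, then $V := L \otimes_{\ZZ_E} E$ is a hermitian space over $E/K$ with $V \otimes_K K_\nu \cong (L_\nu,h_\nu) \otimes_{\ZZ_{E_\nu}} E_\nu$ for every $\nu$; writing $d(V) := [\det M] \in K^\times/N(E^\times)$ for the class of the determinant of a Gram matrix $M$ of $(V,h)$ (this lies in $K^\times$ since $M$ is hermitian, and is well defined modulo norms since a base change scales it by $N(\det g)$), the class $d(V)$ localizes at each $\nu$ to the class $d_\nu$ of $\det(L_\nu,h_\nu)$ in $K_\nu^\times/N(E_\nu^\times)$. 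So the $d_\nu$ are forced to be simultaneous localizations of a single global class, and $S$ is precisely the set of $\nu$ where $d_\nu$ is nontrivial; it is finite, being contained in the finitely many places carrying a non-unimodular $L_\nu$, together with the ramified prime $\mathfrak p$ and the real places of $K$. Conversely, producing a global space $V$ with the prescribed localizations will suffice, the lattices being glued afterwards.

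For the space-level claim I would invoke the local--global classification of hermitian spaces. At a non-archimedean $\nu$, a hermitian space over $E_\nu/K_\nu$ is determined up to isometry by its rank and its class in $K_\nu^\times/N(E_\nu^\times)$, a group which is trivial if $\nu$ splits in $E$ and of order $2$ otherwise; at the ramified prime $\mathfrak p$ this uses that the residue characteristic is the odd prime $p$, so the non-dyadic classification of \Cref{prop:modular} (resp.\ of hermitian forms) applies. At a real place $\nu$ of $K$, which becomes complex in $E = \QQ[\zeta_p]$, the space is determined by its signature, whose parity is again read off the quotient $K_\nu^\times/N(E_\nu^\times) = \RR^\times/\RR_{>0}$ via the sign of a Gram determinant. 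Prescribing the $(L_\nu,h_\nu)$ up to isometry thus fixes a common rank $r = \rk_{\ZZ_E} L$, real signatures $\le r$, and a finitely supported family $(d_\nu)_\nu \in \bigoplus_\nu K_\nu^\times/N(E_\nu^\times)$ — finitely supported because a unimodular lattice at a good prime has unit, hence norm, discriminant, and almost all $L_\nu$ are unimodular by hypothesis. By the Hasse principle for hermitian forms a global space over $E/K$ is determined by its localizations, and real signatures $\le r$ are freely realized in a global space of rank $r$; therefore a global space with the prescribed localizations exists if and only if $(d_\nu)_\nu$ is the localization of a single global class in $K^\times/N(E^\times)$.

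Whether such a global class exists is governed by class field theory for the quadratic extension $E/K$: writing $E = K(\sqrt a)$, membership $d \in N(E_\nu^\times)$ is the triviality of the Hilbert symbol $(d,a)_\nu$, and the Hasse norm theorem (injectivity of the localization map on $K^\times/N(E^\times)$), the computation of the local norm indices, and Hilbert reciprocity $\prod_\nu (d,a)_\nu = 1$ combine to give the exact sequence
\[
1 \longrightarrow K^\times/N(E^\times) \longrightarrow \bigoplus_{\nu \in \Omega(K)} K_\nu^\times/N(E_\nu^\times) \xrightarrow{\,\Sigma\,} \ZZ/2\ZZ \longrightarrow 0,
\]
where $\Sigma$ records the parity of the number of nontrivial components (each component group having order $\le 2$). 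Hence $(d_\nu)_\nu$ lifts to a global class exactly when the number of $\nu$ with $d_\nu$ nontrivial is even, i.e.\ $\#S$ is even; combined with the previous paragraph this settles the existence of the global space. (Alternatively one transfers the whole situation to quadratic forms over $K$ and quotes Hasse--Minkowski.) This reciprocity statement is the heart of the proposition, and making it self-contained is the step I expect to be the main obstacle; everything else is bookkeeping with local classifications and with the dictionary between global and local lattices.

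Finally, I would glue lattices inside a fixed global hermitian space $V$ realizing the prescribed localizations. Fixing any reference $\ZZ_E$-lattice $M \subset V$, the $\ZZ_E$-lattices in $V$ correspond bijectively to families $(\Lambda_\nu)_\nu$ of $\ZZ_{E_\nu}$-lattices $\Lambda_\nu \subset V \otimes_K K_\nu$ with $\Lambda_\nu = M \otimes_{\ZZ_K} \ZZ_{K_\nu}$ for all but finitely many $\nu$, via $\Lambda \mapsto (\Lambda \otimes_{\ZZ_K} \ZZ_{K_\nu})_\nu$. At every good prime a unimodular hermitian $\ZZ_{E_\nu}$-lattice spanning a given space is unique up to isometry (\Cref{prop:modular} and the discussion preceding it), so $(L_\nu)_\nu$ and $(M \otimes_{\ZZ_K} \ZZ_{K_\nu})_\nu$ are both unimodular, hence mutually isometric inside $V \otimes_K K_\nu$, at all but finitely many $\nu$. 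Replacing each prescribed $L_\nu$ by an isometric copy inside $V \otimes_K K_\nu$, the family $(L_\nu)_\nu$ then differs from $(M \otimes_{\ZZ_K} \ZZ_{K_\nu})_\nu$ over only a finite set of places, and the bijection yields a global $\ZZ_E$-lattice $L \subset V$ with $L \otimes_{\ZZ_K} \ZZ_{K_\nu} \cong (L_\nu,h_\nu)$ for every $\nu$. Together with the necessity observation, this gives the claimed equivalence.
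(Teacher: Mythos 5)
The paper does not prove this proposition: it is quoted verbatim from Kirschmer's work on hermitian lattices (the bracketed citation \cite{kirschmer:hermitian} is the ``proof''), so there is no in-paper argument to compare against. Your proof is correct and follows the standard route that the cited source also takes --- reduce to the existence of a global hermitian space with prescribed localizations (local classification by rank, determinant class in $K_\nu^\times/N(E_\nu^\times)$, and real signatures, plus the Hasse principle), identify the obstruction with the lifting of the family $(d_\nu)$ along the class-field-theoretic exact sequence whose cokernel is $\ZZ/2\ZZ$, and then glue local lattices inside the resulting space using uniqueness of unimodular lattices at good primes; the only non-elementary inputs are Landherr's local--global theory for hermitian spaces and the norm/reciprocity sequence for the quadratic extension $E/K$, which you correctly flag as the essential external ingredients.
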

In order to analyse the classes in a genus more closely we need the following facts and notation:
\begin{itemize}
\setlength{\itemsep}{2pt}
 \item $\mathcal{I}$ the group of fractional ideals of $E$,
 \item $J = \{\mathfrak{A} \in \mathcal{I} \mid \mathfrak{A}\mathfrak{A}^\sigma = \ZZ_E\}$,
 \item $J_0 = \{e \ZZ_E \mid e \in E^\times \mbox{ with } ee^\sigma =1\}$,
 \item $C=\Cl(E)$ the class group of $E$,
 \item $C_0=\{ [\mathfrak{A}] \in C \mid \mathfrak{A}=\mathfrak{A}^\sigma\}$ the subgroup of $C$ generated by the image of $\Cl(K)$ and the prime ideals of $\ZZ_E$ ramified in $E/K$,
 \item $C/C_0 \rightarrow J/J_0$ induced by $\mathfrak{A} \mapsto \mathfrak{A}/\mathfrak{A}^\sigma$ is an isomorphism.
 \end{itemize}

In our case only a single prime ideal of $\ZZ_E$ ramifies in $E/K$, namely the principal ideal
$\mathfrak{P}=(1-\zeta)$. By \cite[Thm.\ 4.14]{washington:cyclotomic_fields} the natural
homomorphism $\Cl(K) \rightarrow \Cl(E)$ is injective.
This yields that $\#(C/C_0)=\#\Cl(E)/\#\Cl(K)=\#\Cl(E/K)$ is the relative
class number.

\begin{proposition}\label{prop:genus1}
Let $(L_1,h_1)$ and $(L_2,h_2)$ be hermitian lattices of rank one in the same genus.
Write $L_i=\mathfrak{I}_i x_i$ and $d_i=h_i(x_i,x_i)$ for $x_i \in L_i$ , $\mathfrak{I}_i \in \mathcal{I}$ and $i=1,2$. Let $n \in E$ with $n n^\sigma = d_1/d_2$.
Then $(L_1,h_1)$ is isometric to $(L_2,h_2)$ if and only if
\[\mathfrak{I}_1/\mathfrak{I}_2 \cdot n \in J_0.\]
Furthermore $\# \mathfrak{g}(L,h) = \# \Cl(E/K)$.
\end{proposition}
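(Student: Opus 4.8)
The plan is to reduce the isometry problem for rank-one hermitian lattices to a computation in ideal class groups, following the standard dictionary between rank-one lattices and fractional ideals. First I would record the normal form: any rank-one hermitian lattice $(L,h)$ can be written as $L = \mathfrak{I}x$ with $h(x,x) = d \in K^\times$, and the isomorphism class of $(L,h)$ depends only on the pair $(\mathfrak{I}, d)$ up to the substitution $x \mapsto e x$ for $e \in E^\times$, which replaces $(\mathfrak{I},d)$ by $(e^{-1}\mathfrak{I}, e e^\sigma d)$. Thus $(L_1,h_1) \cong (L_2,h_2)$ if and only if there is $e \in E^\times$ with $\mathfrak{I}_1 = e^{-1}\mathfrak{I}_2$ and $d_1 = e e^\sigma d_2$. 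Given $n \in E$ with $n n^\sigma = d_1/d_2$ (which exists, at least locally, because the two lattices are in the same genus), write $e = n u$; the condition $d_1 = e e^\sigma d_2$ becomes $u u^\sigma = 1$, and the ideal condition becomes $\mathfrak{I}_1/\mathfrak{I}_2 = e^{-1}\ZZ_E = n^{-1} u^{-1} \ZZ_E$. Since $u^{-1}\ZZ_E \in J_0$ exactly when $u \in E^\times$ has $u u^\sigma = 1$, this says precisely $\mathfrak{I}_1/\mathfrak{I}_2 \cdot n \in J_0$. So the global isometry criterion is immediate once we know that a single global $n$ with $n n^\sigma = d_1/d_2$ exists, which follows from the Hasse principle for the norm form of $E/K$ together with the fact that $L_1, L_2$ being in the same genus forces $d_1/d_2$ to be a local norm everywhere; this is where one invokes \Cref{prop:existence-herm} (the determinants agree in $\mathcal{I}/N(E^\times)$) and the Hasse norm theorem for the cyclic extension $E/K$.

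For the cardinality statement $\#\mathfrak{g}(L,h) = \#\Cl(E/K)$, I would fix a lattice $(L,h) = \mathfrak{I}x$ in the genus and parametrize all members of the genus. Every lattice in the genus has the form $(\mathfrak{I}' x, h)$ with the same $d = h(x,x)$ (the local data at the finite ramified prime $\mathfrak{P}$ and the signatures are fixed, and in rank one the modular decomposition leaves only the ideal class free, up to norms), and the set $S$ of \Cref{prop:existence-herm} has even cardinality automatically since it does for $(L,h)$ and changing the ideal by something in $J$ does not change the determinant modulo $N(E^\times)$ at any place. Two such representatives $\mathfrak{I}'_1 x$ and $\mathfrak{I}'_2 x$ are isometric, by the criterion just proved with $n = 1$, if and only if $\mathfrak{I}'_1/\mathfrak{I}'_2 \in J_0$. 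Hence $\mathfrak{g}(L,h)$ is in bijection with the orbit set $\{\mathfrak{I}'/\mathfrak{I} \in J\} / J_0$, which — once one checks that every class in $J/J_0$ is actually realized by a lattice in the genus (again a norm-index count, local-global, using that the number of excess bad primes can be adjusted in pairs) — is exactly $J/J_0$.

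The last step is to identify $\#(J/J_0)$ with the relative class number. Here I would cite the list of facts assembled before the proposition: the map $\mathfrak{A} \mapsto \mathfrak{A}/\mathfrak{A}^\sigma$ induces an isomorphism $C/C_0 \xrightarrow{\ \sim\ } J/J_0$, and in the cyclotomic case $C_0$ is just the image of $\Cl(K)$ in $\Cl(E)$, because the only ramified prime ideal $\mathfrak{P} = (1-\zeta)$ is principal. Then $\#(J/J_0) = \#(C/C_0) = \#\Cl(E)/\#\Cl(K) = \#\Cl(E/K)$ by the injectivity of $\Cl(K) \to \Cl(E)$ (Washington, Thm.\ 4.14).

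I expect the main obstacle to be the \emph{surjectivity} half of the genus count: showing that every fractional ideal class (mod $J_0$) actually arises from a lattice genuinely in the given genus $\mathfrak{g}(L,h)$, rather than just in a genus with the same local invariants at the split and unramified places. Concretely one must verify that altering $\mathfrak{I}$ within $J$ changes neither the signatures, nor the $\mathfrak{P}$-adic modular type, nor the parity of the set $S$ in \Cref{prop:existence-herm}. The signatures and the parity of $S$ are clear, but the claim at $\mathfrak{P}$ uses that an ideal in $J$ is a norm locally at $\mathfrak{P}$ up to the ambiguity already absorbed in $J_0$, so that the local lattice $(L_\mathfrak{P}, h_\mathfrak{P})$ — which by \Cref{prop:modular} is determined by its rank, scale exponent and $\det$ modulo local norms — is unchanged. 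Granting this, the bijection $\mathfrak{g}(L,h) \leftrightarrow J/J_0$ is complete and the proposition follows.
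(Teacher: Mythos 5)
Your proof is correct and follows essentially the same route as the paper's: produce a global $n$ with $nn^\sigma = d_1/d_2$ (the paper gets this from the isomorphism of the ambient hermitian spaces, which in rank one is exactly the Hasse norm theorem you invoke), reduce the isometry question to whether $\mathfrak{I}_1/\mathfrak{I}_2\cdot n$ lies in $J_0$, and count the genus as $J/J_0 \cong C/C_0$ of order $\#\Cl(E/K)$. The surjectivity point you flag as the main obstacle is simply asserted in the paper (``the lattices in $\mathfrak{g}(L_2,h_2)$ are all of the form $(\mathfrak{A}L_2,h_2)$, $\mathfrak{A}\in J$''), and your sketch of why twisting by $\mathfrak{A}\in J$ stays in the genus is the right justification.
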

\begin{remark}
In terms of the isomorphism $J/J_0 \cong C/C_0$, this means that we find a fractional ideal $\mathfrak{I}$ of $E$ with $\mathfrak{I}_1/\mathfrak{I}_2 \cdot n =\mathfrak{I}/\mathfrak{I}^\sigma$. Then $\mathfrak{I}_1/\mathfrak{I}_2 \cdot n \in J_0$ if and only if $[\mathfrak{I}] \in C_0$.
\end{remark}
\begin{proof}
 Since $(L_1,h_1)$ and $(L_2,h_2)$ are in the same genus, the corresponding quadratic spaces $(V_i,h_i)$ are isomorphic. In particular $d_1/d_2$ is a norm. Hence we find $n$ as in the proposition which gives the isometry $g\colon (V_1,h_1) \rightarrow (V_2,h_2)$, $x_1 \mapsto n x_2$. The image of $L_1$ is $n \mathfrak{I}_1 x_2$. Now $g(L_1)$ and $L_2$ are isomorphic if and only if we find an isometry $e \in O(V_2,h_2)\cong J_0$ with $e n \mathfrak{I}_1 x_2 = \mathfrak{I}_2 x_2$, i.e. $\mathfrak{I}_1/\mathfrak{I}_2 \cdot n \in J_0$.
 Since the lattices in $\mathfrak{g}(L_2,h_2)$ are all of the form
 $(\mathfrak{A}L_2,h_2)$, $\mathfrak{A} \in J$, the number of classes in the genus is $\# J / J_0$.
\end{proof}
Next we use the case of rank one to settle the indefinite case of rank at least two. The crucial ingredient is the strong approximation theorem for unitary groups by Shimura \cite{shimura:unitary}.
\begin{definition}
Let $(L,h)$ be a hermitian lattice of rank $r$. Its determinant lattice $\det(L,h)$ consists of the module $\det L = \bigwedge^r L$ equipped with the hermitian form $\wedge^{n}h$.
Explicitly, if $L$ is given by the pseudo basis $L = \sum_{i=1}^r \mathfrak{A}_i x_i$ for fractional ideals $\mathfrak{A}_i$ of $E$ and $x_i \in L$, then $\det L = \prod_{i=1}^r \mathfrak{A}_i x$ where $x=x_1 \wedge \dots \wedge x_n$. Furthermore $h(x,x) = \det h(x_i,x_j)$.
\end{definition}
Let $L,L'$ be $\ZZ_E$ lattices in the $E$-vector space $V$.
The \emph{index ideal} is $[L: L']_{\ZZ_E}=\{\det \sigma | \sigma \in \Hom_{\ZZ_E}(L,L')\}$.
More concretely, let $L,L'\subseteq V=E^n$ with standard basis $e_1,\dots , e_n$ be given by pseudo bases $[\mathfrak{A}_i,x_i]$ and $[\mathfrak{B}_i,y_i]$. Then
$[L:L'] = \prod \mathfrak{A}_i / \mathfrak{B}_i \cdot (\det X / \det Y)$
where $X$ is the matrix with rows $x_i \in E^n$ and $Y$ the matrix with rows $y_i$.
We compute $\det L= \prod \mathfrak{A}_i \det(X) e_1 \wedge \dots \wedge e_n$
and $\det L'= \prod \mathfrak{B}_i \det(Y) e_1 \wedge \dots \wedge e_n$.
Clearly, \[[L: L']_{\ZZ_E}=[\det L': \det L]_{\ZZ_E}.\]

The following is inspired by Bayer-Fluckiger's use of determinant lattices in the
classification of unimodular hermitian lattcies \cite{fluckiger:unimodular}.
\begin{proposition}\label{prop:genus2}
 Let $E = \QQ[\zeta]$ and $K=\QQ[\zeta + \zeta^{-1}]$ where $\zeta$ is a primitive $p$-th root of unity and $p$ an odd prime.
 Let $(L,h)$ be a hermitian lattice over $E/K$. Suppose that $(L,h)$ is of rank at least two and indefinite.
 Then the number of classes in the genus $\mathfrak{g}(L,h)$ is the relative class number $\#C(E/K)$.
 Two lattices in $\mathfrak{g}(L,h)$ are isometric if and only if they have isometric determinant lattices.
\end{proposition}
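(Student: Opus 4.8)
The plan is to imitate the rank one analysis of \Cref{prop:genus1}, the new feature being the kernel $SU(V,h)$ of the determinant homomorphism $\det\colon U(V,h)\to E^1$, where $E^1=\{e\in E^\times\mid ee^\sigma=1\}$. Concretely, I would show that passing to the determinant lattice induces a well-defined bijection between the isometry classes in $\mathfrak{g}(L,h)$ and those in $\mathfrak{g}(\det(L,h))$; since the latter genus has exactly $\#\Cl(E/K)$ classes by \Cref{prop:genus1}, both assertions of the proposition follow at once.

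Set $(V,h)=(L,h)\otimes_{\ZZ_E}E$ and view each member of $\mathfrak{g}(L,h)$ as a full $\ZZ_E$-lattice in $V$. The genus is a single orbit of $U(V,h)(\mathbb{A}_K)$ with stabiliser $\prod_\nu U(L_\nu)$, so its isometry classes are parametrised by $U(V,h)(K)\backslash U(V,h)(\mathbb{A}_K)/\prod_\nu U(L_\nu)$. Since $\rk(L,h)\ge 2$, the map $\det\colon U(V_\nu,h_\nu)\to E_\nu^1$ is surjective at each place (and over $K$), and since $(L,h)$ is indefinite the semisimple simply connected group $SU(V,h)$ is isotropic, hence noncompact, at a real place of $K$; Shimura's strong approximation theorem for $SU(V,h)$ \cite{shimura:unitary} then gives $SU(V,h)(\mathbb{A}_K)=SU(V,h)(K)\cdot\prod_\nu SU(L_\nu)$. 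Pushing the double coset space forward along $\det$ and absorbing the $SU$-part, I obtain a bijection
\[
U(V,h)(K)\backslash U(V,h)(\mathbb{A}_K)\big/\prod_\nu U(L_\nu)\ \xrightarrow{\ \sim\ }\ E^1\big\backslash\mathbb{A}_E^1\big/\prod_\nu\det U(L_\nu),
\]
under which the class of $L''$ relative to $L'$ — choose $g_\nu\in U(V_\nu,h_\nu)$ with $g_\nu L'_\nu=L''_\nu$, then take $(\det g_\nu)_\nu$ — is trivial precisely when $L'\cong L''$. The fractional ideal attached to this idele is the index ideal $[L':L'']_{\ZZ_E}$, and by the identity $[L':L'']_{\ZZ_E}=[\det L'':\det L']_{\ZZ_E}$ it is the one comparing the rank one lattices $\det(L',h')$ and $\det(L'',h'')$; by the computation in \Cref{prop:genus1} its class in $J/J_0$ is trivial if and only if those two determinant lattices are isometric.

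It remains to identify the right-hand quotient with $J/J_0\cong C/C_0$, of order $\#\Cl(E/K)$, which amounts to computing the local images $\det U(L_\nu)$. At a good prime $L_\nu$ splits into modular constituents, each unimodular up to rescaling by $K_\nu^\times$, and the diagonal automorphisms $\mathrm{diag}(u,1,\dots,1)$ with $uu^\sigma=1$ give $\det U(L_\nu)=\ZZ_{E,\nu}^{\times,1}$. The only bad prime is $\pi$; using the normal forms of \Cref{prop:modular}, a $\pi^i$-modular constituent with $i$ even again reduces to the unimodular case, while for $i$ odd it is an orthogonal sum of hyperbolic planes, whose unitary automorphisms one writes out to find $\det U(L_\pi)=\{a/a^\sigma\mid a\in\ZZ_{E,\pi}^\times\}$, of index $2$ in $\ZZ_{E,\pi}^{\times,1}$ by a local norm computation for the tamely ramified extension $E_\pi/K_\pi$. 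This defect is harmless for the double coset: $-1\in E^1$ is a norm one unit at every place and represents exactly the missing coset at $\pi$, so it is absorbed into $E^1\cdot\prod_\nu\det U(L_\nu)$. Hence the right-hand quotient equals $E^1\backslash\mathbb{A}_E^1/\prod_\nu\ZZ_{E,\nu}^{\times,1}$, which upon passing from ideles to ideals is $J/J_0\cong C/C_0$ by the identification recalled before \Cref{prop:genus1}. This gives $\#\mathfrak{g}(L,h)=\#\Cl(E/K)$ and, with the previous paragraph, the determinant-lattice criterion.

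I expect the main obstacle to be the local analysis at the ramified prime $\pi$: determining $\det U(L_\pi)$ for every modular lattice and checking that $-1$ repairs the index two defect, so that the class number of a genus comes out to be the relative class number regardless of the genus. A secondary point needing care is supplying the hypotheses of strong approximation — noncompactness at a real place from the indefiniteness assumption, and surjectivity of the determinant maps from $\rk\ge 2$.
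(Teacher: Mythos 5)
Your proposal is correct and follows essentially the same route as the paper: strong approximation for $SU(V,h)$ (valid by indefiniteness) reduces the class set of the genus to a determinant/idele computation, the only ramified prime $\mathfrak{p}$ contributes at worst an index-two defect $\mathcal{E}_0^{\mathfrak p}/\mathcal{E}_1^{\mathfrak p}=\langle -1\rangle$ which the global norm-one unit $-1$ absorbs, and the resulting quotient is $J/J_0\cong C/C_0$ of order $\#\Cl(E/K)$, with isometry detected by the index ideal $[L:L']_{\ZZ_E}=[\det L':\det L]_{\ZZ_E}$. The only difference is one of packaging: the paper delegates the adelic double-coset bookkeeping and the local determinant computations to Kirschmer's special-genus machinery (his Lemma 4.6, Theorem 3.7, Lemma 3.5), whereas you propose to rederive those inputs by hand.
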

\begin{proof}
Let $(L,h)$ be indefinite of rank two with ambient hermitian space denoted by $(V,h)$ and
 $\mathfrak{q} \in \mathbb{P}(K)$ be a prime. Recall the following definitions and facts from \cite{kirschmer:determinants}:
\begin{itemize}
\setlength{\itemsep}{2pt}
 \item $\mathcal{E}_0^\mathfrak{q}=\{u \in \ZZ_{E_\mathfrak{q}}^* \mid uu^\sigma =1\}$,
 \item $\mathcal{E}_1^\mathfrak{q}=\{u/u^{\sigma} \mid u \in \ZZ_{E_\mathfrak{q}}^*\} \subseteq \mathcal{E}_0^\mathfrak{q}$,
 \item $\mathcal{E}(L_\mathfrak{q})=\{\det(g) \mid g \in O(L_\mathfrak{q},h_\mathfrak{q}) \}\subseteq \mathcal{E}_0^\mathfrak{q}$,
 \item $P(L)=\{\mathfrak{q} \in \mathbb{P}(E) \mid \mathcal{E}(L_\mathfrak{q}) \neq \mathcal{E}_0^\mathfrak{q}\}$ consists only of primes ramified in $E/K$,
 \item $\mathcal{E}(L) = \prod_{\mathfrak{q} \in \mathbb{P}(L)} \mathcal{E}_0^\mathfrak{q}/\mathcal{E}(L_\mathfrak{q})$,
 \item $R(L)=\{(e \mathcal{E}(L_\mathfrak{q}))_{\mathfrak{q}\in P(L)} \in \mathcal{E}(L) \mid e \in \ZZ_E \mbox{ and } ee^\sigma=1\}$.
\end{itemize}

The number of special genera in the genus of $L$ is
$[C:C_0] [\mathcal{E}(L):R(L)]$ (cf. \cite[Lem.\ 4.6]{kirschmer:determinants}). Since the genus is indefinite, each special genus consists of a single isometry class. By definition, two lattices in $(V,h)$ which are isometric lie in the same special genus. Thus the number above is actually the number of classes in the genus.
As $\mathfrak{p}$ is the unique prime ramified in $E/K$,
$\mathcal{E}(L)=\mathcal{E}_0^\mathfrak{p}
/\mathcal{E}(L_\mathfrak{p})$.
By \cite[Thm.\ 3.7]{kirschmer:determinants},
$\mathcal{E}_1^\mathfrak{p} \subseteq\mathcal{E}(L_\mathfrak{p})$
and further the quotient $\mathcal{E}_0^\mathfrak{p}/
\mathcal{E}_1^\mathfrak{p}$ is of order $2$
generated by $-1 \cdot \mathcal{E}_1^\mathfrak{p}$
(cf. \cite[Lem.\ 3.5]{kirschmer:determinants}).
Since $(-1\cdot\mathcal{E}_1^\mathfrak{p})$ is clearly in $R(L)$, the index
$[\mathcal{E}(L):R(L)]=1$.
This implies the following: if $L' \subseteq V$ is a lattice in the genus of $L$, then it is isometric to $(L,h)$ if and only if the index $[L : L']_{\ZZ_E}=[\det L : \det L']_{\ZZ_E}$
lies in $J_0$, i.e.\ if and only if $\det (L,h)$ is isometric to $\det (L',h)$.
\end{proof}
\begin{lemma}
Suppose that the relative class number of $C(E/K)$ is odd. Then
\[\Xi\colon J/J_0 \rightarrow C(E/K),\quad \mathfrak{A}J_0 \mapsto [\mathfrak{A}]\]
is an isomorphism.
\end{lemma}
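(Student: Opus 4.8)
The plan is to reduce the assertion to the triviality that multiplication by $2$ is an automorphism of a finite abelian group of odd order. I would do this by composing $\Xi$ with the isomorphism $\phi\colon C/C_0\xrightarrow{\ \sim\ }J/J_0$, $\mathfrak{A}C_0\mapsto(\mathfrak{A}/\mathfrak{A}^\sigma)J_0$ recorded above: since $\phi$ is already known to be an isomorphism, it is enough to show that the composite $\Xi\circ\phi$ is one, and the key point is that this composite turns out to be an \emph{explicit} endomorphism of a single group.

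First I would note that in the present situation $C_0$ coincides with the image of $\Cl(K)$ in $C=\Cl(E)$ under the natural injection $j$, because the only prime of $\ZZ_E$ ramified in $E/K$ is $\mathfrak{P}=(1-\zeta)$, which is principal and hence contributes nothing to $C_0$. Thus $C(E/K)=C/C_0$, and, as recalled above (see also \Cref{prop:genus1}), $\#(J/J_0)=\#(C/C_0)=\#\Cl(E/K)$ is the relative class number, which is odd by hypothesis; so $\Xi\circ\phi$ is an endomorphism of the finite abelian group $C/C_0$ of odd order. Next I would establish that $\sigma$ acts as $-1$ on $C/C_0$: for every fractional ideal $\mathfrak{B}$ one has $(1+\sigma)[\mathfrak{B}]=[\mathfrak{B}\mathfrak{B}^\sigma]=j\bigl(N^E_K[\mathfrak{B}]\bigr)\in j(\Cl(K))=C_0$, so $1+\sigma$ annihilates $C/C_0$. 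Tracing a class through the composite then yields, in $C/C_0$,
\[ \Xi\bigl(\phi(\mathfrak{A}C_0)\bigr)=\Xi\bigl((\mathfrak{A}/\mathfrak{A}^\sigma)J_0\bigr)=[\mathfrak{A}/\mathfrak{A}^\sigma]=(1-\sigma)[\mathfrak{A}]=2\cdot(\mathfrak{A}C_0), \]
so $\Xi\circ\phi$ is multiplication by $2$. Since $\#(C/C_0)$ is odd this map is bijective, and therefore $\Xi$ is an isomorphism.

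I do not expect a genuine obstacle: all the ingredients — the isomorphism $C/C_0\cong J/J_0$, the inclusion $\im(1+\sigma)\subseteq C_0$, and the equality of cardinalities — are already available in the excerpt, and what remains is bookkeeping. The one place demanding a little care is checking that the source and target of $\Xi\circ\phi$ really are the same group, i.e.\ that $C(E/K)$, $C/C_0$ and $\Cl(E)/\Cl(K)$ all agree here; this is precisely where the principality of $\mathfrak{P}$ enters. One could instead try to prove injectivity of $\Xi$ by hand — from $[\mathfrak{A}]\in j(\Cl(K))$ and $\mathfrak{A}^\sigma=\mathfrak{A}^{-1}$ one gets that $\mathfrak{A}^2$ is principal — but then one must still argue that this principal ideal lies in $J_0$ (equivalently, that the relevant norm of a unit is hit), which is exactly the friction that routing through $\phi$ avoids.
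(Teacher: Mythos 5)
Your proof is correct, but it takes a genuinely different route from the paper's. The paper argues via injectivity directly: it invokes Shimura's result that oddness of the relative class number forces $(\ZZ_K^\times)^+ = N(\ZZ_E^\times)$, and uses this to show that any \emph{principal} ideal $a\ZZ_E \in J$ can be rewritten as $(a/b)\ZZ_E$ with $(a/b)(a/b)^\sigma = 1$, hence lies in $J_0$; bijectivity then follows from the equality of cardinalities. You instead precompose with the isomorphism $\phi\colon C/C_0 \to J/J_0$ already recorded in the paper and observe that $\Xi\circ\phi$ is the endomorphism $1-\sigma$ of $C/C_0$, which equals multiplication by $2$ because $1+\sigma$ factors through the norm and hence dies in $C/C_0$; oddness of the group order then does the rest. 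Your route is purely formal (Hilbert 90 for ideals plus bookkeeping) and avoids the arithmetic input about units entirely, at the cost of not producing the concrete norm-one generator that the paper's argument exhibits. One small point deserves attention: the paper is not entirely consistent about whether $\Cl(E/K)$ denotes the quotient $\Cl(E)/\Cl(K)$ or the kernel of the norm map $\Cl(E)\to\Cl(K)$; you work with the quotient $C/C_0$, which matches the notation list, and since injectivity of the composite $J/J_0 \to \Cl(E) \to C/C_0$ implies injectivity of $J/J_0 \to \Cl(E)$, your argument also yields the kernel-of-norm version by the usual cardinality count. Your identification $C_0 = \im(\Cl(K)\to\Cl(E))$ via principality of $\mathfrak{P}=(1-\zeta)$ is exactly what the paper records just before the lemma, so no circularity is introduced.
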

\begin{proof}
 Since in our setting $\# J/J_0 = \#\Cl(E/K)$, it suffices to show that $\Xi$ is injective.
 Let $(\ZZ_K^\times)^+$ be the set of totally positive units of $\ZZ_K$.
 Suppose that the relative class number is odd, then $(\ZZ_E^\times)^+= N(\ZZ_E^\times)$ by \cite[Prop.\ A.2]{shimura:abelian}. Let $\mathfrak{A}=a\ZZ_E \in J$ be principal. By the definition of $J$, $aa^\sigma \ZZ_E = \mathfrak{A}\mathfrak{A}^\sigma =  \ZZ_E$. Moreover, $aa^\sigma$ lies in $K$ and is totally positive since it is a norm. Thus $aa^\sigma$ lies in $ (\ZZ_K^\times)^+ = N(\ZZ_E^\times)$ and we can find  $b \in \ZZ_E^\times$ with $bb^\sigma = aa^\sigma$.
 Hence $\mathfrak{A} = a \ZZ_E = a/b \ZZ_E$ lies in $J_0$.
\end{proof}

 Let $(L,h)$ be a hermitian lattice. The \emph{volume} of $(L,h)$ is the fractional ideal
 $\mathfrak{v}(L)=[L^\# : L]_{\ZZ_E}$. Let $[\mathfrak{A}_i,e_i]$ be a pseudo basis for $L$.
Set $\mathfrak{I} = \prod_{i=1}^n \mathfrak{A}_i$ and $a = \det h(e_i,e_j)_{ij}$. Then the volume of $(L,h)$ is $\mathfrak{v}(L)= a  \mathfrak{I} \mathfrak{I}^\sigma$.
 Note that the class of $\mathfrak{I}$ in $C(E)$ is the Steinitz invariant of $L$.
\begin{lemma}\label{lem:odd-relative-class}
Suppose that the relative class number of $C(E/K)$ is odd.
Let $(L,h)$ be a hermitian lattice which is indefinite or of rank one.
Suppose that the volume $\mathfrak{v}(L)=\mathfrak{p}^l$ for some $l$ and let $L' \in \mathfrak{g}(L,h)$.
Then $L$ is isomorphic to $L'$ if and only if they have the same Steinitz class in $C(E/K)$.
\end{lemma}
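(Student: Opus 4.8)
Write $L_1=L$ and $L_2=L'$. The plan is to express ``$L_1\cong L_2$'' uniformly in the rank-one and the indefinite case as the membership of one explicit fractional ideal $\mathfrak{J}$ in $J_0$, and then to read off the class of $\mathfrak{J}$ through the isomorphism $\Xi$ established in the lemma just above.

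First I would set up $\mathfrak{J}$. As lattices in the same genus have isometric ambient hermitian spaces, I may take $L_1$ and $L_2$ inside a single space $(V,h)$. Write $\det L_i=\mathfrak{I}_i x_i$, so that $[\mathfrak{I}_1]$ and $[\mathfrak{I}_2]$ are the Steinitz classes of $L_1$ and $L_2$; put $a_i=h(x_i,x_i)$ and let $c\in E^\times$ be the scalar with $x_2=cx_1$, so that $cc^\sigma=a_2/a_1$. If $L$ has rank one, then $\det L_i=L_i$ and \Cref{prop:genus1} gives that $L_1\cong L_2$ if and only if $\mathfrak{J}:=(\mathfrak{I}_1/\mathfrak{I}_2)\,c^{-1}\in J_0$ (indeed $n:=c^{-1}$ satisfies $nn^\sigma=a_1/a_2$). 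If $L$ has rank $\geq 2$ and is indefinite, then by the proof of \Cref{prop:genus2}, $L_1\cong L_2$ if and only if the index ideal $\mathfrak{J}:=[L_1:L_2]_{\ZZ_E}\in J_0$, and a pseudo-basis computation identifies $\mathfrak{J}$ with the index ideal of the determinant lattices, again $c^{-1}\mathfrak{I}_1\mathfrak{I}_2^{-1}$ up to a harmless choice of direction. So in both cases $\mathfrak{J}=c^{-1}\mathfrak{I}_1\mathfrak{I}_2^{-1}$; hence $[\mathfrak{J}]=[\mathfrak{I}_1]-[\mathfrak{I}_2]$ in $C(E)$ and $\mathfrak{J}\mathfrak{J}^\sigma=(a_1/a_2)\,(\mathfrak{I}_1\mathfrak{I}_1^\sigma)(\mathfrak{I}_2\mathfrak{I}_2^\sigma)^{-1}$.

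Next I would feed in that the volume is a genus invariant together with the identity $\mathfrak{v}(L_i)=a_i\,\mathfrak{I}_i\mathfrak{I}_i^\sigma$ from the discussion preceding the lemma: this gives $a_1\mathfrak{I}_1\mathfrak{I}_1^\sigma=a_2\mathfrak{I}_2\mathfrak{I}_2^\sigma$, hence $\mathfrak{J}\mathfrak{J}^\sigma=\ZZ_E$, i.e.\ $\mathfrak{J}\in J$. Since the relative class number is odd, the lemma just above makes $\Xi\colon J/J_0\to C(E/K)$ an isomorphism, so $\mathfrak{J}\in J_0$ if and only if $[\mathfrak{J}]=0$ in $C(E/K)$, i.e.\ if and only if $[\mathfrak{I}_1]=[\mathfrak{I}_2]$ in $C(E/K)$; together with the previous paragraph this is exactly the asserted equivalence. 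Finally, to see that $[\mathfrak{I}_1]$ and $[\mathfrak{I}_2]$ genuinely lie in $C(E/K)$, I would use the hypothesis $\mathfrak{v}(L)=\mathfrak{p}^l$: as $\mathfrak{p}=\pi\pi^\sigma\ZZ_K$ is principal, $a_1\mathfrak{I}_1\mathfrak{I}_1^\sigma=\mathfrak{p}^l$ is principal, so $\mathfrak{I}_1\mathfrak{I}_1^\sigma$ is principal; then $N_{E/K}(\mathfrak{I}_1)$ becomes principal in $\ZZ_E$, hence is principal in $\ZZ_K$ because $\Cl(K)\hookrightarrow\Cl(E)$ is injective, so $N_{E/K}([\mathfrak{I}_1])=0$ and $[\mathfrak{I}_1]\in C(E/K)$; likewise for $L_2$.

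The parts I expect to be routine are the pseudo-basis formula for $[L_1:L_2]_{\ZZ_E}$ and the standard compatibilities of dual, volume and determinant lattice with completions and with isometries (which is what lets ``same genus'' descend to determinant lattices and makes the volume a genus invariant). The only point demanding care is the bookkeeping around the hypothesis $\mathfrak{v}(L)=\mathfrak{p}^l$: it is needed solely to place the Steinitz invariants inside $C(E/K)$, while the equivalence ``isometric $\iff$ equal Steinitz class'' itself rests only on genus-invariance of the volume and on $\Xi$ being an isomorphism.
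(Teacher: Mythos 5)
Your proof is correct and follows essentially the same route as the paper's: both reduce the question to membership of the index ideal $[L:L']_{\ZZ_E}$ in $J_0$ via \Cref{prop:genus1,prop:genus2} and then apply the isomorphism $\Xi$, with the hypothesis $\mathfrak{v}(L)=\mathfrak{p}^l$ serving only to place the Steinitz classes in the relative class group. Your version merely makes explicit some bookkeeping (the identification $[L_1:L_2]_{\ZZ_E}=c^{-1}\mathfrak{I}_1\mathfrak{I}_2^{-1}$ and the use of the injectivity of $\Cl(K)\rightarrow\Cl(E)$) that the paper leaves implicit.
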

\begin{proof}
 Since $\mathfrak{p}^l = a \mathfrak{I} \mathfrak{I}^\sigma$ is principal, the Steinitz class $[\mathfrak{I}]$ lies in the kernel of the norm map, i.e.\ it is an element of the relative class group. As $(L',h)$ runs through a set of representatives of the classes in the genus of $(L,h)$, its Steinitz invariant is given by $[\mathfrak{I}] [L:L'] \in C(E/K)$ where $[L:L']$ runs through a set of representatives of $J/J_0$. Since $\Xi$ is an isomorphism, the $[\mathfrak{I}] [L:L']$ give exactly the relative class group.
\end{proof}

\subsection{Fixed point free isometries of prime order}
Let $L$ be a unimodular lattice and $f \in O(L)$ of prime order $p$.
Since $L$ is unimodular, there is an $f$-equivariant isomorphism
of the discriminant groups $(L^f)^\vee / L^f \cong L_f^\vee / L_f$.
In particular, we see that $f$ acts trivially on the discriminant group of $L_f$.
This observation sets the topic of this section.

\begin{proposition}\label{prop:clas-fixed-pt-free}
	Let $p$ be an odd prime number and $(L,b)$ an integral lattice
	of signature $(s_+,s_-)$.
	Then some lattice in the genus of $(L,b)$ admits a fixed point
	free isometry $f$
	of order $p$ acting trivially on the
	discriminant group if and only if
	there are nonnegative integers $n,m \in \ZZ$ such that
	\begin{enumerate}
		\item[(i)]  $L$ is even, $p$-elementary of discriminant $p^n$,
		\item[(ii)] $s_+ + s_- = (n + 2m)(p-1)$,
		\item[(iii)] $s_+ \in 2\ZZ$.
	\end{enumerate}
	 Conversely, if the integers $s_+,s_-,n,m \in \ZZ_{\geq 0}$ satisfy \emph{(ii)}, \emph{(iii)} and
	 \begin{enumerate}
		\item[(iv)] if $n=0$, then $s_+ \equiv s_- \pmod{8}$,	  \end{enumerate}
	 then a triple $(L,b,f)$ with $L$ satisfying \emph{(i)} exists.

    Suppose that $(L,b)$ is indefinite or of rank $(p-1)$.
    Let $f$, $f'$ be fixed point free isometries or order $p$ of $L$ with associated hermitian lattices $(L,h)$, $(L,h')$. Then $f$ is conjugate to $f'$ if and only if they have the same signature and $\det(L,h)$ is isometric to $\det(L,h')$.
    The number of conjugacy classes with a given signature is either $0$ or $\#C(E/K)$.
    If moreover the relative class number is odd, then $\det(L,h)$ is isometric to $\det(L,h')$ if and only if $(L,f)$ and $(L,f')$ (seen as $\ZZ[\zeta_p]$-modules) have the same Steinitz class in $\Cl(E/K)$.
\end{proposition}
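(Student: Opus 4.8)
The plan is to translate the statement to the hermitian side via the trace construction and then feed everything into the structure theory of genera of hermitian lattices proved above.

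\emph{Reduction and necessity of (i)--(iii).} A fixed point free isometry $f\in O(L)$ of order $p$ is precisely an isometry with minimal polynomial $\Phi_p(x)$, so by the trace construction it corresponds to a hermitian $\ZZ[\zeta]$-lattice $(L,h)$, functorially for $f$-equivariant isometries; since evenness, $p$-elementarity, discriminant and signature are invariants of the $\ZZ$-genus I may pass to any lattice of $\mathfrak{g}(L,b)$. Evenness is \Cref{lem:even}. That $f$ acts trivially on $L^\vee/L$ means $\pi L^\vee\subseteq L\subseteq L^\vee$, and as $p$ equals $\pi^{p-1}$ up to a unit this forces $pL^\vee\subseteq L$; hence $L^\vee/L$ is an $\FF_p$-vector space, of dimension $n$ say, so $(L,b)$ is $p$-elementary of discriminant $p^n$. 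Writing $k=\rk_{\ZZ[\zeta]}L$ we get $s_++s_-=k(p-1)$; moreover $L^\vee/L$ is a quotient of the rank-$k$ projective $\ZZ[\zeta]$-module $L^\vee$ annihilated by $\mathfrak{P}$, with $\ZZ[\zeta]/\mathfrak{P}=\FF_p$, so $n\le k$. The parity $n\equiv k\pmod 2$ I would read off by comparing the trace discriminant with the volume: by \eqref{eqn:discr}, $p^n=|{\det}(L,b)|=\mathcal{N}_{E/\QQ}(\mathfrak{v}(L))\cdot p^{(p-2)k}$, so $\mathfrak{v}(L)=\mathfrak{P}^{\,n-(p-2)k}$, while $\mathfrak{v}(L)=a\mathfrak{I}\mathfrak{I}^\sigma$ with $a\in K^\times$ and $\mathfrak{I}\mathfrak{I}^\sigma$ fixed by $\sigma$, so $v_{\mathfrak{P}}(\mathfrak{v}(L))$ is even and $n\equiv(p-2)k\equiv k\pmod 2$. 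Thus $k=n+2m$ with $m\ge 0$, which is (ii). Finally each $K_i$ of \eqref{eqn:sign} is a $\CC$-vector space of $\CC$-dimension $k$, hence $(k_i^+,k_i^-)\in(2\ZZ)^2$ and $s_+=\sum_i k_i^+$ is even, giving (iii).

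\emph{Sufficiency.} Assume $(s_+,s_-,n,m)$ satisfy (ii)--(iv) (these follow from (i)--(iii), since for $n=0$ condition (i) makes $(L,b)$ even unimodular, forcing $s_+\equiv s_-\pmod 8$). Put $k=n+2m$. By \Cref{thm:p-elementary} the genus of even $p$-elementary lattices of signature $(s_+,s_-)$ and discriminant $p^n$ is nonempty (note $\rk\ne n$; when $n=0$ one uses (iv)) and uniquely determined, so it equals $\mathfrak{g}(L,b)$, and it suffices to produce one triple $(L,b,f)$ with $L$ even $p$-elementary of discriminant $p^n$ and signature $(s_+,s_-)$. I would assemble a hermitian $\ZZ[\zeta]$-lattice place by place: at the real places $v_1,\dots,v_{(p-1)/2}$ of $K$, the definite hermitian space with negative part of dimension $n_i\in[0,k]$, the $n_i$ chosen so that $2\sum_i n_i=s_-$ (possible since $s_-\le k(p-1)$); at the unique bad prime $\mathfrak{p}$, a modular hermitian lattice with $\mathfrak{s}\subseteq\mathfrak{A}$, with $\pi L^\vee_{\mathfrak{p}}\subseteq L_{\mathfrak{p}}$ and of volume $\mathfrak{P}^{\,n-(p-2)k}$, an even exponent by $n\equiv k$, so that it exists by \Cref{prop:modular}; and the unimodular (hence unique) lattice at every other, automatically good, prime. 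By \Cref{prop:existence-herm} these glue to a global hermitian lattice precisely when the set of places at which the local determinant is a non-norm has even cardinality; this set is contained in $\{v_1,\dots,v_{(p-1)/2},\mathfrak{p}\}$, and the needed parity is exactly what (ii)--(iv) furnish, being equivalent to nonemptiness of the target $\ZZ$-genus. The resulting $(L,h)$ has trace lattice even (\Cref{lem:even}), $p$-elementary of discriminant $p^n$ and of signature $(s_+,s_-)$, hence in $\mathfrak{g}(L,b)$; multiplication by $\zeta$ is an isometry of order $p$, fixed point free because $1$ is not a root of $\Phi_p(x)$, acting trivially on $L^\vee/L$ by the local choice at $\mathfrak{p}$. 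I expect the hard part to be exactly this last step: matching the local determinant at $\mathfrak{p}$ with the signs $(-1)^{n_i}$ at the real places so that \Cref{prop:existence-herm} applies, which is where conditions (ii)--(iv), in particular (iv), are genuinely used.

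\emph{Conjugacy.} If $g\in O(L)$ satisfies $gfg^{-1}=f'$, then $g$ interchanges the two $\ZZ[\zeta]$-module structures and preserves $h$, so it is a hermitian isometry $(L,h)\xrightarrow{\sim}(L,h')$; conversely any such isometry lies in $O(L,b)$ and conjugates $f$ to $f'$. Hence conjugacy classes of fixed point free order-$p$ isometries with trace lattice $(L,b)$ correspond to isometry classes of hermitian lattices with trace lattice isometric to $(L,b)$, and those of a fixed signature all lie in one hermitian genus $\mathfrak{g}(L,h)$, determined by the signature. If $\rk L=p-1$ then $(L,h)$ has rank one and \Cref{prop:genus1} applies; if $(L,b)$ is indefinite then $(L,h)$ has rank $\ge 2$ and is indefinite (it is unimodular of rank $\ge 2$, hence isotropic, at the good primes), while $(L,b)$ is the unique class in its $\ZZ$-genus, so \Cref{prop:genus2} applies and all of $\mathfrak{g}(L,h)$ contributes. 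In either case two lattices of $\mathfrak{g}(L,h)$ are isometric if and only if their determinant lattices are, and $\#\mathfrak{g}(L,h)=\#\Cl(E/K)$; this yields the criterion and the count $0$ or $\#\Cl(E/K)$. Finally $\mathfrak{v}(L)=\mathfrak{P}^{\,n-(p-2)k}$ is a power of $\mathfrak{p}$ (even exponent, by $n\equiv k\pmod 2$), so when $\#\Cl(E/K)$ is odd \Cref{lem:odd-relative-class} identifies this isometry relation with equality of Steinitz classes in $\Cl(E/K)$, which gives the last assertion.
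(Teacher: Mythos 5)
Your overall architecture coincides with the paper's: trace construction, local analysis at the unique ramified prime, local--global assembly via \Cref{prop:existence-herm}, and then \Cref{prop:genus1,prop:genus2,lem:odd-relative-class} for the conjugacy statement. The necessity and conjugacy parts are essentially correct, and your derivation of the parity $n\equiv k\pmod 2$ is a genuinely different (and valid) argument: you read it off from $\mathfrak{v}(L)=\mathfrak{P}^{\,n-(p-2)k}=a\,\mathfrak{I}\mathfrak{I}^\sigma$ with $a\in K^\times$ and $\mathfrak{P}^\sigma=\mathfrak{P}$, whereas the paper extracts it from the Jacobowitz classification of $\pi^i$-modular lattices (\Cref{prop:modular}), which forces $(L,h)_\mathfrak{p}\cong M\oplus H_{2-p}^{\oplus m}$ with $M$ of rank $n$. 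Your route is shorter; the paper's buys the explicit local structure at $\mathfrak{p}$, which it then reuses to pin down the genus of $(L,h)$ in the conjugacy part (a step you assert -- ``all lie in one hermitian genus determined by the signature'' -- but do not really justify, since from the volume alone you have not determined the local isomorphism class at $\mathfrak{p}$).

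The genuine gap is in the sufficiency direction, and you have flagged it yourself: you never verify that when (ii)--(iv) hold the set $S$ of places with non-norm local determinant can be made of even cardinality. Your claim that the needed parity is ``equivalent to nonemptiness of the target $\ZZ$-genus'' is unsubstantiated and conflates two different conditions: nonemptiness of $\even_{(s_+,s_-)}p^{\epsilon n}$ is the congruence of \Cref{thm:p-elementary} on the square class $\epsilon$, while evenness of $\#S$ is a product formula for norm residue symbols over the real places and $\mathfrak{p}$. The paper's proof spends its step (iv) precisely on this: for $n\neq 0$ the rank-$n$ $\pi^{3-p}$-modular block has a free choice of determinant in $K_\mathfrak{p}^\times/N(E_\pi^\times)$, so the parity can always be corrected and no condition arises; for $n=0$ the local determinant at $\mathfrak{p}$ is forced to be $(-1)^m$ up to norms, and combining ``$-1\in N(E_\pi^\times)$ iff $p\equiv 1\pmod 4$'' with $s_-=2\sum n_i$ and $s_++s_-=2m(p-1)$ yields exactly condition (iv) via a computation mod $8$. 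Without carrying out this case distinction and computation your proof does not establish existence, and it is exactly the step where (iv) enters; the remainder of your sufficiency argument (choosing the $n_i$, the good-prime components, and quoting \Cref{lem:even}) is fine.
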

\begin{proof}
(i) Since $f=\zeta$ acts as the identity on the discriminant group and $(L,b)$ is integral, we have
\begin{equation}\label{eqn:trivial_act}
(1-\zeta) (L,b)^\vee \subseteq (L,b)\subseteq (L,b)^\vee.
\end{equation}
Thus $(L,b)^\vee / (L,b)$ is isomorphic to $(\ZZ[\zeta]/ (1-\zeta))^n$ for some $n \leq \dim_E L\otimes \QQ$.
Since  $\ZZ[\zeta]/ (1-\zeta) \cong \FF_p$ as abelian groups, we see that $(L,b)$ is $p$-elementary. Moreover
it is even by \Cref{lem:even}.

(ii) Recall that $\pi = (1-\zeta)$.
Using \cref{eqn:discr} we translate \cref{eqn:trivial_act} to
\begin{equation}\label{eq-disc_bound}
\pi^{3-p}(L,h)^\# \subseteq  (L,h) \subseteq \pi^{2-p} (L,h)^\#.
\end{equation}
Thus $(L,h)$ is unimodular at all primes $\mathfrak{q}$ of $\ZZ_K$ except $\mathfrak{p}=\pi \pi^\sigma \ZZ_K$.
Since the primes $\mathfrak{q}$ are good, a unimodular hermitian lattice over $\ZZ_{K_\mathfrak{q}}$ is uniquely determined by its rank.
By \Cref{eq-disc_bound} the modular decomposition of $L$ at $\mathfrak{p}$ may only have $\pi^i$ modular blocks for $2-p \leq i \leq 3-p$.
From the classification of $\pi^i$-modular hermitian lattices in  \Cref{prop:modular}, we extract that
\[(L,h)_\mathfrak{p} \cong M \oplus H_{2-p}^{\oplus m}\]
where $M$ is $\pi^{3-p}$-modular of rank $k$ and determinant $\epsilon \in K_\mathfrak{p}^\times / N(E_\pi^\times)$.
Since $\mathfrak{A}^{-1}H_{2-p}^\#$ equals $H_{2-p}$, we obtain
\[L^\vee/L=\mathfrak{A}^{-1} L^\# / L \cong \mathfrak{A}^{-1}M^\#/M \cong \ZZ_E/\mathfrak{P}^k\cong \FF_p^k\] giving $\det (L,b) = p^k$ and thus $k=n$.

(iii) This follows from the fact that $s_- = 2 \sum_{i=1}^s n_i$.

(iv) Recall from \Cref{prop:existence-herm}
that a collection $(L_{v},h_v)_{v\in \Omega(K)}$ of local
hermitian lattices, all but finitely many of which are
unimodular, comes from a single global hermitian lattice
$(L,h)$ if and only if the set
$S=\{v \in \Omega(K) | \det L_v \not \in N(E_v) \}$
is finite of even cardinality.
An infinite place $v$ lies in $S$ if and only if $n_v$ is odd.
Thus we obtain
\begin{equation}\label{eqn:parity}
\# S \equiv \sum_{i=1}^s n_i +
\begin{cases}
0 \mbox{ if } \epsilon (-1)^m \in N(E^\times_\pi)\\
1 \mbox{ if } \epsilon (-1)^m \not\in N(E^\times_\pi)
\end{cases} \pmod{2}
\end{equation}
where $\epsilon = 1$ if $n=0$.
We see that for $n\neq 0$ this condition uniquely determines
the norm class of $\epsilon$.
We show that for $n=0$ condition (iv)
is equivalent to $\#S \equiv 0 \pmod{2}$.
To this end note that $-1$ is a local norm at $\pi$
if and only if $p \equiv 1 \pmod{4}$.
Thus we can rewrite \cref{eqn:parity} with $\epsilon =1$ as
\[2 \#S \equiv 2 \sum_{i=1}^s n_i + m(p-1) \pmod{4}.\]
With $s_- = 2 \sum_{i=1}^s n_i$ and multiplying by $2$ we arrive at $2s_- \equiv 2m(p-1) \pmod{8}$. Subtracting $s_+ + s_- = (p-1)2m$ yields (iv).
Note that $n=0$ if and only if $(L,b)$ is unimodular, hence we need $s_+ - s_- \equiv 0 \pmod{8}$.
This settles the existence conditions.

The lattice $(L,b)$ determines $n$ and $m$. By the previous
considerations this determines the isomorphism class of $(L,h)_\nu$
at all finite places $\nu \in \mathbb{P}(K)$.
The signatures $n_i$ determine it at the infinite places.
Thus they give the genus of $(L,h)$.
By \Cref{prop:genus1,prop:genus2}
the classes in the genus of $(L,h)$ are determined by the determinant lattice, and
 for odd relative class number by the Steinitz class (cf.\  \Cref{lem:odd-relative-class}).\end{proof}

\begin{remark} \label{rmk: relative class numbers}
 The relative class number $h^{-}(\QQ[\zeta_p])$
 is $1$ for all primes $p\leq 19$. For $p=23$, $29$, $31$,
 $37$, $41$ it equals $3$,
 $2^3$,
 $3^2$,
 $37$,
 $11^2$ respectively. The next prime with \emph{even} relative class number is $p=113$ with  $h^-=2^3 \cdot 17 \cdot 11853470598257$ (cf.\ \cite[Tables \S 3]{washington:cyclotomic_fields}).
 \end{remark}
\subsection{The action on the discriminant group.}
For an integral $\ZZ$-lattice $(L,b)$ and $f \in O(L,b)$ fixed point free of order $p$, we denote by $O(L,b,f)$ the centralizer of $f$ in $O(L,b)$. Note that $O(L,b,f)= O(L,h)$, where $h$ is defined as in \cref{eqn:hermitian_form}. Let $\bar f$ be the isometry of $\disc{L}$ induced by $f$.
In what follows, we want to compute the image of the morphism
\[O(L,f) \rightarrow O(\disc{L},\bar f) \]
in the case $\bar f = \id_{\disc{L}}$.

Let $x \in L^\vee$ and $[x]=x + L$. If $b(x,x)\not \equiv 0 \mod \ZZ$, we have the reflection
\[\tau_{[x]}\colon L^\vee/L \rightarrow L^\vee/L, \quad y \mapsto
y -2\frac{b(x,y)}{b(x,x)}y.\]

This can be adapted for hermitian lattices.
For any $\delta \in \mathcal{E}_0=\{\delta \in \ZZ_E \mid \delta \delta^\sigma=1\}$ and $x \in V=L\otimes E$ with $h(x,x) \neq 0$ we obtain the quasi reflection
$\tau_{x,\delta} \in O(V,h)$ defined by
\[\tau_{x,\delta}: V \rightarrow V, \quad y\mapsto y + (\delta-1)\frac{h(y,x)}{h(x,x)}x.\]
It maps $x$ to $\delta x$ and it is trivial on $x^\perp$.

\begin{lemma}
 Let $x\in L^\vee\otimes \ZZ_p=L^\vee_\mathfrak{p}$ with $b(x,x) \not \equiv 0 \mod \ZZ_p$.
 Suppose that $\mathfrak{P} L_\mathfrak{p}^\vee \subseteq L_\mathfrak{p}$.
 Then
 $\tau_{x,-1}$ acts as the reflection $\tau_{[x]}$ on the discriminant group $L^\vee/L$.
\end{lemma}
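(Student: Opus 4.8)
The plan is to reduce everything to the completion at $\mathfrak p$. The quasi-reflection $\tau:=\tau_{x,-1}$ is built from $x\in L^\vee_\mathfrak p$ and lives in $O(V_\mathfrak p,h)$, where $V_\mathfrak p := V\otimes_\QQ\QQ_p$; the claim concerns its action on the $p$-part $\bar L := L^\vee_\mathfrak p/L_\mathfrak p$ of the discriminant group, which by the hypothesis $\mathfrak P L^\vee_\mathfrak p\subseteq L_\mathfrak p$ is killed by $\mathfrak P$ and so is an $\FF_p=\ZZ_E/\mathfrak P$-vector space. I would establish two things: (a) $\tau$ preserves $L_\mathfrak p$, hence descends to an isometry $\bar\tau$ of $\bar L$; (b) $\bar\tau=\tau_{[x]}$. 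For (a), observe that $\tau$ is also an isometry of $(V_\mathfrak p,b)$, so once $\tau(L^\vee_\mathfrak p)=L^\vee_\mathfrak p$ is known it automatically preserves the $b$-dual $(L^\vee_\mathfrak p)^\vee=L_\mathfrak p$; and since $\tau$ is an involution it is enough to check $\tau(L^\vee_\mathfrak p)\subseteq L^\vee_\mathfrak p$, i.e.\ that $h(y,x)/h(x,x)\in\ZZ_{E_\mathfrak p}$ for all $y\in L^\vee_\mathfrak p$.

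This reduces to a valuation estimate. Arguing as in the proof of \Cref{prop:clas-fixed-pt-free} (applied at $\mathfrak p$, using \eqref{eqn:discr} together with $\mathfrak P L^\vee_\mathfrak p\subseteq L_\mathfrak p$) one gets the local decomposition $L_\mathfrak p\cong M\oplus H_{2-p}^{\oplus m}$ with $M$ a $\pi^{3-p}$-modular lattice, hence $L^\vee_\mathfrak p\cong\pi^{-1}M\oplus H_{2-p}^{\oplus m}$ and $\mathfrak s(L^\vee_\mathfrak p)=\mathfrak P^{1-p}$; in particular $v_\mathfrak P(h(y,z))\geq 1-p$ for all $y,z\in L^\vee_\mathfrak p$. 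Since $\mathfrak P^{2-p}=\mathfrak A^{-1}$ is the inverse different of $E/\QQ$ one has $\Tr^E_\QQ(\mathfrak P^{2-p})\subseteq\ZZ_p$, so $b(x,x)=\Tr^E_\QQ(h(x,x))\notin\ZZ_p$ forces $v_\mathfrak P(h(x,x))<2-p$, and therefore $v_\mathfrak P(h(x,x))=1-p$ exactly. Dividing gives $h(y,x)/h(x,x)\in\ZZ_{E_\mathfrak p}$, proving (a). The same bookkeeping (together with $pL^\vee_\mathfrak p\subseteq L_\mathfrak p$) shows $v_p(b(x,x))=-1$ and $v_p(b(x,y))\geq -1$, so $2b(x,y)/b(x,x)\in\ZZ_p$ and the reflection $\tau_{[x]}$, $[y]\mapsto [y]-\tfrac{2b(x,y)}{b(x,x)}[x]$, is well defined on $\bar L$.

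For (b) I would compute the two maps on $\bar L$. For $y\in L^\vee_\mathfrak p$ one has $\bar\tau([y])=[y]-\overline{\bigl(2h(y,x)/h(x,x)\bigr)}\,[x]$ and $\tau_{[x]}([y])=[y]-\overline{\bigl(2b(x,y)/b(x,x)\bigr)}\,[x]$, the bars denoting reduction modulo $\mathfrak P$, resp.\ modulo $p$ (these agree under $\ZZ_p/p=\ZZ_E/\mathfrak P$, and only the reduction matters since $\bar L$ is an $\FF_p$-space). So it suffices to prove $h(y,x)/h(x,x)\equiv b(x,y)/b(x,x)\pmod{\mathfrak P}$. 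The key arithmetic fact is that $\mathfrak P^{1-p}=p^{-1}\ZZ_E$ and that, modulo $p$, $\Tr^E_\QQ$ is a fixed unit multiple of reduction modulo $\mathfrak P$: $p\,\Tr^E_\QQ(\pi^{1-p}\alpha)\equiv u_0\,\bar\alpha\pmod p$ for every $\alpha\in\ZZ_E$, with $u_0\in\FF_p^\times$ independent of $\alpha$. Writing $h(x,x)=\pi^{1-p}\gamma$ with $\gamma\in\ZZ_{E_\mathfrak p}^\times$ and $h(y,x)=\pi^{1-p}\theta$ with $\theta\in\ZZ_{E_\mathfrak p}$, and using $b(x,z)=\Tr^E_\QQ(h(z,x))$ ($\Tr^E_\QQ$ being $\sigma$-invariant), one gets $h(y,x)/h(x,x)=\theta/\gamma$, whereas $b(x,y)/b(x,x)=\bigl(p\,b(x,y)\bigr)/\bigl(p\,b(x,x)\bigr)$ reduces modulo $p$ to $(u_0\bar\theta)/(u_0\bar\gamma)=\bar\theta/\bar\gamma$ — the universal constant $u_0$ cancels in the ratio — which is the same class. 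Hence $\bar\tau=\tau_{[x]}$.

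The main obstacle is the valuation bookkeeping in the middle step: one must pair the sharp lower bound $v_\mathfrak P(h(\cdot,\cdot))\geq 1-p$ on $L^\vee_\mathfrak p$ (from the local modular structure) with the upper bound $v_\mathfrak P(h(x,x))<2-p$ (from the inverse different and the assumption $b(x,x)\notin\ZZ_p$), so that $h(x,x)$ has valuation \emph{exactly} $1-p$. This exactness is precisely what makes the cancellation of $u_0$ in the final ratio go through; the rest is formal.
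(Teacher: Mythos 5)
Your proof is correct. The opening move coincides with the paper's: both arguments pin down the valuation of $h(x,x)$ exactly, by playing the upper bound $h(x,x)\in\mathfrak{s}(L^\vee,h)_\mathfrak{p}=\mathfrak{P}^{-1}\mathfrak{A}^{-1}$ (a consequence of $\mathfrak{P}L^\vee_\mathfrak{p}\subseteq L_\mathfrak{p}$) against the lower bound $h(x,x)\notin\mathfrak{A}^{-1}$ (from $\Tr^{E_\mathfrak{p}}_{\QQ_p}(h(x,x))=b(x,x)\notin\ZZ_p$). After that the two arguments diverge. The paper notes that, since $h(x,x)$ generates the scale of $L^\vee_\mathfrak{p}$, one gets an orthogonal splitting $L^\vee_\mathfrak{p}=\ZZ_{E_\mathfrak{p}}x\oplus x^\perp$ descending to $[x]\ZZ_p\oplus[x]^\perp$ on the discriminant group; both $\tau_{x,-1}$ and $\tau_{[x]}$ are then $-1$ on the first summand and the identity on the second, so they agree with no further computation. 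You instead check that $\tau_{x,-1}$ preserves the lattice and then match the coefficient of $[x]$ in the two formulas, which needs the additional arithmetic input that $p\,\Tr^E_\QQ(\pi^{1-p}\,\cdot\,)$ reduces modulo $p$ to a fixed nonzero multiple of reduction modulo $\mathfrak{P}$ (true, since $\Tr^E_\QQ(\mathfrak{P})\subseteq p\ZZ_p$ and the induced $\FF_p$-linear map is multiplication by $\Tr(1)=-1$; and as you say, the constant cancels in the ratio). Both routes work: the eigenspace argument is shorter and sidesteps the trace congruence entirely, while your version is more explicit and also records en passant why $\tau_{[x]}$ is well defined ($v_p(b(x,x))=-1$). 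One small economy you could make: invoking the full modular decomposition of $L_\mathfrak{p}$ to compute $\mathfrak{s}(L^\vee_\mathfrak{p})$ is more than necessary, since the containment $h(L^\vee_\mathfrak{p},L^\vee_\mathfrak{p})\subseteq\mathfrak{P}^{-1}\mathfrak{A}^{-1}$ follows directly from $\mathfrak{P}L^\vee_\mathfrak{p}\subseteq L_\mathfrak{p}$ together with $h(L,L^\vee)\subseteq\mathfrak{A}^{-1}$.
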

\begin{proof}
 As $\Tr^{E_\mathfrak{p}}_{\QQ_p}(h(x,x)) = b(x,x) \not \equiv 0 \mod \ZZ_p$,
 we obtain that $h(x,x) \not \in \mathfrak{A}^{-1}$.
 On the other hand $h(x,x) \in \mathfrak{s}(L^\vee,h)_\mathfrak{p}= \mathfrak{P}^{-1} \mathfrak{A}^{-1}$.
 Thus $h(x,x)\ZZ_{E_\mathfrak{p}} = \mathfrak{s}(L^\vee)$
 and we can write $L^\vee = \ZZ_{E_\mathfrak{p}} x \oplus x^\perp$.
 This induces a compatible splitting on $\disc{L}$. Since $\pi x \in L$,
 we have $x\ZZ_{E_\mathfrak{p}}+L_\mathfrak{p} = x \ZZ_p+L_\mathfrak{p}$ and this splitting is
 $[x]\ZZ_p \oplus [x]^\perp$. We can now conclude by comparing the actions of $\tau_{x,-1}$ and $\tau_{[x]}$ on
 $[x]$ and $[x]^\perp$.
\end{proof}

\begin{lemma}
 The morphism
 \[O(L_\mathfrak{p},h_\mathfrak{p}) \rightarrow O(\disc{L})\]
 is surjective.
\end{lemma}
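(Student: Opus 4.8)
The statement to prove is that the natural map $O(L_\mathfrak{p},h_\mathfrak{p}) \rightarrow O(\disc{L})$ is surjective, where $L$ is the trace lattice of a unimodular hermitian lattice over... wait, no—here $L$ is the coinvariant lattice of a fixed-point-free prime order isometry, so by the previous discussion $L$ is $p$-elementary and $\mathfrak{P} L^\vee_\mathfrak{p} \subseteq L_\mathfrak{p}$, i.e.\ the discriminant group is $p$-torsion and killed by $\pi=(1-\zeta)$. I want to realize every isometry of the $\FF_p$-quadratic form on $\disc{L}$ as coming from a hermitian isometry at $\mathfrak{p}$.

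The plan is to use generation by reflections together with Witt's theorem over finite fields. First, recall that $D = \disc{L} = L^\vee/L$ carries a nondegenerate quadratic form valued in $\QQ/\ZZ$ with values in $\frac{1}{p}\ZZ/\ZZ \cong \FF_p$, since $L$ is $p$-elementary. By the Cartan–Dieudonné theorem (valid for nondegenerate quadratic forms over $\FF_p$ with $p$ odd), $O(D)$ is generated by reflections $\tau_{[x]}$ in anisotropic vectors $[x] \in D$. So it suffices to show that every such reflection $\tau_{[x]}$ lies in the image of the map.

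**Lifting reflections.** Fix an anisotropic $[x]\in D$, i.e.\ $b(x,x)\not\equiv 0 \bmod \ZZ_p$ for a representative $x \in L^\vee_\mathfrak{p}$. By the previous lemma, the quasi-reflection $\tau_{x,-1} \in O(V_\mathfrak{p}, h_\mathfrak{p})$ acts on $D$ exactly as $\tau_{[x]}$. It therefore remains to check that $\tau_{x,-1}$ actually preserves the lattice $L_\mathfrak{p}$, i.e.\ $\tau_{x,-1} \in O(L_\mathfrak{p},h_\mathfrak{p})$, not merely the ambient space. This is where I'd carefully use the splitting established in the proof of the previous lemma: since $h(x,x)$ generates $\mathfrak{s}(L^\vee)_\mathfrak{p} = \mathfrak{P}^{-1}\mathfrak{A}^{-1}$, we get the orthogonal decomposition $L^\vee_\mathfrak{p} = \ZZ_{E_\mathfrak{p}} x \oplus x^\perp$, which restricts to $L_\mathfrak{p} = \mathfrak{P}\ZZ_{E_\mathfrak{p}} x \oplus (x^\perp \cap L_\mathfrak{p})$ because $D$ is $\mathfrak{P}$-torsion, hence the $x$-component of $L_\mathfrak{p}$ inside $\ZZ_{E_\mathfrak{p}}x$ is $\pi \ZZ_{E_\mathfrak{p}} x$. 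Now $\tau_{x,-1}$ fixes $x^\perp$ pointwise and sends $x \mapsto -x$; both $\ZZ_{E_\mathfrak{p}}x$ and $\mathfrak{P}\ZZ_{E_\mathfrak{p}}x$ are stable under $x\mapsto -x$. Hence $\tau_{x,-1}$ preserves the decomposition and thus $L_\mathfrak{p}$, so $\tau_{x,-1}\in O(L_\mathfrak{p},h_\mathfrak{p})$ and maps onto $\tau_{[x]}$.

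**Conclusion and the main obstacle.** Combining these: the image of $O(L_\mathfrak{p},h_\mathfrak{p})\to O(D)$ contains all reflections in anisotropic vectors, which generate $O(D)$ by Cartan–Dieudonné, so the map is surjective. The only genuinely delicate point is making sure that \emph{every} anisotropic class $[x]\in D$ admits a representative $x\in L^\vee_\mathfrak{p}$ with $h(x,x)$ generating the scale $\mathfrak{P}^{-1}\mathfrak{A}^{-1}$ of $L^\vee_\mathfrak{p}$—equivalently, that $b(x,x)$ is a unit times $1/p$ and not a higher power of $p$ in the denominator; this is exactly the hypothesis $b(x,x)\not\equiv 0 \bmod \ZZ_p$ used in the previous lemma, and it holds precisely because $[x]$ is anisotropic for the $\FF_p$-valued form and $D$ is $p$-elementary, so that the "worst" denominator is $p$ itself. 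A secondary subtlety is that Cartan–Dieudonné in its usual form is stated for vector spaces; here $D$ is a finite $\FF_p$-vector space with a nondegenerate quadratic form, so the classical statement applies verbatim once we note $p$ is odd (characteristic $\neq 2$). I would also remark that surjectivity of $O(L_\mathfrak{p},h_\mathfrak{p})\to O(D)$ together with the analogous statement at the good (inert/split) primes—where $O(D_\nu)$ is trivial since $L_\nu$ is unimodular—is what will later let us glue local hermitian isometries to control the global map $O(L,f)\to O(\disc L)$.
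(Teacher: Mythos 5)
Your proof is correct and follows the same route as the paper's: note that $O(\disc{L})$ is an orthogonal group over $\FF_p$, invoke Cartan--Dieudonn\'e to generate it by reflections, and lift each reflection $\tau_{[x]}$ to the quasi-reflection $\tau_{x,-1}$ via the preceding lemma. The extra details you supply (that $\tau_{x,-1}$ preserves $L_\mathfrak{p}$, and that anisotropic classes admit suitable representatives) are exactly what the paper leaves implicit in that lemma, so there is no substantive difference.
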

\begin{proof}
 Note that in our case $O(\disc{L})$ is just an orthogonal group over a field. Then the Cartan-Dieudonn\'e theorem \cite[Ch.\ 1, Thm.\ 5.4]{scharlau} says that it is generated by reflections. But these reflections lie in the image.
\end{proof}

\begin{corollary}
The special orthogonal group $SO(\disc{L})$ is contained in the image of $O(L,h) \rightarrow O(\disc{L})$.
\end{corollary}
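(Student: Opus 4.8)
The plan is to lift a prescribed $\bar\phi\in SO(\disc{L})$ to a $\mathfrak{p}$-adic isometry of $(L_\mathfrak{p},h_\mathfrak{p})$ of determinant one and then to globalize it, by strong approximation for the special unitary group (Shimura's theorem, already invoked above), to an isometry of $(L,h)$ inducing the same map on $\disc{L}$. Restricting to $SO(\disc{L})$ rather than all of $O(\disc{L})$ is precisely what makes the determinant-one condition, and hence strong approximation, available.

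Carrying this out: since $\bar f=\id$ on $\disc{L}$, we have $(1-\zeta)L^\vee\subseteq L$, that is $\mathfrak{P}L^\vee\subseteq L$, hence $\mathfrak{P}L^\vee_\mathfrak{p}\subseteq L_\mathfrak{p}$; so the preceding lemma applies to every $x\in L^\vee_\mathfrak{p}$ with $b(x,x)\not\equiv 0 \mod \ZZ_p$, the quasi-reflection $\tau_{x,-1}\in O(L_\mathfrak{p},h_\mathfrak{p})$ inducing $\tau_{[x]}$ on $\disc{L}$. Since $\disc{L}$ is a nondegenerate quadratic space over $\FF_p$, the Cartan-Dieudonn\'e theorem writes $\bar\phi=\tau_{[x_1]}\cdots\tau_{[x_t]}$ with each $[x_i]$ anisotropic, i.e.\ with a lift $x_i\in L^\vee_\mathfrak{p}$ satisfying $b(x_i,x_i)\not\equiv 0 \mod \ZZ_p$; moreover $\det\bar\phi=1$ forces $t$ to be even. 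Set $g_\mathfrak{p}:=\tau_{x_1,-1}\cdots\tau_{x_t,-1}\in O(L_\mathfrak{p},h_\mathfrak{p})$: it induces $\bar\phi$ on $\disc{L}$, and since $\det\tau_{x_i,-1}=-1$ over $E$ we get $\det g_\mathfrak{p}=(-1)^t=1$, so $g_\mathfrak{p}$ lies in the determinant-one subgroup of $O(L_\mathfrak{p},h_\mathfrak{p})$. Now the algebraic group $SU(V,h)$, $V=L\otimes E$, is simply connected and, in the indefinite setting at hand, noncompact at a real place of $K$, hence satisfies strong approximation with respect to the archimedean places. Letting $U\subseteq SU(V,h)(\AA_{K,f})$ be the open compact subgroup equal to $\{g\in SU(L_\mathfrak{p},h_\mathfrak{p})\mid g|_{\disc{L}}=\id\}$ at $\mathfrak{p}$ and to the stabilizer of $L_\nu$ at every other finite place $\nu$, strong approximation yields $g\in SU(V,h)(K)$ lying in the coset $g_\mathfrak{p}U$ (with $g_\mathfrak{p}$ placed at $\mathfrak{p}$ and $1$ elsewhere). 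Then $g$ preserves $L_\nu$ for all finite $\nu$, so $g\in O(L,h)$, and it induces $\bar\phi$ on $\disc{L}$; hence $\bar\phi$ lies in the image of $O(L,h)\to O(\disc{L})$.

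The step I expect to be the main obstacle is this globalization: one must verify that $SU(V,h)$ genuinely satisfies strong approximation under the running hypotheses -- this is where indefiniteness of $(L,b)$ is used, the hermitian-rank-one case being vacuous since there $SO(\disc{L})$ itself is trivial -- and one must set up the open compact subgroup $U$ carefully enough that the global isometry it produces simultaneously stabilizes $L$ at every finite place and still induces the prescribed $\bar\phi$ at $\mathfrak{p}$. The remaining ingredients -- that an anisotropic class of $\disc{L}$ admits a lift $x\in L^\vee_\mathfrak{p}$ with $b(x,x)\not\equiv 0 \mod \ZZ_p$, and that $\det\tau_{x,-1}=-1$ -- are routine.
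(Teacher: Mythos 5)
Your proof is correct and follows essentially the same route as the paper's: decompose the given element of $SO(\disc{L})$ into an even number of reflections, lift these to quasi-reflections $\tau_{x_i,-1}$ whose product has determinant one over $E$, and then use strong approximation for the special unitary group to replace this local isometry at $\mathfrak{p}$ by a global element of $O(L,h)$ inducing the same map on $\disc{L}$. You are merely more explicit than the paper about the adelic setup and about where indefiniteness enters (the paper's one-line invocation of strong approximation tacitly assumes the hypotheses of the surrounding section), which is a welcome clarification but not a different argument.
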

\begin{proof}
 The special orthogonal group is generated by pairs
 $\tau_{[x]}\tau_{[y]}$. Then the corresponding isometry
 $t=\tau_{x,-1}\tau_{y,-1}$ has determinant one. By the strong approximation theorem, for any $k$ we can find $g \in O(L)$ with $(g-t)(L) \subseteq \mathfrak{p}^kL$. For us $k=1$ is enough.
\end{proof}

Since $[O(\disc{L}):SO(\disc{L})]$ is at most $2$, we need only one more generator to get full surjectivity.

\begin{proposition} \label{prop: surjectivity}
 Let $(L,h)$ be a hermitian $\ZZ_E$-lattice with $\mathfrak{P} L^\vee \subseteq L$.
 If $(L,h)$ is indefinite or of rank one, then the natural map
 \[O(L,h) \rightarrow O(\disc{L})\]
 is surjective.
\end{proposition}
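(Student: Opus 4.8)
The plan is to build on the corollary showing $SO(\disc{L}) \subseteq \im\big(O(L,h) \to O(\disc{L})\big)$. Since $SO(\disc{L})$ has index at most $2$ in $O(\disc{L})$, it suffices to produce, whenever $\disc{L} \neq 0$, a single isometry $g \in O(L,h)$ whose induced action $\bar g$ on $\disc{L}$ is improper, i.e.\ $\det \bar g = -1$; together with the corollary this forces the map to be onto.

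First the rank one case. If $(L,h)$ has rank one, then $L$ and $L^\vee$ are rank one $\ZZ_E$-submodules of the one-dimensional $E$-vector space $L\otimes E$, so the chain $\mathfrak{P} L^\vee \subseteq L \subseteq L^\vee$ forces either $L = L^\vee$, in which case $\disc{L}=0$ and there is nothing to prove, or $L = \mathfrak{P} L^\vee$, in which case $\disc{L} = L^\vee/\mathfrak{P} L^\vee \cong \ZZ_E/\mathfrak{P} \cong \FF_p$. In the latter case scalar multiplication by $-1$, which lies in $O(L,h)$, acts on $\disc{L}\cong \FF_p$ as $-\id$, the nontrivial element of $O(\disc{L})$; this settles the rank one case.

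Now suppose $(L,h)$ is indefinite and $\disc{L}\neq 0$. The nondegenerate quadratic space $\disc{L} \cong L^\vee_\mathfrak{p}/L_\mathfrak{p}$ over $\FF_p$ contains an anisotropic vector $[x]$, represented by some $x \in L^\vee_\mathfrak{p}$ with $b(x,x)\not\equiv 0 \pmod{\ZZ_p}$. Since $\mathfrak{P} L^\vee_\mathfrak{p} \subseteq L_\mathfrak{p}$, the lemma identifying $\tau_{x,-1}$ with $\tau_{[x]}$ on the discriminant group shows that $\tau_{x,-1} \in O(L_\mathfrak{p},h_\mathfrak{p})$ and induces the reflection $\tau_{[x]}$ on $\disc{L}$, its determinant as a unitary transformation being $-1 \in \mathcal{E}_0^\mathfrak{p}$. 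It remains to globalize $\tau_{x,-1}$: I would combine strong approximation for the special unitary group of the ambient hermitian space — available because $(L,h)$ is indefinite, exactly as in the proof of the corollary just used — with the vanishing of the determinant obstruction at the unique bad prime, $[\mathcal{E}(L):R(L)]=1$, which was established in the proof of \Cref{prop:genus2} from the fact that $-1\cdot\mathcal{E}_1^\mathfrak{p}$ already lies in $R(L)$. Together these produce an isometry $g \in O(L,h)$ with $g \equiv \tau_{x,-1} \pmod{\mathfrak{p}^N L_\mathfrak{p}}$ for $N$ arbitrarily large; taking $N$ large enough that $g$ and $\tau_{x,-1}$ agree on $\disc{L}$, we obtain $\bar g = \tau_{[x]}$ and hence $\det \bar g = -1$.

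The only genuinely delicate step is this last globalization, and the subtlety is twofold: one must feed strong approximation for the special unitary group the correct archimedean input (this is where the indefiniteness hypothesis enters, just as in the corollary), and at the finite level one must account for the extra $\ZZ/2\ZZ$ that appears because $\tau_{x,-1}$ has determinant $-1$ rather than $+1$ — which is exactly what $[\mathcal{E}(L):R(L)]=1$ controls. Everything else is a formal consequence of the lemmas already established; in particular, one could alternatively first dispose of the case in which $\disc{L}$ has odd dimension over $\FF_p$ by noting that the scalar $-1$ acts improperly there, leaving only the even-dimensional indefinite case for the strong approximation argument.
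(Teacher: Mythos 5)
Your proposal is correct and follows essentially the same route as the paper: reduce to exhibiting one isometry inducing a reflection on $\disc{L}$ (the corollary already gives $SO(\disc{L})$), handle rank one via $O(\disc{L})\subseteq\{\pm\id\}$, and in the indefinite case globalize the quasi-reflection $\tau_{x,-1}$ by strong approximation for the special unitary group. The only cosmetic difference is that the paper takes $x\in L$ to be a global norm generator at $\mathfrak{p}$, so that $\tau=\tau_{x,-1}$ is already a global isometry of $(V,h)$ preserving $L_\mathfrak{p}$, and then removes its denominators at the finitely many remaining primes by an $SU$-approximation of the determinant-one local data $\tau^{-1}\circ\phi_\mathfrak{q}$ with $\det\phi_\mathfrak{q}=-1$; this concrete correction plays the role of your appeal to $[\mathcal{E}(L):R(L)]=1$.
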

\begin{proof}
 Set $V = L\otimes E$.
 If $(L,h)$ is of rank one, then $O(L^\vee/L)\subseteq\{\pm 1\}$ and the natural map is surjective.
 If $(L,b)$ is unimodular, the proposition is certainly true.
 Otherwise, by the proof of \Cref{prop:clas-fixed-pt-free}, $\scale(L) = \norm(L)\ZZ_{E}$. Let $x \in L$ be a local norm generator at $\mathfrak{p}$, that is, $h(x,x)\ZZ_{K_\mathfrak{p}} = \norm(L)_\mathfrak{p}$.
 Since $\norm(L_\mathfrak{p})\ZZ_{E_\mathfrak{p}}=\scale(L_\mathfrak{p})$, the reflection $\tau=\tau_{x,-1} \in O(V,h)$ satisfies $\tau(L_\mathfrak{p}) = L_\mathfrak{p}$.
 However, it has denominators at the finite set of primes
 \[Q = \{\mathfrak{q} \in \mathbb{P}(K) \mid \tau(L_\mathfrak{q}) \neq L_\mathfrak{q}\}.\]
 
 We shall use the strong approximation theorem (\cite[5.12]{shimura:unitary}, \cite{kneser:strong-approx}) in the formulation of \cite[Thm.\ 5.1.3]{kirschmer:hermitian} to compensate the denominators.
 Take $S = \mathbb{P}(K)$. Since $V$ is indefinite $\Omega(K) \setminus S$ contains an isotropic place. We set $T = Q \cup \{ \mathfrak{p}\}$, and define
 $\sigma_\mathfrak{q} = \tau^{-1} \circ \phi_\mathfrak{q} $ for $\mathfrak{q} \in Q$ where $\phi_\mathfrak{q}\in O(L_\mathfrak{q})$ is of determinant $-1$ (which is possible by \cite[Cor.\ 3.6]{kirschmer:determinants}). Finally set $\sigma_\mathfrak{p} = \id_{L_\mathfrak{p}}$.
 By the strong approximation theorem for any $k \in \NN$ we can find $\sigma \in O(V)$ with
 \begin{itemize}
  \item  $\sigma(L_\mathfrak{r}) = L_\mathfrak{r}$ for $\mathfrak{r} \in S \setminus T$ and
  \item  $(\sigma - \sigma_{q})(L_\mathfrak{q})\subseteq \mathfrak{q}^kL_{\mathfrak{q}}$ for $\mathfrak{q} \in T$.
 \end{itemize}
 
 Choose $k\geq 1$ large enough such that $\mathfrak{q}^k \tau( L_\mathfrak{q}) \subseteq L_{\mathfrak{q}}$ for all $\mathfrak{q} \in Q$.
 Then
 \begin{eqnarray*}
  \tau \circ \sigma (L_\mathfrak{q})
  &\subseteq& \tau \circ \left(\sigma - \sigma_\mathfrak{q}\right)(L_\mathfrak{q}) + \tau \circ \sigma_\mathfrak{q}(L_\mathfrak{q})\\
  &\subseteq& \tau(\mathfrak{q}^k L_\mathfrak{q}) + \tau\circ \sigma_\mathfrak{q}(L_\mathfrak{q})\\
  &\subseteq& \mathfrak{q}^k \tau(L_\mathfrak{q}) + \phi_\mathfrak{q}(L_\mathfrak{q})\\
  &\subseteq& L_\mathfrak{q}.
 \end{eqnarray*}

 Hence $\tau \circ \sigma$ preserves $L_\mathfrak{q}$ for all $\mathfrak{q} \in \mathbb{P}(K)$. As it is moreover an element of $O(V,h)$, it must be in $O(L,h)$.
 Since $k\geq 1$, both $\tau$ and $\tau \circ \sigma$ induce the reflection $\tau_{[x]}$ on the discriminant group. This reflection generates $O(\disc{L})/SO(\disc{L})$.
\end{proof}

For later use we prove the following Lemma.
\begin{lemma}\label{lem:SO}
 If $(L,h)$ is a hermitian $\ZZ_E$-lattice with trace lattice $(L,b,f)$, then $O(L,b,f)=SO(L,b,f)$.
\end{lemma}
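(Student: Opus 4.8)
The plan is to use that $O(L,b,f)=O(L,h)$ consists precisely of the $\ZZ_E$-linear isometries of $(L,h)$, and to show that every such isometry, regarded as a $\ZZ$-linear automorphism of $L$, has determinant $1$. Since $SO(L,b,f)\subseteq O(L,b,f)$ trivially, this yields the claimed equality.

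First I would fix $g\in O(L,h)$ and pass to the ambient hermitian space $(V,h)$ with $V=L\otimes_{\ZZ_E}E=L\otimes\QQ$, on which $g$ extends to an $E$-linear isometry. As $V$ is a free $E$-module, $g$ has a well-defined $E$-linear determinant $\det_E(g)\in E^\times$; choosing an $E$-basis of $V$ with Gram matrix $H$ and letting $G$ be the matrix of $g$, the condition that $g$ preserves $h$ becomes $G^{T}HG^{\sigma}=H$, and taking determinants gives $\det_E(g)\,\sigma(\det_E(g))=1$, i.e.\ $N^E_K(\det_E(g))=1$. Note that this step only uses that $E$ is a field, so the fact that $L$ need not be free over $\ZZ_E$ causes no trouble.

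Next I would invoke the restriction-of-scalars formula for determinants: for an $E$-linear endomorphism of a finite-dimensional $E$-vector space, its determinant as a $\QQ$-linear map equals the norm $N^E_\QQ$ of its $E$-linear determinant. Applying this to $g$ acting on $V$, which as a $\QQ$-vector space is just $L\otimes\QQ$, and using transitivity of the norm together with the previous step,
\[
\det(g)=N^E_\QQ(\det\nolimits_E(g))=N^K_\QQ\bigl(N^E_K(\det\nolimits_E(g))\bigr)=N^K_\QQ(1)=1 .
\]
Hence $g\in SO(L,b)$, and since $g\in O(L,b,f)$ was arbitrary, $O(L,b,f)\subseteq SO(L,b)$, so $O(L,b,f)=O(L,b,f)\cap SO(L,b)=SO(L,b,f)$.

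\textbf{Main obstacle.} There is essentially no obstacle: the argument is formal, resting on the norm-one property of $\det_E(g)$ for a hermitian isometry and on the standard identity $\det=N^E_\QQ\circ\det_E$ for restriction of scalars. The only points to state with care are this determinant identity and the reduction to working over the field $E$ (rather than over $\ZZ_E$) so that $\det_E(g)$ is unambiguous even when $L$ is not a free $\ZZ_E$-module.
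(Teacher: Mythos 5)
Your proof is correct and follows essentially the same route as the paper: both arguments note that an isometry of $(L,h)$ has $E$-linear determinant of relative norm $1$ (from $dd^{\sigma}=1$) and then conclude via $\det_{\QQ}=N^E_{\QQ}\circ\det_E=N^K_{\QQ}\circ N^E_K\circ\det_E$. Your version merely spells out the Gram-matrix computation and the passage to the ambient space $V$ in more detail.
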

\begin{proof}
 Let $f \in O(L,b,f)=O(L,h)$. When we view $f$ as an $E$-linear map its determinant $d=\det_E(f) \in E$ satisfies $dd^\sigma=1$. Viewed as a $\QQ$-linear map one obtains $\det_\QQ(f)=N^E_\QQ(\det_E(f))=N^K_\QQ\circ N^E_K(\det_E(f))=N^K_\QQ(1)=1$.
\end{proof}

\subsection{The classification.}
We now use the results of the previous section to obtain
existence and uniqueness results on prime order isometries of unimodular lattices. To make notation lighter, in the following we will denote by $A_L = L^\vee/L$ the discriminant group of an even $\ZZ$-lattice $(L,b)$ and by $q_L$ its discriminant quadratic form. The \emph{length} $l(A_L)$ is defined as the minimal number of generators of the group $A_L$.

\begin{proof}[Proof of \Cref{thm: classific unimod}]
If $L$ is unimodular, there exists an isometry
of order $p$ if and only if there exists a primitive sublattice
$S \subset L$ and a fixed point free isometry $f \in O(S)$ of order $p$
which acts trivially on the discriminant group of $S$.
Indeed, such an isometry $f \in O(S)$ glues with $\id_{S^\perp}$
(see \cite[Cor.\ 1.5.2]{nikulin}) to give an isometry of $L$
whose coinvariant lattice is $S$. By \Cref{prop:clas-fixed-pt-free},
the lattice $S$ is $p$-elementary of rank $(n+2m)(p-1)$ and
discriminant $p^n$, for some nonnegative integers $n,m$.
If we denote its signature by $(s_+,s_-)$, this gives
conditions (1), (2) and condition (5) for $n=0$.
By \cite[Thm.\ 1.12.2 (resp.\ 1.16.5)]{nikulin}, such a lattice $S$ embeds
primitively into some even (resp.\ odd) unimodular lattice
$L$ of signature $(l_+,l_-)$ if and only if (II) (resp. (I)), (3), (4) hold
(note that $l(A_S)=n$) and further
\[\text{if } n = l_+ + l_- - (n+2m)(p-1) \text{, then }
(-1)^{l_+ - s_+}p^n \equiv \text{disc}(K(q_S)) \mod \left( \ZZ_p^*\right)^2\]
where $K(q_S)$ is the unique $p$-adic lattice of rank $n$
and discriminant form $q_S$, i.e. the $p$-modular Jordan
component of $S \otimes \ZZ_p$. Its determinant is obtained
in \Cref{thm:p-elementary}
via $s_+ - s_- \equiv 2\epsilon  -2 - (p-1)n \pmod{8}$,
where $\epsilon \in \{\pm 1\}$ indicates
the unit-square class of the determinant.
The left side is computed by the Legendre symbol $\left(\frac{-1}{p}\right)^{l_+ - s_+} \equiv (p-1)(l_+-s_+) +1 \pmod{4}$. Inserting this for $\epsilon$, we arrive at
\[s_+ - s_- \equiv (p-1)(2l_+ - 2s_+ -n) \equiv (p-1)(l_+ - l_- - s_+ + s_-) \equiv (1-p)(s_+ - s_-)\pmod{8}\]
which gives (5). (Note that in the odd case (5) implies condition (1) of \cite[Thm 1.16.5]{nikulin}).
\end{proof}

\begin{proof}[Proof of \Cref{thm:unimodular_conjugacy}]
Let $L$ be a unimodular lattice and $f,g\in O(L)$ prime order
isometries with $L_f, L_g$ indefinite or of rank $p-1$. Suppose that $L^f\cong L^g$, that the signatures agree and that the determinant lattices of $L_f$ and $L_g$ are isometric.
We prove that $f$ and $g$ are conjugated as isometries of $L$.
By assumption there is an isometry $u\colon L^f \rightarrow L^g$, and by \Cref{prop:clas-fixed-pt-free} an isometry $v\colon(L_f,f|_{L_f})\rightarrow (L_g,g|_{L_g})$.
Let $\epsilon_f\colon A_{L_f} \rightarrow A_{L^f}$ and
$\epsilon_g\colon A_{L_g} \rightarrow A_{L^g}$ be the standard
isomorphisms between the discriminant groups of
orthogonal primitive sublattices inside a unimodular lattice.
By \Cref{prop: surjectivity} there exists an
isometry $w \in O(L_f)$ centralizing $f\vert_{L_f}$ whose action
on the discriminant group $A_{L_f}$ is
$\bar{w} = \bar{v}^{-1} \circ \epsilon_g^{-1} \circ \bar{u}
\circ \epsilon_f$. It follows from
\cite[Cor.\ 1.5.2]{nikulin} that
$u \oplus (w \circ v)\colon L^f \oplus L_f \rightarrow L^g
\oplus L_g$ extends to an isometry of $O(L)$ conjugating $f$
and $g$.
\end{proof}

By using \Cref{thm: classific unimod}, for a given genus $\even_{(l_+,l_-)}$ we can list all triples $(p,n,m)$ such that there exists a unimodular lattice $L \in \even_{(l_+,l_-)}$ and an isometry $f \in O(L)$ of odd prime order $p$ with $L_f$ of signature $(2, (n+2m)(p-1)-2)$ and discriminant $p^n$.
Notice that, while the lattice $L_f$ is uniquely determined (up to isometries) by the triple $(p,n,m)$, a priori there can be several distinct isometry classes in the genus of $L^f$. For geometric applications, we study the uniqueness of the invariant lattice for some selected unimodular genera.

\begin{lemma}\label{lem:uniqueness_L^G}
Let $L$ be a unimodular lattice in one of the genera
$\even_{(3,3)}, \even_{(4,4)}$, $\even_{(5,5)}, \even_{(3,19)},
\even_{(4,20)}, \even_{(5,21)}$. Let $f \in O(L)$
be an isometry of odd prime order $p$ with $L_f$
of signature $(2, (n+2m)(p-1)-2)$ and discriminant $p^n$,
for some $n,m \in \IZ_{\geq 0}$. Then the lattice $L^f$
is unique in its genus unless $L \in \even_{(4,20)}$
and $(p,n,m) = (23,1,0)$, where $L^f$ is isometric to either
$F_{23a} = \left(\begin{smallmatrix} 2 & 1 \\ 1 & 12 \end{smallmatrix}\right)$ or
$F_{23b} = \left(\begin{smallmatrix} 4 & 1 \\ 1 & 6 \end{smallmatrix}\right)$.
\end{lemma}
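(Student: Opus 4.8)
The plan is to analyse $L^f$ directly as an even lattice. Because $L$ is unimodular, the discriminant groups of the orthogonal summands $L^f$ and $L_f$ are canonically isomorphic, so \Cref{prop:clas-fixed-pt-free} tells us that $L^f$ is an even $p$-elementary lattice with $l(A_{L^f})=n$, of rank $r:=l_++l_--(n+2m)(p-1)$ and signature $(l_+-2,\,r-l_++2)$; here $r$ is even, and positive since $l_+\ge 3$. Via \Cref{thm: classific unimod} I would first list the finitely many admissible tuples $(l_+,l_-,p,n,m)$ attached to the six genera in the statement. For each of them the rank, the signature and the length of $A_{L^f}$, together with the sign prescribed by \Cref{thm:p-elementary}, determine the genus of $L^f$; so the statement reduces to checking, one tuple at a time, whether that genus contains a single isometry class.

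For the indefinite lattices $L^f$ of rank $r\ge 3$ the main device is the duality--rescaling involution $X\mapsto X^\vee(p)$ on even $p$-elementary lattices: passing to the dual lattice and then multiplying the form by $p$ produces another even $p$-elementary lattice of the same rank and signature but with the length $n$ of the discriminant group replaced by $r-n$, and this operation induces a bijection between the relevant genera. (That the partner is again even uses the fact that the discriminant form of an even $p$-elementary lattice, $p$ odd, takes values in $\tfrac{2}{p}\ZZ/2\ZZ$.) Hence, for $r\ge 3$, either $L^f$ itself has $\rk L^f\ge l(A_{L^f})+2$ and is unique in its genus by \cite[Cor.\ 1.13.3]{nikulin}, or its partner has length $r-n\le 1\le r-2$, so the same criterion applies to the partner (which is indefinite since $L^f$ is) and uniqueness transports back to $L^f$. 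The only indefinite lattices left have rank $2$, which by the inequalities of \Cref{thm: classific unimod} forces $l_+=3$; inspecting the admissible tuples, $L^f$ is then the hyperbolic plane $U$, or $U$ rescaled by $p$, or -- for the single value $p=5$ that occurs -- an explicit even binary lattice of determinant $-5$, and each of these is readily seen to be unique in its genus.

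It remains to treat the definite lattices. Since $l_+\ge 3$, definiteness forces $r=l_+-2$; because $r$ is even and $l_+\le 5$ this means $r=2$, $l_+=4$, so only $\even_{(4,4)}$ and $\even_{(4,20)}$ contribute and $L^f$ is positive definite of rank two. Here $n=l(A_{L^f})\le 2$; if $n=0$ then $L^f$ is an even unimodular positive definite binary lattice, and if $n=2$ then $L^f\cong N(p)$ with $N$ such a lattice (the $p$-elementarity of $L^f$ making the rescaling legitimate), both impossible -- so $n=1$ and $L^f$ is an even positive definite binary lattice of determinant $p$, that is $Q(2)$ for a primitive binary quadratic form $Q$ of discriminant $-p$. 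Running through the tuples, the values $p=3,7$ occur for $\even_{(4,4)}$ and $p=3,23$ for $\even_{(4,20)}$. For $p\in\{3,7\}$ one has $h(-p)=1$, so $L^f$ equals $A_2$, respectively $\left(\begin{smallmatrix}2&1\\1&4\end{smallmatrix}\right)$, and is unique; likewise for $p=3$ in $\even_{(4,20)}$. For $p=23$, which arises only for $\even_{(4,20)}$ with $(p,n,m)=(23,1,0)$, one has $h(-23)=3$, and the even positive definite binary lattices of determinant $23$ are exactly $F_{23a}=\left(\begin{smallmatrix}2&1\\1&12\end{smallmatrix}\right)$ and $F_{23b}=\left(\begin{smallmatrix}4&1\\1&6\end{smallmatrix}\right)$; they lie in one genus because $-23$ is a prime discriminant, and they are not isometric since $F_{23a}$ represents $2$ while the minimum of $F_{23b}$ is $4$. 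This is precisely the exceptional case of the lemma.

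The real content is this concluding bookkeeping: verifying that, among all $p$-elementary lattices that can occur as an $L^f$ here, the genus of $F_{23a}$ and $F_{23b}$ is the unique one carrying more than one class. The indefinite cases become mechanical once the duality--rescaling involution reduces them to Nikulin's criterion, and the definite ones reduce to the classical theory of binary forms; the point requiring care is that no other small-rank or definite configuration is admissible, and this (together with the relevant values of $h(-p)$) can also be confirmed by a computer algebra system such as \texttt{Magma}.
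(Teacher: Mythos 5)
Your argument is correct, and its skeleton coincides with the paper's: one observes that $L^f$ is even and $p$-elementary with genus determined by $(l_\pm,p,n,m)$, invokes uniqueness in the genus for indefinite $p$-elementary lattices of rank at least $3$, and then checks the finitely many leftover tuples by hand, finding $\even_{(2,0)}23^{1}$ as the only genus carrying two classes. Where you differ is in the key results you lean on. The paper quotes \cite[Ch.~15, Thm.~14]{conway_sloane} for the indefinite step and reads the small cases off Tables 15.1 and 15.2a of \cite{conway_sloane}; you instead reduce the indefinite step to Nikulin's criterion $\rk \geq l(A)+2$ via the duality--rescaling involution $X\mapsto X^\vee(p)$ (your parenthetical on the values of the discriminant form is exactly what is needed for evenness of the partner), and you settle the definite rank-two cases through the dictionary with primitive binary quadratic forms of discriminant $-p$, using $h(-3)=h(-7)=1$, $h(-23)=3$ and genus theory for prime discriminants. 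This makes the verification self-contained, and as a side benefit your explicit treatment of the rank-two \emph{indefinite} invariant lattices ($U$, $U(p)$ and the determinant $-5$ lattice, occurring exactly when $l_+=3$) also covers tuples such as $(l_+,l_-,p,n,m)=(3,19,3,0,5)$ or $(3,19,5,1,2)$, which satisfy the hypotheses of the lemma but do not appear in the paper's explicit list of cases where ``indefinite of rank $\geq 3$'' fails; they are all unique in their genus, so the conclusion is unaffected either way. The only steps you wave off as ``readily seen'' are the uniqueness of $U(p)$ and of the even indefinite binary lattice of determinant $-5$ in their genera; both follow quickly (any lattice in the genus of $U(p)$ is of the form $N(p)$ with $N$ even unimodular of signature $(1,1)$, hence is $U(p)$, and the relevant narrow class number for determinant $-5$ is $1$), so this is not a gap.
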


\begin{proof}
Since $L$ is unimodular, $q_{L^f} \cong -q_{L_f}$,
hence $L^f$ is also $p$-elementary. Denote by $(l_+, l_-)$
the signature of $L$. If $L^f$ is indefinite and $\rk L^f \geq 3$,
then $L^f$ is unique in its genus by \cite[Ch.\ 15, Thm.\ 14]{conway_sloane}.
For $L$ as in the statement, the only cases where one of
these two conditions fails are the following:
$(l_+,l_-,p,n,m) = (3,3,3,0,1)$, $(3,3,3,2,0)$, $(3,3,5,1,0)$,
$(4,4,3,1,1)$, $(4,4,7,1,0)$, $(4,20,3,1,5)$, $(4,20,23,1,0)$.
The genera of $L^f$ are respectively $\even_{(1,1)}$,
$\even_{(1,1)}3^{-2}$, $\even_{(1,1)}5^{-1}$,
$\even_{(2,0)}3^{-1}$, $\even_{(2,0)}7^1$,
$\even_{(2,0)}3^{-1}$, $\even_{(2,0)}23^1$.
By using \cite[Ch.\ 15, Tables 15.1 and 15.2a]{conway_sloane}
we check that there is only one isometry class
for each of these genera, except for $\even_{(2,0)}23^1$,
which contains the two distinct isometry classes
given in the statement.
\end{proof}

\section{Automorphisms of ihs manifolds}\label{sec: autosihs}
In this section we apply our results on isometries of unimodular lattices to obtain a classification of automorphisms of irreducible holomorphic symplectic manifolds of a given deformation type with an action of odd prime order on the second cohomology lattice.

\begin{definition}
An irreducible holomorphic symplectic (ihs) manifold is a compact complex K\"ahler simply connected manifold $X$ such that $H^0(X, \Omega^2_X) = \CC \omega_X$, where $\omega_X$ is an everywhere nondegenerate holomorphic $2$-form.
\end{definition}
If $X$ is ihs, then $H^2(X,\IZ)$ is torsion-free and
it is equipped with a nondegenerate symmetric bilinear form
of topological origin, which gives it the structure of an integral lattice of signature $(3,b_2(X)-3)$ (see \cite[Thm.\ 4.7]{fujiki_relation}).
For all known examples of ihs manifolds this lattice has been
computed explicitly: it is even and only depends on the
deformation type of the manifold. Let $H^2(X,\IZ) \cong \Lambda$
for some fixed lattice $\Lambda$. The choice of an isometry
$\eta: H^2(X,\IZ) \rightarrow \Lambda$ is called a \emph{marking} of
$X$ and two marked ihs manifolds $(X, \eta)$, $(X', \eta')$
are equivalent if there is an isomorphism
$f: X \rightarrow X'$ such that $\eta' = \eta \circ f^*$.

There exists a coarse moduli space $\mathcal{M}_\Lambda$ which
parametrizes (equivalence classes of) $\Lambda$-marked ihs manifolds
$(X, \eta)$ for $X$ of a fixed deformation type
(see \cite{huybr_basic}). Two points $(X, \eta)$ and
$(X', \eta')$ belong to the same connected component of
$\mathcal{M}_\Lambda$ if and only if $\eta' \circ \eta^{-1}$ is
a parallel transport operator (see \cite[\S 1.1]{markman}
for the definition).
Denote by $\Mo^2(X) \subset O(H^2(X,\IZ))$ the
\emph{monodromy group}, which is the group of
\emph{monodromy operators} of $X$, i.e.\ parallel transport
operators $\gamma\colon H^2(X,\IZ) \rightarrow H^2(X,\IZ)$ .
Let
\[ \Omega_\Lambda := \left\{ \CC\omega \in \mathbb{P}(\Lambda \otimes \CC) \mid (\omega, \omega) = 0, (\omega, \overline{\omega}) > 0\right\}\]
be the \emph{period domain} and define the period map 
\[ \mathcal{P}: \mathcal{M}_\Lambda \rightarrow \Omega_\Lambda, \quad (X,\eta) \mapsto \eta(H^{2,0}(X)).\]
By \cite[Thm.\ 5]{beauville} and \cite[Thm.\ 8.1]{huybr_basic},
$\mathcal{P}$ is a local homeomorphism and, for any connected
component $\mathcal{M}_\Lambda^0 \subset \mathcal{M}_\Lambda$, its
restriction $\mathcal{P}_0: \mathcal{M}_\Lambda^0 \rightarrow
\Omega_\Lambda$ is surjective. Moreover, for any $p \in \Omega_\Lambda$
the fiber $\mathcal{P}_0^{-1}(p)$ consists of pairwise
inseparable points and if $(X, \eta), (X', \eta') \in \mathcal{M}_\Lambda$
are inseparable, $X$ and $X'$ are bimeromorphic (global Torelli theorem; see \cite[Theorem 2.2]{markman}).

The \emph{K\"ahler cone} $\K_X \subseteq H^{1,1}(X,\RR)$ consists of the classes of K\"ahler metrics.
The \emph{positive cone} $\C_X$
is the connected component of $\{x \in H^{1,1}(X,\RR) \mid x^2 >0\}$ containing a K\"ahler class.
The positive cone admits two important wall and chamber decompositions.
We denote by $\Delta(X) \subset H^{1,1}(X,\RR) \cap H^2(X,\ZZ)$ the set of primitive integral \emph{monodromy birationally minimal} (MBM) classes (see \cite[Def.\ 1.13]{amerik_verbitsky_mbm}) and by $\B \Delta(X)\subseteq \Delta(X)$ the set of \emph{stably prime exceptional divisors} of $X$. Then the chambers of
\[\C_X \setminus \bigcup_{\delta \in \Delta(X)} \delta^\perp\]
are called the \emph{K\"ahler type} chambers. One of them is the K\"ahler cone.
The chambers of
\[\C_X \setminus \bigcup_{\delta \in \B \Delta(X)} \delta^\perp\]
are called the \emph{exceptional chambers}. The exceptional chamber containing the K\"ahler cone is 
the \emph{fundamental exceptional chamber} $\mathcal{FE}_X$.
Its closure equals the closure of the birational K\"ahler cone.
An element $\delta$ of $\B \Delta(X)$ defines the reflection
$\tau_\delta(x)\defeq x - 2(x,\delta)/(\delta,\delta) \delta$,
which turns out to be a monodromy (see \cite[\S 6.2]{markman}). The group
$W_{Exc}(X) \leq \Mo^2(X)$ generated by these reflections
is called the Weyl-group.
It acts transitively
on the set of exceptional chambers and the fundamental exceptional
chamber is a fundamental domain for the action of
$W_{Exc}(X)$ on $\C_X$.

\smallskip
\emph{Fixing a connected component} $\M^\circ_\Lambda$ of
$\M_\Lambda$  one can transport these objects to $\Lambda$,
so that we may speak of $\Delta(\Lambda)$, $\B\Delta(\Lambda)$, $\C_\Lambda$,
$\Mo^2(\Lambda)$ and so on. For instance given
$(X,\eta) \in \M_\Lambda^\circ$ we have
$\Delta(X) = \eta^{-1}(\Delta(\Lambda)) \cap H^{1,1}(X,\RR)$.

The importance of theses cones for our work lies in Verbitsky's strong Torelli theorem \cite{verbitsky} for ihs manifolds as formulated by Markman.
\begin{theorem}\cite[Thms.\ 1.2, 1.6, Cor.\ 5.7]{markman}\label{thm:torelli}
Let $X_1$, $X_2$ be irreducible holomorphic symplectic manifolds and $\phi\colon H^2(X_1,\ZZ) \rightarrow H^2(X_2,\ZZ)$ a parallel transport operator which is a Hodge isometry.
\begin{itemize}
 \item There exists an isomorphism $f\colon X_2 \rightarrow X_1$ with $\phi=f^\ast$ if and only if $\phi(\K_{X_1})=\K_{X_2}$.
 \item If $X_1$, $X_2$ are projective, there exists a birational map $f\colon X_2 \dashrightarrow X_1$ with $\phi=f^\ast$  if and only if $\phi(\mathcal{FE}_{X_1})=\mathcal{FE}_{X_2}$.
\end{itemize}
\end{theorem}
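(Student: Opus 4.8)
Since this is the Verbitsky--Markman global Torelli theorem for ihs manifolds, which the paper quotes rather than proves, I will describe the strategy one follows to establish it from the period map and the monodromy group. My plan is to (a) use surjectivity of the period map together with the bimeromorphic form of global Torelli (the non-Hausdorff statement already recalled above) to produce \emph{some} bimeromorphic map between $X_1$ and $X_2$; (b) reduce the refined statement to an assertion about the Hodge monodromy group of a single manifold; and (c) use the wall-and-chamber structure of the positive cone to pin down exactly which Hodge monodromy operators are induced by bimeromorphic, resp.\ biregular, self-maps.

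For Step (a) I would fix a marking $\eta$ of $X_1$ and set $\eta' = \eta \circ \phi^{-1}$, a marking of $X_2$. Since $\phi$ is a Hodge isometry, $(X_1,\eta)$ and $(X_2,\eta')$ have the same period; since $\phi$ is a parallel transport operator, they lie in the same connected component of $\M_\Lambda$. Hence they are inseparable, and by the global Torelli theorem $X_1$ and $X_2$ are bimeromorphic; fix a bimeromorphic map $g\colon X_2 \dashrightarrow X_1$. Being an isomorphism in codimension one, $g$ induces a parallel transport Hodge isometry $g^\ast\colon H^2(X_1,\ZZ)\to H^2(X_2,\ZZ)$, and $g^\ast$ carries $\mathcal{FE}_{X_1}$ onto $\mathcal{FE}_{X_2}$ because bimeromorphic maps preserve the birational K\"ahler cone.

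For Steps (b)--(c), put $\theta\defeq \phi\circ (g^\ast)^{-1}$; it is a monodromy operator of $X_2$ and a Hodge isometry. From $\phi=\theta\circ g^\ast$ and $g^\ast(\mathcal{FE}_{X_1})=\mathcal{FE}_{X_2}$ one gets $\phi(\mathcal{FE}_{X_1})=\theta(\mathcal{FE}_{X_2})$, so the $\mathcal{FE}$-statement for $\phi$ is equivalent to: $\theta$ stabilizes $\mathcal{FE}_{X_2}$ if and only if $\theta$ is induced by a bimeromorphic self-map of $X_2$ (in which case $\phi=(g\circ f)^\ast$). To prove this last equivalence I would invoke Markman's description of the Hodge monodromy group $\Mo^2_{\mathrm{Hdg}}(X_2)\leq\Mo^2(X_2)$: it is generated by the Weyl group $W_{Exc}(X_2)$, which is normal and acts simply transitively on the set of exceptional chambers, together with the isometries induced by $\Bir(X_2)$, which stabilize $\mathcal{FE}_{X_2}$; writing $\theta=w\cdot b$ with $w\in W_{Exc}(X_2)$ and $b$ induced by $\Bir(X_2)$ gives $\theta(\mathcal{FE}_{X_2})=w(\mathcal{FE}_{X_2})$, equal to $\mathcal{FE}_{X_2}$ iff $w=\id$ iff $\theta=b$. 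The $\K$-statement then follows: an automorphism $f$ has $f^\ast(\K_{X_1})=\K_{X_2}$; conversely, if $\phi(\K_{X_1})=\K_{X_2}$ then $\phi(\mathcal{FE}_{X_1})=\mathcal{FE}_{X_2}$ (the fundamental exceptional chamber is the unique exceptional chamber containing the K\"ahler cone), so by the $\mathcal{FE}$-case $\phi=h^\ast$ for a bimeromorphic $h\colon X_2\dashrightarrow X_1$; and a bimeromorphic map of ihs manifolds whose induced isometry sends a K\"ahler class to a K\"ahler class is biregular, since otherwise its exceptional locus would contain a rational curve contradicting positivity of the K\"ahler classes.

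The genuinely deep input is the one used at the start of Step (a): the bimeromorphic global Torelli theorem, i.e.\ Verbitsky's result that the period map becomes a bijection onto $\Omega_\Lambda$ after passing to the Hausdorff reduction of a component of Teichm\"uller space. This is entirely disjoint from the lattice combinatorics of the rest of the paper --- it rests on ergodicity of the mapping class group action on Teichm\"uller space and on hyperk\"ahler (twistor) rotation to connect arbitrary periods --- and I would simply cite it. The remaining chamber bookkeeping in Steps (b)--(c) is conceptually transparent but still rests on two substantial external inputs: the Huybrechts--Boucksom description of the K\"ahler cone via MBM classes, and Markman's monodromy computation giving the simple transitivity of $W_{Exc}$ on exceptional chambers and the generation of $\Mo^2_{\mathrm{Hdg}}$ by $W_{Exc}$ together with the birational automorphisms.
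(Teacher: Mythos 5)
The paper does not prove this statement at all: it is quoted directly from Markman's survey (Thms.\ 1.2, 1.6, Cor.\ 5.7 of \cite{markman}), which in turn rests on Verbitsky's global Torelli theorem, so there is no internal proof to compare against. Your sketch is a faithful outline of the standard argument and correctly isolates the deep external inputs (Verbitsky's theorem, the MBM/wall-and-chamber description of the K\"ahler cone, and Huybrechts' result that a bimeromorphic map of ihs manifolds pulling back a K\"ahler class to a K\"ahler class is biregular); Step (a), producing \emph{some} bimeromorphic $g$ from inseparability of equal-period marked pairs in one component, is exactly right. One caveat about the logic of Steps (b)--(c): the decomposition of the Hodge monodromy group as $W_{Exc}(X_2)$ together with the image of $\Bir(X_2)$ --- in particular the claim that the stabilizer of $\mathcal{FE}_{X_2}$ in the Hodge monodromy group is precisely the image of $\Bir(X_2)$ --- is, in Markman's exposition, a \emph{corollary} of the theorem you are proving (it is the case $X_1=X_2$ of the second bullet), not an independent input; invoking it as you do makes the argument circular as written. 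Markman instead argues directly on the fiber of the period map: the pairwise inseparable marked pairs in a fiber are distinguished by the K\"ahler-type chamber onto which the marking sends the K\"ahler cone, so a parallel transport Hodge isometry matching K\"ahler cones (resp.\ fundamental exceptional chambers, after twisting by $W_{Exc}$) forces the marked pairs to be isomorphic (resp.\ bimeromorphic). To make your outline non-circular, either substitute this fiber analysis for the appeal to the monodromy decomposition, or prove the self-map case separately first and then reduce to it as you do.
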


Thus birational models of a projective ihs manifold $X$ are given by the K\"ahler type chambers contained in the fundamental exceptional chamber.
The natural homomorphism
\[\rho_X \colon \aut(X) \rightarrow \Mo^2(X), \quad \sigma \mapsto (\sigma^*)^{-1}\]
has finite kernel (see \cite[Prop.\ 9.1]{huybr_basic}).
Its image is computed by the strong Torelli theorem above.

Let $\sigma \in \aut(X)$ be a (biholomorphic) automorphism of an ihs manifold $X$.
The automorphism $\sigma$ is said to be symplectic if $\sigma_\CC^*(\omega_X) = \omega_X$, non-symplectic otherwise. It can be readily checked (cf. \cite[\S 5]{smith})
that the coinvariant lattice $H^2(X,\IZ)_{\sigma^*}$ of a
symplectic automorphism $\sigma \in \aut(X)$ is negative
definite and contained in $\ns(X)$. On the other hand,
if $\sigma$ is non-symplectic then $H^2(X,\IZ)_{\sigma^*}$
has signature $(2,\ast)$ and it contains the transcendental
lattice of $X$.

\subsection{Deformations}\label{sect:deformation}
In this section we apply results by Horikawa to show the existence of a universal deformation for pairs $(X,f)$ consisting of an ihs manifold and an automorphism. The result is surely known to the experts but for lack of a reference we give a detailed proof.

\begin{definition}
 Let $X$ be an ihs manifold and $f\in \Aut(X)$. A deformation of the pair $(X,f)$ consists of a smooth proper holomorphic map $p \colon \X \rightarrow B$, $0\in B$ with a distinguished fiber $\X_0 \defeq p^{-1}(0)=X$ and an automorphism $F \in \Aut(\X/B)$ such that $F|_{\X_0}=f$.
\end{definition}

From a morphism $s \colon B' \rightarrow B$, we obtain a family of deformations $\X'=\X \times_B B' \rightarrow B'$ and $F' = F \times \id_{B'}$. We call it the family induced from $(\X,B,p,F)$ via $s$.

In the following only the germ at $0 \in B$ of a deformation is of relevance and all statements are to be read in this sense.

\begin{definition}
 A deformation $\mathcal{B}=(\X,B,p,F)$ of $(X,f)$ is called \emph{versal}, if for every deformation $\mathcal{B}'=(\X',B',p',F')$ there exists a morphism $s\colon B' \rightarrow B$ such that $\mathcal{B}'$ is induced by $\mathcal{B}$ via $s$.
 If moreover $s$ is unique, then the deformation is called \emph{universal}.
\end{definition}

In \cite[\S 4]{bcs} the authors construct a family of deformations of the pair $(X,f)$.
This family will later turn out to be universal. We review the construction.
Let $p \colon \mathcal{X} \rightarrow D=\mbox{Def}(X)$ be the universal family of deformations $X$. By \cite[Thm 8.1]{horikawa:III} we obtain (the germ of) a family
$\mathcal{X}' \rightarrow D$ of deformations of $X$ and a holomorphic map $ \mathcal{X}\rightarrow \mathcal{X}'$ whose restriction to $X$ coincides with $f$. By the universality of $p$ we obtain $\mathcal{X}'$ as a pullback of $\mathcal{X}$ under a unique map $\gamma \colon D \rightarrow D$.
\[
\begin{tikzcd}
 \X \arrow[r," "]\arrow[d,"p"] & \X'=\X \times_\gamma D  \arrow[r,""]\arrow[d,"p"]& \X \arrow[d,"p"]\\
 D \arrow[r,"\id"]& D \arrow[r,"\gamma"] &  D
\end{tikzcd}
\]

By composition we have a map $\Phi \colon \mathcal{X} \rightarrow \mathcal{X}$.
Then the restriction of $\Phi$ to $D^\gamma= \{d \in D \mid \gamma(d)=d\}$
gives a family of deformations of $(X,f)$. Note that $D^{\gamma}$ is smooth.

\begin{proposition}
The restriction
$F \colon \mathcal{X}|_{D^\gamma} \rightarrow \X|_{D^\gamma}$ of $\Phi$ is the universal deformation of $(X,f)$.
\end{proposition}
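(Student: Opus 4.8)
The plan is to verify the universal property of the proposition directly, exploiting two standard consequences of the vanishing $H^0(X,T_X)=0$, which holds for every ihs manifold because $T_X\cong\Omega^1_X$ via the symplectic form and $h^{1,0}(X)=0$. First, the Kuranishi family $p\colon\mathcal{X}\to D$ is then \emph{universal} (not merely versal) as a deformation of $X$: every deformation $\mathcal{Y}\to B'$ of $X$ is isomorphic to the pullback $s^*\mathcal{X}$ for a \emph{unique} morphism $s\colon B'\to D$ with $s(0)=0$. Second, if $\mathcal{Y}\to B'$ is a deformation of $X$, an automorphism of $\mathcal{Y}$ over $B'$ restricting to a prescribed automorphism of the central fibre is unique if it exists. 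I would also note at the outset that the restriction $F$ is genuinely a deformation of $(X,f)$ in the sense of the definition: $D^\gamma$ is a smooth subgerm, over it $\gamma$ is the identity, so that the Horikawa map $\iota\colon\mathcal{X}\to\mathcal{X}'=\gamma^*\mathcal{X}$ becomes a self-map of $\mathcal{X}|_{D^\gamma}$ over $D^\gamma$ which restricts to the automorphism $f$ on the central fibre and is therefore itself an automorphism after shrinking.

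Now let $\mathcal{B}'=(\mathcal{Y},B',q,G)$ be an arbitrary deformation of $(X,f)$. Forgetting $G$, the family $\mathcal{Y}\to B'$ is a deformation of $X$, so universality of the Kuranishi family yields a unique $s\colon B'\to D$ and an isomorphism $\Psi\colon\mathcal{Y}\xrightarrow{\ \sim\ }s^*\mathcal{X}$ of deformations of $X$. Conjugating by $\Psi$ turns $G$ into an automorphism $\tilde G$ of $s^*\mathcal{X}$ over $B'$ restricting to $f$ on the central fibre.

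The key point is that $s$ factors through $D^\gamma$, equivalently $\gamma\circ s=s$. For this I would invoke the completeness (versality) part of Horikawa's theorem in the following form: $\iota\colon\mathcal{X}\to\mathcal{X}'$ over $D$ is a versal deformation of the holomorphic map $f\colon X\to X$ in which the deformation of the \emph{source} is the Kuranishi family $\mathcal{X}\to D$; consequently, for any deformation of $f\colon X\to X$ over $B'$ whose source deformation is classified by $s\colon B'\to D$, the map is canonically the pullback $s^*\iota$, and hence its \emph{target} deformation is classified by $\gamma\circ s$. Applying this to $\tilde G\colon s^*\mathcal{X}\to s^*\mathcal{X}$ --- a deformation of $f$ over $B'$ whose source and target deformation are both $s^*\mathcal{X}$, classified by $s$ --- we find that the target is classified both by $s$ and by $\gamma\circ s$; uniqueness of the classifying map forces $\gamma\circ s=s$. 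Thus $s$ lands in $D^\gamma$, and then $\tilde G=s^*\iota=s^*F$ by the same universal property, so $\mathcal{B}'$ is induced via $s$ from the deformation $F\colon\mathcal{X}|_{D^\gamma}\to\mathcal{X}|_{D^\gamma}$. Since $s$ is already the unique classifying map to $D$, it is a fortiori the unique morphism $B'\to D^\gamma$ with this property, and universality is proved.

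I expect the main obstacle to be making the invocation of Horikawa's theorem precise: one has to formulate "deformation of the holomorphic map $f\colon X\to X$ with prescribed source deformation" and the resulting completeness statement correctly, identify the two classifying maps $\mathrm{id}_D$ and $\gamma$ attached to $\iota$, and check that the datum built from $\tilde G$ (with source and target both $s^*\mathcal{X}$) really is such a deformation pulled back along $s$ --- the remaining steps (universality of the Kuranishi family and uniqueness of automorphism lifts from $H^0(X,T_X)=0$, smoothness of $D^\gamma$, and the bookkeeping $s^*F=\tilde G$) being routine.
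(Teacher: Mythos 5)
Your argument follows the same overall strategy as the paper's proof: first use universality of the Kuranishi family to obtain a unique classifying map $s\colon B'\rightarrow D$, then use a versality statement for the Horikawa family of deformations of the map to force $\gamma\circ s=s$ and to identify the given fiberwise automorphism with the pullback of $F$. The genuine difference lies in the key lemma. You invoke versality of $\iota\colon\mathcal{X}\rightarrow\mathcal{X}'$ as a deformation of $f\colon X\rightarrow X$ in which \emph{both} source and target deform (a Horikawa~III--style statement), and you rightly flag that formulating and verifying this is the main obstacle. The paper avoids the issue by a small but essential reformulation: it replaces $f$ by $\Phi_0=\Phi|_X\colon X\rightarrow\mathcal{X}$, a map into the \emph{fixed} complex space $\mathcal{X}$, and regards $\hat{\Phi}=(\Phi,p)\colon\mathcal{X}\rightarrow\mathcal{X}\times D$ as a deformation of $\Phi_0$ in the sense of Horikawa~I; versality there is controlled by surjectivity of the characteristic map, which is checked directly by identifying it with the Kodaira--Spencer map of the universal family (an isomorphism). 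Once that lemma is in place, the diagram chase yielding $\gamma\circ s=s$ and $F'=s^*F$ is the same as yours, as is the observation that uniqueness of $s$ is inherited from the Kuranishi family. So your proof is sound provided the versality statement you invoke is supplied in a verifiable form; as written, that step is the one piece of real mathematical content left open, and the fixed-target reformulation is precisely how the paper closes it.
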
\label{prop:universal}
To prove the versality, we will use the results and language of \cite{horikawa:I}. For this purpose we set
$\Phi_0  = \Phi|_X \colon X \rightarrow \X$ and $\hat{\Phi}=(\Phi,p)\colon \X \rightarrow \X \times D, x\mapsto (\Phi(x),p(x))$.
Then $(\X,\hat{\Phi},p,D)$ is a deformation of $\Phi_0$ in the sense of \cite[Def.\ 1.1]{horikawa:I}.

\begin{lemma}
 The deformation $(\X,\hat{\Phi},p,D)$ is a versal deformation of $\Phi_0$ in the sense of \cite[Def.\ 1.2]{horikawa:I}.
\end{lemma}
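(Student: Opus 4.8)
The plan is to verify that the tuple $(\X, \hat\Phi, p, D)$ satisfies the versality criterion of \cite[Thm.\ 1.3]{horikawa:I}, which (in the setting of deformations of holomorphic maps with fixed target-variation) amounts to a surjectivity statement on the level of the characteristic map attached to the deformation together with the completeness of $p\colon \X \to D$ as a deformation of $X$. Concretely, the deformation $(\X,\hat\Phi,p,D)$ of $\Phi_0\colon X \to \X$ is versal once we know that for the trivial square-zero extension the infinitesimal deformations of the map $\Phi_0$ are exhausted by those arising from deformations of the source $X$ and those coming from moving $\Phi_0$ inside its target $\X$ (which is built precisely to accommodate this, since $\X$ is the universal family over $D=\mathrm{Def}(X)$). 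So the first step is to unwind Horikawa's definitions and identify the relevant obstruction space and the tangent map, and to reduce versality to: (a) $p$ is the complete (universal) deformation of $X$, which holds by definition of $D=\mathrm{Def}(X)$; and (b) the ``relative'' infinitesimal data — deformations of the morphism with the source deformation prescribed — are unobstructed and surjected onto by the construction.

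The key intermediate point is the exact sequence governing deformations of a morphism $\Phi_0\colon X \to \X$ over the base, which in Horikawa's framework takes the shape
\[
H^0(X, \Phi_0^* T_{\X/D}) \longrightarrow \mathrm{Def}(\Phi_0) \longrightarrow H^1(X, T_X) = T_0 D,
\]
where the rightmost arrow records the induced deformation of the source. The construction of $\X$ via \cite[Thm.\ 8.1]{horikawa:III} and the pullback diagram guarantees that the composite $\X \to \X' \to \X$ realizes, over each first-order neighborhood of $0 \in D$, every infinitesimal deformation of $X$ together with a lift of $f$; thus the map $\mathrm{Def}(\Phi_0) \to T_0 D$ is surjective. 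What remains is to check that the fibered term $H^0(X, \Phi_0^* T_{\X/D})$ is also matched: here one uses that $\X$ is smooth and that $\Phi_0$ is an embedding of $X$ as the fiber $\X_0$, so $\Phi_0^* T_{\X/D}$ fits into a controlled sequence with $T_X$, and the relevant $H^1$-obstructions are absorbed because $D$ itself is smooth (the deformations of an ihs manifold are unobstructed by Bogomolov–Tian–Todorov). Combining these, Horikawa's criterion applies and $(\X,\hat\Phi,p,D)$ is versal as a deformation of $\Phi_0$.

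I expect the main obstacle to be purely bookkeeping: matching our setup to the precise hypotheses and the precise notion of ``deformation of a holomorphic map'' in \cite[Def.\ 1.1, Def.\ 1.2]{horikawa:I} (Horikawa allows the target to vary, and one must be careful that here the target varies in exactly the universal family $\X/D$, not arbitrarily), and correctly identifying which cohomology groups play the roles of the tangent and obstruction spaces in his Theorem 1.3. Once the dictionary is fixed, the surjectivity input is immediate from the construction of $\gamma$ and $\X'$ as a pullback, and the unobstructedness input is the smoothness of $D = \mathrm{Def}(X)$. No genuinely new estimate is needed; the proof is an application of Horikawa's general versality theorem to the specific family produced in \cite[\S 4]{bcs}.
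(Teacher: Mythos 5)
Your overall strategy is the paper's: invoke Horikawa's versality criterion (the paper uses \cite[Thm.\ 2.1]{horikawa:I}) and reduce the lemma to surjectivity of the characteristic map of $(\X,\hat{\Phi},p,D)$, with the universality of $p\colon\X\to D$ as the essential input. But the proposal has a gap exactly at the step that carries the content: you never actually identify or compute the characteristic map. Two specific problems. First, the normal sheaf of $\Phi_0\colon X\to\X$ in Horikawa's sense is the cokernel $\L$ of $\T_X\to\Phi_0^*\T_\X$ with the \emph{absolute} tangent sheaf of the total space; your sequence uses $\Phi_0^*\T_{\X/D}\cong f^*\T_X\cong\T_X$, whose cokernel is zero, so it cannot be the receptacle of the characteristic map. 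Second, what you propose to verify is surjectivity of $\mathrm{Def}(\Phi_0)\to T_0D$ (``every first-order deformation of $X$ lifts''), which is not the versality criterion: the characteristic map goes the other way, $T_0D\to H^0(X,\L)$, and its surjectivity means that every first-order deformation of the \emph{map} is realized by the given family. Declaring this ``immediate from the construction of $\gamma$ and $\X'$'' is precisely the point that needs an argument. (The appeal to Bogomolov--Tian--Todorov is also a red herring here: Horikawa's completeness theorem requires no obstruction vanishing, only surjectivity of the characteristic map.)

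The paper closes this gap with a short diagram chase: the two short exact sequences $0\to\T_X\to i^*\T_\X\to p^*\T_D\to 0$ and $0\to\T_X\to\Phi_0^*\T_\X\to\L\to 0$ are compared via $\id$ and $df$, which are isomorphisms on the first two terms, so the induced map $p^*\T_D\to\L$ of cokernels is an isomorphism of sheaves. Taking $H^0$ yields an isomorphism $\tau\colon T_0D\to H^0(X,\L)$ which, by \cite[Prop.\ 1.4]{horikawa:I} together with the fact that the Kodaira--Spencer map $\rho$ of the universal family is an isomorphism, is exactly the characteristic map; in particular it is surjective. If you insert this identification, your outline becomes a proof --- the only inputs are the universality of $p$ and functoriality of the tangent sequences, not unobstructedness.
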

\begin{proof}
This follows from \cite[Thm.\ 2.1]{horikawa:I}. It applies if we check that the so called characteristic map of $(\Phi,p)$ is surjective.
For this purpose let $i\colon X \rightarrow \X$ be the inclusion and consider the exact sequences
\begin{equation*}
\begin{tikzcd}
 0 \arrow[r," "] & \T_X \arrow[r,""]\arrow[d,"\id"]& i^*\T_{\X} \arrow[d,"df"] \arrow[r]& p^*\T_D \cong T_0D \otimes_\CC \O_X  \arrow[r] \arrow[d,""] &0 \\
 0 \arrow[r," "] & \T_X \arrow[r,""]& \Phi_0^*\T_{\X} \arrow[r]& \L \arrow[r] & 0
\end{tikzcd}
\end{equation*}
where $\L$ is defined as the cokernel of the lower part. Note that the vertical arrows in the middle are isomorphisms.
We take the corresponding long exact sequence in cohomology and obtain a commutative diagram
\begin{equation*}
\begin{tikzcd}
 H^0(X,p^*\T_D) = T_0 D \arrow[r,"\rho"] \arrow[d,"\tau"]& H^1(X,\T_X) \arrow[d,"\id"]\\
 H^0(X,\L)\arrow[r,"\delta "] & H^1(X,\T_X)
\end{tikzcd}
\end{equation*}
where $\rho$ is the Kodaira-Spencer map of
$p\colon \X \rightarrow D$. Since $p$ is the universal
deformation, $\rho$ is an isomorphism, and $\tau$ is an
isomorphism by definition.
By \cite[Prop.\ 1.4]{horikawa:I} $\tau = \delta \circ \rho^{-1}$ is the characteristic map.
\end{proof}

\begin{proof}[Proof of \Cref{prop:universal}]
Let $\X' \rightarrow B, F'\in \Aut(\X'/B)$ be a family of deformations of $(X,f)$.
By the universality of $p$, we obtain a diagram
\[
\begin{tikzcd}
 \X' \arrow[r," F'"]\arrow[d,"p'"] & \X'=\X \times_D B  \arrow[r,"p_\X"]\arrow[d,"p'"]& \X \arrow[d,"p'"]\\
 B \arrow[r,"\id"]& B \arrow[r,"s"] &  D
\end{tikzcd}
\]
and the family $\hat{\Phi}'= (p_\X \circ F',p')\colon \X' \rightarrow \X \times B$ of deformations of $\Phi_0$.
By the versality of $\hat{\Phi}$, we obtain that
$\hat{\Phi}' = \hat{\Phi} \times \id_B \colon \X' \rightarrow (\X \times D) \times_D B = \X \times B$.
Note that in fact $\hat{\Phi}'$ factors through $\X \times_D B =\X'$. This is summarized in the following diagram:
\[
\begin{tikzcd}
 \X\times_D B \arrow[rrrr, "\hat{\Phi}'=\hat{\Phi} \times \id_B"]\arrow[ddrr]\arrow[dddd] \arrow[drr,"F'"]&  &  &  & \X \times B \arrow[lldd]\arrow[ddd]\\
  &  &\X\times_D B \arrow[d] \arrow[urr]&  & \\
     &  &B \arrow[dl,"s"]\arrow[dr,"s"] &  &\\
     &D \arrow[rr,"\gamma"]&  &D & \X \arrow[l]\\
 \X \arrow[ur]\arrow[rrrr,"\hat{\Phi}"] &  &  &  & \X \times D \arrow[u]\arrow[lllu]
\end{tikzcd}
\]

Looking at various subdiagrams, one can check that the diagram is actually commutative.
In particular $\gamma \circ s = s$. Hence $s(B) \subseteq D^\gamma$. Thus it makes sense to restrict everything to the fibers over $D^\gamma$, and $\X' = \X|_{D^\gamma} \times_{D^\gamma} B$.
We obtain the diagram
\[
\begin{tikzcd}
 \X' \arrow[ddd]\arrow[rr,"F'"]\arrow[bend left=20, rrr,"\hat{\Phi}'"]\arrow[dr] & & \X'\arrow[dl] \arrow[ddd] \arrow[r]&\X|_{D^\gamma} \times B \arrow[ddd]\\
 & B \arrow[d,"s"]&  & &\\
 & D^\gamma&  & &\\
 \X|_{D^\gamma} \arrow[bend right=20,rrr,"\hat{\Phi}|_{D^\gamma}"]\arrow[ur]\arrow[rr,"F"] & & \X|_{D^\gamma} \arrow[lu]&\X|_{D^\gamma} \times D^{\gamma} \arrow[l]
\end{tikzcd}
 \]

\noindent where the inner quadrangle must commute since the outer one does. We infer that $F'=F \times \id_B$ is the pullback of $F$.
\end{proof}

If, instead of a single automorphism $f$, we have a group $G=\langle f_1,\dots f_n \rangle$, then we obtain an action of $G$ on $D$ via $\gamma_1, \dots , \gamma_n$. Now the universal family arises by restricting to the fixed locus $D^G$.
\subsection{Moduli spaces}
In this section we use universal deformations to construct a moduli space for pairs $(X,G)$.
In a second step we take a closer look at the period map, to determine where it is injective. This study is modeled on previous work in the case of K3 surfaces by Dolgachev, Kond\={o} \cite{dolgachev-kondo} and for ihs manifolds of type $\hskn$ by Joumaah \cite{joumaa:order2} and Boissi\`ere, Camere, Sarti \cite{bcs:ball}.

\begin{definition}
 Let $X,X'$ be ihs manifolds and $G \leq \Aut(X)$, $G' \leq \Aut(X')$. Then we call $(X,G)$ and $(X',G')$ \emph{bimeromorphically conjugate}
 if there exists a bimeromorphic map $\phi\colon X \dashrightarrow X'$ such that $\phi G \phi^{-1} = G'$.
 They are called \emph{deformation equivalent}, if there exists a
 connected family $\psi \colon\mathcal{X} \rightarrow B$ of ihs manifolds, a group of automorphisms $\mathcal{G}$ of $\mathcal{X}/B$ and two points $b,b'$ such that the restriction of $(\mathcal{X},\mathcal{G})$ to the fibers above $b$ and $b'$ give $(X,G)$ and $(X',G')$.
 \end{definition}

 Fix a connected component $\mathcal{M}_\Lambda^\circ$ of the moduli space of $\Lambda$-marked ihs manifolds of a given deformation type.
 Let $(X,\eta)$ be a marked pair belonging to this component. We define $\Mo^2_\circ(\Lambda) = \eta \Mo^2(X)\eta^{-1}$.
 This choice is independent of $(X,\eta)$ as long as they stay in the same connected component.
 A different connected component results in a conjugate subgroup of $O(\Lambda)$.

 \begin{definition}
 Let $H \leq O(\Lambda)$ be a subgroup. An $H$-marked ihs manifold is a triple $(X,\eta,G)$ such that $(X,\eta)$ is a marked pair, $\ker \rho_X \leq G \leq \Aut(X)$ and $\eta\rho_X(G)\eta^{-1}=H$.
 Two $H$-marked ihs manifolds $(X_1,\eta_1,G_1)$ and $(X_2,\eta_2,G_2)$ are called \emph{isomorphic} (respectively, \emph{bimeromorphic})
 if there exists an isomorphism (respectively, a bimeromorphic map) $f\colon X_1 \rightarrow X_2$ such that
 $\eta_1 \circ f^* = \eta_2$. In particular $f G_1 f^{-1} = G_2$.
 We call $H$ \emph{effective} if there exists at least one $H$-marked ihs manifold.
 \end{definition}

 Let $(X,\eta,G)$ be an $H$-marked ihs manifold.
 The action of $G$ on $H^{2,0}(X)$ induces via the marking a character $\chi \colon H \rightarrow \CC^*$.
 We can construct a coarse moduli space parametrizing isomorphism classes of $H$-marked ihs manifolds $(X,\eta,G)$ with
 $\chi(\eta\rho_X(g)\eta^{-1}) \cdot \omega_X = (g^*)^{-1} \omega_X$ for all $g \in G$, by gluing the base spaces of the universal deformations of $(X,\eta,G)$ constructed in \Cref{sect:deformation}.
 We denote the resulting moduli space by $\mathcal{M}^\chi_H$.

 \begin{proposition}\label{prop:forgetful}
 The forgetful map $\phi \colon \mathcal{M}^\chi_H \rightarrow \mathcal{M}_\Lambda, (X,\eta,G) \mapsto (X,\eta)$
 is a closed embedding.
 \end{proposition}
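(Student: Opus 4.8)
The plan is to check in turn that $\phi$ is injective, that it is locally a closed embedding, and that its image is closed in $\mathcal{M}_\Lambda$.

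\emph{Injectivity.} The point is that the group $G$ in a triple $(X,\eta,G)$ is redundant: since $\ker\rho_X\le G$ and $\rho_X(G)=\eta^{-1}H\eta$, we have $G=\rho_X^{-1}(\eta^{-1}H\eta)$, so $G$ is determined by $(X,\eta)$ and $H$. Concretely, if $(X_1,\eta_1,G_1)$ and $(X_2,\eta_2,G_2)$ have the same image in $\mathcal{M}_\Lambda$, then there is an isomorphism $f\colon X_1\to X_2$ with $f^*=\eta_1^{-1}\eta_2$. From $\rho_{X_2}(fgf^{-1})=(f^*)^{-1}\rho_{X_1}(g)f^*$ one gets $\rho_{X_2}(fG_1f^{-1})=\eta_2^{-1}H\eta_2=\rho_{X_2}(G_2)$, and since conjugation by $f$ carries $\ker\rho_{X_1}$ onto $\ker\rho_{X_2}$, both groups contain $\ker\rho_{X_2}$; hence $fG_1f^{-1}=G_2$ and $f$ is an isomorphism of $H$-marked manifolds. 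Thus $\phi$ is injective (and it is obviously continuous).

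\emph{Local structure.} By construction $\mathcal{M}^\chi_H$ is glued from the bases $D^G$ of the universal deformations of the triples $(X,\eta,G)$ produced in \Cref{sect:deformation}, where $D=\mathrm{Def}(X)$ and $D^G$ is the fixed locus of the action of $G$ on $D$ via the maps $\gamma_i$ described there; likewise a neighbourhood of $(X,\eta)$ in $\mathcal{M}_\Lambda$ is identified with $D$, and under these identifications $\phi$ is the inclusion $D^G\hookrightarrow D$. The action of $G$ on $D$ factors through the finite group $\rho_X(G)\cong H$, because $\ker\rho_X$ acts trivially on $H^{1,1}(X)\cong T_0D$; linearizing this finite action (Cartan) shows that $D^G$ is a closed smooth complex submanifold of $D$. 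Hence $\phi$ is locally a closed embedding, in particular an immersion and a homeomorphism onto its image. Moreover, via $\mathcal P$ the submanifold $D^G$ maps isomorphically onto an open subset of $\Omega_\Lambda^{H,\chi}:=\{\CC\omega\in\Omega_\Lambda\mid h(\omega)=\chi(h)\,\omega\ \text{for all }h\in H\}$, since the differential of $\mathcal P|_{D^G}$ at the base point is the inclusion of the corresponding $H$-eigenspace of $H^{1,1}(X)$.

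\emph{Closedness of the image.} Every $H$-marked ihs manifold has period fixed by $H$ with character $\chi$, so $\phi(\mathcal{M}^\chi_H)\subseteq Z:=\mathcal P^{-1}(\Omega_\Lambda^{H,\chi})$, which is closed by continuity of $\mathcal P$. To conclude it suffices, given $(X_i,\eta_i,G_i)\in\mathcal{M}^\chi_H$ with $(X_i,\eta_i)\to(X_0,\eta_0)$ in $\mathcal{M}_\Lambda$, to exhibit $G_0\le\Aut(X_0)$ with $(X_0,\eta_0,G_0)\in\mathcal{M}^\chi_H$. The limit lies in $Z$, so $\Gamma_0:=\eta_0^{-1}H\eta_0\subseteq\Mo^2(X_0)$ consists of Hodge isometries; by the global Torelli \Cref{thm:torelli} it is enough to show $\Gamma_0(\K_{X_0})=\K_{X_0}$. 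Working in the chart $D_0=\mathrm{Def}(X_0)$, the nearby $(X_i,\eta_i)$ are of the form $(X_{d_i},\eta_{d_i})$ with $d_i\to 0$, and since these lie in $\phi(\mathcal{M}^\chi_H)$ the group $\Gamma_0$ (transported by Gauss–Manin) preserves each $\K_{X_{d_i}}$. As $\Gamma_0$ also preserves $\Delta(X_0)$, and the point $(X_0,\eta_0)=\lim(X_{d_i},\eta_{d_i})$ singles out, by the description of the fibres of the period map in \Cref{thm:torelli}, precisely the Kähler-type chamber of $X_0$ into which the cones $\K_{X_{d_i}}$ degenerate, this chamber is $\Gamma_0$-invariant. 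Hence $\Gamma_0=\rho_{X_0}(G_0)$ with $G_0:=\rho_{X_0}^{-1}(\Gamma_0)\le\Aut(X_0)$, and the character condition passes to the limit by continuity, so $(X_0,\eta_0,G_0)\in\mathcal{M}^\chi_H$.

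The main obstacle is this last step. Because $\mathcal{M}_\Lambda$ is non-separated, one cannot simply transport the automorphism group to an arbitrary limit: for a given period point the fibre of $\mathcal P_0$ contains several inseparable birational models, and $\Gamma_0$ need not preserve the Kähler cone of all of them. What rescues the argument is that convergence in $\mathcal{M}_\Lambda$ pins down which model is the limit — the one whose Kähler cone is the degeneration of the nearby Kähler cones — and this selection is $\Gamma_0$-equivariant. The first two steps, by contrast, are formal: injectivity is the redundancy of $G$, and the local closed-embedding property is immediate from the construction of $\mathcal{M}^\chi_H$ together with linearizability of finite group actions.
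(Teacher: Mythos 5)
Your injectivity argument and your local description of $\phi$ are exactly the paper's proof: the group is recovered as $G=\rho_X^{-1}(\eta^{-1}H\eta)$, and in the chart given by the universal deformation $D=\mathrm{Def}(X)$ of a point of the image one has $\im\phi\cap D=D^G$, which is closed in $D$ as the fixed locus of a finite(ly generated) group action. That is the entirety of the paper's argument, and it is all that is used downstream (injectivity, and the fact that $\P\colon\M^\chi_H\to\Omega^\chi_\Lambda$ is a local isomorphism). Where you genuinely diverge is your third step: you try to upgrade this to global closedness of the image, i.e.\ to show that a limit in $\M_\Lambda$ of $H$-marked pairs is again $H$-marked. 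You are right that this is not a formal consequence of the local statement, since the charts are only taken around points of the image and $\M_\Lambda$ is non-separated; the paper simply does not address it.

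However, the one claim that carries your third step is not justified: namely that the limit $(X_0,\eta_0)$ ``singles out precisely the K\"ahler-type chamber into which the cones $\K_{X_{d_i}}$ degenerate'' and that this selection is $\Gamma_0$-equivariant. \Cref{thm:torelli} does not say this, and the statement is exactly where the non-separatedness bites a second time. What one can prove is: a class $\kappa_0\in\K_{X_0}$ extends to K\"ahler classes $\kappa_{d_i}\in\K_{X_{d_i}}$ (openness of the K\"ahler condition in the family), the transported isometry $h\in\Gamma_0$ satisfies $h(\kappa_{d_i})\in\K_{X_{d_i}}$, and $h(\kappa_0)$ lies in the interior of \emph{some} K\"ahler-type chamber of $\C_{X_0}$ because $\Gamma_0$ permutes the $(1,1)$ MBM classes. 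But a limit of K\"ahler classes on the $X_{d_i}$ can perfectly well land in the interior of a chamber other than $\K_{X_0}$ --- that is precisely what happens for the K\"ahler cone of an inseparable birational model $X_0'$ --- so the last implication ``hence $h(\kappa_0)\in\K_{X_0}$'' needs an argument you have not given. A standard way to close this gap is not via cones at all but via compactness of graphs: the automorphisms $g_i\in G_i$ have graphs of bounded volume in $X_{d_i}\times X_{d_i}$, so a subsequence converges (Bishop) to the graph of a bimeromorphic self-map of $X_0$ inducing $\Gamma_0$, which one then shows is biregular. Alternatively, restrict the claim to what the paper actually proves and uses --- that $\phi$ is an injective, locally closed embedding --- for which your first two steps already suffice.
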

 \begin{proof}
 Let $(X,\eta,G)$ be an $H$-marked ihs manifold.
 The universal deformation $\X \rightarrow D$ of $X$ with marking induced by $\eta$ provides the neighborhood $D$ of the point $(X,\eta)$ in the moduli. Then $D^G= \im \phi \cap D$ is closed by its definition as a fixed point set of a finitely generated group of automorphisms.
 It remains to show that the map is injective.
 Indeed, since $\ker \rho_X \leq G$, we can reconstruct the group as $G = \rho_X^{-1}(\eta^{-1} H \eta)$.
 \end{proof}

 \begin{remark}
 Let $G$ be a group and $\rho \colon G \rightarrow O(\Lambda)$ a representation. Then one can define a marked $\rho$-polarized ihs manifold as a triple $(X,\rho',\eta)$ where $\rho'\colon G \rightarrow \Aut(X)$ is a homomorphism such that
 $\rho_X(\rho'(g)) = \eta^{-1} \rho(g) \eta$ for all $g \in G$.
 However, the forgetful map $(X,\rho',\eta)\mapsto (X,\eta)$ is in general not  injective in the presence of cohomologically trivial automorphisms.
 \end{remark}
The corresponding period domain is
\[\Omega_\Lambda^\chi=\{\CC \omega \in \Omega_\Lambda \mid h(\omega)=\chi(h)\cdot \omega \mbox{ for all } h \in H\}.\]
This results in a period map
$\P\colon \M^\chi_H \rightarrow \Omega_\Lambda^\chi$, which is a
local isomorphism by \Cref{prop:forgetful}.
Following \cite{joumaa:order2, bcs:ball} we exhibit
a bijective restriction of the period map.
We set $\mathcal{M}_H^{\circ,\chi} = \phi^{-1}(\mathcal{M}_\Lambda^\circ$).
Let
\[ \Lambda^\chi_\CC \defeq \{x \in \Lambda_\CC \mid h(x)=\chi(h)\cdot x \mbox{ for all } h \in H\}\]
and $N = (\Lambda_\CC^\chi)^\perp \cap \Lambda$.
For $H$-marked ihs manifolds we have
$N\subseteq \eta(\NS(X))$ with equality for a
very general subset of ihs manifolds.
Suppose that $\chi$ is nontrivial.
Then $N$ is of signature
$(1,t)$ for some $t$. In this case $H$-marked ihs manifolds are \emph{projective}, and we will speak of \emph{birational} instead of bimeromorphic maps to emphasize this. 

Let $\C_N = \C_\Lambda \cap N_\RR$.
We say that $(X,\eta,H)$ is $(H,N)$-polarized,
if $\C_N \cap \eta(\mathcal{K}_X) \neq \emptyset$.
Indeed, $(X,\eta^{-1}|_N)$ is an $N$-polarized ihs manifold
in the sense of \cite{camere:lattice-polarized-ihs}.

Recall that $\Delta(\Lambda)$ denotes the set of MBM-classes of $\Lambda$.
Set $M = N^\perp$ and $\Delta(M)=\Delta(\Lambda) \cap M$.
Let $\Delta'(M)$ consist of those $v \in \Delta(\Lambda)$ such that
$v = v_N + v_M$, $v_N \in N_\QQ$, $v_M \in M_\QQ$ with
$q(v_N)<0$.

 \begin{definition}
 Suppose that $\chi$ is nontrivial.
 Let $K(N)$ be a K\"ahler-type chamber of $\C_N$ preserved by
 $H$. An $(H,N)$-polarized ihs manifold $(X,\eta,G)$ is called
 \emph{$K(N)$-general}, if $\eta(\mathcal{K}_X) \cap N_\RR = K(N)$.
 It is called \emph{$H$-general}, if it is $K(N)$-general for
 some $K(N)$.
 \end{definition}
Note that $(X,\eta,G)$ is $H$-general if and only if
$\Delta'(M) \cap \eta(\NS(X))=\emptyset$.
 Let $\mathcal{M}^{\circ,\chi}_{K(N)}$ be the subset of
 $\mathcal{M}_H^{\circ,\chi}$ consisting of $K(N)$-general
 $(H,N)$-polarized manifolds. For $\delta \in \Delta(\Lambda)$,
 let $H_\delta \subset \PP(\Lambda_\CC)$ be the hyperplane
 orthogonal to $\delta$.
 We denote by $\Delta(K(N)) \subset \Delta'(M)$ the subset of elements $\delta$ such that $H_\delta \cap K(N) \neq \emptyset$.

 \begin{proposition}\label{prop:period-map}
  Suppose that $H\leq Mon^2(\Lambda)$ is finite and $\chi$ nontrivial. Set $\Delta = \bigcup_{\delta \in \Delta(M)} H_\delta$ and
  $\Delta' = \bigcup_{\delta \in \Delta(K(N))} H_\delta$.
  \begin{enumerate}
   \item The period map
  \[\mathcal{P} \colon \mathcal{M}^{\circ,\chi}_{H} \rightarrow \Omega^\chi_\Lambda \setminus \Delta\]
  is surjective.
  \item Its restriction
 \[\P_{K(N)}\colon\mathcal{M}^{\circ,\chi}_{K(N)} \rightarrow \Omega^\chi_\Lambda \setminus \left(\Delta \cup \Delta'\right)\]
 is bijective.
  \end{enumerate}
  Suppose in addition that $H$ is of prime order.
  \begin{enumerate}
   \item[(3)] Let $w \in \Omega^\chi_\Lambda \setminus (\Delta \cup \Delta')$
 and $(X_i,G_i,\eta_i)$ for $i=1,2$ be two $H$-general elements of the fiber $\P^{-1}(w)$. Then $(X_1,G_1)$ is birationally conjugate to $(X_2,G_2)$.
  \end{enumerate}

 \end{proposition}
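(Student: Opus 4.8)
The plan is to adapt the arguments of Dolgachev--Kond\={o}, Joumaah and Boissi\`ere--Camere--Sarti \cite{dolgachev-kondo, joumaa:order2, bcs:ball} to arbitrary deformation type, the three inputs being the surjectivity of the period map on $\mathcal{M}_\Lambda^\circ$, the local isomorphism statement of \Cref{prop:forgetful}, and Markman's Torelli theorem (\Cref{thm:torelli}). For part (1), given $w\in\Omega^\chi_\Lambda\setminus\Delta$, I would first pick $(X,\eta)\in\mathcal{M}_\Lambda^\circ$ over $w$ using surjectivity of $\mathcal{P}_0$. Since $w$ lies in the $\chi$-eigenspace, every element of $H_X:=\eta^{-1}H\eta\leq\Mo^2(X)$ is a parallel transport operator which is a Hodge isometry (it multiplies $\omega_X$ by the corresponding value of $\chi$). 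The N\'eron--Severi lattice $\eta(\NS(X))=w^\perp\cap\Lambda$ is determined by $w$ and contains $N$, and the condition $w\notin\Delta$ says precisely that no MBM class contained in $M$ is algebraic on $X$. Using this, together with the facts that $\Mo^2(X)$ preserves the orientation of the positive cone (so $H_X$ preserves $\mathcal{C}_X$) and that $W_{Exc}(X)$ acts simply transitively on the exceptional chambers, I would replace $X$ by a birational model whose K\"ahler cone meets $\mathcal{C}_N$ in the prescribed $H$-invariant chamber $K(N)$. Then \Cref{thm:torelli} realizes each $h\in H_X$ by a biholomorphism, giving a homomorphic section of $\rho_X$ over $H_X$, and $G:=\rho_X^{-1}(H_X)$ produces an $(H,N)$-polarized element of $\mathcal{M}^{\circ,\chi}_H$ over $w$.

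For part (2), surjectivity of $\mathcal{P}_{K(N)}$ onto $\Omega^\chi_\Lambda\setminus(\Delta\cup\Delta')$ is the same construction, with the extra removal of $\Delta'$ ensuring that no class of $\Delta(K(N))$ is algebraic, so that the K\"ahler cone of the model meets $N_\RR$ in all of $K(N)$, i.e.\ the model is $K(N)$-general. For injectivity, if $(X_1,\eta_1,G_1)$ and $(X_2,\eta_2,G_2)$ are $K(N)$-general with the same period $w$, then $\phi:=\eta_2^{-1}\circ\eta_1$ is a parallel transport Hodge isometry which, conjugated by the markings, is the identity on $\Lambda$. As $w\notin\Delta$ the $M$-walls are absent, so a K\"ahler-type chamber of $\mathcal{C}_{X_i}$ is determined by its trace on $N_\RR$; hence $\eta_1(\mathcal{K}_{X_1})=\eta_2(\mathcal{K}_{X_2})$ and $\phi(\mathcal{K}_{X_1})=\mathcal{K}_{X_2}$. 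By \Cref{thm:torelli} $\phi$ is realized by an isomorphism, which is an isomorphism of $H$-marked manifolds since $\phi$ conjugates $\rho_{X_1}(G_1)=\eta_1^{-1}H\eta_1$ to $\rho_{X_2}(G_2)=\eta_2^{-1}H\eta_2$.

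For part (3), with $|H|$ prime, take two $H$-general elements $(X_i,G_i,\eta_i)$ of $\mathcal{P}^{-1}(w)$. Again $\phi=\eta_2^{-1}\circ\eta_1$ is a parallel transport Hodge isometry conjugating the cohomological action of $G_1$ to that of $G_2$, but now $\eta_1(\mathcal{K}_{X_1})\cap N_\RR$ and $\eta_2(\mathcal{K}_{X_2})\cap N_\RR$ are $H$-invariant K\"ahler-type chambers of $\mathcal{C}_N$ that may differ, so $\phi$ only matches the fundamental exceptional chambers up to the Weyl group and one obtains a birational rather than a biholomorphic map. The key point is that a prime-order group acting on $\mathcal{C}_N$, a cone of signature $(1,t)$, fixes a K\"ahler-type chamber: it fixes a point of the associated hyperbolic space, and since $|H|$ is an odd prime it cannot act freely on the chambers meeting that point (a codimension-one face of the decomposition is adjacent to exactly two chambers). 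Choosing $K(N)$ to be such a fixed chamber (it depends only on the $H$-action on $N$), each $(X_i,G_i)$ is $H$-equivariantly birational, through Torelli and a Weyl-group element, to the pair underlying the unique $K(N)$-general point of $\mathcal{M}^{\circ,\chi}_{K(N)}$ over $w$ produced in part (2); composing these two birational conjugacies gives the required birational conjugacy between $(X_1,G_1)$ and $(X_2,G_2)$.

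I expect the main obstacle to be the wall-and-chamber bookkeeping that underlies every step: one must verify that the combinatorial conditions ``$w\notin\Delta$'' and ``$w\notin\Delta\cup\Delta'$'' translate exactly into the geometric statements invoked above --- existence of a K\"ahler-type chamber lying over $K(N)$, its uniqueness given its trace on $N_\RR$, and $K(N)$-generality of the constructed model --- and, for part (3), to make the fixed-chamber argument for the prime-order group precise and to check that the passage to a common $K(N)$-general model can be carried out compatibly with the group actions, so that the final birational map genuinely conjugates $G_1$ onto $G_2$.
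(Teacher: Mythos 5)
Your treatment of parts (1) and (2) follows the paper's argument (itself adapted from \cite{bcs:ball}) essentially verbatim: surjectivity by passing to a birational model whose K\"ahler cone contains an $H$-invariant class and recovering the group as $\rho_{X'}^{-1}(\eta'^{-1}H\eta')$, and injectivity by observing that two $K(N)$-general K\"ahler cones over the same period both contain the nonempty set $K(N)$ and hence coincide as chambers, so that Markman's Torelli theorem (\Cref{thm:torelli}) produces an isomorphism; the identification of the groups then comes from the injectivity of the forgetful map (\Cref{prop:forgetful}).

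For part (3), however, there is a genuine gap at exactly the point you flag as an ``obstacle''. Reducing both $(X_i,G_i)$ to a common $K(N)$-general model does not avoid the real difficulty: the parallel transport operator $\eta_0^{-1}\circ\eta_i$ relating $(X_i,\eta_i)$ to the $K(N)$-general representative $(X_0,\eta_0)$ need not match fundamental exceptional chambers, so it must be corrected by an element of the Weyl group before \Cref{thm:torelli} yields a birational map --- and one must then know that this correcting element normalizes $\rho_{X_0}(G_0)$, or else the resulting birational map fails to conjugate the groups. Your proposal does not supply this. The paper's mechanism is the \emph{$G$-invariant} Weyl group $W^{G}_{Exc}(X)$, generated by reflections in $G$-invariant stably prime exceptional divisors (which automatically commute with $\rho_X(G)$), together with the preceding lemma that for an $H$-general triple the invariant fundamental exceptional chamber $\mathcal{FE}_X^G$ is a full $G$-stable exceptional chamber: this is what allows one to find $w\in W^{G_2}_{Exc}(X_2)$ with $\psi\circ w(\mathcal{FE}^{G_2}_{X_2})=\mathcal{FE}^{G_1}_{X_1}$, to upgrade this to $\psi\circ w(\mathcal{FE}_{X_2})=\mathcal{FE}_{X_1}$ (two exceptional chambers are disjoint or equal), and to conclude $fG_1f^{-1}=G_2$. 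Note also that this argument works directly with the two given triples and never needs your hyperbolic fixed-point argument for the existence of an $H$-fixed K\"ahler-type chamber of $\C_N$: that existence is already built into the $H$-generality hypothesis, and your parity argument is in any case incomplete when the fixed point lies in a face of codimension greater than one, where more than two chambers can meet.
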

\begin{proof}[Proof of (1) and (2)]
 We adapt the proofs of
 \cite[Thms. 4.5, 5.6]{bcs:ball} to our situation
 by using \Cref{prop:forgetful}.
 
 Let $(X,\eta,G)$ be an $H$-marked ihs manifold and $\delta \in \Delta(\Lambda)$. If
 $\P(X,\eta,G) \in H_\delta$, then
 $\eta^{-1}(\delta) \in \NS(X)$ is an MBM class of $X$,
 hence $\delta^\perp \cap \eta(\K_X) = \emptyset$.
 Since $\eta(\K_X) \cap N_\RR$ is nonempty, we obtain that
 $\delta \not \in N^\perp = M$. Thus the first map
 is well defined.
 
 Suppose moreover that $(X,\eta,G)$ is $K(N)$-general,
 i.e. $\eta(\K_X) \cap N_\RR=K(N)$. Let $\delta \in \Delta'(M)$.
 If $\P(X,\eta,G) \in H_\delta$, then $\delta \in \eta(NS(X))$,
 which is a contradiction. Thus the second map is
 well defined.
 Let $\omega \in \Omega^\chi_\Lambda \setminus \Delta$.
 By surjectivity of the period map we can find a marked
 ihs manifold  $(X,\eta)$ in $\M_\Lambda^\circ$ with
 $\P(X,\eta) = \omega$. Since $\Lambda^H$ is hyperbolic and
 $w \not \in \Delta$, we can find a K\"ahler-type chamber
 of $\eta(\NS(X))$ which contains an $H$-invariant vector
 $\kappa$. Hence we have a birational map
 $f\colon X \dashrightarrow X'$ and a monodromy operator
 $\tau \in \Mo^2(X)$ preserving the Hodge structure of
 $X$ such that $\eta'^{-1}(\kappa) \in \K_{X'}$, where
 $\eta' = \eta \circ \tau \circ f^*$.
 By construction $\eta'^{-1}H \eta'$ is a group of monodromies
 that preserves $\eta'^{-1}(\kappa)$.
 Set $G'=\rho_{X'}^{-1}(\eta'^{-1}H \eta')$.
 Then $(X',\eta',G')$ is an $H$-marked ihs manifold
 with period $w$.
 
 To see the injectivity of the second map,
 let $(X_i,\eta_i,G_i)$ for $i = 1,2$ be $K(N)$-general ihs
 manifolds with the same period. Then
 $\eta_1^{-1} \circ \eta_2$ is a monodromy operator
 compatible with the K\"ahler cones and Hodge structures.
 It is therefore induced by an isomorphism
 $f\colon X_1 \rightarrow X_2$, so that $(X_1,\eta_1)$ and
 $(X_2,\eta_2)$ are isomorphic. Since $\phi\colon \M^\chi_H \rightarrow \M_\Lambda$ is injective,
 the preimages of $(X_i,\eta_i)$ under $\phi$ are isomorphic as well.
\end{proof}

Recall that $\B\Delta(X) \leq \Delta(X) :=
\ns(X) \cap \eta^{-1}(\Delta(\Lambda))$ consist of the classes
of stably prime exceptional divisors of $X$. Let $H$ be of prime order and $\C_X^{G}= \C_X \cap H^2(X,\RR)^G$ be the connected
component of the $G$-fixed part of the positive cone which contains K\"ahler classes. We call the connected components of $\C_X^{G}\setminus \bigcup\{v^\perp : v \in \B \Delta(X)\}$ the \emph{$G$-stable exceptional chambers}.
The \emph{$G$-invariant fundamental exceptional chamber} is $\mathcal{FE}_X^G=\mathcal{FE}_X \cap H^2(X,\RR)^G$.
\begin{lemma}
If $(X,\eta,G)$ is $H$-general, then the $G$-invariant fundamental chamber is a $G$-stable exceptional chamber.
\end{lemma}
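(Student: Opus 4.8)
The plan is to show directly that $\mathcal{FE}_X^G = \mathcal{FE}_X \cap H^2(X,\RR)^G$ is a nonempty, connected subset of $U \defeq \C_X^G \setminus \bigcup_{v \in \B\Delta(X)} v^\perp$ which is both open and closed in $U$; since the $G$-stable exceptional chambers are by definition the connected components of $U$, this yields the lemma.

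I would first handle nonemptiness and connectedness. The group $\rho_X(G)$ is finite and acts on the nonempty open convex cone $\K_X$, so averaging any Kähler class over $\rho_X(G)$ produces a $G$-invariant Kähler class $\kappa$, and $\K_X \subseteq \mathcal{FE}_X$ gives $\kappa \in \mathcal{FE}_X^G$. Next, $\mathcal{FE}_X$, being a connected component of the convex cone $\C_X$ with the linear hyperplanes $\{v^\perp\}_{v \in \B\Delta(X)}$ removed, is cut out inside $\C_X$ by sign conditions of the form $\varepsilon_v(\,\cdot\,,v)>0$ with $\varepsilon_v \in \{\pm 1\}$; hence $\mathcal{FE}_X$, and therefore $\mathcal{FE}_X^G$, is convex, in particular connected. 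Since $\mathcal{FE}_X^G$ is connected, meets $\C_X^G$ at $\kappa$, and is contained in $\C_X \cap H^2(X,\RR)^G$ — of which $\C_X^G$ is a connected component — it follows that $\mathcal{FE}_X^G \subseteq \C_X^G$; and since $\mathcal{FE}_X$ avoids every $v^\perp$ with $v \in \B\Delta(X)$, we get $\emptyset \neq \mathcal{FE}_X^G \subseteq U$.

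For clopenness, openness is immediate because $\mathcal{FE}_X$ is open in $H^2(X,\RR)$. For closedness I would invoke local finiteness of the arrangement $\{v^\perp\}_{v \in \B\Delta(X)}$ on $\C_X$ (which is exactly what makes the exceptional chambers well defined, cf.\ \cite[\S 6.2]{markman}): a point of $\C_X$ lying on none of the $v^\perp$ has a connected neighborhood in $\C_X$ meeting no $v^\perp$, hence contained in a single chamber, so $\overline{\mathcal{FE}_X} \cap \bigl(\C_X \setminus \bigcup_v v^\perp\bigr) = \mathcal{FE}_X$; intersecting with $H^2(X,\RR)^G$ and using that the closure of $\mathcal{FE}_X^G$ inside $H^2(X,\RR)^G$ lies in $\overline{\mathcal{FE}_X} \cap H^2(X,\RR)^G$ then shows $\mathcal{FE}_X^G$ is closed in $U$. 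Thus $\mathcal{FE}_X^G$ is a connected component of $U$, i.e.\ a $G$-stable exceptional chamber. The only delicate point is the bookkeeping of intersecting a chamber of a locally finite linear hyperplane arrangement with the subspace $H^2(X,\RR)^G$ of $G$-invariants: convexity of $\mathcal{FE}_X$ takes care of connectedness and local finiteness of $\B\Delta(X)$ takes care of clopenness, and the standing hypothesis that $(X,\eta,G)$ is $H$-general (so that $\chi$ is nontrivial, $X$ is projective, and $\mathcal{FE}_X$, $\B\Delta(X)$ together with Markman's Torelli theorem are the relevant objects) ensures we are in the right setting; beyond organizing these two inputs there is no real obstacle.
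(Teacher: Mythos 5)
Your proof is correct, but it takes a genuinely different route from the paper's. You treat the statement as point-set topology: $\mathcal{FE}_X^G$ is nonempty (average a K\"ahler class over $G$), convex and hence connected (via the standard description of a chamber of a hyperplane arrangement in a convex open cone as a sign set), contained in $U=\C_X^G\setminus\bigcup_{v\in\B\Delta(X)}v^\perp$, and clopen in $U$, hence a connected component. In fact, once you have the sign-set description $\mathcal{FE}_X=\{x\in\C_X \mid (x,v)>0 \text{ for all } v\in\B\Delta(X)\}$ you could dispense with the clopen step and local finiteness altogether: the component of $U$ containing the invariant K\"ahler class is, by the same path argument you use for closedness, exactly $\{x\in\C_X^G \mid (x,v)>0 \text{ for all } v\}=\mathcal{FE}_X^G$. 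The paper instead partitions $\B\Delta(X)$ into the $G$-invariant classes, the classes in $H^2(X,\ZZ)_G$, the classes in $\eta^{-1}(\Delta'(M))$, and the rest, and shows that under $H$-generality only the $G$-invariant walls actually meet $\C_X^G$ (the middle two pieces are empty by generality, and the remaining walls miss $\C_X^G$ because their invariant projections have nonnegative square). That wall identification is strictly more than the lemma asserts, and it is precisely what is consumed afterwards to get transitivity of $W_{Exc}^G(X)$ on the $G$-stable exceptional chambers in the proof of part (3) of \Cref{prop:period-map}; your argument does not supply it. Relatedly, your proof never genuinely uses $H$-generality -- which is consistent, since the lemma as literally stated does hold without it -- so the closing sentence about the hypothesis ``ensuring the right setting'' is the one hand-wave: it would be cleaner either to drop it or to say explicitly that your argument does not need the assumption, while the paper's finer conclusion does.
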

\begin{proof}
Write $\B\Delta(X)$ as the disjoint union of $\B\Delta^G(X)$,
$\B\Delta_G(X) \defeq\B\Delta(X) \cap H^2(X,\ZZ)_G$, $\B\Delta'(X)\defeq \B\Delta(X) \cap \eta^{-1}(\Delta'(M))$ and $\B\Delta''$ for the remaining $\B\Delta''\subseteq \B\Delta(X)$.
In  $\C^G_X \setminus \bigcup\{v^\perp : v \in \B \Delta(X)\}$
the hyperplanes coming from $\B\Delta''$ do not meet $\C_X^{G}$ and can be omitted. Moreover, by the generality assumption $\B\Delta_G(X)$ and $\B\Delta'(X)$ are empty.
\end{proof}

Recall that $W_{Exc}(X)\leq \Mo^2(X)$ is the subgroup generated by the reflections in stable prime exceptional divisors. We define $W^G_{Exc}(X)$ as the subgroup generated by reflections in $G$-invariant stable prime exceptional divisors.

\begin{proof}[Proof of \Cref{prop:period-map} (3)]
 Let $(X_1,\eta_1, G_1)$ and $(X_2,\eta_2, G_2)$ be in the fiber of $\P$ above $w\in \Omega^\chi_\Lambda \setminus (\Delta \cap \Delta')$.
 Since $(X_1,\eta_1)$ and $(X_2,\eta_2)$ lie in the same connected component $\mathcal{M}_\Lambda^\circ$, the composition $\psi=\eta_1^{-1} \circ \eta_2$ is a parallel transport operator, hence $\psi(\F\E_{X_2})$ is an exceptional chamber.

 Both $(X_i,\eta_i,G_i)$ are $H$-general, hence the invariant fundamental exceptional chambers $\F\E^{G_2}_{X_2}$ and $\psi^{-1}(\F\E_{X_1}^{G_1})$ are
 $G_2$-stable chambers.
 Thus there exists an element $w \in W^{G_2}_{Exc}(X_2)$ with $\psi \circ w (\F\E_{X_2}^{G_2})= \F\E_{X_1}^{G_1}$. We obtain $\psi \circ w (\F\E_{X_2})= \F\E_{X_1}$
 from the fact that two exceptional chambers are either disjoint or equal.
 By \Cref{thm:torelli} $\psi \circ w$ is induced by a birational map $f\colon X_1 \dashrightarrow X_2$.
 Recall that $G_i = \rho_{X_i}^{-1}(\eta_i^{-1}H\eta_i)$ for $i\in\{1,2\}$.
 Since $w$ is a reflection in a $G_2$-invariant class, $w \rho_{X_2}(G_2)w =\rho_{X_2}(G_2)$. Using this one obtains
$\eta_2\rho_{X_2}(fG_1f^{-1})\eta_2^{-1}
= H$,
which implies $f G_1 f^{-1}=G_2$ as desired.
\end{proof}

 Note that for $H$ a cyclic group, giving $\chi\colon H \rightarrow \CC^*$ is the same as fixing a generator $h$ of $H$ acting by a fixed primitive root of unity $\zeta$.

 \begin{theorem}\label{thm:non-symplectic-classification}
  Fix a deformation type of ihs manifolds and a connected component $\M_\Lambda^\circ$ of the moduli of $\Lambda$-marked ihs manifolds of the given deformation type.
  Let $h_1, \dots, h_n$ be a complete set of representatives of the conjugacy classes of monodromies $\Mo^2_\circ(\Lambda)$ of odd prime order $p$ with $\ker (h_i + h_i^{-1} - \zeta_p - \zeta_p^{-1})\leq \Lambda_\RR$ of signature $(2,*)$.
  Let $H_i = \langle h_i \rangle \leq \Mo^2_\circ(\Lambda)$ and $\chi_i\colon H_i \rightarrow \CC^*$ the character defined by $\chi_i(h_i)\defeq \zeta_p$.
  For each $H_i$ choose $(X_i,G_i,\eta_i) \in \M^{\circ,\chi_i}_{H_i}$.
  Then $(X_1, G_1), \dots, (X_n, G_n)$ is a complete set of representatives of pairs $(X,G)$ up to deformation and birational conjugation where $X$ is an ihs manifold of the given deformation type, $G\leq \Aut(X)$ is non-symplectic and $\ker \rho_X \leq G$, $|\rho_X(G)|=p$.
 \end{theorem}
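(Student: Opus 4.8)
The plan is to identify equivalence classes of pairs $(X,G)$, up to deformation and birational conjugation, with the $\Mo^2_\circ(\Lambda)$-conjugacy classes $h_1,\dots,h_n$, by attaching to each pair a distinguished monodromy operator and showing that this assignment is a well-defined bijection. Given $(X,G)$ as in the statement, I would first choose a marking $\eta$ with $(X,\eta)\in\M_\Lambda^\circ$ (possible after composing $\eta$ with a suitable isometry of $\Lambda$) and let $h$ be the generator of $\eta\rho_X(G)\eta^{-1}$ acting on the line $\eta(H^{2,0}(X))$ by $\zeta_p$; since $G$ is non-symplectic, $\rho_X(G)$ acts on $H^{2,0}(X)$ through a faithful character valued in the primitive $p$-th roots of unity, so this $h$ exists, is unique, and exhibits $(X,\eta,G)$ as a point of $\M^{\chi_i}_{H_i}$ with $H_i=\langle h\rangle$ and $\chi_i(h)=\zeta_p$. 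Non-symplecticity also makes $\eta(H^{2,0}(X)\oplus H^{0,2}(X))$ a positive-definite real plane inside $\ker(h_\RR+h_\RR^{-1}-\zeta_p-\zeta_p^{-1})$, so the latter has signature $(2,\ast)$ and $h$ is $\Mo^2_\circ(\Lambda)$-conjugate to exactly one $h_i$; replacing $\eta$ by $g\eta$ with $g\in\Mo^2_\circ(\Lambda)$, I may assume $(X,\eta,G)\in\M^{\circ,\chi_i}_{H_i}$. The same discussion shows that each $\M^{\circ,\chi_i}_{H_i}$ is nonempty, since the hypothesis on $h_i$ means $\Lambda^{\chi_i}_\CC$ carries a Hermitian form of signature $(1,\ast)$, and that all its members are pairs of the required kind.

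Next I would show that the conjugacy class of $h_i$ is unchanged under deformation and birational conjugation, which gives injectivity of the assignment. Along a connected family, parallel transport identifies the second cohomology lattices compatibly with the group actions, keeps the (propagated) markings in $\M_\Lambda^\circ$, and — since the distinguished generator is pinned down by the locally constant character $\chi_i$ — carries it to the distinguished generator of any other fibre. For a birational map $\phi\colon X\dashrightarrow X'$ with $\phi G\phi^{-1}=G'$, the pullback $\phi^\ast$ is a parallel transport operator (birational ihs manifolds are deformation equivalent, cf.\ \Cref{thm:torelli}) and a Hodge isometry satisfying $\phi^\ast\rho_{X'}(\sigma')(\phi^\ast)^{-1}=\rho_X(\phi^{-1}\sigma'\phi)$, so after composing the markings it conjugates the distinguished generators. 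Hence $(X_i,G_i)$ and $(X_j,G_j)$ being equivalent forces $h_i$ and $h_j$ to be conjugate in $\Mo^2_\circ(\Lambda)$, i.e.\ $i=j$.

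Turning to completeness, I would connect $(X,\eta,G)\in\M^{\circ,\chi_i}_{H_i}$ to $(X_i,\eta_i,G_i)$ as follows. Since $\chi_i$ is nontrivial, $N$ has signature $(1,t)$; fix an $H_i$-invariant K\"ahler-type chamber $K(N)_0$ of $\C_N$ (a generic $H_i$-invariant class of $\C_N$ lies in a chamber's interior). By \Cref{prop:period-map}(2) and the fact that $\P$ is a local homeomorphism (\Cref{prop:forgetful}), $\M^{\circ,\chi_i}_{K(N)_0}$ is in homeomorphic bijection with $\Omega^{\chi_i}_\Lambda\setminus(\Delta\cup\Delta')$, which is connected because $\Omega^{\chi_i}_\Lambda$ is a complex ball and $\Delta\cup\Delta'$ is a locally finite union of complex hyperplanes; hence $\M^{\circ,\chi_i}_{K(N)_0}$ is connected and all the pairs it parametrizes are deformation equivalent. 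To conclude I would deform $(X,\eta,G)$ inside $\M^{\circ,\chi_i}_{H_i}$ — using the universal deformations of \Cref{prop:universal} as charts — to a Noether--Lefschetz-generic point, where $\NS$ equals $N$; such a point is $(H_i,N)$-polarized and $H_i$-general, its period therefore lies in $\Omega^{\chi_i}_\Lambda\setminus(\Delta\cup\Delta')$, and by \Cref{prop:period-map}(3) it is birationally conjugate to the unique $K(N)_0$-general point over the same period, which lies in $\M^{\circ,\chi_i}_{K(N)_0}$. Running the identical reduction on $(X_i,\eta_i,G_i)$ and using connectedness of $\M^{\circ,\chi_i}_{K(N)_0}$ gives $(X,G)\sim(X_i,G_i)$.

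The main obstacle is precisely this last step: bridging the a priori disconnected moduli space $\M^{\circ,\chi_i}_{H_i}$ to the connected locus $\M^{\circ,\chi_i}_{K(N)_0}$. It relies on \Cref{prop:period-map}(3) and on the Noether--Lefschetz genericity argument, and care is needed to check that a generic member is simultaneously $(H_i,N)$-polarized and $H_i$-general, so that its period actually lands in $\Omega^{\chi_i}_\Lambda\setminus(\Delta\cup\Delta')$ where the clean bijection of \Cref{prop:period-map}(2) is in force; the rest is the bookkeeping of how parallel transport and the pullbacks $\phi^\ast$ interact with the markings and the character $\chi_i$.
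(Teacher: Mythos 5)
Your proposal is correct and follows essentially the same route as the paper: identify the distinguished generator $h$ acting by $\zeta_p$ on the period, observe that its $\Mo^2_\circ(\Lambda)$-conjugacy class is invariant under deformation and birational conjugation, and then, after a small deformation to an $H$-general member, use the connectedness of $\Omega^{\chi_i}_\Lambda\setminus(\Delta\cup\Delta')$ together with parts (2) and (3) of \Cref{prop:period-map} to match periods and conclude birational conjugacy. The only cosmetic difference is that you funnel both pairs through a single fixed chamber $\M^{\circ,\chi_i}_{K(N)_0}$, whereas the paper deforms each pair within its own chamber-moduli until the periods agree and then applies (3) directly.
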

 \begin{proof}
 Let $(X,G)$ be a pair as above. The choice of a marking $\eta$ with $(X,\eta) \in \M_\Lambda^\circ$ determines $H=\eta\rho_X(G)\eta^{-1}$ up to conjugation in $\Mo^2_\circ(\Lambda)$.
 Deformations of $(X,G,\eta)$ leave $H$ invariant.
 Similarly, birational modifications of $X$ are induced by parallel transport operators. In particular, they conjugate $H$ by a monodromy. Hence the conjugacy class of $H$ does not change under birational conjugation and deformation of $(X,G)$.
 We see that the $(X_i,G_i)$, $i=1,\dots, n$, are pairwise not equivalent.
 It remains to show that $(X,G)$ is equivalent to
 $(X_i,G_i)$ for some $i \in \{ 1, \dots, n\}$.
 Let $g\in G$ with $(g^*)^{-1} \omega_X = \zeta_p\omega_X$.
 Then $h = \eta \rho_X(g) \eta^{-1} \in H$ is independent of the choice of $g$.
 By assumption, there exists $i \in \{1,\dots n\}$ and $f \in \Mo^2_\circ(\Lambda)$ with $h = f h_i f^{-1}$.
 Then $(X,G,f\circ \eta)$ gives an element of $\M^{\circ,\chi_i}_{H_i}$.
 After a small deformation of $(X_i,G_i,\eta_i)$ and $(X,G,\eta)$, we may and will assume that they are $H$-general.
 Then each belongs to some moduli space $\M^{\circ, \chi_i}_{K_i(N)}$ respectively $\M^{\circ, \chi_i}_{K(N)}$ for suitable K\"ahler-type chambers $K_i(N)$ and $K(N)$.
 By  \Cref{prop:period-map} both moduli spaces are isomorphic to the period domain $\Omega^{\chi_i}_\Lambda \setminus (\Delta \cup \Delta')$. Since $p$ is odd, this period domain is connected. Hence after a deformation we may assume that their periods agree. We conclude with (3) of  \Cref{prop:period-map}.
\end{proof}

\section{Monodromies}
We have seen that a classification of non-symplectic automorphisms of ihs manifolds with an action of odd prime order on the second cohomology lattice
is given, up to
birational conjugation and deformation, by the corresponding conjugacy classes in
the monodromy group.
We obtain the conjugacy classes by reduction to the unimodular case which allows to handle all known deformation types
in a uniform way.

Let $(L,b)$ be a lattice
The \emph{real spinor norm} $\sigma=\mbox{spin}_\RR: O(L) \rightarrow \RR^*/\left(\RR^*\right)^2 \cong \left\{ \pm 1\right\}$ is defined as
 \[ \spin_\RR(g) = \left( -\frac{b(v_1, v_1)}{2}\right) \ldots \left( -\frac{b(v_r, v_r)}{2}\right) \mod \left(\RR^*\right)^2\]
 \noindent if $g_\RR \in O(L)_\RR$ factors as a product of reflections $g_\RR = \tau_{v_1} \circ \ldots \circ \tau_{v_r}$ with respect to elements $v_i \in (L)_\RR$ (in particular, $r \leq \rk L$ by the Cartan-Dieudonn\'e theorem). Note that  this is the real spinor norm corresponding conventionally to the quadratic form $-b$.
 \begin{example}
  Let $(L,b)$ be a lattice of signature $(l_+,l_-)$.
  We can diagonalize $b\otimes \RR$, that is, we can find
  $e_1,\dots e_n \in L \otimes \RR$ giving a diagonal Gram matrix with $l_+$ ones and $l_-$ minus ones on the diagonal.
  Then $-\id = \tau_{e_1} \circ \dots \circ \tau_{e_n}$.
  We see that the spinor norm of $-\id$ is $(-1)^{l_+}$.
 \end{example}
We set $O^+(L) = \ker \sigma$ and for $G\leq O(L)$ set $G^+=G \cap O^+(L)$.
The importance of this group lies in the fact that $\Mo^2(\Lambda) \subseteq O^+(\Lambda)$.
Here and in the following sections, root lattices associated to Dynkin diagrams are always assumed to be negative definite.

\subsection{K3 surfaces}
For any K3 surface $S$ we have $H^2(S, \IZ) \cong \Lambda = U^{\oplus 3} \oplus E_8^{\oplus 2}$, which is the unique
lattice in the genus $\even_{(3,19)}$ up to isometry. Its monodromy group is given by $\Mo^2(\Lambda)=O^+(\Lambda)$.

\begin{proof}[Proof of \Cref{thm:K3-classification}]

Since $\sigma(-\id_\Lambda)=-1$ and $-\id_\Lambda$ lies in the center of $O(\Lambda)$, conjugacy classes in $O(\Lambda)$ and $O^+(\Lambda)$ coincide.
From  \Cref{thm: classific unimod} we obtain the conditions for the existence of an isometry $f \in O(\Lambda)$ of odd prime order $p$ when $l_+ = 3$, $l_- = 19$, $s_+ = 2$. \Cref{thm:unimodular_conjugacy} implies that
the action of $f$ is determined (up to conjugacy on $\Lambda$) by the isometry class of $\Lambda^f$, which is unique by \Cref{lem:uniqueness_L^G}. We conclude with \Cref{thm:non-symplectic-classification}.
\end{proof}
\begin{remark}
 If the action of the group $G$ is symplectic, then $\Lambda_G$ is negative definite and we have the additional condition that
 $\Lambda_G$ has maximum at most $-4$. Thus our results do not apply
 to recover the classification of symplectic automorphisms, which is best understood in terms of the Niemeier lattices.
 See \cite{kondo} for a survey.
\end{remark}

\subsection{\texorpdfstring{K3$^{[n]}$}{K3 n}, \texorpdfstring{Kum$_n$}{Kum n}, \texorpdfstring{OG$_6$}{OG6} and \texorpdfstring{OG$_{10}$}{OG10}}\label{subsec: k3n}
Let $L$ be a lattice and
$\gamma: O(L) \rightarrow O(A_L)$
the natural map.
We define $\Gamma(L)=\gamma^{-1}(\{\pm \id_{A_L}\})$. Denote by $\chi = \gamma|_{\Gamma(L)}\colon\Gamma(L) \rightarrow \{\pm 1\}$
the corresponding character.

In order to treat the four deformation classes in a uniform way, we embed $\Lambda\cong H^2(X,\ZZ)$ primitively into a unimodular lattice $M$. The orthogonal complement $\Lambda^\perp \subset M$ will be denoted by $V$. These lattices are given in \Cref{tbl:invariants} (see \cite{beauville}, \cite{rapagnetta_OG6}, \cite{rapagnetta_OG10}, \cite[Lem.\ 9.2]{markman}, \cite[Cor.\ 4.8]{markman_mehrotra}, \cite[Thm.\ 4.3]{mongardi:monodromy}, \cite{mongardi_rapagnetta}, \cite{onorati}, \cite[Prop.\ 10]{beauville_rmks}, \cite[Cor.\ 5]{bnws_kum}, \cite{mongardi_wandel_OG}).

\begin{table}
\centering
\caption{Monodromy of the known deformation types of ihs manifolds.}\label{tbl:invariants}
\begin{tabular}{lllllll}
\toprule
Type& $\Lambda$ & $\mathop{\mathrm{Mon}}\nolimits^2(\Lambda)$ & $\ker \rho_X$ & $M$ & $V\leq M$ & $S$\\
\midrule
K3 & $U^{\oplus 3}\oplus E_8^{\oplus 2}$ & $ O^+(\Lambda)$ &  $1$ & $\even_{(3,19)}$& - & -\\
K3$^{[n]}$ & $U^{\oplus 3}\oplus E_8^{\oplus 2} \oplus \langle 2-2n \rangle$ & $ \Gamma^+(\Lambda)$ &  $1$ & $\even_{(4,20)}$& $\langle 2n-2 \rangle$ & $O$\\
Kum$_n$ & $U^{\oplus 3}\oplus \langle -2n-2 \rangle$ & $\ker (\det \cdot \chi)^+$ &  $C_{n+1}^4 \rtimes C_2$&$\even_{(4,4)}$&$\langle 2n+2 \rangle$ & $SO$\\
OG$_6$ & $U^{\oplus 3}\oplus \langle -2\rangle^{\oplus 2}$ & $O^+(\Lambda)$ & $C_2^8$& $\even_{(5,5)}$ & $\langle 2 \rangle\oplus \langle 2 \rangle$ & $O$\\
OG$_{10}$ & $U^{\oplus 3}\oplus E_8^{\oplus 2} \oplus A_2$ & $O^+(\Lambda)$ & $1$& $\even_{(5,21)}$&$A_2(-1)$& $O$\\
\bottomrule
\end{tabular}
\end{table}

\begin{definition}
For $\hskn$ and $\kumn$ set $h_V=-\id_V$, while for
OG$_6$ and OG$_{10}$ let
$h_V \in O(V)$ be represented by the matrix $\left(\begin{smallmatrix}
          0 & 1\\
          1 & 0
         \end{smallmatrix}\right)$ with respect to the obvious basis. Fix $v \in V$ primitive with $h_V(v)=v$.
Let $O(M,V,v)$ be the joint stabilizer in $O(M)$ of $V$ and $v$. For all deformation types the image of $O(V) \rightarrow O(A_V)$ is generated by $\bar h_V$, which has order two. This defines in a natural way a character $\chi_V\colon O(M,V,v) \rightarrow \{\pm 1\}$.
We have the restriction map $O(M,V,v) \rightarrow O(\Lambda)$ and
a section
\[\gamma \colon \Mo^2(\Lambda) \hookrightarrow O(M,V,v), \quad f \mapsto \hat{f}\defeq \hat{\chi}(f) \oplus f\]
where $\hat{\chi}(g)= \id_V$ if $\bar g = \id_{A_\Lambda}$ and $\hat{\chi}(g) = h_V$ else. In the following, let $S(M) \in \left\{O(M), SO(M) \right\}$ be as defined in \Cref{tbl:invariants} for each deformation type.

\end{definition}
\begin{lemma}
The image of $\gamma$ is given by
\[G =S^{\sigma\chi_V}(M,V,v)=\begin{cases}
O^{\sigma\chi_V}(M,V,0) & \mbox{for } \hskn,\\
SO^{\sigma\chi_V}(M,V,0) & \mbox{for } \kumn,\\
O^{\sigma\chi_V}(M,V,v) & \mbox{for } \textrm{OG}_6,\\
O^{\sigma\chi_V}(M,V,v) & \mbox{for } \textrm{OG}_{10}.
\end{cases}\]
\end{lemma}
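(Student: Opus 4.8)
The plan is to prove the two inclusions $\im\gamma\subseteq G$ and $G\subseteq\im\gamma$ separately. Throughout I use that an isometry $g\in O(M)$ with $g(V)=V$ automatically satisfies $g(\Lambda)=g(V^\perp)=\Lambda$, hence splits as $g=g|_\Lambda\oplus g|_V$ on $M\otimes\QQ=(\Lambda\otimes\QQ)\oplus(V\otimes\QQ)$, with the induced maps $\overline{g|_\Lambda}\in O(A_\Lambda)$ and $\overline{g|_V}\in O(A_V)$ related by $\overline{g|_V}=\phi\circ\overline{g|_\Lambda}\circ\phi^{-1}$, where $\phi\colon A_\Lambda\to A_V$ is the glue isomorphism defining the overlattice $M\supseteq\Lambda\oplus V$. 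I also record three facts, checked case by case from \Cref{tbl:invariants}: (a) $V$ is positive definite, of rank one for $\hskn$ and $\kumn$ and of rank two for OG$_6$, OG$_{10}$; (b) $\sigma_V(h_V)=-1$ in all four cases --- for $\hskn$ and $\kumn$ this is the example following the definition of the real spinor norm, $\sigma_V(-\id_V)=(-1)^{\rk V}=-1$, while for OG$_6$ and OG$_{10}$ one writes $h_V=\tau_{e_1-e_2}$ with $(e_1-e_2)^2=4$ resp.\ $6$, so $\sigma_V(h_V)=-(e_1-e_2)^2/2<0$; (c) for $\hskn$ and $\kumn$ the vector $v$ equals $0$ (since $h_V=-\id_V$), $O(M,V,v)$ is then just the stabilizer of $V$, and $O(V)=\{\pm\id_V\}=\{\id_V,h_V\}$, while for OG$_6$ and OG$_{10}$ a direct computation shows that the stabilizer of $v$ in $O(V)$ equals $\{\id_V,h_V\}$ (here $\sigma$ always denotes the real spinor norm of the lattice named in the subscript, with plain $\sigma$ reserved for $M$).

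For the inclusion $\im\gamma\subseteq G$: given $f\in\Mo^2(\Lambda)$, the isometry $\hat f=\gamma(f)=\hat\chi(f)\oplus f$ stabilizes $V$ and fixes $v$ by construction, and $\hat f|_V=\hat\chi(f)\in\{\id_V,h_V\}$. By multiplicativity of the real spinor norm over the orthogonal decomposition $M\otimes\RR=(\Lambda\otimes\RR)\oplus(V\otimes\RR)$ one has $\sigma(\hat f)=\sigma_\Lambda(f)\,\sigma_V(\hat\chi(f))$, and $\sigma_\Lambda(f)=1$ because $\Mo^2(\Lambda)\subseteq O^+(\Lambda)$. Since $\overline{h_V}$ has order two in $O(A_V)$, the character $\chi_V$ equals $-1$ on $\hat f$ precisely when $\hat\chi(f)=h_V$ and $+1$ when $\hat\chi(f)=\id_V$; together with fact (b) this gives $\sigma_V(\hat\chi(f))=\chi_V(\hat f)$ in both cases, hence $\sigma(\hat f)\chi_V(\hat f)=\sigma_\Lambda(f)\,\chi_V(\hat f)^2=1$, i.e.\ $\hat f\in\ker(\sigma\chi_V)$. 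For $\kumn$, where $S(M)=SO(M)$, one also checks $\det\hat f=\det f\cdot\det\hat\chi(f)$ with $\det\hat\chi(f)=\chi(f)$ (it is $1$ if $\bar f=\id_{A_\Lambda}$ and $\det(-\id_V)=-1$ otherwise); since $f\in\Mo^2(\Lambda)=\ker(\det\cdot\chi)^+$ we have $\det f=\chi(f)$, so $\det\hat f=\chi(f)^2=1$. Therefore $\gamma(f)\in G$.

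For the reverse inclusion $G\subseteq\im\gamma$: let $g\in G$, split $g=g|_\Lambda\oplus g|_V$, and set $f:=g|_\Lambda$. By fact (c), $g|_V\in\{\id_V,h_V\}$; since $\overline{h_V}\neq\id_{A_V}$ this element is determined by $\overline{g|_V}=\phi\,\bar f\,\phi^{-1}$, so $g|_V=\id_V$ if and only if $\bar f=\id_{A_\Lambda}$. Comparing with the definition of $\hat\chi$ gives $g|_V=\hat\chi(f)$, i.e.\ $g=\gamma(f)$. As in the first part $\sigma(g)=\sigma_\Lambda(f)\,\chi_V(g)$, so $1=\sigma(g)\chi_V(g)=\sigma_\Lambda(f)$, i.e.\ $f\in O^+(\Lambda)$ --- which already concludes the cases OG$_6$ and OG$_{10}$, where $\Mo^2(\Lambda)=O^+(\Lambda)$. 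For $\hskn$ and $\kumn$ we moreover have $\overline{h_V}=-\id_{A_V}$, so $\bar f=\phi^{-1}\overline{g|_V}\phi\in\{\pm\id_{A_\Lambda}\}$ and $f\in\Gamma(\Lambda)$; for $\hskn$ this is $f\in\Gamma^+(\Lambda)=\Mo^2(\Lambda)$, and for $\kumn$ one uses in addition $1=\det g=\det f\cdot\det(g|_V)$ with $\det(g|_V)=\chi(f)$ to obtain $\det f=\chi(f)$, hence $f\in\ker(\det\cdot\chi)^+=\Mo^2(\Lambda)$. Thus $g\in\im\gamma$.

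The only place requiring genuine input is fact (b): the coincidence $\sigma_V(h_V)=-1$, together with $\chi_V$ taking the value $-1$ on an isometry restricting to $h_V$, is exactly what makes the product character $\sigma\cdot\chi_V$ --- rather than $\sigma$ or $\chi_V$ separately --- the condition that cuts $\im\gamma$ out of the stabilizer $S(M,V,v)$, and it has to be verified in each of the four deformation types. The secondary technical point is fact (c): identifying $O(V)$ (resp.\ the stabilizer of $v$ in $O(V)$) with the two-element group $\langle h_V\rangle$, which is what pins $g|_V$ down so that its action on $A_V$ recovers it.
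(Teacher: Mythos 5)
Your proof is correct and follows essentially the same route as the paper's: both directions reduce to the splitting $g=g|_\Lambda\oplus g|_V$, the identity $\sigma(h_V)=-1$ (which makes $\sigma\chi_V$ the right character), the fact that $O(V,v)=\langle h_V\rangle$ has order two, and the unimodularity of $M$ to match $\overline{g|_V}$ with $\bar f$. The only difference is that you spell out the verifications the paper leaves implicit (the case-by-case check of $\sigma(h_V)=-1$, the identification of the stabilizer of $v$ in $O(V)$, and the final check that $f\in\Mo^2(\Lambda)$), which is harmless.
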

\begin{proof}
Let $f \in \Mo^2(\Lambda)\leq O^+(\Lambda)$.
If $\bar f = \id$, then $\hat{f}= \id_V \oplus f$ and
\[(\chi_V\sigma)(\hat{f})=\sigma(\hat{f})=\sigma(f)=1.\]
Otherwise $\hat{f} = h_V \oplus f$ and
\[(\chi_V\sigma)(\hat{f})=-\sigma(\hat{f})=-\sigma(h_V)\sigma(f)=-\sigma(h_V)=1.\]

By construction $\hat{f}$ preserves $V$ and $v$.
In the $\kumn$-case we have in addition $\det(\hat{f})=\det(\hat{\chi}(f))\det(f) = \chi(f) \det(f)=1$.
We conclude that $G$ contains the image of $\gamma$.
To show the converse inclusion, let $g \in G$ and set
$f=g|_\Lambda$. 
Since $M$ is unimodular, $\chi_V(g)=1$ if and only if $\bar f = \id$.
Moreover $g\vert_V \in O(V,v)=\langle h_V \rangle$, which is of order two. Thus $\hat\chi(f) = g\vert_V$, which implies $\hat f = g$. It is immediate to check that $f \in \Mo^2(\Lambda)$.
\end{proof}
Let $A$ be a group and $g,h \in A$ elements. We denote conjugation by $\prescript{h}{}{g}=hgh^{-1}$ and conjugacy classes by $\prescript{A}{}{g}$.
The set of conjugacy classes of elements of $A$ is $\cl(A)$. For a subgroup $B \leq A$ we have a natural map
$\cl(B) \rightarrow \cl(A)$.
\begin{theorem}\label{thm:fiber}
Let
 $\psi \colon \cl(Mon^2(\Lambda))\cong \cl(G) \rightarrow \cl(S(M))$
 be the natural map.
Let $g \in S(M)$ be of odd prime order with $M_g$ of signature $(2,\ast)$ and $V \subseteq M^g$. Then the map
\[
\begin{array}{rccl}
\varphi \colon &\psi^{-1}\left(\prescript{S(M)}{}{g}\right) &\longrightarrow &S(M^g) \setminus \left\{W \subseteq M^g \text{ primitive} \mid W \cong V \right\}\\[2pt]
 &\prescript{G}{}{h} &\longmapsto&S(M^g)fV \mbox{ where } g=\prescript{f}{}{h}, f\in S(M)
\end{array}
\]
is a bijection.
\end{theorem}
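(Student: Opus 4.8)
The plan is to realise both sides of $\varphi$ as double coset spaces and then identify them. First, since $g$ has odd order we have $\sigma(g)=\spin_\RR(g)=1$, and since $V\subseteq M^g$ the isometry $g$ fixes $V$ pointwise, hence $\chi_V(g)=1$ and $g(v)=v$; so $g\in G$ and the fiber is nonempty. More generally, for $x\in S(M)$ one checks $x^{-1}gx\in G$ iff $x(V)\subseteq M^g$, using that $\spin_\RR$ is a class function, that $\chi_V$ is trivial on isometries fixing $V$ pointwise, and that $M^{x^{-1}gx}=x^{-1}(M^g)$. A conjugator $f$ with $g=\prescript{f}{}{h}$ is well defined up to left multiplication by $C_{S(M)}(g)$, and replacing $h$ by $\prescript{u}{}{h}$ with $u\in G$ replaces $f$ by $fu^{-1}$; hence one gets a bijection
\[\psi^{-1}\bigl(\prescript{S(M)}{}{g}\bigr)\ \cong\ C_{S(M)}(g)\,\big\backslash\,\{x\in S(M):x(V)\subseteq M^g\}\,\big/\,G,\]
transporting $\varphi$ to the assignment $C_{S(M)}(g)\,x\,G\mapsto S(M^g)\,x(V)$.

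Next I would show that restriction to the invariant lattice gives a surjection $C_{S(M)}(g)\twoheadrightarrow S(M^g)$. Any $c\in C_{S(M)}(g)$ preserves $M^g$ and $M_g$, and $c|_{M_g}\in O(M_g,b,g|_{M_g})=O(M_g,h_g)$ has $\QQ$-determinant $1$ by \Cref{lem:SO}, so $\det(c|_{M^g})=\det(c)$ and $c|_{M^g}\in S(M^g)$. Conversely, the hermitian lattice $M_g$ is indefinite or of rank one over $\ZZ[\zeta]$ (its trace form has signature $(2,\ast)$), so given $s\in S(M^g)$ \Cref{prop: surjectivity} provides $t\in O(M_g,h_g)$ inducing on $\disc{M_g}\cong\disc{M^g}$ the automorphism matching $s$, and $s\oplus t\in C_{S(M)}(g)$ restricts to $s$; the same applies with $h$ in place of $g$. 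In particular the left $C_{S(M)}(g)$-action on $\{x(V):x\in S(M),\ x(V)\subseteq M^g\}$ is exactly the $S(M^g)$-action, so for surjectivity of $\varphi$ it suffices to prove that every primitive $W\subseteq M^g$ with $W\cong V$ is $S(M)$-conjugate to $V$. Since $M$ is unimodular, $q_{W^\perp}\cong-q_W\cong-q_V\cong q_\Lambda$, so $W^\perp$ lies in the genus of $\Lambda$; for each deformation type the lattice $\Lambda$ of \Cref{tbl:invariants} is indefinite of rank $\geq 3$ and contains $U^{\oplus 3}$, so this genus is a single class and $O(\Lambda)\to O(\disc\Lambda)$ is onto. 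By Nikulin's gluing theory \cite{nikulin} the primitive embeddings of $V$ into $M$ with orthogonal complement isometric to $\Lambda$ form a single $O(M)$-orbit, and precomposing with the reflection in a vector $u\in U^{\oplus 3}\subseteq W^\perp$ with $u^2=2$ (which fixes $\disc{W^\perp}$ and has determinant $-1$) one may take the conjugating isometry in $S(M)$; thus $x(V)=W$ for some $x\in S(M)$, $x^{-1}gx\in G$, and $\varphi(\prescript{G}{}{x^{-1}gx})=S(M^g)W$.

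For injectivity, suppose $S(M^g)\,x_1(V)=S(M^g)\,x_2(V)$; using $C_{S(M)}(g)\twoheadrightarrow S(M^g)$ we may replace $x_1$ inside $C_{S(M)}(g)x_1$ so that $x_1(V)=x_2(V)$. Then $t:=x_2^{-1}x_1\in\mathrm{Stab}_{S(M)}(V)$ and, with $h:=x_2^{-1}gx_2\in G$, one has $x_1^{-1}gx_1=\prescript{t^{-1}}{}{h}$, so it remains to prove that $\prescript{t}{}{h}$ is $G$-conjugate to $h$ for every $h\in G$ of odd prime order with $V\subseteq M^h$ and every $t\in\mathrm{Stab}_{S(M)}(V)$ (one checks $\prescript{t}{}{h}\in G$ automatically). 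It is enough to produce, for each such $t$, an element $u\in G$ with $u^{-1}t\in C_{S(M)}(h)$, i.e.\ to establish $\mathrm{Stab}_{S(M)}(V)=G\cdot\bigl(C_{S(M)}(h)\cap\mathrm{Stab}_{S(M)}(V)\bigr)$, for then $\prescript{t}{}{h}=\prescript{u}{}{h}$. Now $G$ is the kernel of the character $\sigma\cdot\chi_V$ on $\mathrm{Stab}_{S(M)}(V,v)$, and for the $\mathrm{OG}$-types $\mathrm{Stab}_{S(M)}(V)/\mathrm{Stab}_{S(M)}(V,v)$ is an orbit of $v$ inside $V$; so representatives for $\mathrm{Stab}_{S(M)}(V)/G$ can be taken from the central isometry $-\id_M$ (automatically in $C_{S(M)}(h)$), from reflections $\tau_u\perp\id_V$ in square $\pm 2$ vectors $u\in\Lambda^h\subseteq M^h$, and from isometries of $M_h$ lifting a prescribed automorphism of $\disc{M_h}\cong\disc{M^h}$ via \Cref{prop: surjectivity} together with an accompanying adjustment on $V$ — each chosen to centralise $h$. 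This yields the desired factorisation, and hence the bijectivity of $\varphi$.

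The routine part is the double coset identification; the hard part will be the injectivity, namely controlling the discrepancy between $\mathrm{Stab}_{S(M)}(V)$ and $G$ by elements of the centraliser $C_{S(M)}(h)$. This is exactly where the surjectivity onto discriminant groups (\Cref{prop: surjectivity}, together with the structure of $\Lambda$ in \Cref{tbl:invariants} giving $O(\Lambda)\to O(\disc\Lambda)$ onto) is indispensable, and where the $\mathrm{OG}$ cases — for which $V$ has rank two and $O(V)$ is larger — require the most care; the spinor–norm computation for $-\id_M$ and for the reflections $\tau_u$ is the mechanism that absorbs the index $[\mathrm{Stab}_{S(M)}(V):G]$.
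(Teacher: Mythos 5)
Your double-coset reorganization is sound through the first three steps, and these agree in substance with the paper: the identification of the fiber with $C_{S(M)}(g)\backslash\{x\in S(M):x(V)\subseteq M^g\}/G$, the surjection $C_{S(M)}(g)\twoheadrightarrow S(M^g)$ obtained by gluing an isometry of $M^g$ to an element of $O(M_g,h_g)$ supplied by \Cref{prop: surjectivity} (with \Cref{lem:SO} controlling determinants for $\kumn$), and the surjectivity of $\varphi$ via uniqueness of primitive embeddings of $V$ into $M$ (the paper cites \cite[Prop.\ 1.6.1]{nikulin} directly where you spell out the single-class-genus and discriminant-surjectivity inputs for $\Lambda$). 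Your reduction of injectivity to the factorisation $\mathrm{Stab}_{S(M)}(V)=G\cdot\bigl(C_{S(M)}(h)\cap\mathrm{Stab}_{S(M)}(V)\bigr)$ is also logically correct, and for $\hskn$ and $\kumn$ (where $v=0$, the index is two, and $-\id_M$ is a central representative of the nontrivial coset) this closes the argument.

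The genuine gap is the proof of that factorisation for OG$_6$ and OG$_{10}$. The statement is essentially equivalent to the injectivity you are trying to establish, and your list of coset representatives does not deliver it: since $O(V)$ is dihedral of order $8$ resp.\ $12$ while $G$ restricts into $O(V,v)=\langle h_V\rangle$, the index $[\mathrm{Stab}_{S(M)}(V):G]$ can be as large as $12$, and you must realize rotations and reflections of $V$ that move $v$ by isometries of $M$ centralizing $h$. The reflections $\tau_u$ with $u\in\Lambda^h=V^\perp\cap M^h$ of square $\pm2$ need not exist: $M^h$ has signature $(3,\rk M^h-3)$, so $\Lambda^h$ can be a rank-one positive definite lattice $\langle d\rangle$ with $d\neq 2$, which contains no such vectors; and the ``accompanying adjustment on $V$'' is precisely the unconstructed step, because extending a nontrivial $r\in O(V)$ across the glue requires matching $\bar r$ on $A_V$ with compatible actions on $A_{\Lambda^h}$ and $A_{M_h}$, and $O(\Lambda^h)\to O(A_{\Lambda^h})$ is not surjective for definite $\Lambda^h$. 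The paper avoids this coset analysis by reordering the steps: it first corrects $t\in S(M^g)$ so that it matches $v$ as well as $V$ (using that $\Gamma(V)$ lies in the image of $S(M^g,V)\to O(V)$ and acts transitively on the vectors of norm $v^2$), and only then extends $t$ to $\widetilde t\in C_{S(M)}(g)$; the resulting conjugator $u=(f')^{-1}\circ\widetilde t\circ f$ then lands in $S(M,V,v)$, where $G$ has index exactly two as the kernel of $\sigma\chi_V$, and the single element $\hat\chi(-\id_\Lambda)\oplus(-\id_\Lambda)$ absorbs the remaining coset uniformly in the deformation type. To repair your version, build the $v$-matching into your first reduction (arrange $x_1(v)=x_2(v)$, not merely $x_1(V)=x_2(V)$); the residual coset problem then collapses to the index-two case your argument already handles.
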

\begin{proof}
 Note that $gfV=fhf^{-1}fV=fhV = fV$ and similarly $gfv=fv$. Since $p$ is odd and $\rk V \leq 2$, this implies $fV \leq M^g$. Let $s \in G$ and $f' \in S(M)$ such that
 $h' = \prescript{s}{}{h}$ and $g=\prescript{f'}{}{h'}$.
 We have to show that the left cosets $S(M^G)fV$ and $S(M^g)f'V$ coincide.
 With $t = f'sf^{-1}$ we have $g=\prescript{t}{}{g}$, thus $t$ preserves $M^g$ and its restriction is in
 $O(M^g)$. We have $tfV=f'sV=f'V$.
 For $\kumn$ we have to show in addition that $t|_{M^g} \in SO(M^g)$. Since $gt=tg$, \Cref{lem:SO} applies and gives $\det t|_{M_g}=1$. Thus $1 = \det t = \det t|_{M^g} \cdot \det t|_{M_g}= \det t|_{M^g}$.
 This shows that $\phi$ is \emph{well defined}.
 
Let $W \leq M^g$ be a primitive sublattice with $W \cong V$. By  \cite[Prop.\ 1.6.1]{nikulin} we can find $f \in O(M)$ with $W=fV$ and
since $V^\perp$ contains a hyperbolic plane, we may even assume $f \in S(M)$.
Set $h=\prescript{f^{-1}}{}{g}$. It satisfies $h|_V=\id_V$ and it is of odd order, hence $h \in G=S^{\sigma \chi_V}(M,V,v)$. By definition $g = \prescript{f}{}{h}$ and $\varphi(\prescript{G}{}{h})=S(M^g)W$. Thus $\varphi$ is \emph{surjective}.

Consider $h=\prescript{f^{-1}}{}{g}$ and $h'=\prescript{f^{-1}}{}{g}$ with
$f,f' \in S(M)$.  Their images under $\varphi$ are represented by $fV$ and $f'V$. To prove \emph{injectivity}
suppose that there is $t \in S(M^g)$ satisfying $tfV=f'V$.
As $\Gamma(V)\subseteq \im(S(M^g,V)\rightarrow O(V))$ acts transitively on the vectors of length $v^2$, we may even assume $tfv=f'v$. The natural map
 \[SO(M_g,g)=O(M_g,g) \rightarrow O(\disc{M_g})\]
 is surjective (cf.\ \Cref{prop: surjectivity}) and $M$ is unimodular, hence we can extend $t$ to an isometry $\widetilde{t} \in S(M)$ commuting with $g$. Then
 $u=(f')^{-1} \circ \widetilde{t} \circ f$ conjugates $h$ to $h'$ and preserves $V$ and $v$.
 If $(\sigma\cdot \chi_V) (u) \neq 1$ we can multiply $u$ by
 $\hat\chi(-\id_\Lambda)\oplus (-\id_\Lambda)$ to enforce $u \in G$.
\end{proof}

Together \Cref{thm: classific unimod,thm:unimodular_conjugacy,thm:non-symplectic-classification,thm:fiber}
pave the way for classifying non-symplectic monodromies of odd prime order up to conjugation.
It remains to compute the orbits of primitive sublattices in the invariant lattice $M^f$.
This is done in  \Cref{sec: primitive vectors,sec: OG10}. Before we continue with the monodromy classification,
we give the proof of  \Cref{thm: classification k3n}, which characterizes the invariant and coinvariant
lattices of non-symplectic automorphisms with an action of odd prime order on the second cohomology lattice, for ihs manifolds of type $\hskn$, $\kumn$, OG$_6$ and OG$_{10}$.

\begin{proof}[Proof of \Cref{thm: classification k3n}]
In order for $K$ to be the coinvariant lattice of a non-symplectic automorphism, its signature needs to be $(2,\rk K -2)$.
Any $p$-elementary lattice of discriminant $p$ and signature $(2,(a+2m)(p-1)-2)$ is unique
in its genus, since it is either indefinite or of signature
$(2,0)$ (and therefore isomorphic to $A_2(-1)$; see
\cite[Table 15.1]{conway_sloane}). Hence, by
\Cref{prop:clas-fixed-pt-free} the lattice $K$
admits a fixed-point free isometry $f$ of order $p$ with
$\bar f = \id_{A_K}$ if and only if it is $p$-elementary
and $\rk K = (l(A_K) + 2m)(p-1)$ for some $m \in \ZZ_{\geq 0}$. Let $T := K^\perp \subset \Lambda$. 
We may assume that $(K,f)$ has signature $(k_1^+,k_1^-)=(2,\ast)$.
By \cite[Cor.\ 1.5.2]{nikulin},
we have that $\id_T \oplus f \in O(T \oplus K)$ extends to
an isometry $\Phi \in O(\Lambda)$ such that $\Lambda^\Phi = T$ and $\Lambda_\Phi = K$. 
Since $p$ is odd this isometry is a monodromy.
\Cref{thm:non-symplectic-classification} provides $(X,\eta,\sigma)$.
\end{proof}

\begin{remark}
For manifolds of type $\hskn$, a complete classification of the pairs of invariant and coinvariant lattices of non-symplectic automorphisms of odd prime order is already known in the case $n=2$, by work of Boissi\`ere, Camere and Sarti \cite[Thm.\ 7.1]{bcs} and Boissi\`ere, Camere, Mongardi and Sarti \cite[Thm.\ 6.1]{bcms_p=23}. A similar classification has been achieved for $n=3,4$, and in all dimensions when $\rk K = 22$, by Camere and the second author \cite[Thms.\ 1.2, 4.4]{CC}. For all $n \geq 2$, an analogous statement of \Cref{thm: classification k3n} in the case of involutions has been obtained by Camere, the second author and Andrea Cattaneo in \cite[Thm.\ 2.3]{CCC} building on work of Joumaah \cite{joumaa:order2}. Moreover, for $n = 2,3,4$ and $p \neq 23$ odd, explicit examples of automorphisms realizing all possible pairs of lattices have been exhibited in \cite{bcs} and \cite{CC}.

For manifolds of type $\kumn$, Mongardi, Tari and Wandel \cite{mongardi_tari_wandel:kummer} have classified the pairs of invariant and coinvariant lattices when $n=2$ and presented examples of automorphisms for all cases where $\rk K \leq 6$.

For OG$_6$ type ihs manifolds Grossi has recently classified the invariant and coinvariant lattices \cite{grossi:nonsymplectic} of non-symplectic automorphisms of prime order.
\end{remark}

For an ihs manifold $X$ of type $\hskn$, a group of automorphisms $G \subset \aut(X)$ is called \emph{induced} if there exists a projective $K3$ surface $S$, a group $F \subset \aut(S)$, a Mukai vector $v \in H^*(S, \IZ)^F$ and a $v$-generic stability condition $\tau$ such that $(X, G)$ is isomorphic to $(M_\tau(S,v), \tilde{F})$, where $\tilde{F}$ is the induced action of $F$ on the moduli space. For type $\kumn$, the same definition holds with $S$ an abelian surface and the induced action of $F$ on the fiber $K_\tau(S,v)$ over $(0,0)$ of the Albanese map $M_\tau(S,v) \rightarrow S \times \hat{S}$. Combining \Cref{thm: classific unimod,thm:non-symplectic-classification,thm:fiber} with the results of \cite[\S 3]{mongardi_wandel:induced} we obtain the following (see also \cite[\S 2]{mongardi_tari_wandel:kummer} for $\kumn$).

\begin{proposition}
For manifolds of type $\hskn$ or $\kumn$, let $g \in O(M)$ be an isometry of odd prime order $p$ with $M^g$ of signature $(2, r-2)$ and discriminant $p^a$, for some $r = \rk M^g \geq 2$ and $a \geq 0$. Then the monodromy classes in the fiber $\psi^{-1}(\prescript{S(M)}{}g)$ admit a geometric realization as actions of induced automorphisms if and only if $M^g$ contains $U$ as a direct summand. In particular, this happens if and only if\\
$(p,r,a) \neq (3,2,1), (3,4,4), (3,6,5), (3,8,6), (5,4,3), (23,2,1)$ for type $\hskn$;\\
$(p,r,a) \neq (3,2,1), (7,2,1)$ for type $\kumn$.
\end{proposition}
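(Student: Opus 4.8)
The plan is to reduce the statement to arithmetic of $p$-elementary lattices. By \Cref{thm:fiber} the monodromy classes in the fiber $\psi^{-1}\left(\prescript{S(M)}{}{g}\right)$ are in bijection with the $S(M^g)$-orbits of primitive vectors $v\in M^g$ of square $2n-2$ for $\hskn$ and $2n+2$ for $\kumn$, and by \Cref{thm:non-symplectic-classification} each such class is a pair $(X,G)$ up to deformation and birational conjugation; so it is enough to prove that every class in the fiber contains an induced pair if and only if $M^g$ contains $U$ as a direct summand. The ``only if'' direction is immediate from the moduli-theoretic description of induced pairs: if a class is realized by $(X,G)=(M_\sigma(S,v),\tilde{F})$ with $F=\langle\sigma_0\rangle$ acting on a projective K3 surface (resp.\ an abelian surface) $S$, then $M$ is identified with the Mukai lattice $H^*(S,\ZZ)=H^0(S,\ZZ)\oplus H^2(S,\ZZ)\oplus H^4(S,\ZZ)$ and $g$ (up to conjugacy) with $\sigma_0^*$; a biholomorphism fixes the fundamental class and the class of a point, so $\sigma_0^*$ acts trivially on $H^0(S,\ZZ)\oplus H^4(S,\ZZ)\cong U$, and this copy of $U$ lies in $M^{\sigma_0^*}\cong M^g$, being unimodular and primitive it is an orthogonal direct summand.

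For the ``if'' direction, write $M=U\oplus\Lambda'$ with $U\subseteq M^g$; then $\Lambda'\cong U^{\oplus3}\oplus E_8^{\oplus2}$ (resp.\ $U^{\oplus3}$), $g=\id_U\oplus g'$ with $g'\in O(\Lambda')$, and $N:=(\Lambda')^{g'}$ is the orthogonal complement of $U$ in $M^g$. Every isometry of odd order has real spinor norm $1$, since over $\RR$ it decomposes as a product of rotations in definite planes; hence $g'\in O^+(\Lambda')$, and \Cref{thm:non-symplectic-classification} (resp.\ its classical counterpart for abelian surfaces, see \cite[\S2]{mongardi_tari_wandel:kummer}) provides a projective K3 surface (resp.\ abelian surface) $S$ with a group $\langle\sigma_0\rangle\leq\aut(S)$ of non-symplectic automorphisms of order $p$ realizing the pair $(\Lambda',\langle g'\rangle)$; the existence conditions on the invariant lattice $N$ are precisely the translation of the hypothesis $U\subseteq M^g$ (one uses $M_g\cong(\Lambda')_{g'}$ to see that $N$ has the discriminant form of $M^g$, so that $U\oplus N\cong M^g$, the genus of $M^g$ having a single class as it is indefinite of rank $\geq3$). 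Under the resulting identification $H^*(S,\ZZ)=U\oplus\Lambda'=M$ the action of $\sigma_0$ on cohomology is $g$, so any primitive $v\in M^g$ of the prescribed square is $\sigma_0$-invariant and algebraic --- because $M^g\subseteq H^0(S,\ZZ)\oplus\NS(S)\oplus H^4(S,\ZZ)$ --- hence a valid Mukai vector; for a $\sigma_0$-generic stability condition the moduli space $M_\sigma(S,v)$ (resp.\ its Albanese fiber) is then an ihs manifold of the prescribed deformation type carrying the automorphism induced by $\sigma_0$, and by \Cref{thm:fiber} this induced pair represents exactly the class of $v$. Much of this loop is the content of \cite[\S3]{mongardi_wandel:induced}, to which I would delegate the details of the moduli construction.

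It then remains to decide, arithmetically, when the $p$-elementary lattice $M^g$ of signature $(2,r-2)$ and determinant $p^a$ splits off a copy of $U$. If $r=2$ it is positive definite and contains no hyperbolic plane; if $r\geq3$ it is indefinite of rank $\geq3$ and hence unique in its genus by \cite[Ch.\ 15, Thm.\ 14]{conway_sloane}, so $M^g\cong U\oplus N$ if and only if there is a $p$-elementary lattice $N$ of signature $(1,r-3)$ whose discriminant form coincides with that of $M^g$. By \Cref{thm:p-elementary} such an $N$ exists exactly when $\rk N=r-2\geq a$, i.e.\ $r-a\geq2$, together with the extra requirement in the boundary case $r-a=2$ --- where $N$ is forced to be $p$-modular --- that the sign $\leg{-1}{p}^{r-3}$ prescribed for $N$ by \Cref{thm:p-elementary} agrees with the sign $\epsilon$ of the discriminant form of $M^g$, the latter determined by $4-r\equiv 2\epsilon-2-(p-1)a\pmod{8}$. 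Finally I would enumerate the finitely many triples $(p,r,a)$ that occur: here $\rk M-r=\rk M_g=(a+2m)(p-1)$ for some $m\geq0$ with $\rk M=24$ for $\hskn$ and $\rk M=8$ for $\kumn$, so $p$ is bounded, and imposing the constraints of \Cref{thm: classific unimod} on $(M,g)$ cuts the list down; running the criterion through it --- $r=2$ and $r-a\leq1$ always fail, $r-a\geq3$ always succeeds, and the cases $r-a=2$ are decided by the sign check, which fails for $(3,8,6)$ but succeeds for $(3,4,2)$ --- leaves exactly the exceptional triples in the statement.

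I expect the last step to hold the main obstacle: organising the enumeration of occurring triples and, within it, pinning down the boundary-case sign condition, since that is precisely what distinguishes the excluded triple $(3,8,6)$ for $\hskn$ from the admissible $(3,4,2)$ for $\kumn$. On the geometric side the delicate points are to confirm that the moduli construction reproduces the \emph{same} monodromy class one starts from --- so that the bijection of \Cref{thm:fiber} may be invoked --- and to have at hand the abelian-surface analogue of \Cref{thm:non-symplectic-classification} for type $\kumn$; these are essentially provided by \cite{mongardi_wandel:induced} and \cite{mongardi_tari_wandel:kummer}.
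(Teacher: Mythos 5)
Your proposal is correct and follows essentially the same route the paper takes, namely combining \Cref{thm:fiber} and \Cref{thm:non-symplectic-classification} with the Mongardi--Wandel characterization of induced automorphisms via the $H^0\oplus H^4\cong U$ summand of the Mukai lattice, and then settling the splitting $M^g\cong U\oplus N$ by the arithmetic of $p$-elementary genera (\Cref{thm:p-elementary}, uniqueness in the genus for indefinite rank $\geq 3$); your treatment of the boundary case $r-a=2$, including the sign check separating $(3,8,6)$ from $(3,4,2)$, is accurate. The only points left implicit --- positivity of the Mukai vector and the compatibility of the lattice-theoretic extension $\hat f$ with the geometric action on $H^*(S,\ZZ)$ --- are exactly what the paper also delegates to \cite{mongardi_wandel:induced} and \cite{mongardi_tari_wandel:kummer}.
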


Notice that, if there is a primitive embedding $U \hookrightarrow M^g$, then the fiber $\psi^{-1}(\prescript{S(M)}{}g)$ is nonempty for all values $n \geq 2$ (we can take a primitive vector of square $2(n-1)$ or $2(n+1)$ inside the copy of $U$). We also remark that, for manifolds of type $\hskn$, the isometries of $M$ with $(p,r,a) = (3,4,4), (3,6,5), (3,8,6), (5,4,3)$ can be realized as extended actions of induced automorphisms on moduli spaces of \emph{twisted} sheaves on $K3$ surface, since $M_g$ contains a primitive copy of $U(p)$ (see \cite[Thm.\ 3.4]{ckkm_twisted}). Here however, for a fixed $n$, one has to check whether there exists an embedding $V \hookrightarrow M^g$.
The complete lists of triples $(p,r,a)$ for prime order isometries of $M$ which extend non-symplectic monodromies of manifolds of type $\hskn$ and $\kumn$ are given in \Cref{fig:k3n-fix-3,fig:k3n-fix-5,fig:k3n-fix-rest,fig:kumn-fix}.

\begin{figure}
\includegraphics[scale=0.6]{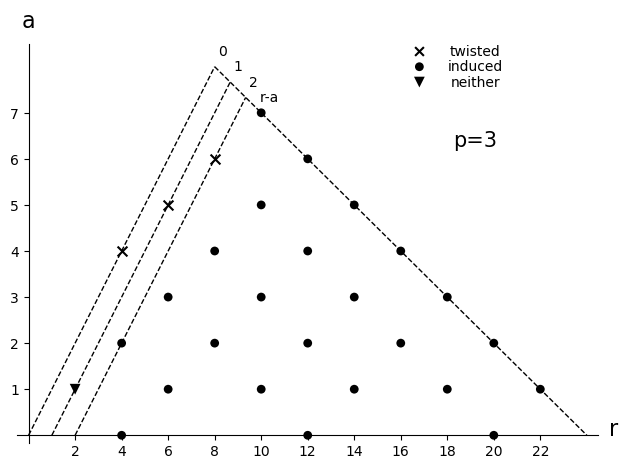}
\caption{Isometries of $\even_{(4,20)}$ of order $3$ and $\hskn$.}\label{fig:k3n-fix-3}
\end{figure}

\begin{figure}
\includegraphics[scale=0.6]{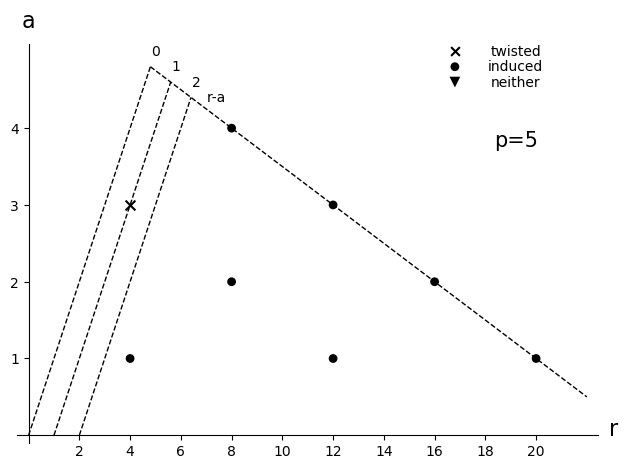}
\caption{Isometries of $\even_{(4,20)}$ of order $5$ and $\hskn$.}
 \label{fig:k3n-fix-5}
\end{figure}

\begin{figure}
\includegraphics[scale=0.6]{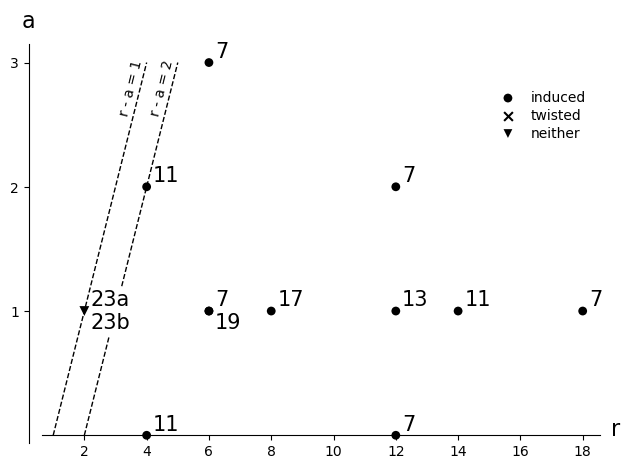}
\caption{Isometries of $\even_{(4,20)}$ of order at least $7$ and $\hskn$. Cases $23a$ and $23b$ are as in \Cref{lem:uniqueness_L^G}.}\label{fig:k3n-fix-rest}
\end{figure}

\begin{figure}
\includegraphics[scale=0.6]{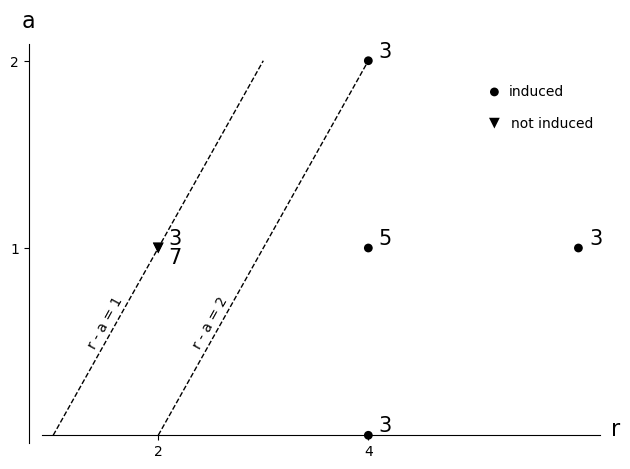}
\caption{Isometries of $\even_{(4,4)}$ and automorphisms of type $\kumn$.}
\label{fig:kumn-fix}
\end{figure}

Ihs manifolds of type OG$_6$ and OG$_{10}$ can be obtained as symplectic
desingularizations of singular moduli spaces of stable objects on
abelian surfaces and K3 surfaces respectively, hence a definition
of induced automorphisms can be given for these deformation types too
(see \cite[\S 5]{mongardi_wandel:induced}). For a lattice-theoretical
characterization of induced automorphisms of manifolds of type OG$_6$
we refer to \cite{grossi:nonsymplectic}.
 
\section{Primitive vectors in \texorpdfstring{$p$}{p}-elementary lattices and \texorpdfstring{$\hskn$, $\kumn$}{K3 n, Kum n}} \label{sec: primitive vectors}
In this section we classify orbits of primitive vectors in $p$-elementary lattices.
The classification comes in two flavors: if the lattice is positive definite, the theta series, a modular form, governs the representation behavior; if it is indefinite, then strong approximation for the spin group allows to handle orbits in terms of congruences and spinor exceptions. This boils down to linear algebra over $\FF_2$.

Let $p$ be an odd prime and $L \in \even_{(l_+,l_-)}p^{\epsilon n}$, $\epsilon \in \left\{\pm 1\right\}$, $l_+, l_- \geq 1$, $l_+ + l_- \geq 3$ and $0<k \in 2\ZZ$.
Set $R=L(-1)\oplus U$. Primitive sublattices
$\ZZ x=\langle k \rangle \subseteq L$ are determined, up the the action of $O(L)$, by
a primitive extension $\langle k \rangle \oplus R \subseteq
\overline{\langle k \rangle \oplus R}$ and
a primitive embedding of
$\overline{\langle k \rangle \oplus R}$
into the unimodular overlattice $\overline{L \oplus R}$ of $L \oplus R$ \cite[Proposition 1.15.1]{nikulin}. For later use we define the following torsion quadratic forms
\[
\begin{array}{lrcc}
u_{p,k}\colon              &(\ZZ/p^k\ZZ)^2 \longrightarrow \QQ/2\ZZ,& (x,y) \mapsto 2xy p^{-k} + 2\ZZ    ;&\\
w_{p,k}^{\epsilon}\colon   & \ZZ/p^k\ZZ    \longrightarrow \QQ/2\ZZ,& x     \mapsto  ux^2p^{-k} +2\ZZ, &u\in \ZZ, \leg{u}{p}=\epsilon.
\end{array}
\]
\subsection{Existence}
We begin by investigating the primitive extension.
\begin{lemma}\label{lem:glue-unique}
 Let $p$ be an odd prime, $\ZZ d = A\cong \ZZ/p^a\ZZ$, $a\geq 1$ and
 $B \cong (\ZZ/p\ZZ)^n$ be nondegenerate torsion quadratic
 modules.
 Suppose that there is an embedding $\phi \colon \ZZ p^{a-1}d \rightarrow B(-1)$,
 and let $H$ be its graph in $A \oplus B$.
 \begin{enumerate}
  \item Let $a=1$ and write $B=A(-1)\oplus C$. Then $H^\perp/H \cong C$.
  \item Let $a\geq 2$ and write $B = u_{p,1} \oplus C$. Then $H^\perp/H \cong A \oplus C$.
 \end{enumerate}
 In either case the map $\phi$ is unique up to the action of $O(B)$.
 Moreover $l(H^\perp/H) = l(B)-1$.
 \end{lemma}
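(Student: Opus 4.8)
The plan is to work with the finite quadratic modules directly, in four steps: fix the order and total isotropy of $H$; split a small orthogonal summand off $B$ adapted to $\phi$; compute $H^\perp/H$ explicitly in each case; and then read off the length and the uniqueness of $\phi$. Since $\phi$ embeds $\ZZ p^{a-1}d$ isometrically into $B(-1)$, for $x,y\in\ZZ p^{a-1}d$ one has $q_B(\phi(x))=-q_A(x)$ and $b_B(\phi(x),\phi(y))=-b_A(x,y)$, hence $q_{A\oplus B}(x,\phi(x))=q_A(x)-q_A(x)=0$ and $b_{A\oplus B}((x,\phi(x)),(y,\phi(y)))=0$; so $H$ is totally isotropic and $H\subseteq H^\perp$. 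As $A\oplus B$ is nondegenerate of order $p^{a+n}$ and $\#H=p$, we get $\#H^\perp=p^{a+n-1}$, so $\#(H^\perp/H)=p^{a+n-2}$, which is $\#C$ when $a=1$ and $\#(A\oplus C)$ when $a\geq 2$. I also record a parity fact for later use: the $\QQ/2\ZZ$-valued form $q_A$ is well defined on $\ZZ/p^a\ZZ$ only if the coefficient $t$ with $q_A(d)=t/p^a$ is even, so for $a\geq 2$ both $q_A(p^{a-1}d)=p^{a-2}t$ and $b_A(p^{a-1}d,p^{a-1}d)=p^{a-2}t$ vanish in $\QQ/2\ZZ$, resp.\ $\QQ/\ZZ$. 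Thus in case (2) the line $\ZZ p^{a-1}d$ carries the zero form, whereas in case (1) it is the nondegenerate module $A$ itself.

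Next I would split off a summand of $B$. In case (1), $\phi(A)$ is a nondegenerate submodule of $B(-1)$ isometric to $A$, hence splits off orthogonally, $B(-1)=\phi(A)\perp\phi(A)^\perp$; twisting by $-1$ gives $B\cong A(-1)\perp C$ with $C:=\phi(A)^\perp(-1)$, and after identifying $\phi(A)$ with $A$ via $\phi$ the embedding becomes $x\mapsto(x,0)$. In case (2), $v:=\phi(p^{a-1}d)$ spans a totally isotropic line; using nondegeneracy of the bilinear form of $B(-1)$ I pick $w$ with $b(v,w)\neq 0$, rescale to $b(v,w)=1/p$, and replace $w$ by a suitable $w+\lambda v$ so that $b(w,w)=0$. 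Since for odd $p$ the quadratic refinement of a $p$-elementary module is determined by its bilinear form, $q(w)=0$ automatically, so $D:=\langle v,w\rangle\cong u_{p,1}$; being nondegenerate it splits off and $B\cong u_{p,1}\perp C$, with $v$ the first standard generator of the $u_{p,1}$-summand.

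Then I would compute $H^\perp/H$. In case (1), with $A\oplus B=A\oplus A(-1)\oplus C$ and $H=\{(x,x,0):x\in A\}$, using $b_{A(-1)}=-b_A$ and nondegeneracy of $b_A$ one gets $H^\perp=\{(y,y,w):y\in A,\ w\in C\}$; the map $(y,y,w)\mapsto w$ induces an isomorphism $H^\perp/H\cong C$, which is an isometry since $q_{A\oplus B}(y,y,w)=q_A(y)-q_A(y)+q_C(w)=q_C(w)$. In case (2), write $A\oplus B=A\oplus\langle e_1,e_2\rangle\oplus C$ with $v=e_1$, so $H=\ZZ(p^{a-1}d,e_1,0)$; an element $(\alpha d,\,x_1e_1+x_2e_2,\,c)$ lies in $H^\perp$ exactly when $x_2\equiv -\alpha t\pmod p$, and the homomorphism $(\alpha d,\,x_1e_1+x_2e_2,\,c)\mapsto\big((\alpha-p^{a-1}x_1)d,\,c\big)$ into $A\oplus C$ is surjective with kernel exactly $H$. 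That it is an isometry reduces, after expanding $q_A\big((\alpha-p^{a-1}x_1)d\big)$ and using that $p^{a-2}t$ is an even integer, to the congruence $x_1(x_2+\alpha t)\equiv 0\pmod p$, which is the defining relation of $H^\perp$. Hence $H^\perp/H\cong A\oplus C$.

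Finally, since $A(-1)\cong\ZZ/p\ZZ$, $u_{p,1}\cong(\ZZ/p\ZZ)^2$, and $A$ is cyclic, in case (1) $l(B)=1+l(C)=l(H^\perp/H)+1$ and in case (2) $l(B)=2+l(C)=\big(1+l(C)\big)+1=l(H^\perp/H)+1$, so $l(H^\perp/H)=l(B)-1$ in both cases. For uniqueness, if $\phi'$ is another embedding then $\phi'\circ\phi^{-1}$ is an isometry between $\phi(\ZZ p^{a-1}d)$ and $\phi'(\ZZ p^{a-1}d)$ — nondegenerate copies of $A$ in case (1), totally isotropic lines in case (2) — and it extends to $O(B(-1))=O(B)$ by Witt's extension theorem for nondegenerate quadratic forms over $\FF_p$ (in the spirit of the Cartan–Dieudonn\'e argument used earlier via \cite{scharlau}); composing gives $\phi'=g\circ\phi$ for some $g\in O(B)$. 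The step I expect to be genuinely delicate is the isometry verification in case (2) of the computation: matching bilinear forms is not enough, and one must track the $\QQ/2\ZZ$-refinement through the parity of $t$ forced by well-definedness of $q_A$; everything else (case (1), the length count, and the uniqueness) is routine once the decompositions are in place.
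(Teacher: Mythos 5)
Your proof is correct and follows essentially the same route as the paper's: normalize the image of $p^{a-1}d$ to a standard (an)isotropic vector, split off the corresponding rank-one or hyperbolic summand, compute $H^\perp/H$ by hand, and get uniqueness from Witt's extension theorem. The only differences are cosmetic --- you build the $u_{p,1}$-summand around $\phi(p^{a-1}d)$ directly rather than moving $\phi(p^{a-1}d)$ onto a fixed generator $e$ of a given splitting (so your complement $C$ is identified with the stated one by Witt cancellation), and you verify the $\QQ/2\ZZ$-refinement explicitly via the parity of $t$ where the paper only exhibits generators of $H^\perp$.
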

\begin{proof}
Case (1) is clear. For the uniqueness use Witt's theorem \cite[Ch.\ 1, Thm.\ 5.3]{scharlau}.
In case (2) let $e,f \in u_{p,1}$ be generators such that
$b(e,f)=1/p$. Let $d^2 = \epsilon/p^a$. Then
$p^{a-1}d$ and its image are isotropic. By Witt's theorem, an isotropic vector in a
regular quadratic space is unique up to isometries. So we may assume that $\phi(p^{a-1}d)=e$. Then $H$ is generated by $p^{a-1}d+e$, while
$H^\perp$ is generated by $e$, $d- \epsilon f$ and $C$.
Thus $H^\perp/H \cong \ZZ/p^a \ZZ \oplus C$.
Since $(d- \epsilon f)^2=d^2$, we see that
 $\ZZ(d- \epsilon f)$ is isomorphic to $A$.
\end{proof}

We now relate the index of the primitive extension $\langle k \rangle \oplus R \subseteq \overline{\langle k \rangle \oplus R}$ to the divisibility $b(x, L)$ of the image of $x$ in $(L,b)$. A similar computation (for $L$ not necessarily $p$-elementary) can be found in \cite[Prop.\ 6.1]{garbagnati_sarti}. We denote by $\nu_p(k)$ the valuation of $k \in \ZZ$ at the prime $p$.

\begin{lemma}\label{lem:qc-decomp}
We have $\div(x)=1$ if and only if $\overline{\langle k \rangle \oplus R}=\langle k \rangle \oplus R$.
Otherwise $\div(x)=p$. Moreover
$q_{\langle k \rangle^\perp} \cong -q \oplus r$ where
\begin{enumerate}
 \item $q=q_{\langle k \rangle }$, $r = q_L$ if $\div(x) =1$;
 \item $q\oplus w_{p,1}^\epsilon=q_{\langle k \rangle }$, $r\oplus w_{p,1}^\epsilon = q_L$ if $\div(x) =p$ and $\nu_p(k) =1$;
 \item $q=q_{\langle k \rangle }$, $r \oplus u_{p,1} = q_L$ if if $\div(x) =p$ and $\nu_p(k) \geq 2$.
\end{enumerate}

\end{lemma}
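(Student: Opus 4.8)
The plan is to analyze the primitive extension $\langle k\rangle \oplus R \subseteq \overline{\langle k\rangle \oplus R}$ via the standard correspondence between overlattices and isotropic subgroups of the discriminant group, i.e.\ Nikulin's \cite[Prop.\ 1.4.1]{nikulin}. Write $A_{\langle k\rangle} = \ZZ/k\ZZ$ with generator $\bar x$ of square $k^{-1}$, and recall $q_R = q_{L(-1)} \oplus q_U = -q_L$ since $U$ is unimodular. An overlattice of $\langle k\rangle\oplus R$ corresponds to an isotropic subgroup $H \leq A_{\langle k\rangle}\oplus A_R$, which (because $\langle k\rangle$ has a single cyclic discriminant summand) must be the graph of a partial isometry $\phi$ from a subgroup of $A_{\langle k\rangle}$ to $A_R(-1) = A_L$. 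Since we want the extension for which $\langle k\rangle$ stays primitive with image $\ZZ x\subseteq L$ of prescribed divisibility, the key observation is: $\div(x) = [\,\overline{\langle k\rangle\oplus R} : \langle k\rangle\oplus R\,] $-related, and $\div(x)\in\{1,p\}$ because $L$ is $p$-elementary (any glue vector has order dividing $p$ in $A_L$, as $A_L$ is $p$-elementary, while its order in $A_{\langle k\rangle}$ divides $k$; hence $H$ is either trivial or cyclic of order $p$). This immediately gives the dichotomy $\div(x)=1 \iff H = 0 \iff \overline{\langle k\rangle\oplus R} = \langle k\rangle\oplus R$, and otherwise $\div(x) = p$.

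**The three cases.** For the divisibility-one case, $q_{\langle k\rangle^\perp} = q_{(\langle k\rangle\oplus R)^\perp\text{ in }\overline{L\oplus R}}$; since here the overlattice is trivial, $\langle k\rangle^\perp$ inside $L$ has $q_{\langle k\rangle^\perp}\cong -q_{\langle k\rangle}\oplus q_L$ by the orthogonal-complement formula for $p$-elementary pieces, giving (1). For $\div(x) = p$, the glue group $H\cong\ZZ/p\ZZ$ is generated by some $\alpha\bar x + \beta$ with $\beta\in A_L$ of order $p$; the isotropy condition forces $q_{\langle k\rangle}(\alpha\bar x) = -q_L(\beta)$ in $\QQ/2\ZZ$. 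I would then split on $\nu_p(k)$. When $\nu_p(k) = 1$, the element $\alpha\bar x$ is a generator of the $p$-part of $A_{\langle k\rangle}$, and peeling it off writes $q_{\langle k\rangle} = q \oplus w_{p,1}^{\epsilon}$ and correspondingly $q_L = r\oplus w_{p,1}^{\epsilon}$ with the \emph{same} $w_{p,1}^\epsilon$ block being glued diagonally and killed; \Cref{lem:glue-unique}(1) then identifies $H^\perp/H$ with the residual $-q\oplus r$, giving (2). When $\nu_p(k)\geq 2$, the glue vector $\alpha\bar x$ has order $p$ but lies in $p A_{\langle k\rangle}$ (it is $p$-divisible inside the cyclic group), hence is isotropic in its own summand; the $p$-part of $A_{\langle k\rangle}$ together with its glued partner in $A_L$ forms a copy of the hyperbolic plane $u_{p,1}$, so $q_L = r\oplus u_{p,1}$ while $q_{\langle k\rangle}$ is unaffected ($q = q_{\langle k\rangle}$), and \Cref{lem:glue-unique}(2) gives $H^\perp/H\cong -q\oplus r$. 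This is case (3).

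**Main obstacle.** The delicate point is the bookkeeping of \emph{signs} and \emph{unit classes} in the case $\div(x)=p$, $\nu_p(k)=1$: I must verify that the $p^1$-block split off from $q_{\langle k\rangle}$ and the one split off from $q_L$ are genuinely the \emph{same} torsion quadratic form $w_{p,1}^{\epsilon}$ (same $\epsilon = \leg{u}{p}$), not just abstractly isomorphic cyclic groups — this is exactly what makes the diagonal glue isotropic and is what forces the compatibility. Here I would use that the glue condition $q_{\langle k\rangle}(\alpha\bar x) + q_L(\beta) \equiv 0 \pmod{2\ZZ}$ pins down $u$ up to squares mod $p$, and that for $p$ odd the form $w_{p,1}^{\epsilon}$ on $\ZZ/p\ZZ$ is determined by $\epsilon$ alone (no subtlety at $2$ since everything is $p$-primary with $p$ odd). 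The uniqueness of $\phi$ up to $O(A_R)$ — needed so that $\langle k\rangle^\perp$ is well-defined up to isometry of its discriminant form, hence (together with signature) up to genus — follows from \Cref{lem:glue-unique} and Witt's theorem, exactly as invoked there. A final routine check is that the $U$-summand in $R = L(-1)\oplus U$ never participates in the glue (its discriminant is trivial), so all the discriminant-form computations take place entirely within $A_{\langle k\rangle}\oplus A_L$, which is what lets me quote \Cref{lem:glue-unique} verbatim.
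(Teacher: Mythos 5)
Your strategy is the same as the paper's: reduce everything to the glue group $H$ of the primitive extension $\langle k \rangle \oplus R \subseteq \overline{\langle k \rangle \oplus R}$ and read off $q_{\langle k \rangle^\perp}$ from \Cref{lem:glue-unique}. However, the pivot of the whole argument --- the equality $\div(x)=[\overline{\langle k \rangle \oplus R}:\langle k \rangle \oplus R]$ --- is never established: you say the two quantities are ``related'' and then use them interchangeably. Your parenthetical only shows $|H|\in\{1,p\}$; it says nothing about $\div(x)$, so the equivalence $\div(x)=1 \Leftrightarrow H=0$ does not ``immediately'' follow, and without it the case split in (2)--(3), which needs $H\neq 0$ precisely when $\div(x)=p$, is unjustified. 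This identification is in fact the entire content of the paper's proof: it computes $\div(x)=k/h$ with $h=[L:\langle k \rangle \oplus \langle k \rangle^\perp]$ via the orthogonal projection onto $\langle k \rangle^\vee$, and then a determinant count ($h^2=\det\langle k \rangle \det \langle k \rangle^\perp/\det L$ combined with $\det\langle k \rangle^\perp = \det\overline{\langle k \rangle \oplus R}$) yields $k/h=[\overline{\langle k \rangle \oplus R}:\langle k \rangle \oplus R]$. (Alternatively, one can check that the projection of $H$ to $A_{\langle k \rangle}$ is generated by the class of $x/\div(x)$, which has order $\div(x)$.) Some such argument must be supplied.

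Second, your isotropy condition carries the wrong sign, and this is exactly the ``delicate point'' you single out. The glue group sits in $A_{\langle k \rangle}\oplus A_R$ with $q_R=-q_L$, so isotropy of the graph reads $q_{\langle k \rangle}(\alpha\bar x)+q_R(\eta)=0$, i.e.\ $q_{\langle k \rangle}(\alpha\bar x)=+q_L(\beta)$ for the corresponding $\beta \in A_L$, not $-q_L(\beta)$ as you write (twice). Taken literally, your condition gives $q_L(\beta)\equiv -k'/p$, so the block split off from $q_L$ would be $w_{p,1}^{\leg{-k'}{p}}$ while the one split off from $q_{\langle k \rangle}$ is $w_{p,1}^{\leg{k'}{p}}$; these differ whenever $p\equiv 3 \pmod{4}$, contradicting item (2) of the statement. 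A sanity check: for $L=A_2(-1)$ and $x=e_1+e_2$ one has $k=6$, $\div(x)=3$, and both blocks are $w_{3,1}^{-1}$ with value $2/3$, whereas your condition would require an element of $q_L$ of square $-2/3$, which does not exist. With the sign corrected, the two blocks coincide on the nose and the remainder of your argument --- the appeal to \Cref{lem:glue-unique}, Witt's theorem, and the observation that the $U$-summand never participates in the glue --- matches the paper.
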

\begin{proof}
 Set $K=\langle k \rangle$ and let $\pi_{\langle k \rangle}\colon L \rightarrow {\langle k \rangle}^\vee$
 be the orthogonal projection.
 Set $H_{\langle k \rangle} = \pi_{\langle k \rangle}(L)/{\langle k \rangle}$ and $h=|H_{\langle k \rangle}|= [L:{\langle k \rangle}\oplus {\langle k \rangle}^\perp]$.
 Then the divisibility of $x$ is given by the ideal
 \[\langle x, L \rangle=\langle x,\pi_{\langle k \rangle}(L) \rangle=
 \langle  x,x/h\rangle=\ZZ (k/h).\]
 
 We compute $h^2=[L:{\langle k \rangle}\oplus {\langle k \rangle}^\perp]^2 = \det {\langle k \rangle} \det {\langle k \rangle}^\perp/\det L
 $.
 Using $k =\det {\langle k \rangle}$ and $\det {\langle k \rangle}^\perp=\det \left(\overline{{\langle k \rangle}\oplus R}\right)
 = \det {\langle k \rangle} \det L /\left[\overline{({\langle k \rangle} \oplus R)}:({\langle k \rangle} \oplus R)\right]^2$
 we arrive at $k/h = \left[\overline{({\langle k \rangle} \oplus R)}:({\langle k \rangle} \oplus R)\right]$.
Since $R$ is $p$-elementary and ${\langle k \rangle}$ of rank one, this index is either $p$ or $1$.
The computation of $q_{{\langle k \rangle}^\perp}$ follows from \Cref{lem:glue-unique}.
\end{proof}

Before we compute when such a sublattice exists, we introduce some notation.
Let $s=s'p^{\nu_p(s)} \in \QQ_p$. We set $\chi_p(s)=\leg{s'}{p}$. For a torsion quadratic form $r$ we denote $\chi_p(r) = \chi_p(\det K_p(r))$ where $K_p(r)$ is the unique $p$-adic lattice of rank $l(r)$ with $q_{K_p(r)}=r$.

\begin{theorem} \label{thm: existence for p-elem}
 Let $k$ be an even positive number and $p$ be an odd prime number.
 Let $L\in \even_{(l_+,l_-)}p^{\epsilon n}$ with $l_+, l_- \geq 1$ and $l_+ + l_-\geq 3$. Write $a = \nu_p(k)$ and $k = k'p^a$.
 Then there exists a primitive vector $x \in L$
 with $x^2 = k$ and $\div(x) =1$ if and only if one of the following conditions is met:
 \begin{enumerate}
  \item[(I-1)] $a=0$, $n < \rk L - 1$;
  \item[(II-1)] $a=0$, $n = \rk L - 1$ and $\leg{-1}{p}^{l_-}\leg{k'}{p}=\epsilon$;
  \item[(III-1)] $a >0$, $n < \rk L - 2 $;
  \item[(IV-1)] $a >0$, $n = \rk L - 2$ and $l_+ - l_- \equiv 0   \pmod 8$.
 \end{enumerate}
 
 There exists a primitive vector $x \in L$
 with $x^2 = k$ and $\div(x) =p$ if and only if one of the following conditions is met:
 \begin{enumerate}
  \item[(I-p)] $a = 1$, $n > 1$;
  \item[(II-p)] $a = 1$, $n = 1$ and $\leg{k'}{p}=\epsilon$;
  \item[(III-p)] $a > 1$, $n > 2$;
  \item[(IV-p)] $a > 1$, $n = 2$ and $l_+ - l_- \equiv 0 \pmod 8$.
 \end{enumerate}

 If $l_+=1$ or $l_-=0$, the given conditions are still necessary but possibly not sufficient.
\end{theorem}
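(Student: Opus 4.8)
The plan is to reduce the problem to Nikulin's existence criterion for lattices with prescribed signature and discriminant form, using the setup already in place. By Lemma~\ref{lem:qc-decomp}, a primitive vector $x \in L$ with $x^2 = k$ and prescribed divisibility is exactly a primitive embedding $\langle k \rangle \hookrightarrow L$ whose orthogonal complement $N := \langle k \rangle^\perp$ has signature $(l_+-1,l_-)$ (because $k>0$) and discriminant form of one of the three shapes listed there, according to whether $\div(x)=1$, or $\div(x)=p$ with $\nu_p(k)=1$, or $\div(x)=p$ with $\nu_p(k)\geq 2$; the two halves of the theorem thus correspond to the two possible gluings $\langle k\rangle\oplus R\subseteq\overline{\langle k\rangle\oplus R}$ with $R=L(-1)\oplus U$. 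By \cite[Prop.\ 1.15.1]{nikulin} (and uniqueness of $L$ in its genus, $L$ being indefinite of rank $\geq 3$) this is equivalent to a primitive embedding of $\overline{\langle k\rangle\oplus R}$ into the even unimodular lattice $M=\overline{L\oplus R}\cong U^{\oplus(l_+ + l_- + 1)}$, i.e.\ — by \cite[Thm.\ 1.12.2]{nikulin}, equivalently the existence criterion \cite[Cor.\ 1.10.2]{nikulin} — to the mere existence of an even lattice $N$ of signature $(l_+-1,l_-)$ with the discriminant form dictated by Lemma~\ref{lem:qc-decomp}.

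Then I would spell out Nikulin's conditions for $N$. Since the copies of $L(-1)$ and $U$ inside $R$ already furnish a sublattice of $M$ orthogonal to $\langle k\rangle$ of the same signature as $N$, the signature inequalities are automatic except $l_+-1\geq 0$; what remains is, for each prime $\ell$, the length condition $\rk N = l_+ + l_- - 1 \geq l_\ell(A_N)$, with the extremal case $\rk N = l_\ell(A_N)$ forcing a square-class condition on $\det N_\ell$, together with the archimedean/$2$-adic congruence $l_+-1-l_- \equiv \signt(q_N)\pmod 8$, which in the extremal case is a genuine constraint. Reading off the lengths via Lemma~\ref{lem:glue-unique} and the signatures via the known values of $\signt(w^{\epsilon}_{p,k})$, $\signt(u_{p,k})$ and $\signt(q_L)\equiv l_+ - l_-\pmod 8$, these collapse onto the eight numbered conditions: the inequalities (I-1),(I-p),(III-1),(III-p) are the non-extremal length conditions (the shift between $n<\rk L - 1$ and $n<\rk L - 2$, resp.\ $n>1$ and $n>2$, reflects whether a $w^{\epsilon}_{p,1}$- or a $u_{p,1}$-block is split off as in Lemma~\ref{lem:qc-decomp}); (II-1),(II-p) are the extremal determinant condition at $p$ (only the $p$-part of $N$ being maximal), whence the Legendre symbols; and (IV-1),(IV-p) are the extremal signature condition, where also the real places are exhausted — equivalently the condition that $q_L$ (of rank $n=\rk L - 2$, resp.\ $n=2$) be isotropic of the right type so that the extra $u_{p,1}$ or unimodular block can be split off. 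One auxiliary point to record: splitting off the block $w^{\epsilon}_{p,1}$ with $\epsilon=\chi_p(k')$ from $q_L$ is always possible when $n\geq 2$, since a nondegenerate binary quadratic form over $\FF_p$ ($p$ odd) represents every nonzero square class (the norm of $\FF_{p^2}/\FF_p$ is surjective); this is what makes (I-p) unconditional, while for $n=1$ it imposes $\leg{k'}{p}=\epsilon$, giving (II-p).

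For the last sentence, I would argue as follows. Necessity of all the listed conditions uses only the discriminant-form and local obstructions above, which are insensitive to definiteness, so it holds whenever $l_+\geq 1$, $l_-\geq 0$ and $l_+ + l_-\geq 3$. Sufficiency, in contrast, passes from ``$\langle k\rangle$ embeds primitively into some lattice in the genus of $L$'' — which is all Nikulin's criterion yields — to ``$\langle k\rangle$ embeds primitively into $L$'', and this uses that $L$ is determined by its genus; when $L$ is positive definite ($l_-=0$) its genus typically has several classes and representability is governed by the theta series rather than by congruences, so sufficiency genuinely fails, and the borderline signature $l_+=1$ is excluded for the analogous reason that the genus/strong-approximation inputs degenerate once the orthogonal complement is definite. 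The hard part will be the bookkeeping of the second paragraph: correctly matching Nikulin's local symbols and the mod-$8$ signature invariants against the numbered conditions across all combinations of $\div(x)\in\{1,p\}$ and $\nu_p(k)\in\{0,1,\geq 2\}$, and in particular identifying precisely when the ``rank $=$ length'' extremal case occurs at the prime $p$ as opposed to at $\infty$, which is what separates the Legendre-symbol conditions from the mod-$8$ conditions.
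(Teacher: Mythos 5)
Your overall route is the paper's: translate the divisibility into the shape of $q_{\langle k\rangle^\perp}$ via \Cref{lem:qc-decomp}, then apply Nikulin's criterion (\cite[Prop.\ 1.15.1]{nikulin} together with \cite[Thm.\ 1.12.2]{nikulin}) to the complement, with the four inequalities coming from the non-extremal length condition and the remaining conditions from the extremal cases. Your observation that a nondegenerate form over $\FF_p$ of rank $\geq 2$ represents every nonzero square class (so that $w_{p,1}^{\delta}$ splits off from $q_L$ whenever $n\geq 2$) is exactly how the paper obtains (I-p)/(II-p), and your explanation of the final sentence (necessity is purely local, sufficiency needs $L$ to be unique in its genus, which fails in the definite range) is also the right one.

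There is, however, one concrete misstep that would derail precisely the bookkeeping you defer: you attribute (IV-1) and (IV-p) to ``the extremal signature condition, where also the real places are exhausted.'' No such condition contributes anything here. The congruence $\signt(q_N)\equiv (l_+-1)-l_-\pmod 8$ is automatically satisfied for every candidate discriminant form $-q\oplus r$ produced by \Cref{lem:qc-decomp} (additivity of $\signt$, $\signt(q_L)\equiv l_+-l_-\pmod 8$, and $\signt(u_{p,1})\equiv 0$), so looking for (IV-1)/(IV-p) among the archimedean conditions yields nothing. In the paper's proof, (IV-1) arises from exactly the same mechanism as (II-1) --- the determinant square-class condition at $p$ in the extremal case $l(q\oplus -r)=\rk L -1$ --- the only difference being that for $a>0$ the form $q_{\langle k\rangle}$ has nontrivial $p$-part, so $\chi_p(-q)=\leg{-k'}{p}$ instead of $1$; the resulting constraint is $\epsilon=\leg{-1}{p}^{l_-+1}$, which becomes $l_+\equiv l_-\pmod 8$ only after being fed into the $p$-elementary genus relation \cref{eqn:p-sign} of \Cref{thm:p-elementary}. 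Likewise (IV-p) is not a condition on the complement at all but the condition that $q_L\cong u_{p,1}$ when $n=2$ (so that the $u_{p,1}$-block of \Cref{lem:qc-decomp} can be split off), i.e.\ $\epsilon=\leg{-1}{p}$, again converted to a mod-$8$ statement via \cref{eqn:p-sign}. So the dichotomy you propose --- Legendre symbols from the finite place, mod-$8$ conditions from infinity --- is false: all four extremal conditions live at $p$, and the mod-$8$ ones are the $p$-adic ones rewritten through the classification of $p$-elementary genera. Without that translation step your plan cannot produce (IV-1) or (IV-p) in the stated form; with it, the rest of your outline goes through as in the paper.
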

\begin{proof}
Suppose that $\div(x) =1$.
By \cite[Thm.\ 1.12.2]{nikulin} an embedding exists if and only if
$l(q \oplus -r) < \rk L - 1$  or
$l(q \oplus -r) = \rk L - 1$ and
\begin{equation}\label{eqn:nikulin}
\leg{(-1)^{l_-}|q \oplus -r|}{p} = \leg{(-1)^{l_-} k'}{p}= \chi_p(-q \oplus r).
\end{equation}

For $a=0$, $l(q\oplus -r)=\max\{1,n\}$ while for $a>0$, $l(q\oplus -r)=n+1$. This yields (I-1) and (III-1).
In case (II-1) we have $\chi_p(-q)\chi_p(r)=1\cdot \epsilon$.
In case (IV-1) we have $\chi_p(-q) = \leg{-k'}{p}$ and
$\chi_p(r)=\epsilon$.
Then \cref{eqn:nikulin}
gives $1+(p-1)(l_-+1) \equiv \leg{-1}{p}^{l_-+1}\equiv\epsilon \pmod 4$. Inserting into \cref{eqn:p-sign} we obtain $l_+ \equiv l_- \pmod 8$.

Suppose that $\div(x) = p$. For an embedding to exist, we have to be able to write
$q_C= -q \oplus r$ as in \Cref{lem:qc-decomp}.
Suppose that $a=1$. Then we need
$q_{\langle k \rangle} = q \oplus w_{p,1}^{\delta}$,
$q_L = r \oplus w_{p,1}^{\delta}$ for
$\delta = \leg{k'}{p}$. This is possible if $n> 1$ (I-p) or $n=1$ and $\epsilon=\delta$ (II-p).

Now, suppose that $a>1$. Then we can write $q=q_{\langle k \rangle}$ and $r \oplus u_{p,1} = q_L$ if and only if
$n>2$ (III-p) or $n=2$ and $q_L=u_{p,1}$, i.e.\ $\epsilon = \leg{-1}{p}\equiv 1 + (p-1) \pmod 4$. Inserting into \cref{eqn:p-sign} gives $l_+ - l_- \equiv 0$ (IV-p).

In both cases one computes $\chi_p(-q \oplus r)=\leg{k'}{p}\epsilon$.
If $\rk L - 1 = l(-q \oplus r)= n-1$, we additionally have to check  \cref{eqn:nikulin}.
By \Cref{thm:p-elementary} we have $\epsilon = \leg{-1}{p}^{l_-}$. So the condition is always satisfied.
\end{proof}

\begin{remark}
As already noted in \cite{CC}, \Cref{thm: existence for p-elem,thm:fiber} imply that the classification of non-symplectic automorphisms with a cohomology action of odd prime order $p$ on ihs manifolds of type $\hskn$ (resp.\ $\kumn$) is usually richer when $p$ divides $2(n-1)$ (resp.\ $2(n+1)$), since in this case the Mukai vector inside the invariant lattice of the extended action on the Mukai lattice might be chosen of divisibility $p$ other than $1$.
\end{remark}

\subsection{Spinor exceptions}\label{subs: spinor}
At this point we can effectively determine when a primitive sublattice $\langle k \rangle \hookrightarrow L$ exists.
Further, we know that $O(L)$-orbits of primitive sublattices
are locally determined by their divisibility. However, globally
this need not be true, due to spinor exceptions.
In this subsection the exceptional cases will be quantified.

\textbf{Setting.}
Let $p$ be an odd prime and $L \in \even_{(l_+,l_-)}p^{\epsilon n}$, $\epsilon \in \left\{\pm 1\right\}$, $l_+ \geq 2$, $l_-\geq 1$, and $l_+ + l_- \geq 4$. Let $0<k \in 2\ZZ$ and $\ZZ x=\langle k \rangle \subseteq L$ be a primitive sublattice and set $C = \langle k \rangle^\perp$.
We will write $q_C = -q \oplus r$ as in \Cref{lem:qc-decomp}. We set $p^m = |r|$ and have $j=|q|=j' p^b$ with $b=\nu_p(j)$.
Note that our conditions on $l_\pm$ imply that $C$ is indefinite of rank at least $3$.

\textbf{Notation.}\hfill
In order to compute the spinor exceptions,
we rely on the results of Miranda and Morrison
\cite{miranda_morrison:embeddings}.
We recall the necessary notation.
\begin{itemize}
 \item $|\mathfrak{g}(C)|$ the number of isomorphism classes of lattices in the genus of $C$,
 \item $\AA$ the finite adele ring of $\QQ$,
 \item $S$ the finite set of square free integers dividing $2\det C$,
  \item $\UU_\ell= \QQ_\ell^\times / \left(\QQ_\ell^\times\right)^2$
 \item $\EE_\ell= \{s \in \UU_\ell \mid \nu_\ell(s) \equiv 0 \pmod 2\}$,
 \item $\Gamma_{\ell}=\{ \pm 1 \} \times \UU_\ell$,
 \item $\Gamma_{\ell,0}=\{\pm 1\} \times \EE_\ell$,
 \item $\Gamma_\AA=\left\{(d_\ell,s_\ell)_{\ell \in \PP} \in \prod_{\ell \in \PP}\Gamma_{\ell} \mid (d_\ell,s_\ell) \in \Gamma_{\ell,0} \mbox{ for all but finitely many  } \ell\right\}$,
 \item $\Gamma_\QQ= \{\pm 1\} \times \QQ^\times / \left(\QQ^\times\right)^2\leq \Gamma_\AA$,
 \item $\Gamma_S=\{(d,s) \in \Gamma_\QQ \mid s \in S\}$,
 \item $O(L\otimes \mathbb{A})=\left\{f \in \prod_{\ell \in \PP} O(L_\ell\otimes \QQ_\ell) \mid f_\ell \in O(L_\ell) \mbox{ for all but finitely many }\ell\right\}$,
 \item $\spin(\tau_x)\defeq b(x,x)/2$ for $x \in L$ anisotropic and $\tau_x$ the reflection in $x$,
 \item $\sigma = (\det,\spin)\colon O(L\otimes \mathbb{A})
 \rightarrow \Gamma_\AA$,
 \item $\Sigma(L_\ell) = \im(\sigma_\ell \colon O(L_\ell) \rightarrow \Gamma_\ell)$,
 \item $\Sigma^\#(L_p) = \im(\sigma_\ell \colon O^\#(L_\ell) \rightarrow \Gamma_\ell)$,
 \item $\Sigma(L) = \prod_{\ell \in \PP}\Sigma(L_\ell) \leq \Gamma_\AA$,
 \item $\Sigma^\#(L) = \prod_{\ell \in \PP}\Sigma^\#(L_\ell)$.
\end{itemize}
Note that we view $\Gamma_\ell$ as a subgroup of $\Gamma_\AA$.

\begin{lemma}[Key Lemma]
 Write $q_C = -q \oplus r$ as in \Cref{lem:qc-decomp} and set
 \[\Delta(C)\defeq\left(\Gamma_S \cap \Sigma(C)\right)\cdot \Sigma^\#(C)
 \cdot \sigma\left(O(r)\times \{\pm \id_{-q}\}\right). \]
 Then the number of primitive sublattices $\langle k \rangle \subseteq L$ with $q_{\langle k \rangle^\perp} \cong q_C$ up to the action of $O(L)$ is given by $|\mathfrak{g}(C)|\cdot [\Sigma(C):\Delta(C)]$.
 \end{lemma}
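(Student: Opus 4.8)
The plan is to combine Nikulin's gluing correspondence for primitive sublattices with the adelic spinor-norm machinery of Miranda--Morrison, using the rigidity of the glue datum established in \Cref{lem:glue-unique}. First I would recast the problem via \cite[Prop.\ 1.15.1]{nikulin}: specifying a primitive sublattice $\langle k\rangle\subseteq L$ up to $O(L)$ is the same as specifying an isometry class of the orthogonal complement $C'=\langle k\rangle^\perp$ --- which necessarily lies in the genus $\mathfrak{g}(C)$, since it has signature $(l_+-1,l_-)$ and discriminant form $q_C$ --- together with a glue isometry between subgroups $H_{\langle k\rangle}\subseteq q_{\langle k\rangle}$ and $H_{C'}\subseteq q_{C'}$ whose graph overlattice has discriminant form $q_L$, taken modulo the actions of $O(\langle k\rangle)$ and $O(C')$ on the respective discriminant groups and subject to the overlattice being \emph{isometric} to, not merely in the genus of, $L$. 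By \Cref{lem:qc-decomp} the divisibility of $x$ and the splitting $q_C=-q\oplus r$ determine which gluing occurs, and by \Cref{lem:glue-unique} the glue isometry is unique up to the action of $O(q_{C'})$, which moreover only moves the glued summand $-q$ via its $\{\pm\id\}$ and leaves the complementary summand $r$ free. Since $O(\langle k\rangle)=\{\pm\id\}$ induces exactly $\pm\id_{-q}$ through the glue, the combinatorial data collapses and the count reduces to: how many lattices obtained by gluing some $C'\in\mathfrak{g}(C)$ to $\langle k\rangle$ are isometric to $L$, counted up to $O(L)$.

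The second step resolves the ``isometric to $L$ versus in the genus of $L$'' discrepancy. Because $C$ is indefinite of rank at least $3$ (as already noted from the hypotheses on $l_\pm$), strong approximation for the spin group applies, and Miranda--Morrison's theory \cite{miranda_morrison:embeddings} expresses the $O(L)$-orbit set of such primitive embeddings as a quotient of the adelic group $\Sigma(C)=\prod_\ell\Sigma(C_\ell)$. The subgroup one quotients by is generated by: the global part $\Gamma_S\cap\Sigma(C)$, coming from the rational isometries of $C\otimes\QQ$, which sweeps out $\mathfrak{g}(C)$ and pins the genus; the stable part $\Sigma^\#(C)=\prod_\ell\Sigma^\#(C_\ell)$ of isometries acting trivially on the discriminant --- computed locally from the Jordan decompositions, where at every $\ell\neq p$ the lattice $C_\ell$ is unimodular so only $\ell=p$ and $\ell=2$ require work; and the image $\sigma\bigl(O(r)\times\{\pm\id_{-q}\}\bigr)$ of the discriminant-form automorphisms absorbable into a change of embedding --- the factor $O(r)$ coming from isometries of $C'$ acting on the $r$-summand, and the factor $\{\pm\id_{-q}\}$ supplied by $O(\langle k\rangle)$ through the glue. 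This group is exactly $\Delta(C)$, and the Miranda--Morrison counting formula then yields the orbit count $|\mathfrak{g}(C)|\cdot[\Sigma(C):\Delta(C)]$, the factor $|\mathfrak{g}(C)|$ accounting for the isometry class of the complement and the index $[\Sigma(C):\Delta(C)]$ for the remaining spinor-genus ambiguity.

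The main obstacle I expect is the precise identification of $\Delta(C)$ --- showing that the relevant subgroup of $\Sigma(C)$ is neither larger nor smaller than $(\Gamma_S\cap\Sigma(C))\cdot\Sigma^\#(C)\cdot\sigma(O(r)\times\{\pm\id_{-q}\})$. Concretely this requires: (i) a local computation of $\Sigma^\#(C_p)$ from the $p$-adic Jordan form dictated by \Cref{lem:qc-decomp}, together with the customary delicate analysis at $\ell=2$; (ii) matching the $O(q_{C'})$-action on glue data --- split by \Cref{lem:glue-unique} as $O(r)\times\{\pm\id_{-q}\}$ --- with Miranda--Morrison's discriminant-form term, checking that $O(r)$ is realized by genuine lattice isometries of $C'$ (the obstruction to which, together with $\Sigma^\#$, precisely accounts for the failure of surjectivity of $O(C')\to O(q_{C'})$) while the $\langle k\rangle$-side contributes only the sign $-\id_{-q}$; and (iii) verifying that the global factor $\Gamma_S\cap\Sigma(C)$ simultaneously interpolates between the genus and the isometry class of $L$ and enumerates $\mathfrak{g}(C)$, so that the two indices genuinely multiply rather than interfere. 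Once this bookkeeping is in place, the formula follows at once.
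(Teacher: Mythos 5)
Your proposal follows essentially the same route as the paper's proof: Nikulin's Prop.\ 1.15.1 reduces the count to a double coset in $O(q_C)$, \Cref{lem:qc-decomp} and \Cref{lem:glue-unique} identify the image of the glue stabilizer as $O(r)\times\{\pm\id_{-q}\}$, and Miranda--Morrison's exact sequence computes $\overline{O(C)}$, giving $|\mathfrak{g}(C)|\cdot[\Sigma(C):\Delta(C)]$. The only cosmetic difference is that the paper performs the gluing inside the auxiliary unimodular overlattice $\overline{L\oplus R}$ (the reason $R=L(-1)\oplus U$ was introduced), so the ``isometric to $L$ versus in the genus of $L$'' discrepancy you flag never arises there; otherwise your argument is the paper's.
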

\begin{proof}
 Fix (the isomorphism class of) the orthogonal complement $C$ and let $\phi$ be the obvious anti-isometry
 \[\phi \colon q_{\overline{\langle{k}\rangle \oplus R}}= q \oplus -r  \longrightarrow -q \oplus r=q_C.\]
 
 Let $H= \overline{\langle k \rangle \oplus R}/(\langle k \rangle \oplus R)$ be its graph and $T = \mbox{Stab}(H,O(\langle k\rangle) \times O(R))$ the corresponding stabilizer.
 Then, following \cite[Prop.\ 1.15.1]{nikulin}, the number of primitive sublattices $\langle k \rangle$ with orthogonal
 complement isomorphic to $C$ is given by the double coset
 \[
  \overline{O(C)}\backslash O(q_C)/ \phi
  \overline{T} \phi^{-1}
 \]
 where the bar indicates that we regard the image in the orthogonal group of the discriminant group.
 Since $\overline{O(C)}$ is normal, we can turn this into the single coset
 \begin{equation}\label{eqn:spin_sequence}
O(q_C) \; /\; \left(\overline{O(C)} \cdot
 \phi \overline{T} \phi^{-1}
 \right).
 \end{equation}

Now $\overline{O(C)}$ is computed by the exact sequence \cite[VIII Thm.\ 5.1]{miranda_morrison:embeddings}
 \[O(C) \rightarrow O(q_C) \xrightarrow{\sigma}
\Sigma(L)/(\Gamma_S\cap \Sigma(C))\cdot
\Sigma^\#(C)\rightarrow 1\]
 where by abuse of notation we identify $\sigma$ and the corresponding map in the sequence.
 We see that
 \[ O(q_C)/ (\overline{O(C)} \cdot \phi
  \overline{T} \phi^{-1}) \cong
\Sigma(L)/(\Gamma_S\cap \Sigma(C)\cdot
\Sigma^\#(C))\cdot \sigma(\phi\overline{T}\phi^{-1}).\]

It remains to compute $\phi\overline{T}\phi^{-1}$.
We pass to the discriminant group $A_{\langle k \rangle} \oplus A_R$. Since $O(R)\rightarrow O(q_R)$ is surjective,
we may replace $O(R)$ by $O(q_R)$ and $O(\langle k \rangle)$ by $\{\pm \id_{q_{\langle k\rangle }}\}$. Looking at the three cases in \Cref{lem:qc-decomp}, we see that $\phi \overline{T} \phi^{-1}= O(r) \times \{\pm \id_{-q}\}$.
\end{proof}

We continue by computing the pieces making up $\Sigma(C)$ and $\Delta(C)$.
\begin{lemma}
 Let $\ell$ be a prime, $\ell \neq 2, p$. 
 \begin{equation}\label{eqn:SCq}
\Sigma^\#(C_\ell)= \Gamma_{\ell,0},
 \qquad
\Sigma(C_\ell)=
 \begin{cases}
 \Gamma_{\ell,0} & \mbox{for }\nu_\ell(j) \equiv 0 \pmod 2\\
  \Gamma_\ell &\mbox{for } \nu_\ell(j) \equiv 1 \pmod 2
 \end{cases}
 \end{equation}
 \begin{equation}\label{eqn:SC2}
\Sigma^\#(C_2)= \Gamma_{2,0},
 \qquad
\Sigma(C_2)=
 \begin{cases}
 \Gamma_{2,0} & \mbox{for }\nu_2(j) \equiv 1 \pmod 2\\
  \Gamma_2 &\mbox{for } \nu_2(j) \equiv 0 \pmod 2
 \end{cases}
 \end{equation}
 \end{lemma}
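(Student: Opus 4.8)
The plan is to pin down the $\ell$-adic isometry class of $C=\langle k\rangle^\perp$ at every prime $\ell\ne p$ and then read off $\Sigma(C_\ell)$ and $\Sigma^\#(C_\ell)$ from the local computations of Miranda--Morrison \cite{miranda_morrison:embeddings}. The structural input is that, $L$ being $p$-elementary with $p$ odd, the localization $L_\ell$ is unimodular for every $\ell\ne p$; in particular $L_2$ is an even unimodular $\ZZ_2$-lattice. Writing $\langle k\rangle_\ell=\langle \ell^{\nu_\ell(k)}u\rangle$ for a suitable $\ell$-adic unit $u$ and taking orthogonal complements inside the unimodular lattice $L_\ell$, one finds that $C_\ell$ has cyclic discriminant group of order $\ell^{\nu_\ell(k)}$ (this also follows from \Cref{lem:qc-decomp}, together with $\nu_\ell(j)=\nu_\ell(k)$ for $\ell\ne p$), so that $C_\ell\cong W_\ell\oplus\langle \ell^{\nu_\ell(k)}u_\ell\rangle$ with $W_\ell$ unimodular of rank $\rk C-1\ge 2$ and $u_\ell$ a unit; when $\ell\nmid k$ the second summand is absent and $C_\ell=W_\ell$ is unimodular of rank $\ge 3$. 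Here one uses that $C$ is even, being a sublattice of $L$, to fix the $2$-adic shape of the rank-one block.

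For $\ell\ne 2,p$ the computation is then elementary: $O(C_\ell)$ is generated by reflections $\tau_v$ in anisotropic $v\in C_\ell$, such a reflection preserves $C_\ell$ exactly when $v^2$ is a unit or a unit times $\ell^{\nu_\ell(k)}$ (the norms supported by the two Jordan blocks), and $\sigma(\tau_v)=(-1,\,v^2/2)$. The block $W_\ell$ of rank $\ge 2$ represents all units, so $\spin(O(C_\ell))\supseteq\EE_\ell$; the rank-one block contributes the class of $\ell^{\nu_\ell(k)}u_\ell$, which lies in $\EE_\ell$ iff $\nu_\ell(j)=\nu_\ell(k)$ is even and otherwise together with $\EE_\ell$ fills up $\UU_\ell$. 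Since the determinant is visibly onto and independent of the spinor norm, this gives $\Sigma(C_\ell)=\Gamma_{\ell,0}$ for $\nu_\ell(j)$ even and $\Gamma_\ell$ for $\nu_\ell(j)$ odd. For $\Sigma^\#$, recall that for odd $\ell$ the Jordan decomposition is orthogonally split and respected both by $O(C_\ell)$ and by the action on $q_{C_\ell}\cong\ZZ/\ell^{\nu_\ell(k)}$; an isometry trivial on this cyclic group must restrict to $+\id$ on the rank-one block (as $-\id\ne\id$ there), hence $O^\#(C_\ell)=O(W_\ell)$ and $\Sigma^\#(C_\ell)=\{\pm1\}\times\EE_\ell=\Gamma_{\ell,0}$, independently of the parity of $\nu_\ell(j)$. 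This proves \eqref{eqn:SCq}.

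For $\ell=2$ the same principle applies but the bookkeeping is more delicate, so I would quote the relevant local tables from \cite[Ch.\ VII]{miranda_morrison:embeddings} (alternatively one can rerun the argument by hand). Note first that $k$ even forces $\nu_2(j)=\nu_2(k)\ge 1$, so the rank-one block is always present. For the even unimodular block $W_2$ one has $\spin(O(W_2))=\EE_2$, since a reflection $\tau_v$ preserves $W_2$ only when $v^2$ is twice a unit; the reflection in the generator of $\langle 2^{\nu_2(k)}u_2\rangle$ has $\spin=2^{\nu_2(k)-1}u_2$, which, because of the factor $1/2$ in the spinor norm, now lies in $\EE_2$ exactly when $\nu_2(j)$ is \emph{odd} and fills up $\UU_2$ together with $\EE_2$ when $\nu_2(j)$ is \emph{even} --- this is precisely why the parity condition in \eqref{eqn:SC2} is the reverse of the one in \eqref{eqn:SCq}. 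For $\Sigma^\#(C_2)$, the isometries acting trivially on $q_{C_2}$ --- including the unipotent ones mixing Jordan blocks, whose spinor norm and determinant are trivial --- contribute only $\Gamma_{2,0}$; the one reflection in the rank-one generator that acts trivially on $q_{C_2}\cong\ZZ/2^{\nu_2(k)}$ occurs when $\nu_2(k)=1$, and even then its spinor norm is a unit, so it stays inside $\Gamma_{2,0}$. The main obstacle is this $\ell=2$ case: making the $2$-adic isometry class of $C_2$ precise (the interaction of the rank-one block with evenness, and the fact that $O(C_2)$ is genuinely larger than the product of the orthogonal groups of its Jordan blocks) requires care, so the cleanest route is to invoke the explicit local computations of Miranda--Morrison rather than to redo their analysis.
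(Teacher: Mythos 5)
Your proposal is correct and follows essentially the same route as the paper, whose proof is the one-line observation that the claim follows from the local computations of Miranda--Morrison \cite[VII \S 12]{miranda_morrison:embeddings} once one notes $\rk C \geq 3$ and $l(A_C)_\ell \leq 1$ for $\ell \neq p$. You simply unpack the Jordan-block bookkeeping behind that citation (correctly, including the parity reversal at $\ell=2$ coming from the factor $1/2$ in the spinor norm and the evenness of the unimodular block) before deferring to the same reference for the delicate $2$-adic points.
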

\begin{proof}
This follows from \cite[VII \S 12]{miranda_morrison:embeddings} with $\rk C \geq 3$ and $l(A_C)_\ell \leq 1$.
\end{proof}

\begin{lemma}\label{lem:SCp-sharp}
Let $q_C \cong -q \oplus r$ with $q,r$ as in \Cref{lem:qc-decomp}.
 Let $\delta = \det C / \det (q_C)_p$.
  \begin{equation}\label{eqn:SCp_sharp}
\Sigma^\#(C_p)=
\begin{cases}
\langle (1,1)\rangle & \mbox{for } \quad l(A_C) = \rk C\\
\langle (-1,2\delta )\rangle& \mbox{for }\quad  l(A_C) + 1 = \rk C\\
\Gamma_{p,0}& \mbox{for } \quad l(A_C) + 2 \leq \rk C.
\end{cases}
 \end{equation}
 \end{lemma}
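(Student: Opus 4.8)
The plan is to work entirely $p$-adically and read off the image of the spinor norm on $O^\#(L_p)$ from the explicit description of spinor norms of $p$-adic orthogonal groups in \cite[VII]{miranda_morrison:embeddings}, splitting into the three cases according to how the length $l(A_C)$ of the discriminant group compares to the rank. Recall that $\Sigma^\#(L_\ell)$ is the image of $O^\#(L_\ell)$ — the kernel of the natural map $O(L_\ell)\to O(q_{L_\ell})$ — under $\sigma=(\det,\spin)$. First I would record the structure of $C_p = C\otimes\ZZ_p$: since $C$ is $p$-elementary (after writing $q_C \cong -q\oplus r$ as in \Cref{lem:qc-decomp}, all $p$-torsion has exponent $p$, possibly with one extra cyclic factor of higher order coming from $q$ when $\nu_p(k)\geq 2$ — but in the relevant cases $q$ is prime to $p$ or the $p$-part is $u_{p,1}$), so $C_p$ admits a Jordan splitting $C_p \cong C_0 \perp C_1$ where $C_0$ is unimodular of rank $\rk C - l(A_C)$ and $C_1$ is $p$-modular of rank $l(A_C)$.

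The three cases then correspond to $\rk C_0 = 0$, $\rk C_0 = 1$, and $\rk C_0 \geq 2$. When $l(A_C)=\rk C$ (so $C_0 = 0$, $C_p$ is $p$-modular of rank $\rk C$), every isometry of $C_p$ acts on $A_{C_p}$, and $O^\#(C_p)$ is the kernel of $O(C_p)\to O(q_{C_p})$; I expect this kernel to be very small — essentially trivial modulo squares — forcing $\Sigma^\#(C_p) = \langle(1,1)\rangle$. When $l(A_C)+1=\rk C$, there is a rank-one unimodular block $\langle u\rangle$ with $u$ a unit whose square class is governed by $\delta = \det C/\det(q_C)_p$; an isometry in $O^\#$ may still flip the sign of a generator of this block, contributing the reflection $\tau_v$ with $v^2 = u$, whose image under $\sigma$ is $(-1, u) = (-1, 2\delta)$ up to squares (the factor $2$ from the normalization $\spin(\tau_x)=b(x,x)/2$, and $u$ and $\delta$ agree modulo $\left(\QQ_p^\times\right)^2$ and modulo the $p$-modular part's contribution). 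So $\Sigma^\#(C_p) = \langle(-1,2\delta)\rangle$. When $\rk C_0 \geq 2$, the unimodular block alone is a $p$-adic lattice of rank $\geq 2$ on which $O^\#$ acts as the full orthogonal group (isometries supported on $C_0$ are automatically trivial on the discriminant), and the spinor norm of a rank-$\geq 2$ unimodular $p$-adic form with $p$ odd is all of $\EE_p$, with determinants $\pm 1$ available, giving $\Sigma^\#(C_p) = \Gamma_{p,0} = \{\pm1\}\times\EE_p$. Throughout I would cite \cite[VII \S 12 and the tables therein]{miranda_morrison:embeddings} for the precise local computations, exactly as the preceding lemma does for $\ell\neq p$ and $\ell=2$.

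The main obstacle I anticipate is the middle case, $l(A_C)+1=\rk C$: one must pin down the square class $2\delta$ correctly, tracking (i) the normalization convention $\spin(\tau_x) = b(x,x)/2$ rather than $b(x,x)$, (ii) the difference between $\det C$ as a $p$-adic unit-square class and $\det(q_C)_p$, so that $\delta$ is genuinely the determinant of the unimodular Jordan block, and (iii) the fact that reflections in the $p$-modular block lie in $O^\#$ only in constrained combinations, so they do not enlarge the image beyond the cyclic group generated by the single unimodular reflection. A careful bookkeeping of Jordan blocks and of which reflections preserve $q_{C_p}$ — linear algebra over $\FF_p$ — resolves this, but it is the step where sign and square-class errors are easiest to make. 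The other two cases are comparatively immediate once the Jordan decomposition is in hand.
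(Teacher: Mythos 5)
Your proposal is correct and follows essentially the same route as the paper: the paper's entire proof is the single sentence that the formula is a translation of \cite[VII Thm.\ 12.1]{miranda_morrison:embeddings} into its notation, and your Jordan-splitting case analysis (trivial unimodular block, rank-one block contributing the single reflection of spinor norm $2\delta$, rank $\geq 2$ block giving all of $\Gamma_{p,0}$) is an accurate unpacking of what that cited theorem asserts. The subtleties you flag in the middle case are exactly the ones the citation absorbs, so nothing further is needed.
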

\begin{proof}
This is a translation of
\cite[VII Thm.\ 12.1]{miranda_morrison:embeddings}
to our notation.
\end{proof}
Recall that $|r|=p^m$ and $|q|=j= j'p^b$.
\begin{lemma}\label{lem:SCp}
Let $q_C \cong -q \oplus r$ as in \Cref{lem:qc-decomp}. Let $j=j' p^b$, $b=\nu_p(j)$.
Let $0 \neq u \in \EE_p$.
 \[\Sigma(C_p) = \begin{cases}
    \Gamma_{p,0} &\mbox{for }\quad m=0 \wedge b \equiv_2 0,\\
  \langle (1,u),(-1,p)\rangle &\mbox{for }\quad l(A_C)=\rk C \wedge(b=0 \vee  b \equiv_2 1),\\
  \langle (-1,1),(-1,p)\rangle  &\mbox{for }\quad m=1 \wedge b>1 \wedge \rk C=3 \wedge \left( \frac{-j'}{p}\right)=\left(\frac{2}{p}\right) \hypertarget{ast}{\condAst},\\
  \Gamma_p &\mbox{else.}
    \end{cases}\]
\end{lemma}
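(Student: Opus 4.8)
The plan is to reduce to a purely local computation at $p$ and then to read off the answer from the explicit description of $\Sigma(L_p)$ of Miranda--Morrison, exactly as was done for $\Sigma^\#(C_p)$ in \Cref{lem:SCp-sharp}.

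\emph{Step 1: the $p$-adic shape of $C$.} By \Cref{lem:qc-decomp} the $p$-part of the discriminant form of $C$ is $(-q\oplus r)_p$, where $r$ is $p$-modular of length $m$ and the $p$-part of $q$ (which is cyclic, being the $p$-part of $q_{\langle k\rangle}$, possibly with a copy of $w_{p,1}^{\epsilon}$ removed) has order $p^b$. Hence the Jordan decomposition of $C_p=C\otimes\ZZ_p$ takes the form
\[C_p\;\cong\; C_0\ \perp\ C_1\ \perp\ C_b,\]
with $C_0$ unimodular of rank $\rk C-m-[\,b\geq 1\,]$, $C_1$ a $p$-modular block of rank $m$, and (only when $b\geq 1$) a rank-one $p^b$-modular block $C_b$; when $b=1$ the last two blocks merge into a $p$-modular block of rank $m+1$. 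Using the standing hypotheses together with the existence conditions and the square-class bookkeeping carried out in the proof of \Cref{thm: existence for p-elem}, I would record the unit square class of $\det C_i$ for each block in terms of $\epsilon$, $\leg{j'}{p}$ and $\leg{-1}{p}$. Observe that $l(A_C)=\rk C$ means precisely that $C_p$ has no unimodular block, that ``$m=0$ and $b$ even'' means precisely that every Jordan block of $C_p$ has even scale, and that ``$m=1$, $b>1$, $\rk C=3$'' forces $C_p$ to be an orthogonal sum of three rank-one blocks of scales $1$, $p$, $p^b$.

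\emph{Step 2: assembling $\Sigma(C_p)$.} I would then apply \cite[VII \S 12]{miranda_morrison:embeddings}, whose relevant content for $p$ odd is: a unimodular block of rank $\geq 2$ contributes all of $\Gamma_{p,0}=\{\pm1\}\times\EE_p$; rescaling a block by $p^i$ multiplies its $\spin$-values by the square class of $p^i$, which is trivial for $i$ even and equals $p$ for $i$ odd; a rank-one block contributes only $\{\pm1\}$ on the determinant and a single $\spin$-value; and two Jordan blocks at the consecutive scales $p^0,p^1$ can produce one further generator. The first, second and fourth cases of the statement follow directly: if $m=0$ and $b$ is even there are only even-scale blocks and a unimodular part of rank $\geq 2$, so $\Sigma(C_p)=\Gamma_{p,0}$; if $C_p$ has no unimodular block and ($b=0$ or $b$ odd) then the $p$-modular block has rank $\geq 2$ while a $p^b$-block with $b$ odd contributes nothing new, so $\Sigma(C_p)=\langle(1,u),(-1,p)\rangle$; and in the remaining generic situation one finds a unimodular block of rank $\geq 2$, hence $\Gamma_{p,0}$, together with an odd-scale block or an adjacent pair of blocks supplying a non-unit $\spin$-value, hence $\Sigma(C_p)=\Gamma_p$.

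\emph{Step 3: the exceptional case, which is the main obstacle.} When $m=1$, $b>1$ and $\rk C=3$ we have $C_p\cong\langle u_0\rangle\perp\langle u_1p\rangle\perp\langle u_bp^b\rangle$, and $\Sigma(C_p)$ is generated by the images of the three reflections \emph{and} by the images of the isometries mixing the adjacent rank-one blocks $\langle u_0\rangle$ and $\langle u_1p\rangle$. Using the rank-two formulas of \cite[VII \S 12]{miranda_morrison:embeddings} I would check that this subgroup equals $\langle(-1,1),(-1,p)\rangle$ exactly when the unit square classes $u_0,u_1,u_b$ are constrained in a prescribed way, and then unwind that constraint through the anti-isometry $q_C\cong-q\oplus r$ and the definitions of $u_{p,1}$ and $w_{p,1}^{\epsilon}$ (keeping track of the factor $\leg 2 p$ coming from $\spin(\tau_x)=b(x,x)/2$) to reach the Legendre-symbol identity $\leg{-j'}{p}=\leg 2 p$, i.e.\ $\condAst$; in every other sub-configuration of this shape the three reflection images already span $\Gamma_p$. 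Everything outside Step 3 is a routine transcription of the Miranda--Morrison tables, so the delicate point is precisely the verification that the cross isometries of the scale-$1$ and scale-$p$ blocks fail to enlarge the image exactly under $\condAst$.
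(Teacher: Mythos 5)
Your overall strategy coincides with the paper's: the lemma is proved there precisely by declaring it a translation of \cite[VII Thms.\ 12.5, 12.7, 12.9]{miranda_morrison:embeddings}, with the exceptional case $\condAst$ singled out as the only nontrivial computation --- exactly the case you isolate in Step 3. Steps 1 and 2 are fine as far as they go (one small remark: for $p$ odd the ``cross'' isometries mixing adjacent Jordan blocks contribute no new spinor norms, so $\Sigma(C_p)$ is generated by the block-diagonal reflections alone; this only simplifies your Step 3).

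The place where your outline does not close is the list of ingredients you propose to use in Step 3. Writing $C_p\cong\langle u_0\rangle\perp\langle u_1p\rangle\perp\langle u_bp^b\rangle$, the condition ``$\Sigma(C_p)=\langle(-1,1),(-1,p)\rangle$'' unwinds to $\leg{u_0}{p}=\leg{u_1}{p}=\leg{u_b}{p}=\leg{2}{p}$, and the $\leg{2}{p}$ here is indeed the $\spin(\tau_x)=b(x,x)/2$ normalization. But the anti-isometry $q_C\cong -q\oplus r$ together with the definitions of $u_{p,1}$ and $w_{p,1}^{\epsilon}$ only determines the square classes of $u_1$ and $u_b$ (in terms of $\epsilon$ and $\leg{j'}{p}$); it says nothing about $u_0$, which is invisible in the discriminant form, and it leaves $\epsilon$ in the answer, whereas the asserted condition $\leg{-j'}{p}=\leg{2}{p}$ contains no $\epsilon$. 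Two further global inputs are needed, and they constitute the entire displayed computation in the paper's proof: (a) the determinant relation $\chi_p(\det C)=\epsilon_0\epsilon_1\epsilon_b$ with $\det C=(-1)^{l_-}j'p^{b+m}$, which forces $\epsilon_0=\epsilon_1$; and (b) the existence condition \cref{eqn:p-sign} for even $p$-elementary genera applied to $L$ itself (here $\rk L=4$, signature $(2,2)$ or $(3,1)$, $|\det L|=p$ or $p^{3}$ according to $\div(x)$), which forces $\epsilon_1=\leg{2}{p}$ and ties the signature of $L$ to $p\bmod 4$. Only after (a) and (b) do the conditions on $u_0$ and $u_1$ become automatic, so that the single remaining condition is $\leg{u_b}{p}=\leg{2}{p}$, i.e.\ $\leg{-j'}{p}=\leg{2}{p}$, and so that its failure yields all of $\Gamma_p$ rather than some other index-two subgroup. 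Your Step 1 gestures at ``the existence conditions'', but the explicit chain of deductions in Step 3 omits (a) and (b), and without them the computation cannot terminate in a condition on $j'$ alone.
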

\begin{proof}
This is a translation of \cite[VII Thms.\ 12.5, 12.7, 12.9]{miranda_morrison:embeddings} to our situation.
The only nontrivial computation is ($\ast$).

Assume that
$m=1$, $b>1$ and $\rk C = 3$.
Then the rank of $L$ is $4$. By our assumptions the signature of $L$ is either $(2,2)$ or $(3,1)$.
Let $1^{\epsilon_0} p^{\epsilon_1} (p^b)^{\epsilon_b}$
be the $p$-adic genus symbol of $C_p$.
Suppose that $\div(x)=1$ so that $|\det L| = p$.
By the classification of $p$-elementary genera in \Cref{thm:p-elementary},
$L$ must be in the genus $\even_{(2,2)}p^{\epsilon_1}$
(respectively $\even_{(3,1)}p^{\epsilon_1}$)
for $p \equiv 1 \pmod 4$ (respectively $p \equiv 3 \pmod 4$).
Let $\div(x)=p$. Then
$L \in \even_{(2,2)}p^{\epsilon_1' 3}$ (respectively $\even_{(3,1)}p^{\epsilon_1' 3}$) for $p \equiv 1 \pmod 4$ (respectively $p \equiv 3 \pmod 4$). By \Cref{lem:qc-decomp}, $\epsilon_1 = \leg{-1}{p} \epsilon_1'$.

With \Cref{thm:p-elementary} we compute that $\epsilon_1 = \left(\tfrac{2}{p}\right)$.
We have $\det C = (-1)^{l_-}j' p^{b+m}= \left(\tfrac{-1}{p}\right) j' p^{b+m}$ and
$\epsilon_b = \left(\tfrac{-j'}{p}\right)$.
By comparing the determinant with the symbol, we obtain the relation
\begin{equation}\label{eqn:eps}
\left(\frac{-j'}{p}\right)=\chi_p(\det C)=\epsilon_0 \epsilon_1 \epsilon_b=\epsilon_0 \epsilon_1 \left(\frac{-j'}{p}\right).
\end{equation}

Hence $\epsilon \defeq \epsilon_0 = \epsilon_1$.
The conditions of \cite[VII Thm.\ 12.5]{miranda_morrison:embeddings} hold if and only if $\epsilon_b=\epsilon$ giving $\condAst$.
It remains to compute the spinor norms with \cite[VII Thms.\ 12.5, 12.9]{miranda_morrison:embeddings}.
Let $\alpha \in \EE_p$ be defined by $\left(\tfrac{\alpha}{p}\right)=\epsilon=\left(\tfrac{2}{p}\right)$.
Then $\Sigma(C_p) =\langle (-1,2\alpha),(-1,2 \alpha p)\rangle$. We conclude with $\left(\tfrac{2 \alpha}{p}\right)=1$.
\end{proof}
\begin{lemma}\label{lem:sigma-rq}
Write $q_C = -q \oplus r$ as in \Cref{lem:qc-decomp}.
Generators of
\[
 \sigma\left(O(r)\times \{\id_{-q_j}\}\right) \leq \Sigma(C)/\Sigma^\#(C) \]
 are given by
 \begin{enumerate}
  \item $(-1,2\alpha p)_p$ for $m=1$ and $r \cong w_{p,1}^\epsilon$ where $\epsilon = \leg{\alpha}{p}$,
  \item $(-1,p)_p, (1,u )_p$ for $m \geq 2$ and $1 \neq u \in \EE_p$.
 \end{enumerate}
\end{lemma}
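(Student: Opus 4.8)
The plan is to reduce everything to a spinor-norm computation on the $p^1$-modular Jordan block of $C_p$. First I would observe that any isometry in $O(r)\times\{\id_{-q_j}\}$, regarded inside $O(q_C)$, acts trivially on $q_{C_\ell}$ for every prime $\ell\neq p$, since $r$ is supported at $p$; hence it admits the identity as a local lift at all such $\ell$, and its image under $\sigma=(\det,\spin)$ lies in $\Gamma_p\leq\Gamma_\AA$ and is computed by a single lift to $O(C_p)$. This is precisely the kind of bookkeeping extracted from the exact sequences of \cite[VIII]{miranda_morrison:embeddings} used in the proof of the Key Lemma above, so I would invoke them here.

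Next I would isolate the relevant local lattice. Since $p$ is odd, the decomposition $q_C=-q\oplus r$ of \Cref{lem:qc-decomp} together with the fact that $C$ is $p$-elementary shows that $r$ is realized by an orthogonal direct summand $M\subseteq C_p$ with $q_M=r$, and $M\cong K_p(r)$ is a $p^1$-modular $\ZZ_p$-lattice of rank $m$; here one uses that for $p$ odd the orthogonal splittings of a $\ZZ_p$-lattice can be matched with those of its discriminant form. Every $g\in O(r)$ then lifts to $\widetilde g\in O(M)$ (surjectivity of $O(M)\to O(q_M)=O(r)$ for $p$ odd, exactly as in the Key Lemma), and extending $\widetilde g$ by the identity on the orthogonal complement of $M$ in $C_p$ produces a lift of $g\times\id_{-q_j}\in O(q_C)$ whose spinor norm equals that of $\widetilde g$ acting on $M\otimes\QQ_p$.

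It then remains to compute $\sigma_p$ on the $p^1$-modular $\ZZ_p$-lattice $M$ of rank $m$. By Cartan--Dieudonn\'e over $\QQ_p$ every element of $O(M)$ is a product of reflections $\tau_v$ in anisotropic vectors $v\in M$, and $v^2=\beta p$ with $\beta\in\ZZ_p^\times$, so $\sigma_p(\tau_v)=(-1,\spin(\tau_v))_p=(-1,\tfrac12\beta p)_p=(-1,2\beta p)_p$, using that $2$ is a $p$-adic unit. If $m=1$ then $M=\langle\alpha p\rangle$ with $\leg{\alpha}{p}=\epsilon$ (this is exactly what $r\cong w_{p,1}^\epsilon$ encodes), the group $O(r)=\{\pm\id\}$ is generated by $-\id=\tau_v$, and we obtain the single generator $(-1,2\alpha p)_p$, which is case (1). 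If $m\geq 2$, the reduction $M\otimes\FF_p$ is a nondegenerate quadratic space of dimension $\geq 2$ and therefore represents both unit square classes; choosing $v,v'\in M$ with $v^2/p$ and $v'^2/p$ in different square classes gives $\sigma_p(\tau_v)=(-1,2\alpha p)_p$ and $\sigma_p(\tau_v\tau_{v'})=(1,\alpha\alpha')_p=(1,u)_p$ with $u$ a non-square unit, and a direct check (writing $(-1,2\alpha p)_p=(-1,p)_p\,(1,2\alpha)_p$ and noting $(1,2\alpha)_p\in\langle(1,u)_p\rangle$) shows that these two elements generate the same subgroup of $\Gamma_p$ as $(-1,p)_p$ and $(1,u)_p$; since $O(r)$ is generated by reflections, nothing further appears, which is case (2). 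Passing to the quotient $\Sigma(C)/\Sigma^\#(C)$ leaves these as generators.

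I expect the main obstacle to be the bookkeeping in the second paragraph: making fully precise that the Jordan splitting of $C_p$ can be chosen compatibly with $q_C=-q\oplus r$, so that the lift of $O(r)$ really is supported on the rank-$m$ summand $K_p(r)$ and picks up no spurious spinor-norm contribution from the $-q$-part or the unimodular part, together with carrying along the normalizations (the $\tfrac12$ in $\spin(\tau_v)$ and the even-lattice convention behind $w_{p,1}^\epsilon$) so that the square classes come out exactly as displayed.
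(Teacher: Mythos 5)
Your proposal is correct and follows essentially the same route as the paper: $O(r)$ is generated by reflections, so one computes the spinor norms $(-1,2\beta p)_p$ of the lifted reflections, and for $m\geq 2$ one observes that both unit square classes of $\beta$ occur (you argue via representability of a nondegenerate form over $\FF_p$ in dimension $\geq 2$, the paper via $w_{p,1}^{1}\oplus w_{p,1}^{1}\cong w_{p,1}^{-1}\oplus w_{p,1}^{-1}$ --- the same fact). Your more detailed treatment of the local lifting to the $p$-modular Jordan summand is just an expansion of what the paper leaves implicit.
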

\begin{proof}
 We know that $O(r)$ is generated by reflections.
 So one may just compute the spinor norms of all reflections.
 For $m\geq 2$ use that $w_{p,1}^{1} \oplus w_{p,1}^{1} \cong w_{p,1}^{-1} \oplus w_{p,1}^{-1}$.
\end{proof}

\begin{lemma}
 \[\Delta(C)=\left(\Gamma_S \cap \Sigma(C)\right)\cdot \Sigma^\#(C)
 \cdot \Sigma(C_p) \]
\end{lemma}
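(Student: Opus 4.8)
The goal is to show $N\cdot\sigma\big(O(r)\times\{\pm\id_{-q}\}\big)=N\cdot\Sigma(C_p)$, where I abbreviate $N:=(\Gamma_S\cap\Sigma(C))\cdot\Sigma^\#(C)$; since the left-hand side is precisely $\Delta(C)$ by definition, that is the whole content. All groups in sight sit inside the abelian $2$-torsion group $\Gamma_\AA$, so I would treat everything as $\FF_2$-linear algebra and argue by tracking a few explicit generators.

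The first step is to remove the sign: since $-\id_C\in O(C)$ induces $-\id_{A_C}=(-\id_{-q})\oplus(-\id_r)$ on the discriminant group, this element is killed by the Miranda--Morrison map $\sigma\colon O(q_C)\to\Gamma_\AA/N$ used in the Key Lemma, so $\sigma\big((-\id_{-q})\oplus\id_r\big)\equiv\sigma\big(\id_{-q}\oplus(-\id_r)\big)\pmod N$, and the latter lies in $\sigma(O(r)\times\{\id_{-q}\})$ because $-\id_r\in O(r)$. Hence $N\cdot\sigma\big(O(r)\times\{\pm\id_{-q}\}\big)=N\cdot\sigma\big(O(r)\times\{\id_{-q}\}\big)$, and it remains to compare $\sigma(O(r)\times\{\id_{-q}\})$ with $\Sigma(C_p)$ modulo $N$. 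Both are, modulo $N$, subgroups of the single local factor $\Gamma_p$: for $\Sigma(C_p)$ by definition, and for $\sigma(O(r)\times\{\id_{-q}\})$ because an element of $O(r)\times\{\id_{-q}\}$ acts trivially on the $\ell$-part of $q_C$ for every $\ell\neq p$, hence lifts locally at those primes with spinor in $\Sigma^\#(C_\ell)\subseteq N$. This is reflected in \Cref{lem:sigma-rq}, which moreover lists explicit generators of $\sigma(O(r)\times\{\id_{-q}\})$ modulo $\Sigma^\#(C)$ in terms of $m:=\nu_p(|r|)$: none if $m=0$; a single $(-1,2\alpha p)_p$ if $m=1$, where $\alpha\in\EE_p$ is chosen so that $\leg{\alpha}{p}$ equals the sign $\epsilon$ of $r\cong w_{p,1}^{\epsilon}$; and $(-1,p)_p,(1,u)_p$ if $m\geq 2$.

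With this setup I would carry out the comparison inside $\Gamma_p$ (an $\FF_2$-space of dimension $3$, with $\Gamma_{p,0}$ of dimension $2$), reading $\Sigma(C_p)$ off \Cref{lem:SCp} and $\Sigma^\#(C_p)$ off \Cref{lem:SCp-sharp}. For the inclusion $\Delta(C)\subseteq N\cdot\Sigma(C_p)$ it suffices to check that each generator from \Cref{lem:sigma-rq} already lies in $\Sigma(C_p)$: for $m\geq 2$ this is immediate since $\langle(1,u),(-1,p)\rangle\subseteq\Sigma(C_p)$ in the cases (ii), (iv) of \Cref{lem:SCp} that can occur then, and for $m=1$ case (ii) is impossible (because $l((A_C)_p)\leq l((-q)_p)+l(r)\leq 1+1<\rk C$), leaving only the case $(\ast)$, addressed below, and case (iv) where $\Sigma(C_p)=\Gamma_p$. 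For the reverse inclusion $\Sigma(C_p)\subseteq N\cdot\sigma(O(r)\times\{\id_{-q}\})$ I would combine $\Sigma^\#(C_p)\subseteq N$ with those same generators; in the cases where this only reaches an index-$2$ subgroup of a larger $\Sigma(C_p)=\Gamma_p$ (for instance $m=0$ with $\nu_p(|q|)$ odd, or $l((A_C)_p)=\rk C$ with $\nu_p(|q|)$ even), I would observe that there $p\mid\det C$, so the global element $(1,[p])\in\Gamma_\QQ$ lies in $\Gamma_S\cap\Sigma(C)$: its components away from $p$ lie in $\Gamma_{\ell,0}=\Sigma^\#(C_\ell)$, and its $p$-component $(1,p)_p$ lies in $\Sigma(C_p)=\Gamma_p$. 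Hence $(1,p)_p\in N$ modulo $\Sigma^\#(C)$, and adjoining it fills out $\Gamma_p$.

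The step I expect to cost the most care is case $(\ast)$ of \Cref{lem:SCp}, where $\Sigma(C_p)$ is the small group $\langle(-1,1),(-1,p)\rangle$ and one must show $(-1,2\alpha p)_p\in\langle(-1,1),(-1,p)\rangle$, equivalently that $2\alpha$ is a square mod $p$, i.e. $\leg{2}{p}=\epsilon$. This forces a match between the sign $\epsilon$ of $r\cong w_{p,1}^{\epsilon}$ and the $p$-adic genus data of $C$: since case $(\ast)$ has $\nu_p(|q|)>1$, the $q_{\langle k\rangle}$-contribution lands in the $p^{\nu_p(|q|)}$-block, so the $p^1$-block of the genus symbol of $C_p$ comes entirely from $r$ and $\epsilon$ equals the sign $\epsilon_1$ of that block, which the proof of \Cref{lem:SCp} computes to be $\leg{2}{p}$; hence $\leg{2\alpha}{p}=\leg{2}{p}\epsilon=1$, as needed. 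Once this identification is in place, the remaining cases are a routine finite check from the tables in \Cref{lem:SCp-sharp,lem:SCp,lem:sigma-rq}.
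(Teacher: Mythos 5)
Your overall strategy is the one the paper uses: strip off the sign $\pm\id_{-q}$ via $-\id_{q_C}\in\overline{O(C)}$, compare $\sigma(O(r)\times\{\id_{-q}\})$ with $\Sigma(C_p)$ modulo $N=(\Gamma_S\cap\Sigma(C))\cdot\Sigma^\#(C)$ using the explicit generators from the three preceding lemmas, and close the remaining index-$2$ gap with the global element $(1,p)\in\Gamma_S\cap\Sigma(C)$. The sign removal and the forward inclusion are fine (the forward inclusion is in fact automatic, since the generators in the lemma on $\sigma(O(r)\times\{\id_{-q}\})$ are by construction elements of $\Sigma(C)/\Sigma^\#(C)$ supported at $p$; your explicit verification of case $(\ast)$ via $\epsilon=\epsilon_1=\leg{2}{p}$ is a correct consistency check).

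The reverse inclusion, however, is where your case list — flagged by your own ``for instance'' — is not exhaustive, and the missing case is exactly one where adjoining $(1,p)_p$ does \emph{not} help. Take $m=1$, $b=\nu_p(|q|)\geq 1$ and $\rk C=3$ (so $l((A_C)_p)=2=\rk C-1$), outside case $(\ast)$, so that $\Sigma(C_p)=\Gamma_p$ while $\Sigma^\#(C_p)=\langle(-1,2\delta)\rangle$ and $\sigma_p(O(r)\times\{\id\})=\langle(-1,2\alpha p)\rangle$. If $2\delta$ and $2\alpha$ are both squares mod $p$, these two generators span only $\langle(-1,1),(-1,p)\rangle$, which already \emph{contains} $(1,p)_p$; adjoining $(1,p)_p$ (and, for $p\equiv 1\pmod 4$, also $(\pm 1,\mp 1)$) leaves you in an index-$2$ subgroup of $\Gamma_p$ missing $(1,u)_p$ with $u$ a nonsquare unit. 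This configuration is realizable — e.g.\ $p=5$, $L\in\even_{(2,2)}5^{-1}$, $k=10$, $\div(x)=1$ gives $\rk C=3$, $m=1$, $b=1$, $\leg{2\delta}{5}=\leg{2\alpha}{5}=1$ — and no other element of $\Gamma_S\cap\Sigma(C)$ supported only at $p$ modulo $\Sigma^\#(C)$ supplies the missing class, so the argument genuinely stalls there rather than being merely incomplete in exposition. To be fair, the paper's own proof has the same blind spot: its displayed identity $\Sigma(C_p)=\Sigma^\#(C_p)\cdot\sigma_p(O(r)\times\{\id\})\cdot\langle(1,t)_p\rangle$ with $t=1$ is not correct in this configuration either. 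So your proposal faithfully reproduces the paper's argument, including its weakest step; a complete proof would need either to rule this configuration out or to produce the missing spinor class from some other global isometry, and as written neither you nor the paper does so.
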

\begin{proof}
Combine \Cref{lem:SCp-sharp,lem:SCp,lem:sigma-rq} to obtain
\begin{equation}\label{eqn:sigma_Cp}
\Sigma(C_p)=
\Sigma^\#(C_p)\cdot \sigma_p\left(O(q_L)\times \{ \id_{-q}\}\right) \cdot \langle (1,t)_p \rangle
\end{equation}
where $t=p$ if $l(A_C)=\rk C$, $0<b\equiv 0 \pmod 2$ and else $t=1$.
If $t\neq 1$, then $\Sigma(C_p)=\Gamma_p$ and
$\Sigma^\#(C_p)= \langle (1,1)\rangle$.
We rewrite
\begin{eqnarray*}
\Delta(C) \cdot \langle (1,t)_p\rangle &=& \left(\Gamma_S \cap \Sigma(C)\right)\cdot
\Sigma^\#(C) \cdot \sigma\left(O(r)\times \{ \id_{-q}\}\right) \cdot \sigma(\{\pm \id_{q_C}\})\cdot \langle (1,t)_p\rangle\\
&=& \left(\Gamma_S \cap \Sigma(C)\right)\cdot
\Sigma^\#(C) \cdot \sigma\left(O(r)\times \{ \id_{-q}\}\right) \cdot\langle (1,t)_p\rangle\\
 &=& \left(\Gamma_S \cap \Sigma(C)\right)\cdot
\Sigma^\#(C) \cdot \Sigma(C_p)
\end{eqnarray*}
where we use that $-\id_{q_C} \in \overline{O(C)}$, i.e.\ $\sigma(-\id_{q_C}) \in (\Gamma_S \cap \Sigma(C))\cdot \Sigma^\#(C)$ and \cref{eqn:sigma_Cp}.

It remains to show that $(1,t)_p \in \Delta(C)$.
So suppose that $t=p$. Then $(1,p) \in \Gamma_S \cap \Sigma(C)$ and $(1,p)_\ell \in \Gamma_{\ell,0}= \Sigma^\#(C_\ell)$ for $\ell\neq p$. Thus $(1,p)_p \in (\Gamma_S\cap\Sigma(C))\cdot \Sigma^\#(C)\subseteq \Delta(C)$.
\end{proof}

\begin{definition}
Define
\[\mathcal{L}_1 = \{\ell \in \PP\setminus \{2,p\} \mid \nu_\ell(j) \equiv_2 1\} \cup \{2 \mid 0 <\nu_2(j) \equiv_2 0 \}\]
\[\mathcal{L}_0 = \{\ell \in \PP\setminus \{2,p\} \mid 0< \nu_\ell(j)  \equiv_2 0\} \cup \{2 \mid \nu_2(j) \equiv_2 1\}\]
so that the set of prime divisors of $pj$ is the disjoint union $\{p\} \sqcup \mathcal{L}_1 \sqcup \mathcal{L}_0$.
For $\ell \in \mathcal{L}_1$ we have  $\Sigma(C_\ell)=\Gamma_{\ell}$.
For $\ell \in \mathcal{L}_0$ we have $\Sigma(C_\ell)=\Gamma_{\ell,0}.$
 \end{definition}

\begin{proposition}
The index of $[\Sigma(C) : \Delta(C)]$ is one, unless $\condAst$ is true and $\mathcal{L}_1$
 contains a nonsquare modulo $p$. In this case the index is two.
\end{proposition}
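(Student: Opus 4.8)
The plan is to read the index off the description $\Delta(C)=(\Gamma_S\cap\Sigma(C))\cdot\Sigma^\#(C)\cdot\Sigma(C_p)$ from the previous lemma, feeding in the explicit local computations of $\Sigma(C_\ell)$ and $\Sigma^\#(C_\ell)$ established above. First I would simplify: since $\Sigma^\#(C_\ell)=\Gamma_{\ell,0}$ for every $\ell\neq p$ (by \cref{eqn:SCq,eqn:SC2}) and $\Sigma^\#(C_p)\subseteq\Sigma(C_p)$, one gets $\Sigma^\#(C)\cdot\Sigma(C_p)=B$ with $B:=\bigl(\prod_{\ell\neq p}\Gamma_{\ell,0}\bigr)\times\Sigma(C_p)\leq\Gamma_\AA$, hence $\Delta(C)=(\Gamma_S\cap\Sigma(C))\cdot B$ and $[\Sigma(C):\Delta(C)]$ equals the index of the image of $\Gamma_S\cap\Sigma(C)$ inside $\Sigma(C)/B$.

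Next I would identify $\Sigma(C)/B$. Its $p$-component is trivial (the $p$-component of $B$ is all of $\Sigma(C_p)$), while for $\ell\neq p$ the $\ell$-component of the quotient is $\Sigma(C_\ell)/\Gamma_{\ell,0}$, which is trivial unless $\Sigma(C_\ell)=\Gamma_\ell$, in which case it is cyclic of order two generated by the class of a uniformizer. Here I would use the small fact that $j=|q|$ is always even — the discriminant form of $\langle k\rangle$ is cyclic of order $k\in 2\ZZ$, possibly with an extra $w_{p,1}^\epsilon$ summand — so that every prime dividing $j$ lies in $\mathcal{L}_1\sqcup\mathcal{L}_0$, and by \cref{eqn:SCq,eqn:SC2} the primes with $\Sigma(C_\ell)=\Gamma_\ell$ are exactly those of $\mathcal{L}_1$. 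Thus $\Sigma(C)/B\cong(\ZZ/2\ZZ)^{\mathcal{L}_1}$, the class of $g=(d_\ell,s_\ell)_\ell$ being the vector $(\nu_\ell(s_\ell)\bmod 2)_{\ell\in\mathcal{L}_1}$.

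Then I would compute the image of $\Gamma_S\cap\Sigma(C)$. Given $(d,s)\in\Gamma_S\cap\Sigma(C)$ with $s$ a squarefree divisor of $2\det C$, the conditions $(d,s)_\ell\in\Sigma(C_\ell)=\Gamma_{\ell,0}$ at the primes $\ell\notin\{p\}\cup\mathcal{L}_1$ force $s$ to be a product of primes from $\{p\}\cup\mathcal{L}_1$, say $s=p^{\epsilon_0}\prod_{\ell\in T}\ell$ with $T\subseteq\mathcal{L}_1$, whose image in $(\ZZ/2\ZZ)^{\mathcal{L}_1}$ is the characteristic vector of $T$; conversely, for any $T$ the only remaining requirement is $(d,p^{\epsilon_0}\prod_{\ell\in T}\ell)_p\in\Sigma(C_p)$, with $\epsilon_0$ and $d$ at our disposal. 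I would now run through the four cases of \Cref{lem:SCp}. If $\Sigma(C_p)=\Gamma_p$ there is no constraint; if $\Sigma(C_p)=\Gamma_{p,0}$ take $\epsilon_0=0$; if $\Sigma(C_p)=\langle(1,u),(-1,p)\rangle$ take $\epsilon_0=0$ and $d=1$ — in each of these three cases every $T$ occurs, the image is all of $(\ZZ/2\ZZ)^{\mathcal{L}_1}$, and the index is $1$. In the remaining case $\condAst$, where $\Sigma(C_p)=\langle(-1,1),(-1,p)\rangle$ consists of the local classes whose $\UU_p$-part lies in $\{1,p\}$, membership $(d,s)_p\in\Sigma(C_p)$ holds (for a suitable $d$) exactly when the prime-to-$p$ part of $s$ is a square modulo $p$, i.e.\ when $\prod_{\ell\in T}\leg{\ell}{p}=1$. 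Hence the image is $\ker\bigl((\ZZ/2\ZZ)^{\mathcal{L}_1}\to\{\pm1\},\ T\mapsto\prod_{\ell\in T}\leg{\ell}{p}\bigr)$, which is everything when every prime of $\mathcal{L}_1$ is a square mod $p$ (index $1$) and an index-two subgroup as soon as $\mathcal{L}_1$ contains a nonsquare mod $p$ (index $2$), as claimed.

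The bookkeeping in the three easy cases is routine; the delicate step I expect to be the $\condAst$ case, where one must check that the local condition at $p$ really cuts the image down to the Legendre-symbol kernel with no extra slack. This is precisely where the simplified shape $\langle(-1,1),(-1,p)\rangle$ of $\Sigma(C_p)$ (obtained from $\langle(-1,2\alpha),(-1,2\alpha p)\rangle$ via $\leg{2\alpha}{p}=1$) is used, and where one has to be careful about which classes $s$ are admissible in $\Gamma_S$ so that the sign of $s$ cannot be exploited to enlarge the image. Assembling the three ingredients — the reduction $\Sigma^\#(C)\cdot\Sigma(C_p)=B$, the identification $\Sigma(C)/B\cong(\ZZ/2\ZZ)^{\mathcal{L}_1}$, and the case analysis at $p$ — is the crux, but each piece is a finite check against the local formulas already proved.
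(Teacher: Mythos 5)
Your argument is essentially the paper's own proof: the paper likewise identifies $\Sigma(C)/(\Sigma^\#(C)\cdot\Sigma(C_p))$ with $\FF_2^{\mathcal{L}_1}$ via the valuations $\nu_\ell(s_\ell)$, reduces the index to $[\FF_2^{\mathcal{L}_1}:\phi(\Gamma_S\cap\Sigma(C))]$, obtains the standard basis from the elements $(1,\ell)$ whenever $\{1\}\times\EE_p\leq\Sigma(C_p)$ (all non-$\condAst$ cases), and in case $\condAst$ reads off the kernel of $T\mapsto\prod_{\ell\in T}\leg{\ell}{p}$. The sign subtlety you flag at the end (whether a negative $s\in S$ could be used to flip $\leg{s'}{p}$ when $p\equiv 3\pmod 4$) is precisely the one point the paper also passes over in silence — it simply declares $\leg{s'}{p}=1$ to be a linear condition on $\FF_2^{\mathcal{L}_1}$ — so your reconstruction is faithful to the paper's proof, including in the single step that both leave to Miranda--Morrison's normalization of $\Gamma_S$.
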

\begin{proof}
The set $\mathcal{L}_1$ is defined so that for $\ell \in \mathcal{L}_1$ the quotient $\Sigma(C_\ell)/\Sigma^\#(C_\ell) = \Gamma_{\ell}/\Gamma_{\ell,0}$ is of order $2$ and generated by the class of $(1,\ell)$.
The map
\[\phi\colon \Sigma(C)/\left(\Sigma^\#(C)\cdot \Sigma(C_p)\right) \longrightarrow \FF_2^{\mathcal{L}_1}, \quad(d_\ell,s_\ell)_{\ell \in \PP} \mapsto (\ell \mapsto \nu_\ell(s_\ell))\]
is an isomorphism of $\FF_2$-vector spaces.
Hence the index is given by \[[\Sigma(C) : \Delta(C)]=[\FF_2^{\mathcal{L}_1} : \phi(\Gamma_S \cap \Sigma(C))].\]

Let $(d,s) \in \Gamma_S$. It lies in $\Sigma(C)$ if and only if $(d_p,s_p) \in \Sigma(C_p)$.
Suppose that we are not in case $\condAst$. Then $\{1\} \times \EE_p \leq \Sigma(C_p)$. Hence $(1,\ell) \in \Gamma_S \cap \Sigma(C)$ for all $\ell \in {\mathcal{L}_1}$. Taking the images $\phi(1,\ell)$, we obtain the standard basis of $\FF_2^{\mathcal{L}_1}$.

If $\condAst$ holds, $\Sigma(C_p)=\langle (-1,1),(1,p)\rangle$.
Then $(d,s)$ with $s=s'p^{\nu_p(s)}$ is in $\Gamma_S \cap \Sigma(C)$
if and only if $\left(\frac{s'}{p}\right)=1$.
This gives a linear condition on $\FF_2^{{\mathcal{L}_1}}$ which is
nontrivial if and only if there is at least one $s \in {\mathcal{L}_1}$ with $\left(\frac{s}{p}\right)=-1$.
\end{proof}
Notice the isomorphism
$\Gamma_p \rightarrow \FF_2^3$, $(d,s) \mapsto (\epsilon_p(d),u_p(s),\nu_p(s))$, where $d=(-1)^{\epsilon_p(d)}$ and $u_p = \epsilon_p\circ \chi_p$.
\begin{lemma}
 We have $|\mathfrak{g}(C)|=1$, unless $\condAst$ is true,
 $p \equiv 1 \pmod 4$ and
 every element of ${\mathcal{L}_1}$ is a square modulo $p$,
 in which case $|\mathfrak{g}(C)|=2$.
\end{lemma}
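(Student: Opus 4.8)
The plan is to count isometry classes in the genus of $C$ by counting spinor genera and then to turn the resulting adelic index into a finite computation at the single prime $p$, feeding in the local images $\Sigma(C_p)$ already determined in \Cref{lem:SCp}. First I would record that, under the standing hypotheses $l_+\geq 2$, $l_-\geq 1$, $l_++l_-\geq 4$, the lattice $C=\langle k\rangle^\perp$ is indefinite of rank $l_++l_--1\geq 3$; hence by Eichler's theorem each spinor genus in $\mathfrak{g}(C)$ is a single isometry class, and $|\mathfrak{g}(C)|$ equals the number of spinor genera. In the $\Gamma$-language of \cite{miranda_morrison:embeddings} this number is the index $[\Gamma_\AA:\Gamma_\QQ\cdot\Sigma(C)]$ (the companion statement to the exact sequence \cite[VIII Thm.\ 5.1]{miranda_morrison:embeddings} quoted in the Key Lemma), where $\Sigma(C)=\prod_\ell\Sigma(C_\ell)$; one has to note that the archimedean place imposes no condition because $C$ is indefinite.

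Next I would reduce this index to a computation inside $\Gamma_p$. For every $\ell\neq p$ we have $\Gamma_{\ell,0}\subseteq\Sigma(C_\ell)$ by \cref{eqn:SCq,eqn:SC2}, with $\Sigma(C_\ell)=\Gamma_\ell$ exactly when $\ell\in\mathcal{L}_1$; moreover the extra generator $(1,\ell)$ of $\Gamma_\ell/\Gamma_{\ell,0}$ is the class of a global element of $\Gamma_\QQ$. On the other hand $\Gamma_\AA=\Gamma_\QQ\cdot\prod_\ell\Gamma_{\ell,0}$, since any adelic class can be corrected by a global $s\in\QQ^\times$ fixing the parities of all valuations. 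Combining these, $\Gamma_\QQ\cdot\Sigma(C)=\Gamma_\QQ\cdot\bigl(\prod_{\ell\neq p}\Gamma_{\ell,0}\bigr)\cdot\Sigma(C_p)$, and therefore
\[
|\mathfrak{g}(C)|=[\Gamma_p:\Sigma(C_p)\cdot\Theta_p],
\]
where $\Theta_p\leq\Gamma_p$ (a group of order $8$) is generated by the local images at $p$ of the global classes $(-1,1)$, $(1,p)$, $(1,-1)$ and $(1,\ell)$ for $\ell\in\mathcal{L}_1$.

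Finally I would run through the four cases of \Cref{lem:SCp}. When $\Sigma(C_p)=\Gamma_p$ the index is obviously $1$; when $\Sigma(C_p)=\Gamma_{p,0}$ (the case $m=0$, $b$ even) or $\Sigma(C_p)=\langle(1,u),(-1,p)\rangle$ (the case $l(A_C)=\rk C$ with $b=0$ or $b$ odd), one checks using $(-1,1),(1,p)\in\Theta_p$ that $\Sigma(C_p)\cdot\Theta_p=\Gamma_p$, so again the index is $1$. In the remaining case $\condAst$ we have $\Sigma(C_p)=\langle(-1,1),(-1,p)\rangle=\ker u_p$, of index $2$ in $\Gamma_p$; since $u_p(-1,1)=u_p(1,p)=0$, $u_p(1,-1)=\leg{-1}{p}$ and $u_p(1,\ell)=\leg{\ell}{p}$ (as elements of $\FF_2$), the subgroup $\Theta_p$ is contained in $\ker u_p$ precisely when $\leg{-1}{p}=1$ and $\leg{\ell}{p}=1$ for all $\ell\in\mathcal{L}_1$, i.e.\ when $p\equiv1\pmod4$ and every element of $\mathcal{L}_1$ is a square modulo $p$; in that subcase $\Sigma(C_p)\cdot\Theta_p=\Sigma(C_p)$ and the index is $2$, while otherwise $\Sigma(C_p)\cdot\Theta_p=\Gamma_p$ and the index is $1$. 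This yields exactly the claimed dichotomy. The only genuinely delicate points are invoking the right Miranda--Morrison class-number formula in the $\Gamma$-formalism and the reduction of the adelic index to $[\Gamma_p:\Sigma(C_p)\cdot\Theta_p]$; after that the case analysis is elementary $\FF_2$-linear algebra prepackaged by \Cref{lem:SCp} and the definition of $\mathcal{L}_1$.
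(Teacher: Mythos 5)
Your overall route is essentially the paper's: both arguments feed the local computations of \Cref{lem:SCp} (together with \cref{eqn:SCq,eqn:SC2}) into Miranda--Morrison's class-number formula \cite[VIII Prop.\ 6.1]{miranda_morrison:embeddings} and finish with $\FF_2$-linear algebra. The only organizational difference is that the paper computes the rank of the explicit matrix $\psi(B)$ with values in $\FF_2^{\gamma}\times\FF_2^{\mathcal{L}_0}$, whereas you collapse the $\mathcal{L}_0$-factors first and phrase everything as a single index at $p$. Your final formula $|\mathfrak{g}(C)|=[\Gamma_p:\Sigma(C_p)\cdot\Theta_p]$, with $\Theta_p$ generated by the $p$-components of $(-1,1)$, $(1,p)$, $(1,-1)$ and of $(1,\ell)$ for $\ell\in\mathcal{L}_1$, is correct, and your case analysis then matches the lemma exactly.

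However, the step that is supposed to produce this formula contains a real error: the displayed identity
\[
\Gamma_\QQ\cdot\Sigma(C)=\Gamma_\QQ\cdot\Bigl(\prod_{\ell\neq p}\Gamma_{\ell,0}\Bigr)\cdot\Sigma(C_p)
\]
is false precisely in the cases that matter. For $\ell\in\mathcal{L}_1$ the left-hand side contains the idele $x$ equal to $(1,\ell)$ at $p$ and trivial elsewhere, since $(1,\ell)^{-1}x$ lies in $\prod_{q\neq p}\Sigma(C_q)$ (using $(1,\ell)\in\Gamma_\ell=\Sigma(C_\ell)$ at $q=\ell$ and $(1,\ell)\in\Gamma_{q,0}$ at $q\neq \ell,p$). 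On the other hand, the intersection of the right-hand side with $\Gamma_p$ is only $\Sigma(C_p)\cdot\langle(\pm1,\pm1)_p,(1,p)_p\rangle$, because the only global classes $(d,s)$ whose component lies in $\Gamma_{q,0}$ for every $q\neq p$ are those with $s\in\{\pm1,\pm p\}$ modulo squares. The correct identity therefore carries the extra factor $\langle(1,\ell)_p:\ell\in\mathcal{L}_1\rangle$ on the right, and this is not cosmetic: taken literally, your displayed equality yields the index $[\Gamma_p:\Sigma(C_p)\cdot\langle(\pm1,\pm1)_p,(1,p)_p\rangle]$, which in case \condAst{} with $p\equiv1\pmod 4$ equals $2$ unconditionally, so the condition on $\mathcal{L}_1$ in the statement would disappear. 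Your $\Theta_p$ silently reinstates exactly the missing generators $(1,\ell)_p$, so the proof is repaired by replacing the displayed equality with the corrected one; everything downstream of the formula is fine.
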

\begin{proof}
The order of $\mathfrak{g}(C)$ is the order of the cokernel of
\[\Gamma_S \rightarrow
\Delta' / \{(\pm 1, \pm 1)\}_S \]
where $\Delta'\defeq \left(\prod_{\ell \mid 2jp}  \Gamma_\ell / \Sigma(C_\ell)\right)$ (cf. \cite[VIII Prop. 6.1]{miranda_morrison:embeddings}).
For $\ell\in \mathcal{L}_0$ we have $\Gamma_\ell/\Sigma(C_\ell)=\Gamma_\ell/\Gamma_{\ell,0}$ and for $\ell \in \mathcal{L}_1$ the quotient is trivial.
Let $\pi_p\colon \Gamma_p / \Sigma(C_p) \xrightarrow{\sim} (\FF_2)^\gamma$, $\gamma \in \{0,1\}$ be the unique isomorphism.
We obtain the isomorphism
\[
\psi \colon \Delta' = \Gamma_p/\Sigma(C_p) \times
\prod_{\ell \in \mathcal{L}_0}  \Gamma_\ell / \Gamma_{\ell,0}
\xrightarrow{\sim} \FF_2^\gamma \times \FF_2^{\mathcal{L}_0}, \quad (d,s) \mapsto  (\pi_p(d,s),\ell\mapsto \nu_\ell(s)).\]

A basis $B$ of $\Gamma_S$ is given by
\[\{(1,\ell) \mid \ell \in {\mathcal{L}_1} \cup \mathcal{L}_0\}\cup \{(1,p)\} \cup \{(\pm 1,\mp 1)\}.\]
We view $\psi(B)$ as a matrix and compute its rank.
The projection of $\{\psi(1,\ell) \mid \ell\in \mathcal{L}_0\}$ to $\FF_2^{\mathcal{L}_0}$ is a basis of $\FF_2^{\mathcal{L}_0}$. Thus $\psi(B)$ has rank at least $|\mathcal{L}_0|$. It has rank $|\mathcal{L}_0| + \gamma$ if and only if $\gamma=0$ or $\{\pi_p(1,s) \mid s \in {\mathcal{L}_1} \cup \{p\}\}\cup \{\pi_p(\pm 1,\mp1)\}$ contains a nonzero element.
If $\condAst$ does not hold and $\gamma \neq 0$, then $\pi_p \in \{\nu_p,\epsilon_p\cdot \nu_p\}$. Now $\pi_p(1,p)\neq 0$ does the trick.
If $\condAst$ holds, then $\pi_p = u_p$.
For $p \equiv 3 \pmod 4$ we have $u_p(1,-1)\neq 0$.
But for $p \equiv 1 \pmod 4$ we have $u_p(\pm 1,\mp 1)= 0 =u_p(1,p)$.
Thus the map is not surjective if and only if $p\equiv 1 \pmod 4$ and  every $\ell \in {\mathcal{L}_1}$ is a square mod $p$.
\end{proof}
\begin{example}
 For $k=50$, $L = H_5 \oplus U \in \even_{(2,2)}5^{-1}$ and divisibility $1$ we have $j=2$, $b=2$, $m=1$, $\mathcal{L}_0=\{2\}$, $\mathcal{L}_1=\emptyset$.
 Thus the genus of $C$ consists of the two classes
 \[  \left(\begin{array}{rrr}
2 & 3 & 0 \\
3 & -8 & 0 \\
0 & 0 & -10
\end{array}\right), \left(\begin{array}{rrr}
2 & 1 & 0 \\
1 & -2 & 10 \\
0 & 10 & -90
\end{array}\right).\]

For a case where $|\mathfrak{g}(C)|=1$ but $[\Sigma(C) : \Delta(C)]=2$, consider
$L = H_5 \oplus U$ and divisibility $1$, so that $j = k$. We need $\leg{-j'}{5}=\leg{2}{5}=-1$ and that $\mathcal{L}_1$ contains a nonsquare modulo $p$. The smallest example is $k= 4 \cdot 3 \cdot 25 = 300$.
\end{example}
We summarize the results of this section in the following theorem.
\begin{theorem}\label{thm:orbits}
 Let $p$ be an odd prime number, $k= k'p^{\nu_p(k)}>0$ even and $L$ a $p$-elementary lattice of signature $(l_+,l_-)$ with $l_+\geq 2$, $l_-\geq 1$ and $l_+ + l_- \geq 4$.
 The number of $O(L)$-orbits of primitive vectors $x \in L$ of a given divisibility with $x^2=k$ is either zero or one, unless
 \[\rk L = 4,\quad \nu_p(k)\geq 2,\quad |\det L| /\div(x)^2=p,\quad \left(\frac{-2k'}{p}\right)=1\]
 and either $p \equiv 1 \pmod 4$, or $p\equiv 3 \pmod 4$ and there is $\ell\in {\mathcal{L}_1}$ with $\leg{\ell}{p}=-1$.
 In this case the number of orbits is two.
 \end{theorem}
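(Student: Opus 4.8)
The plan is to combine the Key Lemma with the two structural computations of this section --- the Proposition evaluating the index $[\Sigma(C):\Delta(C)]$ and the Lemma evaluating $|\mathfrak{g}(C)|$ --- and then to unwind the hypothesis $\condAst$ into the four arithmetic conditions appearing in the statement.

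First I would dispose of the ``zero orbits'' case. Since $l_+\geq 2$, $l_-\geq 1$ and $l_++l_-\geq 4$, \Cref{thm: existence for p-elem} gives a complete criterion for the existence of a primitive vector $x\in L$ with $x^2=k$ and a prescribed divisibility $\div(x)\in\{1,p\}$; if no such vector exists there is nothing to prove, so assume one does. By \Cref{lem:qc-decomp}, fixing $x^2=k$ and $\div(x)$ fixes the isomorphism class of $q_{\langle k\rangle^\perp}$, so we are exactly in the situation of the Key Lemma with $C:=\langle k\rangle^\perp$. Since $-\id_L\in O(L)$, two primitive vectors of square $k$ lie in the same $O(L)$-orbit if and only if the rank-one sublattices they span do. Hence the number $N$ of $O(L)$-orbits of primitive vectors of square $k$ and divisibility $\div(x)$ equals
\[
N \;=\; |\mathfrak{g}(C)|\cdot[\Sigma(C):\Delta(C)].
\]

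Next I would feed in the two computations. The Proposition gives $[\Sigma(C):\Delta(C)]=2$ precisely when $\condAst$ holds and $\mathcal{L}_1$ contains a non-square modulo $p$, and $[\Sigma(C):\Delta(C)]=1$ otherwise; the Lemma computing $|\mathfrak{g}(C)|$ gives $|\mathfrak{g}(C)|=2$ precisely when $\condAst$ holds, $p\equiv 1\pmod{4}$, and every element of $\mathcal{L}_1$ is a square modulo $p$, and $|\mathfrak{g}(C)|=1$ otherwise. Both exceptions require $\condAst$, and their side conditions on $\mathcal{L}_1$ --- containing a non-square versus consisting entirely of squares --- are mutually exclusive. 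Consequently $N\in\{1,2\}$, and $N=2$ if and only if $\condAst$ holds and, in addition, either $p\equiv 1\pmod{4}$, or $p\equiv 3\pmod{4}$ and there is $\ell\in\mathcal{L}_1$ with $\leg{\ell}{p}=-1$.

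The remaining --- and only delicate --- point is to rewrite $\condAst$ in the form of the statement. Recall it reads $m=1$, $b>1$, $\rk C=3$, and $\leg{-j'}{p}=\leg{2}{p}$. Since $\rk C=\rk L-1$, the third clause is $\rk L=4$. Running through the three cases of \Cref{lem:qc-decomp}, one has $b=\nu_p(|q|)$, which equals $\nu_p(k)$ in cases (1) and (3) and equals $0$ in case (2); hence $b>1$ forces $\nu_p(k)\geq 2$ and rules out case (2). In the two surviving cases $|q|=k$, so $j'=k'$, and $\leg{-j'}{p}=\leg{2}{p}$ is equivalent to $\leg{-2k'}{p}=1$; moreover $p^m=|r|$ equals $|\det L|$ when $\div(x)=1$ and $|\det L|/p^2$ when $\div(x)=p$, that is $p^m=|\det L|/\div(x)^2$, so $m=1$ is equivalent to $|\det L|/\div(x)^2=p$. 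Substituting these equivalences into the previous paragraph reproduces exactly the exceptional set of the theorem, with orbit count $2$ there and orbit count $0$ or $1$ elsewhere. Finally, a short check against \Cref{thm: existence for p-elem} shows that $\condAst$ forces a primitive vector to exist (one falls into case (III-1) or (III-p) of that theorem), so in the exceptional situation $N$ is genuinely $2$ and not $0$; this completes the argument.
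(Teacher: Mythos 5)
Your proposal is correct and follows essentially the same route as the paper, which presents \Cref{thm:orbits} as a summary of \Cref{subs: spinor}: the Key Lemma's formula $|\mathfrak{g}(C)|\cdot[\Sigma(C):\Delta(C)]$, the two computations of those factors, and the identification of vector orbits with rank-one sublattice orbits via $-\id_L$. Your explicit unwinding of \condAst{} into the conditions $\rk L=4$, $\nu_p(k)\geq 2$, $|\det L|/\div(x)^2=p$, $\leg{-2k'}{p}=1$ via \Cref{lem:qc-decomp}, and the existence check in the exceptional case, are exactly the steps the paper leaves implicit.
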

\begin{remark}
 The number of $O(L)$-orbits of rank one primitive sublattices and primitive vectors is the same since $-\id_L$ exchanges the two generators of the sublattice. For $SO(L)$ this is certainly the case if $\det(-\id_L)=1$, i.e. the rank of $L$ is even.
\end{remark}

\begin{lemma}
 Let $L$ be as in \Cref{thm:orbits} and $V \subset L$ a primitive sublattice of rank one. Then $SO(L)V = O(L)V$.
\end{lemma}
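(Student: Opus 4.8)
The plan is to reduce the statement to producing a single isometry of $L$ of determinant $-1$ preserving $V$, and then to construct such an isometry as a reflection. Write $V=\ZZ x$ with $x^2=k>0$ even, and set $C:=x^\perp$. First I would observe that, since $L$ is even and $p$-elementary with $p$ odd, the completion $L\otimes\ZZ_2$ is even and unimodular, hence of even rank; thus $\rk L$ is even and $C$ is an even, indefinite lattice of odd rank $\rk L-1\ge 3$ (its signature is $(l_+-1,l_-)$ with both entries positive). The reduction is the usual coset argument: if $s\in O(L)$ fixes $V$ with $\det s=-1$, then for any $h\in O(L)$ one of $h,hs$ lies in $SO(L)$, and both send $V$ to $hV$; since $SO(L)V\subseteq O(L)V$ is trivial, this gives $O(L)V=SO(L)V$.

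Next I would build $s$ as a reflection. If $w\in C$ has $w^2=\pm 2$, then $\tau_w\colon y\mapsto y-\frac{2b(w,y)}{w^2}w$ preserves $L$, because the coefficient $\frac{2b(w,y)}{w^2}=\mp b(w,y)$ is integral on $L$; moreover $\tau_w$ fixes $x$ since $w\perp x$, and $\det\tau_w=-1$, so $s=\tau_w$ works. More generally it suffices to find $w\in x^\perp$ with $\tau_w\in O(L)$, which — as $L$ is $p$-elementary, so $\div(w)\in\{1,p\}$ for primitive $w$ — amounts to asking for a vector of square $\pm 2$ and divisibility $1$, or of square $\pm 2p$ and divisibility $p$. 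Such a $w$ exists whenever $\rk C\ge 5$: an even indefinite lattice of rank $\ge 5$ is isotropic by Meyer's theorem, hence splits off a hyperbolic plane $U$, and $U$ contains vectors of square $2$ and $-2$.

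The hard part is the case $\rk C=3$, i.e.\ $\rk L=4$, where $L\in\even_{(2,2)}p^{\epsilon n}$ or $\even_{(3,1)}p^{\epsilon n}$. Here I would analyze the genus of $C$ using the explicit description of \Cref{lem:qc-decomp} together with the facts that $C$ is unimodular at all primes not dividing $2pk$ and has cyclic discriminant $\ell$-part for every $\ell\ne p$. A place-by-place check of local representability shows that $C$ represents $2$ or $-2$ at every completion, with the only possible exception arising when $p\equiv 1\pmod 4$ and $\det C$ lies in one particular square class modulo $p$; outside that exceptional set, Hasse--Minkowski combined with the fact that an indefinite lattice of rank $3$ has spinor genus equal to class (so that the only possible obstruction to representing $\pm 2$ in the lattice would be a spinor exception, which does not occur for the values $\pm 2$ in these genera) produces the vector $w$. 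The residual genera are finite in number and I would treat them directly, using the spinor-norm computations of \S\ref{subs: spinor}: they show that the image $\overline{O(C)}\le O(A_C)$ contains an isometry of determinant $-1$ acting trivially on the glue subgroup $p_C(H)\subseteq A_C$, which lifts to $O(C)$ and glues with $\id_V$ via \cite[Cor.\ 1.5.2]{nikulin} to give $s$. I expect this rank-$4$ bookkeeping — pinning down which of $\pm 2$ (or $\pm 2p$) is locally represented at $p$, and checking that $\overline{O(C)}$ is large enough in the borderline genera — to be the only genuine obstacle; the rest of the argument is formal.
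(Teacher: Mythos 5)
There is a genuine gap in the step you use to dispose of the case $\rk C\ge 5$. ``Isotropic over $\QQ$'' does not imply ``splits off a hyperbolic plane over $\ZZ$'', and in the situation at hand the conclusion you want from it is actually false: take $L=U\oplus U(p)^{\oplus 2}\in\even_{(3,3)}p^{4}$ and $x$ a primitive vector of divisibility $1$ with $p\mid x^2$ (such $x$ exists by case (IV-1) of \Cref{thm: existence for p-elem}). Then $C=x^\perp\cong U(p)^{\oplus 2}\oplus\langle -x^2\rangle$ is indefinite of rank $5$ and isotropic, yet every $w\in C$ has $p\mid w^2$, so $C$ contains no vector of square $\pm 2$ and does not split off $U$. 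More generally, whenever $l\bigl((A_C)_p\bigr)=\rk C$ the $p$-adic Jordan decomposition of $C$ has no unimodular constituent and $\pm 2$ is not even locally represented; one would then have to fall back on vectors of square $\pm 2p$ \emph{and} verify that their divisibility in $L$ (not just in $C$) is $p$, which your argument does not address. So the existence of a reflection $\tau_w\in O(L)$ fixing $x$ is not established in the range where you claim it is easy, and it is not clear that such a reflection exists in all cases covered by \Cref{thm:orbits}.

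The remaining part (your $\rk C=3$ discussion) is a plan rather than a proof: it defers precisely to the spinor-norm bookkeeping that has to be carried out, and it is also where the actual argument lives. The paper does not use reflections at all: writing $q_C=-q\oplus r$ as in \Cref{lem:qc-decomp}, it uses \Cref{lem:SCp-sharp} and \Cref{lem:sigma-rq} to exhibit an adelic element $(-1,s)\in\sigma\left(\{\id_{-q}\}\times O(r)\right)\cdot\Sigma^{\#}(C)$ and then invokes strong approximation in the form of \cite[VIII Thm.\ 2.3]{miranda_morrison:embeddings} to realize it by a global $g\in O(C)$ with $\det g=-1$ acting trivially on $-q$; this $g$ (which is typically not a reflection) glues with $\id_V$ by \cite[Cor.\ 1.5.2]{nikulin}. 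That argument is uniform in the rank, so no case split on $\rk C$ is needed. If you want to keep a reflection-based proof, you must replace the ``splits off $U$'' step by a genuine local--global analysis of which of $\pm 2$, $\pm 2p$ is represented by $C$ with the correct divisibility in $L$, in all the genera allowed by \Cref{thm:orbits} --- at which point you are redoing the Miranda--Morrison computation anyway.
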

\begin{proof}
We have $SO(L)V = O(L)V$ if and only if $O(L) = SO(L)$ or $\exists f \in O(L)$, $\det f=-1$ and $fV=V$.
If the rank of $L$ is odd, then $f = -\id_L$ suffices. If $\rk L$ is even, after replacing $f$ by $-f$ we can assume that $f\vert_V = \id_V$.
Such $f$ exists if and only if there is $g \in O(C)$ with $g|_{-q}=\id_{-q}$ (and then $g=f|_C$).
By \Cref{lem:SCp-sharp,lem:sigma-rq} we find
$(-1,s) \in \sigma(\{\id_{-q}\} \times O(r)) \cdot \Sigma^\#(C)$ for some $s \in \QQ$. Inspecting the proof of \cite[VIII Thm.\ 2.3]{miranda_morrison:embeddings} shows that
 we can find $g\in O(C)$ with $\sigma(g) = (-1,s)$.
\end{proof}

\subsection{The definite case} \label{appendix:fixed}
Let $L$ be an even positive definite lattice of even dimension $n$.
Its \emph{level} is defined as the smallest natural number $N$  with $N L^\vee \subseteq L$.
 Its \emph{theta series} is
 \[\Theta_L(z) = \sum_{x \in L }q^{x^2/2}= \sum_{k=0}^\infty a(k)q^k, \quad \mbox{with } q=\exp(2\pi i z).\]
 It is a modular form of weight $n/2$ with respect to the congruence subgroup $\Gamma_0(N)$ of SL$_2(\ZZ)$ and the character $\chi(d)=\leg{(-1)^{n/2}\det L}{d}$ (see e.g.\ \cite[Thm.\ 3.2]{ebeling}).
 
 The integer $a(k) = \#\{x \in L \mid x^2 = 2k\}$ counts the number of vectors of length $2k$ in $L$.
 Define $r(k) = \#\{x \in L \text{ primitive} \mid x^2 = 2k\}$
 as the number of \emph{primitive} vectors of length $2k$.
 We can compute $r$ from $a$ as follows.
 Since every vector is a multiple of a primitive vector,
 we can write
 \begin{equation}\label{eqn:a}
 a(k) = \sum_{d^2 \mid k} r(k/d^2).
 \end{equation}
 
 Consider the space $\CC^\NN$ of complex sequences equipped with point wise addition and the Dirichlet convolution $\ast$. It is a commutative ring with identity element $\epsilon=(1,0,0,\dots)$.
 Denote by $\mu$ the M\"obius function, by $\lambda$ the Liouville function, by $1=1_\NN$ the constant sequence $(1,1,1,\dots)$ and by $1_{sq}$ the indicator function of the set of square numbers.  We note that $1_{sq}=\lambda \ast 1$, $\mu \ast 1 = \epsilon$ (M\"obius inversion) and $|\mu| \ast \lambda = \epsilon$.
 In this terminology \cref{eqn:a} is $a = 1_{sq} \ast r = (\lambda \ast 1) \ast r$.
 Thus $r = |\mu| \ast \mu \ast a$. This yields
 \begin{equation}\label{eqn:primitive}
r(n)= \sum_{d^2 \mid n}\mu(d)a(n/d^2)
 \end{equation}
where we have used that $(\mu \ast |\mu|)(d)=0$ if $d$ is not a square and $(\mu \ast |\mu|)(d)=\mu(\sqrt{d})$ if $d$ is a square.

In our classification of invariant lattices of prime order isometries, there are four positive definite invariant lattices of rank $2$:
\[A_2(-1)=\left(\begin{matrix}2 & 1 \\1 & 2\end{matrix}\right),\quad
K_7=\left(\begin{matrix}2 & 1 \\1 & 4\end{matrix}\right),\quad
F_{23a}=\left(\begin{matrix}2 & 1 \\1 &12\end{matrix}\right),\quad
F_{23b}=\left(\begin{matrix}4 & 1 \\1 & 6\end{matrix}\right).\]

Set $\theta_2\defeq \sum_{m = -\infty}^{\infty} q^{(m+\tfrac{1}{2})^2}$ and
$\theta_3\defeq \sum_{m = -\infty}^{\infty} q^{m^2}$.
Following \cite[Ch.\ 4, \S 6.2]{conway_sloane} the first three lattices have theta series given by $\theta_3(z)\theta_3(ez) + \theta_2(z)\theta_2(ez)$ for $e=3,7,23$. For the last one $F_{23b}$ we have $\theta_3(z)\theta_3(ez) + \theta_2(z)\theta_2(ez) - 2\eta(z)\eta(23z)$ (see also \cite{blij}). To see this note that the difference $\Theta_{23a}-\Theta_{23b}$ is a cusp form of weight $1$ and level $23$. The space of such forms is of dimension one \cite[\href{http://www.lmfdb.org/ModularForm/GL2/Q/holomorphic/23/1/b/a/}{Modular Form 23.1.b.a}]{lmfdb}. It is spanned by $\eta(z)\eta(23z)$ where the Dedekind $\eta$-function is $\eta=q^{1/24}\prod_{n=1}^\infty(1-q^n)$.

Denote by $b_k$ the number of primitive vectors of length $2k$ in $L$ modulo the action of $G \in \{SO(L),O(L)\}$.
A primitive vector is stabilized by $g \in G$ if and only if it is an eigenvector of eigenvalue $1$ for $g$. Since $L$ is of rank $2$ this eigenspace is of rank at most $1$ if $g\neq 1$.
Thus there are only finitely many primitive $v \in L$ with nontrivial stabilizer in $G$.
This means that for $k \notin S_G \defeq \{v^2/2 \mid \ZZ v = \ker (1-g), g\in G, g\neq 1\}$ we have $b_k = r(k)/|G|$. For the readers convenience we list the first few terms in \Cref{tbl: orbit lengths}.

\begin{table}
\centering
\caption{Orbit lengths of primitive vectors.} \label{tbl: orbit lengths}
\begin{tabular}{lll}
\toprule
$L$ & $G$ & $\sum b(k) t^k$\\
\midrule
$A_2(-1)$ &  $O$ &$t + t^{3} + t^{7} + t^{13} + t^{19} + t^{21} + t^{31} + t^{37} + t^{39} + t^{43} + t^{49}\dots$\\
$A_2(-1)$ & $SO$ &$t + t^{3} + 2t^{7} + 2t^{13} + 2t^{19} + 2t^{21} + 2t^{31} + 2t^{37} + 2t^{39} + 2t^{43}  \dots$\\
$K_7$ & $SO$ &$t + 2t^{2} + 2t^{4} + t^{7} + 2t^{8} + 2t^{11} + 2t^{14} + 2t^{16} + 4t^{22} + 2t^{23} \dots$\\
$F_{23a}$ & $O$ &$t + t^{6} + t^{8} + t^{12} + t^{18} + t^{23} + t^{26} + t^{27} + t^{36} + t^{39} + t^{48}\dots$\\
$F_{23b}$ & $O$ &$t^{2} + t^{3} + t^{4} + t^{6} + t^{9} + t^{12} + t^{13} + t^{16} + t^{18} + 2t^{24} + t^{26} + t^{29}\dots$\\
\bottomrule
\end{tabular}
\end{table}

\begin{remark}
 We note that $r(k)$ and thus $b(k)$ is unbounded.
\end{remark}

For $K \leq L$ a primitive sublattice we define its divisibility $\div(K)=|\det K| / [L : K \oplus K^\perp]$.

\begin{definition}
 Let $(X,G)$ be a pair consisting of an ihs manifold $X$ of type $\hskn$ or $\kumn$ and $G \leq \Aut(X)$ a group of non-symplectic automorphisms with $\ker \rho_X \leq G$, $\rho_X(G)$ of odd prime order $p$. Fix a generator $f \in \rho_X(G)$ acting by $\zeta_p$ on the holomorphic $2$-form $\omega_X$. Let $(p,r,a)$ be the invariants of the extended action of $f$ on the unimodular lattice $M \supset H^2(X,\IZ)$ as in \Cref{subsec: k3n} and $[I_f]$ the Steinitz class of the $\ZZ[\zeta_p]$-module $(H^2(X,\ZZ)_f,f|_{H^2(X,\ZZ)_f})$

 We say that $G$ is \emph{ambiguous} if $(X,G)$ is not determined up to birational conjugation and deformation by $(p,r,a)$, $[I_f]$ and
 the divisibility of $V \leq M$.
 \end{definition}

\begin{remark}
 If $p<23$, then $[I_f]$ is trivial (\Cref{rmk: relative class numbers}).
 The invariants $(p,r,a)$ and $\div(V)$ are equivalent
 to the datum of the genera of the invariant and coinvariant lattices $H^2(X,\ZZ)^f$ and $H^2(X,\ZZ)_f$.
\end{remark}
 \begin{table} 
 \caption{Ambiguous automorphisms for $\hskn$.}\label{tbl:ambiguous-k3n}
  \begin{tabular}{lll}
   \toprule
   $(p,r,a)$ & $\div(V)$ & $n$\\
   \midrule
   $(3,2,1)$   & $-$ & $92, 134, 218, 248, 260, 274, 302, 400, 404, 428, 470, 482, \dots$\\
   $(5,4,1)$   & $1$ & $26, 101, 126, 151, 226, 276, 351, 401, 476, 501, 526, 601, \dots $\\
   $(5,4,3)$   & $5$ & $26, 101, 126, 151, 226, 276, 351, 401, 476, 501, 526, 601, \dots$\\
   $(23,2,1)$ & $-$ & $ 7, 13, 19, 25, 27, 37, 40, 49, 53, 55, 59, 63, 73, 79, 83,88 \dots$ \\
   \bottomrule
  \end{tabular}
\end{table}

 \begin{table}
 \caption{Ambiguous automorphisms for $\kumn$.}\label{tbl:ambiguous-kumn}
  \begin{tabular}{lll}
   \toprule
   $(p,r,a)$ & $\div(V)$ & $n$\\
   \midrule
   $(3,2,1)$   & $-$ & $6, 12, 18, 20, 30, 36, 38, 42, 48, 56, 60, 66, 72, 78, 90, 92,96,\dots $\\
   $(5,4,1)$   & $1$ & $24, 99, 124, 149, 224, 274, 349, 399, 474, 499, 524, 599, 624, \dots$\\
   $(7,2,1)$ & $-$ & $3, 7, 10, 13, 15, 21, 22, 27, 28, 31, 36, 42, 43, 45, 52, 55, 57, 63 \dots$\\
\bottomrule
  \end{tabular}
\end{table}

\begin{corollary}
If a group of automorphisms of an ihs manifold of type $\hskn$ or $\kumn$ is ambiguous, then its invariants are as in \Cref{tbl:ambiguous-k3n,tbl:ambiguous-kumn}.
\end{corollary}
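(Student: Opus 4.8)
The plan is to trace the ambiguity through the two reduction steps already established in the paper. Fix $(X,G)$ as in the statement, let $g\in\rho_X(G)$ be the generator acting by $\zeta_p$ on $\omega_X$, and work inside the fixed connected component of the moduli space. By \Cref{thm:non-symplectic-classification} the class of $(X,G)$ up to deformation and birational conjugation is the conjugacy class of the monodromy $h=\eta\rho_X(g)\eta^{-1}\in\Mo^2(\Lambda)$; via the isomorphism $\Mo^2(\Lambda)\cong S^{\sigma\chi_V}(M,V,v)$ of \Cref{tbl:invariants} and \Cref{thm:fiber}, this datum is equivalent to the pair consisting of (i) the $S(M)$-conjugacy class of the associated isometry $g'$ of the unimodular lattice $M\supset H^2(X,\IZ)$ (of order $p$, with $M_{g'}$ of signature $(2,\ast)$ and $V\subseteq M^{g'}$), together with (ii) the $S(M^{g'})$-orbit of a primitive sublattice $W\cong V$ of $M^{g'}$. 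Since for $\hskn$ and $\kumn$ the lattice $V=\langle v^2\rangle$ has rank one, (ii) is the same as the orbit of a primitive vector of square $v^2=2n-2$ (resp.\ $2n+2$) and divisibility $\div(V)$ in $M^{g'}$, which has signature $(2,r-2)$ and discriminant $p^a$. Thus $G$ is ambiguous exactly when (i) or (ii) fails to be pinned down by $(p,r,a)$, $[I_f]$, and $\div(V)$, and it suffices to locate these failures.

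For (i): as $G$ is non-symplectic of order $p$, the $2$-dimensional positive-definite part of $M_{g'}$ is spanned by $\mathrm{Re}\,\omega_X,\mathrm{Im}\,\omega_X$, which lie in a single eigenspace of $g'$; hence the signatures $(k_i^+,k_i^-)$ of $g'$ are forced ($k_i^+=2$ for one index and $0$ for the rest) and depend only on $(p,r,a)$. The lattice $M_{g'}$ is indefinite unless $\rk M_{g'}=2=p-1$, i.e.\ $p=3$, in which case the ``rank $p-1$'' clause applies; either way \Cref{thm:unimodular_conjugacy} shows the $O(M)$-conjugacy class of $g'$ is determined by the isometry classes of $M^{g'}$ and of $\det(M_{g'},h_{g'})$, and by \Cref{lem:SO} the same holds for $S(M)$-conjugacy in the $\kumn$ case $S(M)=SO(M)$. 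By \Cref{rmk: relative class numbers} the relative class number is odd in all relevant cases ($1$ for $p\le19$, $3$ for $p=23$), so $\det(M_{g'},h_{g'})$ is determined by $[I_f]$; and by \Cref{lem:uniqueness_L^G} the invariant lattice $M^{g'}$ is unique in its genus --- hence a function of $(p,r,a)$ --- except for type $\hskn$ with $(p,r,a)=(23,2,1)$, where $M^{g'}\in\{F_{23a},F_{23b}\}$. So (i) can introduce ambiguity only in this last case, and then exactly for those $n$ for which the total number of admissible vector orbits over the two lattices $F_{23a},F_{23b}$ --- read off from their generating series in \Cref{tbl: orbit lengths} --- is at least $2$; this produces the row $(23,2,1)$ of \Cref{tbl:ambiguous-k3n}.

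For (ii), fix $g'$ and count $S(M^{g'})$-orbits of primitive vectors of square $k:=v^2$ and divisibility $\div(V)$ in $M^{g'}$. When $r\ge4$ the lattice $M^{g'}$ is indefinite of rank $\ge3$; by the lemma that $SO(L)V=O(L)V$ for a rank-one primitive sublattice $V$ of a lattice $L$ as in \Cref{thm:orbits}, this equals the number of $O(M^{g'})$-orbits, which by \Cref{thm:orbits} is $0$ or $1$ unless $\rk M^{g'}=4$, $\nu_p(k)\ge2$, $p^a/\div(V)^2=p$, $\leg{-2k'}{p}=1$ (writing $k=k'p^{\nu_p(k)}$), and either $p\equiv1\pmod4$ or $p\equiv3\pmod4$ with some $\ell\in\mathcal{L}_1$ satisfying $\leg{\ell}{p}=-1$, in which case it is $2$. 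Combining the rank identity $(a+2m)(p-1)=\rk M-r$ with the parity constraint on $a$ from \Cref{prop:clas-fixed-pt-free} shows that, among $\hskn$ and $\kumn$, the configuration $\rk M^{g'}=4$ with $p^a/\div(V)^2=p$ occurs only for $(p,r,a,\div(V))=(5,4,1,1)$ and $(5,4,3,5)$ in type $\hskn$ and $(5,4,1,1)$ in type $\kumn$; substituting $k=2n-2$ (resp.\ $2n+2$) into the numerical criterion yields precisely the $n$-columns of those rows. When $r=2$ the lattice $M^{g'}$ is positive definite of rank $2$ --- it is $A_2(-1)$ for $p=3$, $K_7$ for $p=7$ in type $\kumn$, and $F_{23a}$ or $F_{23b}$ for $p=23$ in type $\hskn$ --- and \Cref{thm:orbits} does not apply; instead the number of $S(M^{g'})$-orbits of primitive vectors of square $k$ is the relevant coefficient of the generating series in \Cref{tbl: orbit lengths}, where $S(M^{g'})=O(M^{g'})$ for $\hskn$ and $SO(M^{g'})$ for $\kumn$. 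Requiring this number (summed over $F_{23a},F_{23b}$ when $p=23$) to be $\ge2$ produces the rows $(3,2,1)$ and $(7,2,1)$ and completes $(23,2,1)$.

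Finally one checks that no other triple $(p,r,a)$ can occur: the bound $(a+2m)(p-1)=\rk M-r\le\rk M-2$ forces $p\le23$ for $\hskn$ and $p\le7$ for $\kumn$, and for each of the finitely many remaining triples either $M^{g'}$ is indefinite of rank $\ge3$ with $\rk M^{g'}\ne4$, or $\rk M^{g'}=4$ but $p^a/\div(V)^2\ne p$, or $\rk M^{g'}=2$ with $p\in\{3,7,23\}$; in every such case (i) and (ii) are both determined, so $G$ is not ambiguous. Assembling the three contributions gives \Cref{tbl:ambiguous-k3n,tbl:ambiguous-kumn}. The main difficulty is not any single step but the combinatorial bookkeeping in the second half: one must verify that the list of exceptional triples is exhaustive and, crucially, handle the interaction between the genus-nonuniqueness of $M^{g'}$ (the $F_{23a}/F_{23b}$ phenomenon) and the multiplicity of vector orbits simultaneously rather than separately, while keeping track of which orthogonal group ($O$ or $SO$) acts in the definite rank-two case that lies outside the scope of \Cref{thm:orbits}.
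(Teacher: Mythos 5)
Your proposal is correct and follows essentially the same route the paper intends for this corollary: reduce via \Cref{thm:non-symplectic-classification} and \Cref{thm:fiber} to (i) the $S(M)$-conjugacy class of the extended isometry, controlled by \Cref{thm:unimodular_conjugacy}, \Cref{rmk: relative class numbers} and \Cref{lem:uniqueness_L^G}, and (ii) the count of $S(M^{g'})$-orbits of primitive vectors of square $2n\mp2$, handled by \Cref{thm:orbits} in the indefinite case and by the theta-series coefficients of \Cref{tbl: orbit lengths} in the definite rank-two case, with the arithmetic check that the exceptional configurations of \Cref{thm:orbits} occur exactly for $(5,4,1)$ and $(5,4,3)$. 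The only imprecision is your appeal to \Cref{lem:SO} alone to pass from $O(M)$- to $SO(M)$-conjugacy for $\kumn$: that lemma only controls the determinant on the coinvariant part, and one also needs a determinant $-1$ element of the centralizer restricting to $M^{g'}$, which is supplied by \Cref{prop: surjectivity} together with the lemma $SO(L)V=O(L)V$ that you invoke elsewhere.
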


\begin{remark}
Differently from the case of K3 surfaces, for ihs manifolds of higher dimension it is possible to have non-symplectic automorphisms with birationally conjugate deformations, but whose geometrical fixed loci are topologically different. An example for manifolds of type K3$^{[2]}$ is discussed in \cite[Rmk.\ 7.7]{bcs} and \cite[§7.2]{bcs:ball}. However, for a manifold $X$ of type K3$^{[2]}$ with an automorphism $g$ of odd prime order $p \leq 19$ the numerical invariants of the coinvariant lattice determine the dimension of the mod $p$ cohomology $h^*(X^g, \mathbb{F}_p)$ and the Euler characteristic $\chi(X^g)$, and vice-versa (see \cite[Thm.\ 2]{smith} and \cite[\S 3.2, Appendix A]{bcs}).
\end{remark}

\section{Embeddings of \texorpdfstring{$A_2(-1)$}{A2(-1)} and \texorpdfstring{OG$_{10}$}{OG10}}\label{sec: OG10}
In this section we compute the number of orbits of primitive sublattices isomorphic to $A_2(-1)$ into the invariant lattices of prime order isometries of $\even_{(5,21)}$.

\begin{proposition}
 Let $L \in \even_{(3,l_-)}p^{\epsilon n}$ with $l_- > 0$, $l_- \equiv 1 \pmod 2$.
 Then $A_2(-1)$ embeds primitively into $L$ with divisibility $1$ if and only if one of the following holds
 \begin{enumerate}
  \item[(I-1)] $p\neq 3$, $\rk L - n > 2$;
  \item[(II-1)] $p\neq 3$, $\rk L - n = 2$ and $\epsilon = \leg{-3}{p}$;
  \item[(III-1)] $p =   3$, $\rk L - n > 3$;
  \item[(IV-1)] $p =   3$, $\rk L - n = 3$ and $\epsilon = -1$.
 \end{enumerate}
The lattice $A_2(-1)$ embeds primitively into $L$ with divisibility $3$ if and only if $p=3$ and one of the following holds
 \begin{enumerate}
  \item[(I-3)] $n=1$ and $\epsilon=-1$;
  \item[(II-3)] $n>1$, $\rk L - n > 1$;
  \item[(III-3)] $n>1$, $\rk L - n = 1$ and $\epsilon = 1$.
 \end{enumerate}
 The embedding of $A_2(-1)$ is determined up to the action of $O(L)$ by its divisibility.
\end{proposition}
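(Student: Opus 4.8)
The plan is to reduce the question to the general existence and uniqueness machinery already developed, following the same template as the proof of \Cref{thm: existence for p-elem}. Set $R = A_2 \oplus U$ (note $A_2(-1)(-1) = A_2$), so that a primitive embedding $A_2(-1) \hookrightarrow L$ corresponds, via \cite[Prop.\ 1.15.1]{nikulin}, to a primitive extension $A_2(-1) \oplus R \subseteq \overline{A_2(-1) \oplus R}$ together with a primitive embedding of the overlattice into the unimodular lattice $\overline{L \oplus R}$. Since $A_2(-1)$ has rank two and discriminant form $q_{A_2(-1)} \cong w_{3,1}^{-1}$ (a single $\ZZ/3\ZZ$-block), the index of the primitive extension $[\overline{A_2(-1)\oplus R} : A_2(-1)\oplus R]$ is either $1$ or $3$, and it is $3$ precisely when the embedding has divisibility $3$ inside $L$ — and this can only happen when $p = 3$, since $L$ is $p$-elementary and $\det A_2(-1) = 3$ forces a glue at the prime $3$. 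This immediately explains the dichotomy in the statement.

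First I would handle the divisibility $1$ case. Here $\overline{A_2(-1)\oplus R} = A_2(-1) \oplus R$, the orthogonal complement $C := A_2(-1)^\perp \subseteq L$ has discriminant form $q_C \cong -q_{A_2(-1)} \oplus q_L = w_{3,1}^{1} \oplus q_L$ (an analogue of case (1) of \Cref{lem:qc-decomp}), so $l(A_C)$ equals $n$ if $p \neq 3$ and $n+1$ if $p = 3$. Then I invoke Nikulin's existence criterion \cite[Thm.\ 1.12.2]{nikulin}: the embedding exists iff $l(q_C) < \rk L - 2$, or $l(q_C) = \rk L - 2$ together with the congruence/Legendre-symbol condition on $\det C$. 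Plugging $l(q_C) = n$ (resp.\ $n+1$) gives conditions (I-1), (III-1) and the boundary cases (II-1), (IV-1). For (II-1) one computes $\chi_p(\det C) = \leg{3}{p}$ against $\epsilon$, using $\leg{-1}{p}^{l_-}\epsilon$ and the fact that $\det A_2(-1) = 3$; being careful with the sign $\leg{-1}{p}$ and $l_- \equiv 1 \pmod 2$ yields $\epsilon = \leg{-3}{p}$. For (IV-1) with $p = 3$ one uses \Cref{thm:p-elementary} exactly as in the proof of \Cref{thm: existence for p-elem}: the boundary congruence $l_+ - l_- \equiv 2\epsilon - 2 - 2(n+1) \pmod 8$ combined with $l_+ = 3$, $l_- \equiv 1 \pmod 2$ forces $\epsilon = -1$.

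Next, the divisibility $3$ case, which forces $p = 3$. Here the glue identifies a copy of $\ZZ/3\ZZ$ inside $q_{A_2(-1)}$ with one inside $q_L(-1)$, and by \Cref{lem:glue-unique}(1) (with $a = 1$) the orthogonal complement satisfies $q_C \cong -q_{A_2(-1)} \oplus r$ with $q_L = q_{A_2(-1)} \oplus r$, so that $\det L = 3^n$ gives $|r| = 3^{n-1}$ and $l(A_C) = n-1$. Applying Nikulin's criterion again with $l(q_C) = n-1$ gives (I-3), (II-3), (III-3); the boundary case $n-1 = \rk L - 2$, i.e.\ $n = \rk L - 1$ (so $C$ has rank $1$), is only possible when $\epsilon = \leg{-1}{3}^{l_-} = \leg{-1}{3} = -1$ by \Cref{thm:p-elementary} — but one must check that this indeed corresponds to case (I-3) (where $n=1$, $\rk L = 2$), and separately that for $n > 1$ the relevant Legendre condition reduces to $\epsilon = 1$ in (III-3). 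The comparison here requires writing out $q_L = q_{A_2(-1)} \oplus r$: the residual form $r$ must itself arise as a $3$-elementary discriminant form, and $\chi_3(\det r)$ versus $\chi_3(\det q_L)$ differ by $\chi_3(\det q_{A_2(-1)}) = \chi_3(-1) = \leg{-1}{3}$. Tracking this sign carefully is where I expect the bookkeeping to be most delicate.

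For the uniqueness statement I would argue that in every case the orthogonal complement $C$ is either indefinite of rank $\geq 3$ or is small enough (rank $1$ or $2$, or the indefinite signature $(1,\ast)$ with $\rk \geq 3$) that $C$ is unique in its genus — using \cite[Ch.\ 15, Thm.\ 14]{conway_sloane} in the indefinite rank $\geq 3$ case, and direct inspection otherwise (e.g.\ via \cite[Ch.\ 15, Tables]{conway_sloane} as in \Cref{lem:uniqueness_L^G}, noting here $l_+ = 3$ so $C$ has signature $(1, l_- - 1)$ or $(3, \ast)$, always indefinite of rank $\geq 2$) — and that the primitive extension is unique up to $O(R)$ by \Cref{lem:glue-unique}. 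Combined with the surjectivity of $O(C) \to O(q_C)$ and $O(R) \to O(q_R)$ in these low-corank situations — here $l(A_C) \leq l(A_L) + 1$ is small and $C$ indefinite, so the relevant orthogonal groups of discriminant forms are hit, cf.\ \Cref{thm:orbits} and the spinor-norm analysis of \Cref{subs: spinor} which for $\rk L = 5 > 4$ gives a single orbit — this shows the double coset of \cite[Prop.\ 1.15.1]{nikulin} governing embeddings collapses to a point, so the embedding is determined by its divisibility. The main obstacle throughout is not conceptual but the careful matching of the Legendre-symbol boundary conditions with the $p$-elementary genus constraints from \Cref{thm:p-elementary}, particularly disentangling the two subcases $n = 1$ and $n > 1$ in the divisibility $3$ case where the corank-one complement behaves differently from the rest.
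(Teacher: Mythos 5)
Your overall strategy coincides with the paper's: reduce to Nikulin's criterion \cite[Thm.\ 1.12.2, Prop.\ 1.15.1]{nikulin} via \Cref{lem:qc-decomp} and \Cref{lem:glue-unique}, and get uniqueness from the Miranda--Morrison analysis. The divisibility~$1$ half is essentially correct as you describe it (with $q_C \cong w_{3,1}^{1}\oplus q_L$ and $l(q_C)=n$ resp.\ $n+1$), and your route to (II-1) and (IV-1) lands on the right conditions. The uniqueness discussion is looser than the paper's --- the paper simply observes that the analogue of \condAst{} never occurs because $b=\nu_3(|q_V|)\leq 1$, and explicitly sets aside $l_-=1$, where $\rk C=2$ and the rank~$\geq 3$ machinery of \Cref{subs: spinor} does not apply --- but it gestures at the right ingredients.

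The genuine gap is in the divisibility~$3$ case. First, the discriminant form of the complement is $q_C\cong r$, where $q_L\cong w_{3,1}^{-1}\oplus r$: the whole group $A_{A_2(-1)}\cong\ZZ/3\ZZ$ is consumed by the glue, so no $-q_{A_2(-1)}$ summand survives. Your formula $q_C\cong -q_{A_2(-1)}\oplus r$ would give $l(A_C)=n$, not the $n-1$ you subsequently (correctly) use, and if carried through consistently it would shift the boundary case to $\rk L-n=2$ and produce the wrong conditions. Second, condition (I-3) does not come from the corank boundary of Nikulin's criterion at all: it is the solvability condition for the splitting $q_L\cong w_{3,1}^{-1}\oplus r$, which for $n=1$ forces $q_L=w_{3,1}^{-1}$, i.e.\ $\epsilon=-1$ (for $n>1$ the splitting always exists because $w_{3,1}^{1}\oplus w_{3,1}^{1}\cong w_{3,1}^{-1}\oplus w_{3,1}^{-1}$). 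Third, in the actual boundary case $n-1=\rk C$, i.e.\ $\rk L-n=1$, you conclude $\epsilon=\leg{-1}{3}^{l_-}=-1$; but that computation determines $\chi_3(q_C)=\chi_3(r)=-\epsilon$ (the sign flip coming from $\chi_3(w_{3,1}^{-1})=-1$ in $q_L\cong w_{3,1}^{-1}\oplus r$), so the condition is $-\epsilon=-1$, i.e.\ $\epsilon=+1$ --- which is (III-3), not (I-3), and $C$ there has rank $\rk L-2\geq 2$, not rank one. As written, your derivation of the divisibility~$3$ conditions therefore assigns the wrong sign to the boundary case and leaves (I-3) without a correct justification; both points need to be repaired before the proof is complete.
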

\begin{proof}
 Recall the notation used in \Cref{subs: spinor}. We use \Cref{lem:qc-decomp} with $q_{\langle k \rangle}$ replaced by $q_V$.
 Note that $A_2(-1)$ embeds into $L$ if and only if
 $l(q_C)< \rk L -2$ or $l(q_C)=\rk L-2$ and
 \[\chi_p((-1)^{l_-} 3 p^n) =\chi_p(-3)= \chi_p(q_C).\]
 
 If $p = 3$, this yields $-1=\chi_3(q_C)$.
  Suppose that the divisibility is $1$. Then
  $q_C=q_{A_2} \oplus q_L=w_{3,1}^1 \oplus q_L$.
 If $p \neq 3$, then $l(q_C)=n$ and $\chi_p(q_C)=\chi_p(q_L)=\epsilon$ giving (I-1) and (II-1).
 If $p = 3$, then $l(q_C)=n+1$ and $\chi_3(q_C)=\epsilon$ giving (III-1) and (IV-1).
 Let $\div(V)=3$, then $p=3$ and we can write $q_C= r$ where $q_L = w_{3,1}^{-1} \oplus r$ if and only if $n>1$ or $n=1$ and $q_L=w_{3,1}^{-1}$, i.e.\ $\epsilon=-1$.
 Then $l(q_C)=n -1$ and $\chi_3(q_C)=-\epsilon$ giving (I-3), (II-3), (III-3).

 The analogue of $\condAst$ in this situation never holds since $b \leq 1$. So we have uniqueness of the embedding for $l_- >1$. If $l_-=1$, then $\rk C=2$ so the previous reasoning does not apply. However there are just three cases to consider. We leave them to the reader.
\end{proof}

\begin{corollary}
 Let $X$ be an ihs manifold of type OG$_{10}$ and $G \leq \aut(X)$ a group of odd prime order $p$ of non-symplectic automorphisms. Let $f \in \rho_X(G)$ be a generator acting by $\zeta_p$ on the holomorphic $2$-form $\omega_X$. Then $(X, G)$ is determined up to deformation and birational conjugation by the isomorphism classes of $H^2(X,\ZZ)^f$, $H^2(X,\ZZ)_f$ and additionally, if $p=23$, by the Steinitz class $[I_f]$ of the $\ZZ[\zeta_p]$-module $(H^2(X,\ZZ)_f,f|_{H^2(X,\ZZ)_f})$.
\end{corollary}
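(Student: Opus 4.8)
The plan is to run the reduction set up in the preceding sections and then feed in the existence/uniqueness results for prime order isometries and for embeddings of $A_2(-1)$. By \Cref{thm:non-symplectic-classification}, $(X,G)$ is determined up to deformation and birational conjugation by the conjugacy class in $\Mo^2_\circ(\Lambda)=O^+(\Lambda)$ of the monodromy $m$ corresponding to $f$, i.e.\ $m=\eta f\eta^{-1}$ for a marking $\eta$ placing $X$ in the fixed component; via $\eta$, the isometry classes of $\Lambda^m$ and $\Lambda_m$ are those of $H^2(X,\ZZ)^f$ and $H^2(X,\ZZ)_f$, the Steinitz class of the $\ZZ[\zeta_p]$-module $\Lambda_m$ is $[I_f]$, and $p=|G|$ since $\ker\rho_X=1$. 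For type OG$_{10}$ one has $S(M)=O(M)$. I would set $g=\gamma(m)\in G=O^{\sigma\chi_V}(M,V,v)\subseteq O(M)$ with $V\cong A_2(-1)$; as $m$ has odd order, $g$ fixes $V$ pointwise, $g|_\Lambda=m$, and $M_g$ has signature $(2,\ast)$. Then \Cref{thm:fiber}, applied to this $g$, puts the conjugacy classes of monodromies lying over $\prescript{O(M)}{}{g}$ in bijection with the $O(M^g)$-orbits of primitive sublattices $W\subseteq M^g$ with $W\cong A_2(-1)$, the class of $m$ going to the orbit of $V$. So it remains to show that both $\prescript{O(M)}{}{g}$ and the orbit $O(M^g)V$ are determined by $[\Lambda^m]$, $[\Lambda_m]$ and, when $p=23$, by $[I_f]$.

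For the conjugacy class $\prescript{O(M)}{}{g}$ I would invoke \Cref{thm:unimodular_conjugacy}, so I need the isometry class of $M^g$, the signatures of $g$, and the determinant lattice $\det(M_g,h_g)$. Here the key identifications are: since $\Lambda$ is primitive in $M$ and $V\subseteq M^g$, one has $M_g=(M^g)^\perp\subseteq V^\perp=\Lambda$, and $(\Lambda_m)_\QQ\cap M=(\Lambda_m)_\QQ\cap\Lambda=\Lambda_m$, whence $M_g=\Lambda_m$ as $\ZZ[\zeta_p]$-modules; in particular the Steinitz class of $M_g$ is $[I_f]$, and $q_{M^g}\cong-q_{M_g}\cong-q_{\Lambda_m}$ as $M$ is unimodular. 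The lattice $M^g$ is even $p$-elementary of signature $(3,23-\rk\Lambda_m)$ with $\rk\Lambda_m$ even and $\le22$, hence indefinite of rank $\ge3$, so by \cite[Ch.\ 15, Thm.\ 14]{conway_sloane} it is unique in its genus; its genus is pinned down by $q_{M^g}\cong-q_{\Lambda_m}$ and its signature, hence $[M^g]$ is determined by $[\Lambda_m]$. For the signatures $(k_i^+,k_i^-)$ of $g$ (which equal those of $m$, since $V$ contributes only to the eigenvalue $1$): as $G$ is non-symplectic, $\Lambda_m$ has signature $(2,\ast)$, so $\sum_ik_i^+=2$, each $k_i^\pm$ is even, and $k_i^++k_i^-=\rk\Lambda_m/(p-1)$ is constant; since $f$ acts on $\omega_X$ by $\zeta_p$, the positive $2$-plane $\langle\omega_X,\overline\omega_X\rangle$ lies in the eigenspace $K_1$ of \cref{eqn:sign}, forcing $k_1^+=2$ and $k_i^+=0$ for $i\ge2$, so the signatures are determined by $\rk\Lambda_m$. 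Finally $\det(M_g,h_g)$: for $p\le19$ the relative class number is $1$ (\Cref{rmk: relative class numbers}), so the genus of the hermitian lattice $(M_g,h_g)$ --- determined by $[\Lambda_m]$ and the signatures --- has a single class and the determinant lattice is forced; for $p=23$ the relative class number $3$ is odd, so by \Cref{lem:odd-relative-class} and \Cref{prop:genus2} the determinant lattice is determined by that genus together with the Steinitz class $[I_f]$. Hence $\prescript{O(M)}{}{g}$ depends only on $[\Lambda_m]$ and, for $p=23$, on $[I_f]$.

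For the orbit $O(M^g)V$ I would apply the Proposition preceding this Corollary with $L=M^g\in\even_{(3,l_-)}p^{\epsilon n}$, $l_-=23-\rk\Lambda_m$ odd and positive: the $O(L)$-orbit of a primitive copy of $A_2(-1)$ in $L$ is determined by its divisibility. Since $g$ fixes $V$ pointwise, the orthogonal complement of $V$ in $M^g$ is $(V^\perp\cap M)^{g}\cong\Lambda^{m}$, and from $\div_L(V)^2=|\det V|\cdot|\det L|/|\det(V^\perp\cap L)|$ with $|\det V|=3$, $|\det L|=|\det M_g|=|\det\Lambda_m|$ and $|\det(V^\perp\cap L)|=|\det\Lambda^m|$ we get $\div_L(V)^2=3\,|\det\Lambda_m|/|\det\Lambda^m|$, which is determined by $[\Lambda^m]$ and $[\Lambda_m]$. (Concretely, for a second monodromy $m'$ with the same invariants and $\phi\in O(M)$ with $\phi\,\gamma(m')\,\phi^{-1}=g$, the complements of $V$ and $\phi(V)$ in $M^g$ are isometric to $\Lambda^{m}$ and $\Lambda^{m'}$, hence to each other, so $V$ and $\phi(V)$ have the same divisibility in $M^g$ and lie in one $O(M^g)$-orbit.) Combining the two steps, the data $[H^2(X,\ZZ)^f]$, $[H^2(X,\ZZ)_f]$ and (for $p=23$) $[I_f]$ determine the pair $\bigl(\prescript{O(M)}{}{g},\,O(M^g)V\bigr)$, hence the conjugacy class of $m$, hence $(X,G)$ up to deformation and birational conjugation.

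I expect the main obstacle to be the lattice-theoretic bookkeeping behind the identifications $M_g=\Lambda_m$, $q_{M^g}\cong-q_{\Lambda_m}$ and ``the complement of $V$ in $M^g$ is $\cong\Lambda^m$'', together with the resulting point that the divisibility of $V$ in $M^g$ is governed solely by $|\det\Lambda^m|$ and $|\det\Lambda_m|$: it is exactly here that the invariant lattice must be kept as a separate datum, since for $p=3$ the quantity $|\det\Lambda^m|$ is not recoverable from $\Lambda_m$ alone. A second, more subtle point is the signature computation, which genuinely needs the automorphisms to be non-symplectic so that the positive part $s_+=2$ of $\Lambda_m$ is concentrated in the one eigenspace carrying $\omega_X$.
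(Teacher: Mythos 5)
Your proposal is correct and follows exactly the route the paper intends for this corollary (which it leaves essentially implicit): reduce via \Cref{thm:non-symplectic-classification} and \Cref{thm:fiber} to the conjugacy class of $g\in O(M)$ together with the $O(M^g)$-orbit of $V\cong A_2(-1)$, control the former by \Cref{thm:unimodular_conjugacy} (using \Cref{rmk: relative class numbers} and \Cref{lem:odd-relative-class} for the $p=23$ Steinitz datum, and uniqueness of $M^g$ in its genus as in \Cref{lem:uniqueness_L^G}), and the latter by the preceding Proposition via the divisibility formula $\div(V)^2=3\,|\det\Lambda_m|/|\det\Lambda^m|$. The identifications $M_g=\Lambda_m$, $V^\perp\cap M^g=\Lambda^m$ and the signature normalization $k_1^+=2$ that you spell out are exactly the bookkeeping the paper suppresses, and they are all verified correctly.
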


\Cref{fig:OG10-fix-3,fig:OG10-fix-5,fig:OG10-fix-rest} provide the triples $(p, r, a)$ for isometries $f$ of $M \in \even_{(5,21)}$ of odd prime order $p$, with $\rk M^f = r$, $\det M^f = p^a$ and $M_f$ of signature $(2, \ast)$. We highlight the cases where $M^f$ contains a primitive copy of $V = A_2(-1)$, and its divisibility.

\begin{figure}
\includegraphics[scale=0.6]{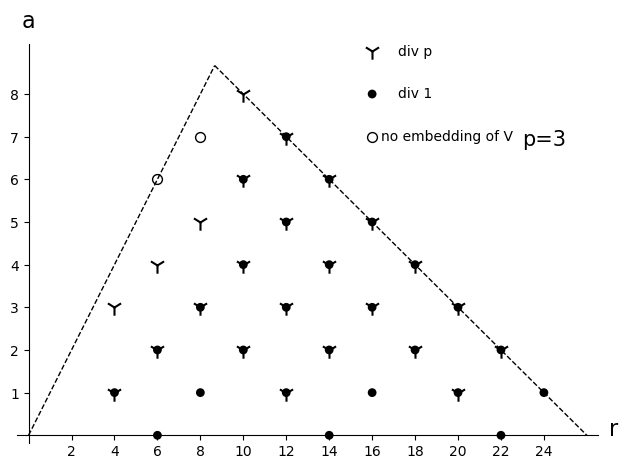}
\caption{Isometries of $\even_{(5,21)}$ of order $3$ and OG$_{10}$.} \label{fig:OG10-fix-3}
\end{figure}

\begin{figure}
\includegraphics[scale=0.6]{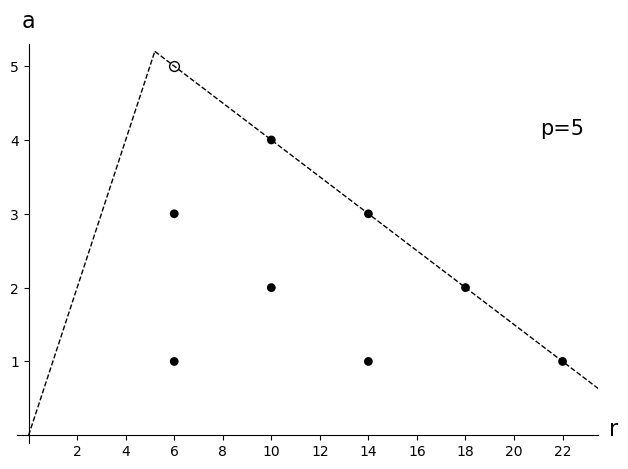}
\caption{Isometries of $\even_{(5,21)}$ of order $5$ and OG$_{10}$.}
\label{fig:OG10-fix-5}
\end{figure}

\begin{figure}
\includegraphics[scale=0.6]{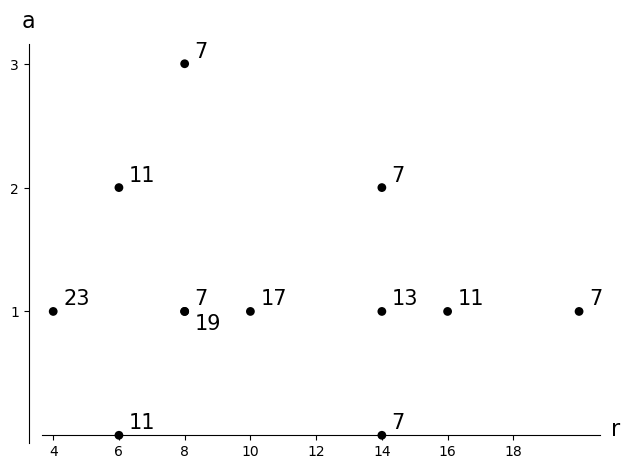}
\caption{Isometries of $\even_{(5,21)}$ of order at least $7$ and OG$_{10}$.}
\label{fig:OG10-fix-rest}
\end{figure}
\bibliographystyle{alpha}
\bibliography{literature}

\begin{thebibliography}{{LMF}20}

\bibitem[AS08]{autom_k3_ord3}
M.~Artebani and A.~Sarti.
\newblock Non-symplectic automorphisms of order 3 on {$K3$} surfaces.
\newblock {\em Math. Ann.}, 342(4):903--921, 2008.

\bibitem[AST11]{autom_k3}
M.~Artebani, A.~Sarti, and S.~Taki.
\newblock {$K3$} surfaces with non-symplectic automorphisms of prime order.
\newblock {\em Math. Z.}, 268(1-2):507--533, 2011.
\newblock With an appendix by Shigeyuki Kond\={o}.

\bibitem[AV15]{amerik_verbitsky_mbm}
E.~Amerik and M.~Verbitsky.
\newblock Rational curves on hyperk\"{a}hler manifolds.
\newblock {\em Int. Math. Res. Not. IMRN}, (23):13009--13045, 2015.

\bibitem[Bay82]{fluckiger:unimodular}
E.~Bayer.
\newblock Unimodular {H}ermitian and skew-{H}ermitian forms.
\newblock {\em J. Algebra}, 74(2):341--373, 1982.

\bibitem[BCMS16]{bcms_p=23}
S.~Boissi\`ere, C.~Camere, G.~Mongardi, and A.~Sarti.
\newblock Isometries of ideal lattices and hyperk\"ahler manifolds.
\newblock {\em Int. Math. Res. Not. IMRN}, (4):963--977, 2016.

\bibitem[BCS16]{bcs}
S.~Boissi\`ere, C.~Camere, and A.~Sarti.
\newblock Classification of automorphisms on a deformation family of
  hyper-{K}\"ahler four-folds by {$p$}-elementary lattices.
\newblock {\em Kyoto J. Math.}, 56(3):465--499, 2016.

\bibitem[BCS19]{bcs:ball}
S.~Boissi\`ere, C.~Camere, and A.~Sarti.
\newblock Complex ball quotients from manifolds of {$K3^{[n]}$}-type.
\newblock {\em J. Pure Appl. Algebra}, 223(3):1123--1138, 2019.

\bibitem[Bea83a]{beauville_rmks}
A.~Beauville.
\newblock Some remarks on {K}\"ahler manifolds with {$c_{1}=0$}.
\newblock In {\em Classification of algebraic and analytic manifolds ({K}atata,
  1982)}, volume~39 of {\em Progr. Math.}, pages 1--26. Birkh\"auser Boston,
  Boston, MA, 1983.

\bibitem[Bea83b]{beauville}
A.~Beauville.
\newblock Vari\'et\'es {K}\"ahleriennes dont la premi\`ere classe de {C}hern
  est nulle.
\newblock {\em J. Differential Geom.}, 18(4):755--782, 1983.

\bibitem[BF20]{bayer-fluckiger:unimodular-charpoly}
E.~Bayer-Fluckiger.
\newblock Isometries of lattices and {H}asse principles.
\newblock {\em arXiv:2001.07094}, 2020.

\bibitem[BNWS11]{bnws_kum}
S.~Boissi\`ere, M.~Nieper-Wi{\ss}kirchen, and A.~Sarti.
\newblock Higher dimensional {E}nriques varieties and automorphisms of
  generalized {K}ummer varieties.
\newblock {\em J. Math. Pures Appl. (9)}, 95(5):553--563, 2011.

\bibitem[BNWS13]{smith}
S.~Boissi{\`e}re, M.~Nieper-Wisskirchen, and A.~Sarti.
\newblock Smith theory and irreducible holomorphic symplectic manifolds.
\newblock {\em J. Topol.}, 6(2):361--390, 2013.

\bibitem[Cam16]{camere:lattice-polarized-ihs}
C.~Camere.
\newblock Lattice polarized irreducible holomorphic symplectic manifolds.
\newblock {\em Ann. Inst. Fourier (Grenoble)}, 66(2):687--709, 2016.

\bibitem[CC19]{CC}
C.~Camere and Al. Cattaneo.
\newblock Non-symplectic automorphisms of odd prime order on manifolds of
  {$K3^{[n]}$}-type.
\newblock {\em Manuscripta Math. doi:10.1007/s00229-019-01163-4}, 2019.

\bibitem[CCC20]{CCC}
C.~Camere, {Al.} Cattaneo, and {An.} Cattaneo.
\newblock Non-symplectic involutions of manifolds of {$K3^{[n]}$}-type.
\newblock {\em Nagoya Math. J. doi:10.1017/nmj.2019.43}, 2020.

\bibitem[CKKM19]{ckkm_twisted}
C.~Camere, G.~Kapustka, M.~Kapustka, and G.~Mongardi.
\newblock Verra four-folds, twisted sheaves, and the last involution.
\newblock {\em Int. Math. Res. Not. IMRN}, (21):6661--6710, 2019.

\bibitem[CS99]{conway_sloane}
J.~H. Conway and N.~J.~A. Sloane.
\newblock {\em Sphere packings, lattices and groups}, volume 290 of {\em
  Grundlehren der Mathematischen Wissenschaften}.
\newblock Springer-Verlag, New York, third edition, 1999.
\newblock With additional contributions by E. Bannai, R. E. Borcherds, J.
  Leech, S. P. Norton, A. M. Odlyzko, R. A. Parker, L. Queen and B. B. Venkov.

\bibitem[DK07]{dolgachev-kondo}
I.~V. Dolgachev and S.~Kond\={o}.
\newblock Moduli of {$K3$} surfaces and complex ball quotients.
\newblock In {\em Arithmetic and geometry around hypergeometric functions},
  volume 260 of {\em Progr. Math.}, pages 43--100. Birkh\"{a}user, Basel, 2007.

\bibitem[Ebe13]{ebeling}
W.~Ebeling.
\newblock {\em Lattices and codes}.
\newblock Advanced Lectures in Mathematics. Springer Spektrum, Wiesbaden, third
  edition, 2013.
\newblock A course partially based on lectures by Friedrich Hirzebruch.

\bibitem[Fre82]{freedman}
M.~H. Freedman.
\newblock The topology of four-dimensional manifolds.
\newblock {\em J. Differential Geometry}, 17(3):357--453, 1982.

\bibitem[Fuj87]{fujiki_relation}
A.~Fujiki.
\newblock On the de {R}ham cohomology group of a compact {K}\"ahler symplectic
  manifold.
\newblock In {\em Algebraic geometry, {S}endai, 1985}, volume~10 of {\em Adv.
  Stud. Pure Math.}, pages 105--165. North-Holland, Amsterdam, 1987.

\bibitem[Gro20]{grossi:nonsymplectic}
A.~Grossi.
\newblock Nonsymplectic automorphisms of irreducible symplectic manifolds of
  {OG}$_6$ type.
\newblock {\em arXiv:2004.12694}, 2020.

\bibitem[GS09]{garbagnati_sarti}
A.~Garbagnati and A.~Sarti.
\newblock Elliptic fibrations and symplectic automorphisms on {$K3$} surfaces.
\newblock {\em Comm. Algebra}, 37(10):3601--3631, 2009.

\bibitem[Hor73]{horikawa:I}
E.~Horikawa.
\newblock On deformations of holomorphic maps. {I}.
\newblock {\em J. Math. Soc. Japan}, 25:372--396, 1973.

\bibitem[Hor76]{horikawa:III}
E.~Horikawa.
\newblock On deformations of holomorphic maps. {III}.
\newblock {\em Math. Ann.}, 222(3):275--282, 1976.

\bibitem[Huy99]{huybr_basic}
D.~Huybrechts.
\newblock Compact hyper-{K}\"ahler manifolds: basic results.
\newblock {\em Invent. Math.}, 135(1):63--113, 1999.

\bibitem[Jac62]{jacobowitz}
R.~Jacobowitz.
\newblock Hermitian forms over local fields.
\newblock {\em Amer. J. Math.}, 84:441--465, 1962.

\bibitem[Jou16]{joumaa:order2}
M.~Joumaah.
\newblock Non-symplectic involutions of irreducible symplectic manifolds of
  {$K3^{[n]}$}-type.
\newblock {\em Math. Z.}, 283(3-4):761--790, 2016.

\bibitem[Kir16]{kirschmer:hermitian}
M.~Kirschmer.
\newblock Definite quadratic and hermitian forms with small class number.
\newblock {\em Habilitationsschrift, RWTH Aachen University}, 2016.

\bibitem[Kir19a]{kirschmer:unimodular}
M.~Kirschmer.
\newblock Automorphisms of even unimodular lattices over number fields.
\newblock {\em J. Number Theory}, 197:121--134, 2019.

\bibitem[Kir19b]{kirschmer:determinants}
Markus Kirschmer.
\newblock Determinant groups of {H}ermitian lattices over local fields.
\newblock {\em Arch. Math. (Basel)}, 113(4):337--347, 2019.

\bibitem[KN19]{nebe-kirschmer:binary-hermitian}
M.~Kirschmer and G.~Nebe.
\newblock Binary hermitian lattices over number fields.
\newblock {\em Exp. Math. doi:10.1080/10586458.2019.1618756}, 2019.

\bibitem[Kne66]{kneser:strong-approx}
M.~Kneser.
\newblock Strong approximation.
\newblock In {\em Algebraic {G}roups and {D}iscontinuous {S}ubgroups ({P}roc.
  {S}ympos. {P}ure {M}ath., {B}oulder, {C}olo., 1965)}, pages 187--196. Amer.
  Math. Soc., Providence, R.I., 1966.

\bibitem[Kne02]{kneser}
M.~Kneser.
\newblock {\em Quadratische {F}ormen}.
\newblock Springer-Verlag, Berlin, 2002.
\newblock Revised and edited in collaboration with Rudolf Scharlau.

\bibitem[Kon18]{kondo}
S.~Kond\={o}.
\newblock A survey of finite groups of symplectic automorphisms of {$K3$}
  surfaces.
\newblock {\em J. Phys. A}, 51(5):053003, 17, 2018.

\bibitem[{LMF}20]{lmfdb}
The {LMFDB Collaboration}.
\newblock The {L}-functions and modular forms database.
\newblock \url{http://www.lmfdb.org}, 2020.
\newblock [Online; accessed 30 March 2020].

\bibitem[Mar11]{markman}
E.~Markman.
\newblock A survey of {T}orelli and monodromy results for
  holomorphic-symplectic varieties.
\newblock In {\em Complex and differential geometry}, volume~8 of {\em Springer
  Proc. Math.}, pages 257--322. Springer, Heidelberg, 2011.

\bibitem[MM09]{miranda_morrison:embeddings}
R.~Miranda and D.~R. Morrison.
\newblock Embeddings of integral quadratic forms, 2009.

\bibitem[MM17]{markman_mehrotra}
E.~Markman and S.~Mehrotra.
\newblock Hilbert schemes of {$K3$} surfaces are dense in moduli.
\newblock {\em Math. Nachr.}, 290(5-6):876--884, 2017.

\bibitem[Mon16]{mongardi:monodromy}
G.~Mongardi.
\newblock On the monodromy of irreducible symplectic manifolds.
\newblock {\em Algebr. Geom.}, 3(3):385--391, 2016.

\bibitem[MR19]{mongardi_rapagnetta}
G.~Mongardi and A.~Rapagnetta.
\newblock Monodromy and birational geometry of {O}’{G}rady’s sixfolds.
\newblock {\em arXiv:1909.07173}, 2019.

\bibitem[MTW18]{mongardi_tari_wandel:kummer}
G.~Mongardi, K.~Tari, and M.~Wandel.
\newblock Prime order automorphisms of generalised {K}ummer fourfolds.
\newblock {\em Manuscripta Math.}, 155(3-4):449--469, 2018.

\bibitem[MW15]{mongardi_wandel:induced}
G.~Mongardi and M.~Wandel.
\newblock Induced automorphisms on irreducible symplectic manifolds.
\newblock {\em J. Lond. Math. Soc. (2)}, 92(1):123--143, 2015.

\bibitem[MW17]{mongardi_wandel_OG}
G.~Mongardi and M.~Wandel.
\newblock Automorphisms of {O}'{G}rady's manifolds acting trivially on
  cohomology.
\newblock {\em Algebr. Geom.}, 4(1):104--119, 2017.

\bibitem[Neb13]{nebe:automorphisms-unimodular}
G.~Nebe.
\newblock On automorphisms of extremal even unimodular lattices.
\newblock {\em Int. J. Number Theory}, 9(8):1933--1959, 2013.

\bibitem[Nik79]{nikulin}
V.~V. Nikulin.
\newblock Integer symmetric bilinear forms and some of their geometric
  applications.
\newblock {\em Izv. Akad. Nauk SSSR Ser. Mat.}, 43(1):111--177, 238, 1979.

\bibitem[O'G99]{ogrady10}
K.~G. O'Grady.
\newblock Desingularized moduli spaces of sheaves on a {$K3$}.
\newblock {\em J. Reine Angew. Math.}, 512:49--117, 1999.

\bibitem[O'G03]{ogrady6}
K.~G. O'Grady.
\newblock A new six-dimensional irreducible symplectic variety.
\newblock {\em J. Algebraic Geom.}, 12(3):435--505, 2003.

\bibitem[Ono20]{onorati}
C.~Onorati.
\newblock On the monodromy group of desingularised moduli spaces of sheaves on
  {K3} surfaces.
\newblock {\em arXiv:2002.04129}, 2020.

\bibitem[Que81]{quebbemann}
H.-G. Quebbemann.
\newblock Zur {K}lassifikation unimodularer {G}itter mit {I}sometrie von
  {P}rimzahlordnung.
\newblock {\em J. Reine Angew. Math.}, 326:158--170, 1981.

\bibitem[Rap07]{rapagnetta_OG6}
A.~Rapagnetta.
\newblock Topological invariants of {O}'{G}rady's six dimensional irreducible
  symplectic variety.
\newblock {\em Math. Z.}, 256(1):1--34, 2007.

\bibitem[Rap08]{rapagnetta_OG10}
A.~Rapagnetta.
\newblock On the {B}eauville form of the known irreducible symplectic
  varieties.
\newblock {\em Math. Ann.}, 340(1):77--95, 2008.

\bibitem[Sch85]{scharlau}
W.~Scharlau.
\newblock {\em Quadratic and {H}ermitian forms}, volume 270 of {\em Grundlehren
  der Mathematischen Wissenschaften}.
\newblock Springer-Verlag, Berlin, 1985.

\bibitem[Shi64]{shimura:unitary}
G.~Shimura.
\newblock Arithmetic of unitary groups.
\newblock {\em Ann. of Math. (2)}, 79:369--409, 1964.

\bibitem[Shi77]{shimura:abelian}
G.~Shimura.
\newblock On abelian varieties with complex multiplication.
\newblock {\em Proc. London Math. Soc. (3)}, 34(1):65--86, 1977.

\bibitem[vdB52]{blij}
F.~van~der Blij.
\newblock Binary quadratic forms of discriminant {$-23$}.
\newblock {\em Nederl. Akad. Wetensch. Proc. Ser. A. {\bf 55} = Indagationes
  Math.}, 14:498--503, 1952.

\bibitem[Ver13]{verbitsky}
M.~Verbitsky.
\newblock Mapping class group and a global {T}orelli theorem for
  hyperk\"{a}hler manifolds.
\newblock {\em Duke Math. J.}, 162(15):2929--2986, 2013.
\newblock Appendix A by Eyal Markman.

\bibitem[Was97]{washington:cyclotomic_fields}
L.~C. Washington.
\newblock {\em Introduction to cyclotomic fields}, volume~83 of {\em Graduate
  Texts in Mathematics}.
\newblock Springer-Verlag, New York, second edition, 1997.

\end{thebibliography}

\end{document}